\def \Rn{\mathbb{R}^n}
\newcommand{\1}{\mathbf{1}}
\newcommand{\GG}{ {\mathcal G}}
\newcommand{\dd}{\,\mathrm{d}}
\newcommand{\TT}{{\mathcal T}  }
\newcommand{\pp}{ {\partial} }
\newcommand{\HH}{{{\mathcal H}}  }
\newcommand{\B}{ \mathcal{B} }
\newcommand{\RR}{{{\mathbb R}}}
\newcommand{\cuad}{{\sqcap\kern-.68em\sqcup}}
\newcommand{\be}{\begin{equation}}
\newcommand{\ee}{\end{equation}}
\theoremstyle{plain}
\newtheorem{theorem}{Theorem}[section]
\newtheorem{lemma}[theorem]{Lemma}
\newtheorem{prop}[theorem]{Proposition}
\newtheorem{corollary}[theorem]{Corollary}
\newtheorem{remark}{Remark}[theorem]
\newcommand{\bremark}{\begin{remark} \em}
	\newcommand{\eremark}{\end{remark} }
\numberwithin{equation}{section}
\title[Bubble towers in the ancient solution]{Bubble towers in the ancient solution of energy-critical  heat equation}
\author{Liming Sun}
\address{Academy of Mathematics and Systems Science, the Chinese Academy of Sciences, Beijing 100190, China.}
\email{lmsun@amss.ac.ca}
\author{Jun-cheng Wei}
\address{Department of Mathematics, University of British Columbia, Vancouver, BC, V6T 1Z2, CA.}
\email{jcwei@math.ubc.ca}
\author{Qidi Zhang}
\address{Department of Mathematics, University of British Columbia, Vancouver, BC, V6T 1Z2, CA.}
\email{qidi@math.ubc.ca}
\date{\today}
\subjclass[2020]{Primary 35K58, 35B09; Secondary 35K55}
\keywords{Semi-linear heat equation, energy critical, ancient solution, blow up, inner-outer gluing.}
\begin{document}

	\maketitle

	\begin{abstract}
		We construct a radially smooth positive ancient solution for energy critical semi-linear heat equation in $\mathbb{R}^n$, $n\geq 7$. It blows up at the origin with the profile of multiple Talenti bubbles in the backward time infinity.
	\end{abstract}

\tableofcontents

\setlength{\parskip}{0.5em}
\section{Introduction}
\subsection{Motivation}
This paper deals with the analysis of ancient solutions that exhibit \textit{infinite time blow-up} in the energy critical semi-linear heat equation.
\begin{align}\label{intro:eq:main}
	u_{t} =\Delta u+|u|^{p-1} u \quad& \text { in } \mathbb{R}^{n} \times(-\infty, 0)
\end{align}
where $n\geq 3$ and $p$ is the critical Sobolev exponent $p_S:=\frac{n+2}{n-2}$. We are interested in the positive solutions $u(x,t)$ globally defined for ancient time such that
\begin{align}
    \lim_{t\to-\infty}\|u(\cdot,t)\|_{L^\infty(\Rn)}=+\infty.
\end{align}

 Problem \eqref{intro:eq:main} has a more popular counterpart in the forward direction, namely
\begin{align}\label{intro:forward}
    \begin{cases}
u_{t}=\Delta u+|u|^{p-1} u & \text { in } \mathbb{R}^{n} \times(0, T), \\
u(x, 0)=u_{0}(x) & \text { in } \mathbb{R}^{n}.
\end{cases}
\end{align}
The \textit{energy functional} associated to \eqref{intro:forward} is
\begin{align}
    J(u):=\int_{\Rn} \frac{1}{2}|\nabla u|^2-\frac{1}{p+1}|u|^{p+1} .
\end{align}
The scaling $u(x, t) \mapsto \lambda^{2 /(p-1)} u\left(\lambda x, \lambda^{2} t\right)$ keeps the equation invariant and transforms $J(u)$ to $ \lambda^{\frac{4}{p-1}+2-n}J(u)$. Evidently, \eqref{intro:forward} is energy critical when $p=p_S$.

Problem \eqref{intro:forward} has been extensively studied in the literature. It is well-known that for a large class of initial data, say bounded continuous, there is a unique maximal classical solution  $u(x,t)$ for $t\in (0,T)$. If $T$ is finite, then $u$ will blow up at $T$. There are two types of blow-up depending on the rate
\begin{align}
\text{Type I}&: \limsup _{t \rightarrow T}(T-t)^{\frac{1}{p-1}}\|u(\cdot,t)\|_{L^\infty(\Rn)}<\infty ,\\
\text{Type II}&: \limsup _{t \rightarrow T}(T-t)^{\frac{1}{p-1}}\|u(\cdot,t)\|_{L^\infty(\Rn)}=\infty.
\end{align}
The blow-up is almost completely understood in the sub-critical range $p<p_S$, for instance, by \cite{filippas1992refined, giga1985asymptotically,giga1987characterizing,giga2004blow,quittner1999priori,velazquez1992higher}. The solution always blows up in type I in this range.  The existence of type II blow-up has been established in various settings, for instance by  \cite{herrero1993blow,herrero1994explosion,mizoguchi2004type} when $p>p_{JL}$, where
\begin{align}
    p_{{JL}}=\begin{cases}
\infty & \text { if } n \leq 10, \\
1+\frac{4}{n-4-2 \sqrt{n-1}} & \text { if } n \geq 11 .
\end{cases}
\end{align}
Recently, there are active researches in the energy critical case $p=p_S$ by \cite{filippas2000fast,schweyer2012type,cortazar2019green,del2019type,del2020type,del2020infinite,harada2020higher,harada2020type}. These works found that $u$ can exhibit type II blow-up in finite time in lower dimensions, while \citet{wang2021refined} precluded this fast blow-up for $n\geq 7$.

Ancient solutions play an important role in studies of singularities and long-time behavior of solutions of many evolution problems, for instance in the mean curvature flow, Ricci flow and Yamabe flow. Comparing to the forward direction, the studies to ancient solutions of semi-linear heat equation \eqref{intro:eq:main} are quite limited.
In the sub-critical case, \citet{merle1998optimal} first established the following result.
\begin{theorem}[\cite{merle1998optimal}]
    Let $p<p_{S}$ and $u$ be a positive solution of \eqref{intro:eq:main} satisfying
\[
\|u(\cdot, t)\|_{L^\infty(\Rn)} \leq C(-t)^{-1 /(p-1)} \quad \text { as } \quad t \rightarrow-\infty.
\]
Then there exists $T^{*} \geq 0$ such that $u(x, t)=(p-1)^{-1 /(p-1)}\left(T^{*}-t\right)^{-1 /(p-1)}$.
\end{theorem}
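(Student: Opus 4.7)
My plan is to follow the Giga-Kohn framework: introduce similarity variables, exploit the associated Lyapunov functional to extract stationary limits at $s=\pm\infty$, classify these limits using the subcritical Pohozaev identity, and finish with a rigidity argument that forces the rescaled profile to be spatially constant.

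First I would set $w(y,s):=(-t)^{1/(p-1)}u(\sqrt{-t}\,y,t)$ with $s=-\log(-t)$, converting $t\in(-\infty,0)$ into $s\in\mathbb{R}$, so that $w$ satisfies the autonomous equation
\[ w_s=\Delta w-\tfrac12 y\cdot\nabla w-\tfrac{1}{p-1}w+w^p\quad\text{in }\mathbb{R}^n\times\mathbb{R}, \]
and the type I hypothesis becomes $\|w(\cdot,s)\|_\infty\le C$. Against the Gaussian weight $\rho(y)=e^{-|y|^2/4}$ the Giga-Kohn functional
\[ E(w):=\int_{\mathbb{R}^n}\Big(\tfrac12|\nabla w|^2+\tfrac{1}{2(p-1)}w^2-\tfrac{1}{p+1}w^{p+1}\Big)\rho\,dy \]
is monotone along the flow, with $\tfrac{d}{ds}E=-\int w_s^2\rho\,dy$. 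Boundedness of $w$ implies boundedness of $E$, hence finite limits at $s=\pm\infty$ and $\int_{-\infty}^{\infty}\!\int w_s^2\rho\,dy\,ds<\infty$; combined with parabolic interior regularity this yields, along any sequence $s_k\to\pm\infty$, a $C^2_{\mathrm{loc}}$ subsequential limit $\phi\ge 0$ satisfying the stationary equation $\Delta\phi-\tfrac12 y\cdot\nabla\phi-\tfrac{1}{p-1}\phi+\phi^p=0$ in $\mathbb{R}^n$.

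For $p<p_S$, the Giga-Kohn weighted Pohozaev identity forces every such bounded non-negative $\phi$ to be $0$ or $\kappa:=(p-1)^{-1/(p-1)}$, and a direct computation yields $E(0)=0<E(\kappa)=\tfrac{\kappa^2}{2(p+1)}\int\rho\,dy$. Together with the strong maximum principle (which gives $w>0$), this rules out the $\alpha$-limit $0$: otherwise $E(s)\le 0$ for all $s$, which in combination with $E(\kappa)>0$ forces the $\omega$-limit also to be $0$, so $E\equiv 0$ and $w_s\equiv 0$; the resulting bounded positive stationary profile would have to be $\kappa$, contradicting $E(\kappa)>0$. Hence $w(\cdot,s)\to\kappa$ locally uniformly as $s\to-\infty$.

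The final step is a rigidity argument reducing $w$ to an ODE. Linearizing the flow at $\kappa$ gives the operator $L:=\Delta-\tfrac12 y\cdot\nabla+1$, whose spectrum in $L^2(\rho)$ is $\{1-k/2:k=0,1,2,\ldots\}$, with unstable modes the constant $1$ (eigenvalue $1$) and the linear functions $y_i$ (eigenvalue $1/2$). Each $\partial_{y_i}w$ satisfies an ancient linearized-type equation, decays to $0$ as $s\to-\infty$, and is orthogonal to the constant mode; a dynamical/absorbing argument in the Gaussian-weighted space shows that the only possible ancient trajectory for $\partial_{y_i}w$ is the trivial one, so $\partial_{y_i}w\equiv 0$. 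Thus $w=w(s)$ solves $w'=-\tfrac{1}{p-1}w+w^p$, whose bounded positive ancient solutions are exactly $w(s)=\kappa(1+T^*e^s)^{-1/(p-1)}$ for $T^*\ge 0$, which after unscaling gives $u(x,t)=\kappa(T^*-t)^{-1/(p-1)}$. I expect this rigidity step to be the main obstacle: eliminating the $n$-dimensional $y_i$-unstable subspace and the infinite-dimensional center subspace requires a careful combination of positivity, boundedness, and the precise form of the nonlinear equation to rule out any nontrivial ancient perturbation of $\kappa$.
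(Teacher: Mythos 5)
This statement is not proved in the paper at all: it is quoted verbatim from \cite{merle1998optimal}, so the only meaningful comparison is with the Merle--Zaag proof itself. Your first two stages reproduce that proof's skeleton correctly: self-similar variables, the Giga--Kohn Lyapunov functional, the classification of bounded nonnegative stationary states as $\{0,\kappa\}$ for $p<p_S$, and the energy argument forcing $w(\cdot,s)\to\kappa$ as $s\to-\infty$. (One caveat: to run the $\omega$-limit/energy-sign part you need boundedness of $w$ as $s\to+\infty$, i.e.\ the bound $u\le C(-t)^{-1/(p-1)}$ for all $t<0$ as in Merle--Zaag, not merely as $t\to-\infty$; as stated this requires an extra justification.)

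The rigidity step, however, contains a genuine gap, and it is exactly the heart of the theorem. First, $\partial_{y_i}w$ is \emph{not} orthogonal to the constant mode: integrating by parts against $\rho=e^{-|y|^2/4}$ gives
\begin{equation*}
\int_{\mathbb{R}^n}\partial_{y_i}w\,\rho\,dy=\tfrac12\int_{\mathbb{R}^n}y_i\,w\,\rho\,dy ,
\end{equation*}
which is the projection of $w$ on the eigenfunction $y_i$ and is nonzero in general. Indeed the dangerous scenario $w-\kappa\approx C_1\cdot y\,e^{s/2}$ produces a nontrivial ancient solution $\partial_{y_i}w\approx C_1e^{s/2}$ of your derivative equation which decays as $s\to-\infty$, so no linear ``absorbing'' argument can eliminate it. Second, even after the unstable constant mode, the derivative equation (operator $\mathcal{L}-\tfrac12$ plus a potential tending to zero only locally and with no rate) retains neutral directions, and the neutral-mode-dominated regime for $w-\kappa$ (behavior like $c/|s|$ times the quadratic Hermite modes) cannot be excluded softly: in Merle--Zaag this is the hardest case, treated by a nonlinear ODE analysis of the competing spectral projections ($a'\approx Ca^2$ dynamics), refined asymptotics, and a contradiction with the type I bound/blow-up criterion; the $y_i$-mode case is removed using translation invariance, and only then is the constant-mode case matched to the explicit family $\kappa(1+T^*e^s)^{-1/(p-1)}$. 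Your terminal ODE classification and the unscaling to $u=\kappa(T^*-t)^{-1/(p-1)}$ are fine, but the reduction to the ODE, i.e.\ $\nabla_y w\equiv0$, is precisely where the theorem's difficulty lives, and the mechanism you propose for it (orthogonality plus absorption) is incorrect as stated.
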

The above result about ancient solutions has some interesting and important consequences in the study of the (forward) blow-up behavior of solutions of \eqref{intro:forward} when $p<p_S$. See \cite{merle1998optimal} for details.

For the super-critical case, one knows that there exists one-parameter radially positive steady states $\{\phi_\alpha\}$ for each $\alpha>0$. Furthermore, if $p>p_{JL}$, then these solutions are ordered as $\phi_\alpha<\phi_\beta$ for $\alpha<\beta$ and $\phi_\beta\to \phi_\infty$ as $\beta\to \infty$, where (see \cite{quittner2019superlinear,wang1993cauchy})
\[\phi_{\infty}(x):=L|x|^{-2 /(p-1)}, \quad L:=\left(\frac{2}{(p-1)^{2}}((n-2) p-n)\right)^{\frac{1}{p-1}}.\]
The following Liouville-type results are known by \citet[Theorem 2.4]{fila2011homoclinic} and \citet[Theorem 1.2]{polavcik2005liouville}.
\begin{theorem}[\cite{fila2011homoclinic,polavcik2005liouville}]
 Let $u$ be a non-negative radial solution of \eqref{intro:eq:main}.
 \begin{enumerate}
     \item Assume $p_{S} \leq p<p_{J L}$ and $u(\cdot, t) \leq \phi_{\infty}$ for all $t \leq 0$. Then $u \equiv 0$.
     \item Assume $p>p_{J L}$ and $\phi_{\alpha} \leq u(\cdot, t) \leq \phi_{\infty}$ for some $\alpha>0$ and all $t \leq 0$. Then $u(\cdot, t) \equiv \phi_{\gamma}$ for some $\gamma \geq \alpha$.
 \end{enumerate}
\end{theorem}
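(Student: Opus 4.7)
My strategy combines two classical ingredients available in the radial parabolic setting: the Matano--Angenent intersection-number (zero-number) theorem applied to $u(\cdot,t)-\phi_\infty$ and, in part (2), to $u(\cdot,t)-\phi_\beta$ for various $\beta$; and the phase-plane structure of positive radial stationary solutions of $\Delta v + v^{p} = 0$ in $\RR^{n}$, whose intersection pattern with the singular profile $\phi_\infty$ changes qualitatively as $p$ crosses $p_{JL}$. Writing the equation in $r=|x|$ as a one-dimensional parabolic problem on $(0,\infty)$, the zero number $z(t)$ of $r\mapsto u(r,t)-\phi_\infty(r)$ is a non-increasing $\mathbb{Z}_{\ge 0}$-valued function of $t$ that drops strictly at multiple zeros; the assumption $u\le \phi_\infty$ forces $z(t)\equiv 0$, and the strong maximum principle then yields $u<\phi_\infty$ strictly (unless we are in a degenerate case that we handle separately).

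For part (1), the plan is first to use the barriers $0\le u\le \phi_\infty$, the smoothness assumption at the origin, and standard interior parabolic regularity to obtain uniform $C^{2+\alpha}_{\mathrm{loc}}$ bounds on $u$ on $\RR^{n}\times(-\infty,0]$ away from $x=0$. Using these bounds I would extract a backward $\omega$-limit $v=\lim_{t_{k}\to-\infty}u(\cdot,t_{k})$ along a subsequence and, via an energy/monotonicity argument in self-similar variables $y=x/\sqrt{-t}$, $s=-\log(-t)$, identify $v$ as a non-negative radial stationary solution of $\Delta v+v^{p}=0$ satisfying $v\le\phi_\infty$. The decisive phase-plane input, valid exactly in the range $p_{S}\le p<p_{JL}$, is that every non-trivial positive regular radial solution of $\Delta v+v^{p}=0$ must cross $\phi_\infty$ (once when $p=p_{S}$, infinitely often when $p_{S}<p<p_{JL}$), because the linearization around $\phi_\infty$ has complex characteristic exponents throughout this interval. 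Hence $v\equiv 0$. To promote $v\equiv 0$ to $u\equiv 0$ on all of $(-\infty,0]$, I would iterate a comparison/zero-number argument: once $u(\cdot,t_{k})\le\ve\phi_\infty$ for some small $\ve$ and large $-t_{k}$, a small forward supersolution drives $u$ to $0$.

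For part (2), when $p>p_{JL}$, the stationary family $\{\phi_\alpha\}_{\alpha>0}$ is totally ordered and sweeps the region between $0$ and $\phi_\infty$. Applying the same compactness and self-similar-variable argument, the backward limit $v$ is a positive radial stationary solution with $\phi_\alpha\le v\le\phi_\infty$, so by the ordering it must equal $\phi_\gamma$ for some $\gamma\ge\alpha$. To upgrade this to $u(\cdot,t)\equiv\phi_\gamma$ for all $t\le 0$, I would apply the zero-number theorem to $u-\phi_\gamma$: the zero number is non-increasing in $t$ and vanishes in the backward limit, so any residual sign change would persist under the flow and contradict the convergence $u(\cdot,t_k)\to\phi_\gamma$.

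The main obstacle I anticipate is the backward compactness step. Unlike forward $\omega$-limits, the backward direction enjoys no energy dissipation, so the existence of a meaningful limit requires uniform $t$-independent regularity. The envelope $\phi_\infty$ controls the far-field behavior, but because $\phi_\infty$ is singular at the origin one must rule out concentration of $u$ at $x=0$ as $t\to-\infty$. The radial symmetry together with the phase-plane classification -- positive radial stationary solutions cross $\phi_\infty$ for $p<p_{JL}$ and lie strictly below it for $p\ge p_{JL}$ -- is exactly what closes both statements: it excludes all non-trivial candidate backward limits in the regime of part (1), and it forces the limit in part (2) to belong to the ordered family $\{\phi_\alpha\}$.
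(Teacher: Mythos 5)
First, a point of orientation: the paper does not prove this theorem at all — it is quoted as background from \cite{fila2011homoclinic,polavcik2005liouville} — so there is no in-paper argument to compare yours against; I can only assess your sketch on its own terms, and it has genuine gaps.

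The central gap is the backward compactness step and the identification of the backward limit as a steady state of $\Delta v+v^{p}=0$. In backward time there is no dissipation and no finite Lyapunov functional available here: the energy $J$ is not finite for functions pinned only by $u\le\phi_\infty$ (note $\phi_\infty\notin \dot H^1$ near $0$ or near $\infty$), and the Giga--Kohn monotonicity in the self-similar variables $y=x/\sqrt{-t}$, $s=-\log(-t)$ identifies limits as solutions of the \emph{rescaled} elliptic equation $\Delta w-\tfrac{y}{2}\cdot\nabla w-\tfrac{w}{p-1}+w^{p}=0$, not as steady states of the original equation; so the object $v$ you want to classify by the phase-plane picture is not produced by the argument you describe. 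Moreover, the concentration at $x=0$ that you flag as "to be ruled out" is precisely the hard point and cannot be dismissed softly: the very solutions constructed in this paper are radial, positive, ancient, concentrate at the origin as $t\to-\infty$, and are comparable to $\phi_\infty$ up to constants, so only the sharp barrier $u\le\phi_\infty$ (with the exact constant $L$), exploited through intersection arguments with the full family $\{\phi_\beta\}$, can exclude such behavior — a compactness-plus-classification scheme that never uses the sharp constant cannot close part (1). Relatedly, even granting a limit, you must also exclude singular limits (e.g.\ $\phi_\infty$ itself or singular oscillating steady states), since the bound $u\le\phi_\infty$ gives no uniform control near $x=0$ as $t\to-\infty$.

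There is also a logical problem in your final step of part (2). The zero number of $u(\cdot,t)-\phi_\gamma$ is nonincreasing in forward time, so a sign change at some time $t_1$ propagates \emph{backwards}, not forwards; to kill it you would need the zero number to be finite and to vanish as $t\to-\infty$, but convergence $u(\cdot,t_k)\to\phi_\gamma$ only locally uniformly along a subsequence does not force this — zeros can drift to $r=0$ or $r=\infty$, and finiteness of the intersection number on the unbounded interval $(0,\infty)$ with a comparison function that is singular at $r=0$ needs separate justification. Even if $u-\phi_\gamma$ had a fixed sign for all $t$, that alone does not yield $u\equiv\phi_\gamma$; one still needs a sweeping/comparison argument through the ordered family $\{\phi_\beta\}_{\beta\ge\alpha}$ (this, together with intersection properties of $\phi_\beta$ with $\phi_\infty$ on either side of $p_{JL}$, is the actual mechanism in the cited works, which argue directly with the steady-state family rather than via backward limits).
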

Without this $\phi_\infty$ bound, \cite{fila2011homoclinic} also constructed some radially positive bounded solutions which do depend on time. \citet{polavcik2021entire} classified all radially positive ancient solutions
under some further conditions for the super-critical regime.

We are interested in the energy-critical case $p=p_S$. The steady states of the equation \eqref{intro:forward} satisfy
\be
\Delta u+|u|^{\frac{4}{n-2}} u=0 \quad \text { in } \mathbb{R}^{n} .
\ee
We recall that all positive entire solutions of the equation are given by the family of Aubin-Talenti solitons \cite{aubin1976problemes,talenti1976best,gidas1979symmetry}
\begin{align}
	U_{\mu, \xi}(x)=\mu^{-\frac{n-2}{2}} U\left(\frac{x-\xi}{\mu}\right)
\end{align}
where $U(y)$ is the standard bubble soliton
\begin{align}\label{intro:bubble}
	U(y)=\alpha_{n}\left(\frac{1}{1+|y|^{2}}\right)^{\frac{n-2}{2}}, \quad \alpha_{n}=[n(n-2)]^{\frac{n-2}{4}} .
\end{align}
This family of solitons are also called Aubin-Talenti ground state solitary wave of the energy functional $J$. \citet{collot2017dynamics} classified the ancient solutions near the ground states.
\begin{theorem}[\cite{collot2017dynamics}]
Let $n\geq 7$ and $p=p_S$. There exist two strictly positive, $C^\infty$ radial solutions of \eqref{intro:eq:main}, $Q^+$ and $Q^-$ such that $\lim_{t\to-\infty}\|Q^{\pm}-U\|_{\dot H^1}=0$. Conversely, there exists $0<\delta\ll 1$ such that if $u$ is a solution of \eqref{intro:eq:main} with
\[\sup_{t\leq 0}\inf_{\mu>0,\xi\in\Rn}\|u(t)-U_{\mu,\xi}\|_{\dot H^1}\leq \delta,
\]
then $u=Q^{\pm}$ or $u=U$ up to the symmetry of the flow.
\end{theorem}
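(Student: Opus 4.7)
The plan is to combine a spectral analysis at the bubble $U$ with a shooting/contraction argument to produce $Q^\pm$, and a modulation plus coercivity argument for the classification. Both parts rest on the linearization $Lv := \Delta v + pU^{p-1}v$. First I would establish: in the radial class, $L$ has a one-dimensional kernel spanned by the scaling direction $\Lambda U := \tfrac{n-2}{2}U + y\cdot\nabla U$, and exactly one simple negative eigenvalue $-\nu_0<0$ with a positive radial eigenfunction $\phi_0$. The decay rates $|\phi_0(x)|\lesssim \langle x\rangle^{-(n-2)}$ and $|\Lambda U(x)|\lesssim \langle x\rangle^{-(n-2)}$ would be recorded, since they control all the weighted norms used later.

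For existence I would use the ansatz
\[
u(x,t)\;=\;U(x)\;\pm\;e^{\nu_0 t}\phi_0(x)\;+\;R(x,t),
\]
the sign labelling the two branches. The equation for $R$ reads $R_t = LR + N[e^{\nu_0 t}\phi_0 + R]$, where $N$ collects the superlinear terms. Working on $(-\infty,T_0]$ with $T_0\ll 0$, the strategy is a contraction in a weighted parabolic space whose natural decay rate is $e^{2\nu_0 t}$, matching the quadratic size of $N$ in the unstable amplitude. This produces $R$, and hence $u$, uniquely once the unstable coefficient is prescribed. Positivity and $C^\infty$ smoothness of $Q^\pm$ then follow from parabolic regularity and the maximum principle, since $U\pm e^{\nu_0 t}\phi_0 > 0$ for $t$ sufficiently negative.

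For classification, given $u$ with $\sup_{t\le 0}\inf_{\mu,\xi}\|u(t)-U_{\mu,\xi}\|_{\dot H^1}\le\delta$, I would decompose
\[
u(x,t)=U_{\mu(t),\xi(t)}(x)+v(x,t),\qquad v\perp \ker L,
\]
using the standard modulation orthogonality to fix $(\mu(t),\xi(t))$. Splitting further $v=a(t)\phi_0+w$ with $w\perp\phi_0$, one obtains modulation ODEs for $(\mu,\xi)$, a scalar equation $\dot a = \nu_0 a + \mathcal{O}(\|v\|^2)$, and a coercivity estimate $\langle Lw,w\rangle \gtrsim \|w\|_{\dot H^1}^2$ on the doubly-orthogonal space. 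A Lyapunov/monotonicity argument forces $\|w\|$ and the modulation errors to vanish as $t\to-\infty$. If $a\equiv 0$, the defect is absorbed by the symmetries and $u$ is $U$ up to the flow symmetries. Otherwise $a(t)$ is nonzero with fixed sign for $t\ll 0$, and the uniqueness along a prescribed unstable branch obtained in the construction step then identifies $u$ with $Q^+$ or $Q^-$.

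The hardest part is the coercivity of $\langle Lw,w\rangle$ on the doubly-orthogonal complement, in a norm strong enough to close both the contraction and the Lyapunov arguments yet compatible with the slow polynomial decay of $U$ and $\Lambda U$. This is exactly where the hypothesis $n\ge 7$ enters: with $U^{p-1}\sim |x|^{-4}$ and $\Lambda U,\phi_0\sim |x|^{-(n-2)}$, the weighted integrals $\int U^{p-1}|\Lambda U|^2$ and $\int U^{p-1}\phi_0^2$ converge and the spectral gap survives in the scale-invariant topology only when $n$ is large enough. In lower dimensions the modulation orthogonality cannot be closed in the $\dot H^1$ framework and a different analysis would be required, which is the reason the statement is restricted to $n\ge 7$.
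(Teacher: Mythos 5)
This statement is not proved in the paper at all: it is quoted verbatim, with attribution, from \cite{collot2017dynamics} (Collot--Merle--Rapha\"el), and serves only as background motivation for the construction carried out here. So there is no in-paper argument to compare yours against; the only fair comparison is with the cited work, and your sketch does follow the broadly standard route taken there (spectral decomposition of $L=\Delta+pU^{p-1}$, construction of $Q^{\pm}$ along the one-dimensional unstable direction by a fixed point with decay $e^{\nu_0 t}$, and modulation plus coercivity for the rigidity part).

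As a proof, however, the sketch has genuine gaps concentrated in the classification half. Knowing only that $u$ stays $\delta$-close to the two-parameter family $\{U_{\mu,\xi}\}$ for all $t\le 0$ does not give you backward convergence to a \emph{single} bubble: the modulation parameters $\mu(t),\xi(t)$ could a priori drift as $t\to-\infty$, and the coercivity of the quadratic form (which, with your sign convention, should read $-\langle Lw,w\rangle\gtrsim\|w\|_{\dot H^1}^2$ on the doubly orthogonal subspace) gives no control of the kernel directions themselves, only of their orthogonal complement at each fixed time. The heart of the cited result is precisely to rule out this drift and to upgrade smallness to exponential backward decay toward a fixed $U_{\mu,\xi}$, via energy/no-return type arguments; only after that can a uniqueness statement for the unstable manifold identify $u$ with $Q^{\pm}$ or $U$. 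Your phrase ``the uniqueness along a prescribed unstable branch obtained in the construction step then identifies $u$'' presupposes exactly this missing step, since the contraction argument only gives uniqueness within the class of solutions already known to decay like $e^{\nu_0 t}$ to $U$. Finally, the justification you give for the restriction $n\ge 7$ is not correct: with $U^{p-1}\sim|x|^{-4}$ and $\Lambda U\sim|x|^{2-n}$ one has $U^{p-1}|\Lambda U|^2\sim|x|^{-2n}$, so $\int U^{p-1}|\Lambda U|^2$ converges in every dimension $n\ge 3$ (and $\phi_0$ in fact decays exponentially, not just like $\langle x\rangle^{2-n}$). The dimensional restriction in \cite{collot2017dynamics} enters through different mechanisms (the smallness of the exponent $p=\frac{n+2}{n-2}<2$ and the decay rates needed to close the energy estimates), so this part of the argument would need to be reworked rather than merely detailed.
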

They also pointed out the forward behavior: $Q^+$ explodes according to type I blow-up in finite time, and $Q^-$ is global and dissipates $Q^-\to 0$ as $t\to \infty$ in $\dot H^1(\Rn)$.

A natural question is whether we have \textit{multiple} Aubin-Talenti solitons in the backward limit. In the forward direction, \citet{del2019existence} constructed an initial condition $u_0$ such that  \eqref{intro:forward} blows up in infinite time exactly at the origin. The solutions constructed in \cite{del2019existence} consist of sign-changing bubbling towers in the forward limit $t \to +\infty$. In this paper, we investigate the possibility of such phenomenon in the backward direction.

Recall that for any Palais-Smale sequence $\{u(x,t_n)\}_{n=1}^\infty\geq 0$ of the energy functional $J$, Struwe's profile decomposition  \cite{struwe1984global} tells us that passing to a subsequence, there are positive scalars $\{\mu_j(t_n)\}_{j=1}^k$ and points $\{\xi_j(t_n)\}_{j=1}^k$ such that
\begin{align}
    \frac{\mu_i(t_n)}{\mu_j(t_n)}+\frac{\mu_j(t_n)}{\mu_i(t_n)}+\frac{|\xi_i-\xi_j|^2}{\mu_i\mu_j}(t_n)\to \infty \quad \text{as } n\to \infty
\end{align}
and
\be
u(x, t_n)=\sum_{j=1}^{k} \frac{1}{\mu_{j}(t_n)^{\frac{n-2}{2}}} U\left(\frac{x-\xi_j(t_n)}{\mu_{j}(t_n)}\right)+o(1) \quad \text { as } \quad n \rightarrow \infty
\ee
where (after some permutation) $\mu_{k}(t) \leq \cdots \leq \mu_{1}(t)$. Our main result is the following existence  of bubbling-tower solution in the backward limit $ t\to -\infty$:

\begin{theorem}\label{intro:thm:main}
	Let $n \geq 7 $, $k \geq  2$. There exists a radially smooth positive solution of \eqref{intro:eq:main}
	that blows up backward in infinite time exactly at 0 with a profile of the form
	\be\label{W-1.7}
	u(x, t)=
	\left(1+O(|t|^{-\epsilon}) \right)
	\sum_{j=1}^{k} \mu_{j}(t)^{-\frac{n-2}{2}} U\left(\frac{x}{\mu_{j}(t)}\right)
	 \quad \text { for all } \quad t\le t_0
	\ee
	where $O(|t|^{-\epsilon})$ denotes some function $g(x,t)$ satisfying $\|g(\cdot,t)\|_{L^{\infty}(\RR^n)}\lesssim |t|^{-\epsilon}$. Furthermore, we have
	\begin{equation}
	\| u(x,t) - \sum_{j=1}^{k} \mu_{j}(t)^{-\frac{n-2}{2}} U\left(\frac{x}{\mu_{j}(t) }\right) \|_{H^1(\RR^n)} \lesssim |t|^{-\epsilon}
	\quad \text { for all } \quad t\le t_0 .
	\end{equation}
	Here $\epsilon>0$ is small,
	\[
	\mu_{j}(t)=\beta_{j} (-t)^{-\alpha_{j}}(1+O(|t|^{-\sigma})),\quad \alpha_{j}=\frac{1}{2}\left(\frac{n-2}{n-6}\right)^{j-1}-\frac{1}{2}, \quad j=1, \ldots, k,
	\]
where $\beta_j$, $\sigma$ are certain positive constants.
\end{theorem}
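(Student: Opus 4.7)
The plan is to use the inner--outer gluing scheme, in the spirit of the forward-time bubble-tower construction of del Pino, Musso and Wei \cite{del2019existence}, adapted to the ancient (backward-in-time) direction. I would start from the radial ansatz
\[
u(x,t) = U_\ast(x,t) + \phi(x,t), \qquad U_\ast(x,t) := \sum_{j=1}^{k} U_{\mu_j(t)}(x), \qquad U_{\mu}(x) := \mu^{-\frac{n-2}{2}} U(x/\mu),
\]
with positive scales $\mu_1(t) \gg \mu_2(t) \gg \cdots \gg \mu_k(t)$ on $(-\infty, t_0]$ to be determined jointly with the correction $\phi$. Since each $U_{\mu_j}$ is a steady state, the remainder satisfies
\[
\partial_t\phi - \Delta\phi - pU_\ast^{p-1}\phi = \mathcal{E} + \mathcal{N}(\phi), \qquad \mathcal{E} := -\partial_t U_\ast + U_\ast^p - \sum_{j=1}^k U_{\mu_j}^p,
\]
where $\mathcal{N}(\phi) := (U_\ast+\phi)^p - U_\ast^p - pU_\ast^{p-1}\phi$. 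A direct computation gives $-\partial_t U_{\mu_j} = \mu_j^{-n/2}\dot\mu_j\,(\Lambda U)(x/\mu_j)$ with the dilation mode $\Lambda U := \frac{n-2}{2}U + y\cdot\nabla U$, while the nonlinear part of $\mathcal{E}$ is concentrated on the overlap between neighbouring bubbles.

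Next I would decompose $\phi = \psi + \sum_j \eta_j\,\tilde\phi_j(x/\mu_j, \tau_j)$, with $\eta_j$ a smooth cutoff at the core scale $|x|\lesssim R\mu_j$ and $\tau_j$ an inner time with $d\tau_j/dt = \mu_j^{-2}$. This splits the problem into an outer equation for $\psi$---essentially a linear heat equation with small perturbative potential, forced by the off-core part of $\mathcal{E}$, by $\mathcal{N}(\phi)$ and by cutoff commutators---coupled to inner equations
\[
\partial_{\tau_j}\tilde\phi_j = \Delta_y\tilde\phi_j + pU(y)^{p-1}\tilde\phi_j + H_j(y,\tau_j),
\]
in which $H_j$ packages the near-core error, the restriction of $\psi$ to the $j$-th core, and the influence of the other bubbles. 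The linearized operator $L_0 := \Delta + pU^{p-1}$ has a one-dimensional radial bounded kernel spanned by $\Lambda U$, so bounded inner solutions with the required decay in $y$ exist only under the orthogonality conditions
\[
\int_{\mathbb{R}^n} H_j(y,\tau_j)\,\Lambda U(y)\,dy = 0, \qquad j=1,\ldots,k,
\]
which produce the ODE system determining $\mu_j(t)$.

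The $j$-th condition balances, at leading order, the projection of the self-time-derivative, of size $\mu_j\dot\mu_j$, against the interaction with the adjacent larger bubble: in the core of $U_{\mu_j}$ one has $U_{\mu_{j-1}}(\mu_j y)\approx \alpha_n\mu_{j-1}^{-(n-2)/2}$, and the resulting cross term $pU_{\mu_j}^{p-1}U_{\mu_{j-1}}$ projects onto $\Lambda U$ as a constant multiple of $(\mu_j/\mu_{j-1})^{(n-2)/2}$, non-degenerate by the Pohozaev-type identity $\int U^{p-1}\Lambda U\,dy = -\frac{(n-2)^2}{2(n+2)}\int U^p\,dy$. Contributions from the smaller bubbles $\mu_{j+1},\ldots,\mu_k$ are of strictly higher order thanks to the $|y|^{-(n-2)}$ decay of $U$. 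The resulting balance $\mu_j\dot\mu_j \approx c_n (\mu_j/\mu_{j-1})^{(n-2)/2}$, combined with the ansatz $\mu_j(t) = \beta_j(-t)^{-\alpha_j}$ and matching powers of $(-t)$, yields the recurrence
\[
\alpha_j + \tfrac{1}{2} = \tfrac{n-2}{n-6}\bigl(\alpha_{j-1} + \tfrac{1}{2}\bigr), \qquad \alpha_1 = 0,
\]
whose closed form is exactly $\alpha_j = \tfrac{1}{2}\bigl((\tfrac{n-2}{n-6})^{j-1}-1\bigr)$; the same balance pins down the $\beta_j$.

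The remainder is quantitative linear theory and a fixed point. I would establish weighted $L^\infty$ bounds for bounded \emph{ancient} solutions of the outer equation $\partial_t\psi = \Delta\psi + g$ (absorbing the small potential perturbatively) via Duhamel representations with spatial weights adapted to the multi-scale profile, together with invertibility of $\partial_{\tau_j} - L_0$ on the $\Lambda U$-orthogonal complement in a radial $L^\infty$ space with polynomial $\langle y\rangle$-decay. Writing $\mu_j = \beta_j(-t)^{-\alpha_j}(1+\lambda_j(t))$ and expanding the orthogonality conditions to the next order yields a nearly-triangular linear ODE system for $\{\lambda_j\}$, solved by integration from $-\infty$ with decay $|t|^{-\sigma}$. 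A Banach contraction on the tuple $(\psi, \{\tilde\phi_j\}, \{\lambda_j\})$ in the product of these weighted norms then closes the construction on $(-\infty, t_0]$ for $|t_0|$ large. The main obstacle is the backward-in-time nature of both linear problems: we cannot evolve from initial data but must select, for each forcing, the unique ancient solution whose temporal weight matches the forcing's decay, and these choices have to be simultaneously compatible across the outer, inner and parameter norms. The restriction $n\ge 7$ is essential on two counts: it secures $p-1 = 4/(n-2) < 1$ so that $\mathcal{N}(\phi)$ is sub-Lipschitz and the contraction estimates close, and it gives $n-6 > 0$ so that the recursion produces positive $\alpha_j$ and genuine concentration $\mu_j\to 0$ as $t\to -\infty$.
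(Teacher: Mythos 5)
Your proposal follows essentially the same route as the paper: the same bubble-tower ansatz and inner--outer gluing decomposition, the same orthogonality conditions projecting onto the dilation mode $Z_{n+1}$, which yield $\mu_j\dot\mu_j \approx c_*\left(\mu_j/\mu_{j-1}\right)^{\frac{n-2}{2}}$ and hence exactly the stated exponents $\alpha_j$, and the same weighted, Duhamel-based ancient linear theory for the inner and outer problems with the parameter perturbations solved from an almost-triangular ODE system. The only notable deviations are minor: you propose to close with a Banach contraction, which would additionally require Lipschitz dependence of all the weighted estimates on $(\psi,\{\tilde\phi_j\},\{\lambda_j\})$, whereas the paper sidesteps this by using Schauder's fixed point theorem (continuity plus compactness via parabolic estimates and Arzel\`a--Ascoli) together with a uniqueness argument for the ancient inner linear problem; and the essential role of $n\ge 7$ is $n-6>0$ (positive rates $\alpha_j$ and integrability in the parameter ODEs), the fact that $p<2$ being only a convenience in the nonlinear estimates.
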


One interesting question is the forward behavior of the ancient solution we construct. Either $u(x,t)$ is an eternal solution, or it will blow up in type I in some later time.

There are some other related results.
\cite{del2018ancient} studied the ancient solution in Allen-Cahn equation.
\citet{daskalopoulos2018type}
 constructed the ancient bubbling-tower solution for Yamabe flow. For construction of radially symmetric bubbling-towers in NLS and energy-critical wave equations, we refer to \cite{Jendrej2017,Jendrej2018, Jendrej2019}.

\subsection{Sketch of the proof}

The method of this paper is close in spirit to the analysis in the works \cite{cortazar2019green,del2019existence}, where the inner-outer gluing method is employed. That approach consists of reducing the original problem to solving a basically uncoupled system, which depends in subtle ways on the parameter choices (which are governed by relatively simple ODE systems).

We start with the ansatz solution $\bar U=\sum_{j=1}^k U_j=\sum_{j=1}^k \mu_j(t)^{-\frac{n-2}{2}}U(x/\mu_j(t))$ and search for $\varphi(x,t)$ such that $\bar U+\varphi$ is a solution for
\begin{align}\label{intro:Su}
    S[u]:=-u_{t}+\Delta u+|u|^{p-1}u=0 \quad \text { in } \mathbb{R}^{n} \times\left(-\infty, t_0\right).
\end{align}
Because of the specific form of $\bar U$, we anticipate $\varphi\approx \sum_{j=1}^k\mu_j^{-\frac{n-2}{2}}\phi(x/\mu_j(t))\chi_j$ with some cut-off function $\chi_j$ supporting in the region where $U_j$ dominates other $U_l$, $l\ne j$.
Plugging in $\bar U+\varphi$, we found that in the support of $\chi_j$, the linearized operator of $\varphi$ is $\Delta+pU^{p-1}_j$ and  the leading error is $-\partial_t U_j+p U_j^{p-1}U_{j-1}(0)$. Making a change of variable $y=x/\mu_j(t)$, we will choose the $\phi$ satisfying
\be\label{intro:W-2.14}
\Delta_{y} \phi+p U(y)^{p-1} \phi+h_{j}(y, t)=0 \quad \text { in } \mathbb{R}^{n},
\quad
\phi(y) \rightarrow  0
\mbox{ \ as \ } |y| \rightarrow \infty
\ee
where
\be
h_{j}(y, t )=\mu_{j} \dot{\mu}_{j} Z_{n+1}(y)+p U(y)^{p-1}\left(\frac{\mu_{j}}{\mu_{j-1}}\right)^{\frac{n-2}{2}} U(0).
\ee
One knows that \eqref{intro:W-2.14} is solvable if and only if $\int_{\Rn} h_j(y,t)Z_{n+1}(y)dy=0$. Using the above expression of $h_j$, it implies that $\mu_j\dot \mu_j=c_*(\mu_j/\mu_{j-1})^{\frac{n-2}{2}}$ (see \eqref{W-2.15}). This implies $\mu_j(t)=\beta_j(-t)^{-\alpha_j}$.
We will denote it as $\mu_{0j}$, because the above process is the first approximation.

Next we will start with $u_*=\bar U+\sum_{j=1}^k\mu_{0j}(t)^{-\frac{n-2}{2}}\phi_{0j}(x/\mu_{0j}(t))\chi_j$ and search for $\varphi$ with the form
\[\varphi=\sum_{j=1}^k\mu_j^{-\frac{n-2}{2}}\phi_j(\frac{x}{\mu_j(t)},t)\eta_j+\Psi(x,t)\]
such that $u_*+\varphi$ is a solution of \eqref{intro:Su} and $\mu_j(t)=\mu_{0j}(t)+\mu_{1j}(t)$. Plugging in $u_*+\varphi$ to \eqref{intro:Su} can deduce the following equations of $\phi_j$ and $\Psi$
\begin{align}
	&
	\mu_{j}^{2} \partial_{t} \phi_{j}
	=
	\Delta_{y} \phi_{j}+
	p U(y)^{p-1} \phi_{j}
	+
	\HH_j[\Psi,\vec{\mu}_1](y,t)
	\mbox{ \ \ in \ }
	B_{8R} \times (-\infty,t_0),
	\quad
	j=1,\dots,k,
	\label{intro:W-3.6} \\
	&\pp_t \Psi
	=
	\Delta_{x} \Psi
	+
	\GG[\vec{\phi},\Psi,\vec{\mu}_1]
	(x,t)
	\mbox{ \ \ in \ }
	\RR^n \times (-\infty,t_0)
	,
	\label{intro:W-3.7}
\end{align}
where $\mathcal{H}_j$ is defined in \eqref{def:Hj} and $\GG$ is defined in \eqref{def:calG},  $\vec{\mu}_1(t)=(\mu_{11},\cdots,\mu_{1k})$ and $\vec{\phi}=(\phi_1,\cdots,\phi_k)$. \eqref{intro:W-3.6} is the so-called \textit{inner problem} and \eqref{intro:W-3.7} is the so-called \textit{outer problem}. One will see that these two problems are weakly coupled in the sense that the dependence of $\mathcal{H}_j$ on $\Psi$ and $\GG$ on $\vec{\phi}$ is small in appropriate norm. The strategy to solve \eqref{intro:W-3.6} and \eqref{intro:W-3.7} is: for each fixed $\vec{\phi}$ and $\vec{\mu}_1$, one can solve \eqref{intro:W-3.7} for $\Psi=\Psi[\vec{\phi},\vec{\mu}_1]$. Next,  inserting such $\Psi$ to \eqref{intro:W-3.6} and using fixed point theorem to find $\vec{\phi}$ and $\vec{\mu}_1$.

The foundation of this process lies on a clear understanding of the linearized problem of \eqref{intro:W-3.6} and \eqref{intro:W-3.7} respectively. The study to linearized equation of inner problem \eqref{intro:W-3.6} has been done in \cite{cortazar2019green} for the forward direction. For the backward direction, one encounters new difficulty when taking subsequences. We establish a uniqueness statement to make sure that different subsequences will give the same limit function. The linearized equation of the outer problem  \eqref{intro:W-3.7} occupies the bulk of this paper. Notice \eqref{intro:W-3.7} actually can be thought of nonhomogenous heat equation, we leverage the Duhamel's formula to get a solution $\Psi$. The main difficulty is to find a suitable topology for the outer problem due to bubble tower phenomenon. We spend a great deal of effort to find a good space to put $\GG$. Check Remark \ref{rmk:weight} and \ref{rmk:weight-2} for further explanation. Having set up the right space, we apply the Schauder fixed point theorem to prove the existence of
ancient solution of \eqref{intro:eq:main}.

{ Here is the structure of the paper.} In Section \ref{sec:ansatz}, we derive first approximation from the ansatz solution. Section \ref{sec:to_system} is devoted to splitting the flow equation to a system of inner problem and outer problem. In Section \ref{sec:linear}, we study the linear problem of the inner one and outer one respectively. We put off some tedious computations to Appendix A and B. Section \ref{sec:Ortho} is used to derive the orthogonal equations $\vec{\mu}_1$ should satisfy. In the last section, we put everything together and solve the problem by using Schauder's fixed point theorem.

\subsection{Notations}
Throughout this paper,
we denote $a\lesssim b$ if $a \le Cb$ for some positive constant $C$. Denote $a\approx b$ if $a\lesssim b \lesssim a$. $\chi(s)$ denotes a smooth cut-off function such that $0\le \chi(s) \le 1$,
\begin{equation*}
\chi(s)=\left\{\begin{array}{ll}
	1 & \text { if } s \leq 1 ,
	\\
	0 & \text { if } s \geq 2.
\end{array}\right.
\end{equation*}
For a set $\Omega \subset \mathbb{R}^{n}, \mathbf{1}_{\Omega}$ denotes the characteristic function defined as
\[
\mathbf{1}_{\Omega}(x)=\left\{\begin{array}{ll}
	1 & \text { if } x \in \Omega ,
	\\
	0 & \text { if } x \in \mathbb{R}^{n} \backslash \Omega.
\end{array}\right.
\]
For $j=1,\dots,k$,
$\mu_{j}$, $\mu_{0j}$  are some positive functions about $t$. We will use the notation
\begin{align}\label{def:barU}
    \vec{\mu}=\left(\mu_{1}, \ldots, \mu_{k}\right),\quad \bar U:=\ \sum_{j=1}^{k} U_{j}
\end{align}
where
\be\label{def:Uj}
U_{j}(x, t)={\mu_{j}(t)^{-\frac{n-2}{2}}} U\left(\frac{x}{\mu_{j}(t)}\right)
\ee
and $U(y)$ is given by \eqref{intro:bubble}. We denote
\begin{align}
    \bar\mu_j:=\sqrt{\mu_j\mu_{j-1}},\quad \bar\mu_{0j}:=\sqrt{\mu_{0j}\mu_{0,j-1}},\quad  j=2,\cdots,k
\end{align}
and make the convention that
\begin{align}
\bar\mu_1=\bar\mu_{01}=(-t)^{\delta},\quad \bar\mu_{k+1}=\bar \mu_{0,k+1}=0.
\end{align}
where $\delta>0$ is a small constant.
We write $\langle x\rangle=\sqrt{1+x^2}$.
	\section{A first approximation and the ansatz}\label{sec:ansatz}
Problem \eqref{intro:eq:main} is  equivalent to
\be\label{W-2.1}
S[u]:=-u_{t}+\Delta u+|u|^{p-1}u=0 \quad \text { in } \mathbb{R}^{n} \times\left(-\infty, t_0\right)
\ee
where $t_0$ is a very negative constant. After some translation in time, we can assume the solution lives up to $t=0$.

For any integer $k \geq 2$, let us consider $k$ positive functions
\[
\mu_{k}(t)<\mu_{k-1}(t)<\cdots<\mu_{1}(t) \quad \text { in }\left(-\infty,t_0\right)
\]
which will be chosen later, such that as $t \rightarrow-\infty$,
\be\label{W-2.2}
\mu_{1}(t) \rightarrow 1, \quad \frac{\mu_{j+1}(t)}{\mu_{j}(t)} \rightarrow 0 \quad \text { for all } \quad j=1, \ldots, k-1.
\ee
We assume that for $j=1,\dots,k$, $\mu_{0j}$ is the leading order of $\mu_{j}$ and has the similar property of $\mu_{j}$ above. $\mu_{0j}$ will be determined later. We will get an accurate first approximation to a solution of \eqref{W-2.1} of the form $\bar{U} +\varphi_0$  that reduces the part of the error $S[ \bar{U} ]$ created by the interaction of the bubbles $U_j$ and $U_{j-1}$, $j=2,\cdots,k$. To get the correction $\varphi_0$, we will need to fix the parameters $\mu_j$ at main order around certain explicit values.

Let us introduce the cut-off functions
\begin{align}
	\chi_{j}(x, t)=
	\begin{cases}
		\displaystyle
		\chi\left({2\left|x\right|}/{\bar{\mu}_{0j}}\right)
		-\chi\left(2{\left|x\right|}/{ \bar{\mu}_{0,j+1}}\right) & j=2, \ldots, k-1,
		\\
		\displaystyle\chi\left({2\left|x\right|}/{\bar{\mu}_{0k}}\right) & j=k.
	\end{cases}
\end{align}
One readily sees that
\begin{align}\label{spt-chij}
	\chi_{j}(x, t)=
	\begin{cases}
		0 &\text { if }  \left|x\right| \leq  \frac 12 \bar{\mu}_{0,j+1},
		\\
		1 &\text { if } \bar{\mu}_{0,j+1} \leq\left|x\right| \leq \frac{1}{2} \bar{\mu}_{0j}, \\
		0 & \text { if }  \left|x\right| \geq \bar{\mu}_{0j}.
	\end{cases}
\end{align}
We define our approximate solution to be given by
\be\label{W-2.25}
u_{*}={\bar{U}}+\varphi_{0} .
\ee
The correction $\varphi_{0}$ has the form
\be\label{W-2.6}
\varphi_{0}=\sum_{j=2}^{k} \varphi_{0 j} \chi_{j}
\ee
where
\begin{align}\label{W-2.7}
\varphi_{0 j}(x, t)={\mu_{j}(t)^{-\frac{n-2}{2}}} \phi_{0 j}\left(\frac{x}{\mu_{j}(t)}, t\right)
\end{align}
for certain functions $\phi_{0j}(y,t)$ defined in entire $y\in \mathbb{R}^n$ which we will suitably determine.
Let us write
\begin{align}
	S\left(u_{*}\right)
	=\bar E_1+\mathcal{L}_{{{\bar{U}}}}\left[\varphi_{0}\right]+N_{{{\bar{U}}}}\left[\varphi_{0}\right]
\end{align}
where
\begin{align}
	\mathcal{L}_{{\bar{U}}}\left[\varphi_{0}\right]&=-\partial_{t} \varphi_{0}+\Delta_{x} \varphi_{0}+p{\bar{U}}^{p-1} \varphi_{0}, \\
	N_{{\bar{U}}}\left[\varphi_{0}\right]&=\left|
	{\bar{U}}+\varphi_{0}
	\right|^{p-1}
	\left(
	{\bar{U}}+\varphi_{0}
	\right)
	-p{\bar{U}}^{p-1} \varphi_{0}-{\bar{U}}^p\label{N-phi0},\\
	\bar{E}_{1}&=-\sum_{j=1}^{k}\partial_{t} U_{j}+{\bar{U}}^p-\sum_{j=1}^{k} U_j^p.
\end{align}
Next we write $\mathcal{L}_{{\bar{U}}}\left[\varphi_{0}\right]$ using the form of $\varphi_{0}$ in \eqref{W-2.6} as follows
\[
\begin{aligned}
	\mathcal{L}_{{\bar{U}}}\left[\varphi_{0}\right] =&
	\sum_{j=2}^{k}\left(
	\Delta_{x} \varphi_{0 j}+p U_j^{p-1} \varphi_{0 j}\right) \chi_{j} +\sum_{j=2}^{k} p\left( \bar{U}^{p-1}-U_j^{p-1}\right) \varphi_{0 j} \chi_{j}\\
	&+\sum_{j=2}^{k}\left(
	2 \nabla_{x} \varphi_{0 j} \cdot \nabla_{x}\chi_{j}+\Delta_{x}\left(\chi_{j}\right) \varphi_{0 j}\right)
	-\sum_{j=2}^{k} \partial_{t}\left(\varphi_{0 j} \chi_{j}\right) .
\end{aligned}
\]

In the end, we have the error expansion
	\be\label{W-2.12}
	\begin{split}
		S\left( u_{* } \right)
		=&-\partial_t U_1+\sum_{j=2}^{k}\left(
		\Delta_{x} \varphi_{0 j}+pU_j^{p-1} \varphi_{0 j}-\partial_{t} U_{j}+pU_j^{p-1} U_{j-1}(0)\right) \chi_{j} \\
		&+\bar{E}_{11}+\sum_{j=2}^{k} p\left({\bar{U}}^{p-1}-U_{j}^{p-1}\right) \varphi_{0 j} \chi_{j} \\
		&+\sum_{j=2}^{k}\left(
		2 \nabla_{x} \varphi_{0 j} \cdot\nabla_{x}\chi_{j}+\Delta_{x} \chi_{j} \varphi_{0 j}\right)-\sum_{j=2}^{k} \partial_{t}\left(\varphi_{0 j} \chi_{j}\right)+N_{{\bar{U}}}\left[\varphi_{0}\right]
	\end{split}
	\ee
	where
	\be\label{W-2.10}
	\begin{split}
		\bar{E}_{11}
		={\bar{U}}^p-\sum_{j=1}^kU_j^p -\sum_{j=2}^k pU_j^{p-1}U_{j-1}(0)\chi_j- \sum_{j=2}^{k}(1-\chi_j) \partial_{t} U_{j}
		.
	\end{split}
	\ee
	

The function $\varphi_{0j}$ is chosen to eliminate at main order the terms in the first line of \eqref{W-2.12}, after conveniently restricting the range of variation of $\vec{\mu}$,
\be\label{W-2.13}
\begin{split}
	&E_{j}[\varphi_{0 j} , \vec{\mu}]:= \Delta_{x} \varphi_{0 j}+pU_{j}^{p-1} \varphi_{0 j}-\partial_{t} U_{j}+pU_{j}^{p-1} U_{j-1}(0) \\
	=& \mu_{j}^{-\frac{n+2}{2}}
	\left[\Delta_{y} \phi_{0 j}+p U(y)^{p-1} \phi_{0 j}+\mu_{j} \dot{\mu}_{j} Z_{n+1}(y)\right.+p U^{p-1}(y)\left(\frac{\mu_{j}}{\mu_{j-1}}\right)^{\frac{n-2}{2}} U(0)\big]_{y=\frac{x}{\mu_{j}}}
\end{split}
\ee
where $Z_{n+1}(y)=\frac{n-2}{2} U(y)+y \cdot \nabla U(y)$. The elliptic equation (for a radially symmetric function $\phi(y))$
\be\label{W-2.14}
\Delta_{y} \phi+p U(y)^{p-1} \phi+h_{j}(y, t)=0 \quad \text { in } \mathbb{R}^{n}
\ee
where
\be
h_{j}(y, t)=\mu_{j} \dot{\mu}_{j} Z_{n+1}(y)+p U(y)^{p-1}\left(\frac{\mu_{j}}{\mu_{j-1}}\right)^{\frac{n-2}{2}} U(0)
\ee
has a solution with $\phi(y) \rightarrow 0$ as $|y| \rightarrow \infty$ if and only if $h_{j}$ satisfies the solvability condition
\[
\int_{\mathbb{R}^{n}} h_{j}(y, t) Z_{n+1}(y) d y=0.
\]
The latter conditions hold if the parameters $\mu_{j}(t)$ satisfy the following relations:
\be\label{W-2.15}
\mu_{1}=1, \quad \mu_{j} \dot{\mu}_{j}=c_* \lambda_{j}^{\frac{n-2}{2}}, \quad \lambda_{j}=\frac{\mu_{j}}{\mu_{j-1}} \quad \text { for all } \quad j=2, \ldots, k
\ee
where
\begin{align}\label{W-2.16}
	c_*=-U(0) \frac{p \int_{\mathbb{R}^{n}} U^{p-1} Z_{n+1} d y}{\int_{\mathbb{R}^{n}} Z_{n+1}^{2} d y}=U(0) \frac{n-2}{2} \frac{\int_{\mathbb{R}^{n}} U^{p} d y}{\int_{\mathbb{R}^{n}} Z_{n+1}^{2} d y}>0.
\end{align}
Let $\vec{\mu}_{0}=\left(\mu_{01}, \ldots \mu_{0 k}\right)$ be the solution of \eqref{W-2.15} in $(-\infty,t_{0})$ given by
\be\label{W-2.17}
\mu_{0 j}(t)=\beta_{j} (-t)^{-\alpha_{j}}, \quad t \in\left( -\infty,t_0\right)
\ee
where
\[
\alpha_{j}=\frac{1}{2}\left(\frac{n-2}{n-6}\right)^{j-1}-\frac{1}{2}, \quad j=1, \ldots, k
\]
and the numbers $\beta_{j}$ are determined by the recursive relations
\begin{align}
\beta_{1}=1, \quad \beta_{j}=(\alpha_jc_*^{-1})^{\frac{2}{n-6}} \beta_{j-1}^{\frac{n-2}{n-6}} .
\end{align}
From $\eqref{W-2.15}$, we set
\begin{align}
\lambda_{0 j}(t)=\frac{\mu_{0 j}}{\mu_{0, j-1}}(t) .
\end{align}
We have
\[
h_{j}\left(y, t \right)=\lambda_{0 j}^{\frac{n-2}{2}} \bar{h}(y), \quad \bar{h}(y)
=
\bar{h}(|y|)
= p U(0) U(y)^{p-1}
+
c_* Z_{n+1}(y) .
\]
Since $\int_{\mathbb{R}^{n}} \bar{h} Z_{n+1} d y=0,$ there exists a radially symmetric solution $\bar{\phi}(y)$ to the equation
\[
\Delta \bar{\phi}+p U(y)^{p-1} \bar{\phi}+\bar{h}(|y|)=0 \quad \text { in } \quad \mathbb{R}^{n}
\]
such that $\bar\phi(y)=O(|y|^{-2})$ as $|y|\to +\infty$.

Then we define $\phi_{0 j}(y, t)$ as
\be\label{W-2.20}
\phi_{0 j}(y, t)=\lambda_{0 j}^{\frac{n-2}{2}} \bar{\phi}(y)
.
\ee

In what follows we let the parameters $\mu_{j}(t)$ in \eqref{W-2.2} have the form $\vec{\mu}=\vec{\mu}_{0}+\vec{\mu}_{1}$, namely
\be\label{W-2.21}
\mu_{j}(t)=\mu_{0 j}(t)+\mu_{1 j}(t) ,
\ee
where the parameters $\mu_{1 j}(t)$ to be determined satisfy
\be\label{W-2.22}
\left|\mu_{1 j}(t)\right| \lesssim \mu_{0 j}(t)
(-t)^{-\sigma}
\ee
for some small and fixed constant $0<\sigma <1$.  We ansatz $\frac{3}{4} \le \frac{|\mu_j|}{|\mu_{0j}|} \le \frac{4}{3}$ for $j=1,\dots,k$.

We observe that for some positive number $c_{j}$ we have
\[
\lambda_{0 j}(t)=c_{j} (-t)^{-\frac{2}{n-6}\left(\frac{n-2}{n-6}\right)^{j-2}} .
\]
With these choices, the expression $E_{j}[\varphi_{0 j} ; \vec{\mu}]$ in \eqref{W-2.13} can be decomposed as
\[
\begin{aligned}
	E_{j}[\varphi_{0 j} , \vec{\mu}_{0}+\vec{\mu}_{1}] &=\mu_{j}^{-\frac{n+2}{2}}\left[
	\left(\mu_{j} \dot{\mu}_{j}-\mu_{0 j} \dot{\mu}_{0 j}\right) Z_{n+1}\left(y_{j}\right)+\left(
	\lambda_{j}^{\frac{n-2}{2}}-\lambda_{0 j}^{\frac{n-2}{2}}\right) p U^{p-1}\left(y_{j}\right)U(0)
	\right] \\
	&=\mu_{j}^{-\frac{n+2}{2}} D_{j}\left[\vec{\mu}_{1}\right](y_j,t)
	+
	\mu_{j}^{-\frac{n+2}{2}}\Theta_{j}\left[\vec{\mu}_{1}\right](y_j,t),
	\quad y_{j}=\frac{x}{\mu_{j}(t)}
\end{aligned}
\]
where $j=2,\cdots,k$ and
\be
\begin{split}\label{W-2.23}
	D_{j}[\vec{\mu}_{1}](y_j,t)
	=&
	\left(\dot{\mu}_{0 j} \mu_{1 j}+\mu_{0 j} \dot{\mu}_{1 j}\right) Z_{n+1}\left(y_{j}\right)+\frac{n-2}{2} p U^{p-1}\left(y_{j}\right) U(0) \lambda_{0 j}^{\frac{n-2}{2}} \left(\frac{\mu_{1 j}}{\mu_{0 j}}-\frac{\mu_{1,j-1}}{\mu_{0,j-1}}\right) ,
	\\
	\Theta_{j}[\vec{\mu}_{1}](y_j,t)
	=&
	\mu_{1 j}\dot{\mu}_{1j} Z_{n+1}\left(y_{j}\right)+p U^{p-1}\left(y_{j}\right) \lambda_{0 j}^{\frac{n-2}{2}} O\left(\frac{|\mu_{1 j} |}{\mu_{0 j}}+\frac{|\mu_{1, j-1}|}{\mu_{0,j-1}}\right)^{2}.
\end{split}
\ee
Here we have used the fact that
\begin{align}
	\lambda_j^{\frac{n-2}{2}}-\lambda_{0j}^{\frac{n-2}{2}}=&\frac{n-2}{2}\lambda_{0j}^{\frac{n-2}{2}}\left(\frac{\mu_{1j}}{\mu_{0j}}-\frac{\mu_{1,j-1}}{\mu_{0,j-1}}\right)+\lambda_{0j}^{\frac{n-2}{2}}
	O\left(\frac{|\mu_{1 j} |}{\mu_{0 j}}+\frac{|\mu_{1, j-1}|}{\mu_{0,j-1}}\right)^{2}.
\end{align}
We also introduce the notation
\be\label{W-2.24}
D_{1}[\vec{\mu}_{1}](y_1,t)
=
( 1+\mu_{11})\dot{\mu}_{11} Z_{n+1}\left(y_{1}\right), \quad y_{1}=\frac{x}{\mu_{1}},
\ee
which is derived from
\begin{align}
    -\partial_tU_1=\mu_1^{-\frac{n+2}{2}}D_1[\vec{\mu}_1].
\end{align}


\section{The inner-outer gluing system}\label{sec:to_system}
We consider the approximation $u_{*}=u_{*}[\vec{\mu}_{1}]$ in \eqref{W-2.25} built in the previous section and want to find a solution of equation \eqref{W-2.1} in the form $u=u_{*}+\varphi .$ By Lemma \ref{lem:phi0}, we have $u_{*}>0$ when $t_{0}$ is very negative.  The problem becomes
\be\label{W-3.1}
S\left[u_{*}+\varphi\right]=
-\varphi_{t}+\Delta \varphi
+p u_{*}^{p-1} \varphi+N_{u_{*}}[\varphi]+S\left[u_{*}\right]=0 \quad \text { in } \mathbb{R}^{n} \times\left(-\infty,t_0\right)
\ee
where
\[
N_{u_{*}}[\varphi]=\left|
u_{*}+\varphi
\right|^{p-1}
\left(
u_{*}+\varphi
\right)
-
u_{*}^{p}
-p u_{*}^{p-1} \varphi.
\]

We consider the cut-off functions $\eta_{j}, \zeta_{j}, j=1, \ldots, k,$ defined as
\begin{align}\label{def:eta}
	\eta_{j}(x, t)&=\chi\left(\frac{\left|x\right|}{2R \mu_{0j}(t)}\right)
\end{align}
and
\begin{align}
	\zeta_{j}(x, t)&=\begin{dcases}
	\chi\left(\frac{\left|x\right|}{R \mu_{0 j}(t)}\right)-\chi\left(\frac{R\left|x\right|}{ \mu_{0 j}(t)}\right)\quad &j=1,\cdots, k-1,\\ \chi\left(\frac{|x|}{R\mu_{0k}(t)}\right)\quad &j=k.\end{dcases}\label{def:zeta}
\end{align}
We observe that $\eta_i\zeta_i=\zeta_i$, because
\[
\eta_{j}(x, t)=\left\{\begin{array}{ll}
	1 & \text { for }\left|x\right| \leq2 R \mu_{0j}(t) ,
	\\
	0 & \text { for }\left|x\right| \geq 4 R \mu_{0j}(t) .
\end{array}\right.
\]
and
\begin{align}\label{spt:zeta}
	\zeta_{j}(x, t)
	&
	=\left\{\begin{array}{ll}
		1 & \text { for } \quad 2 R^{-1} \mu_{0 j}(t) \leq\left|x\right| \leq R \mu_{0 j}(t), \\
		0 & \text { for }\quad |x| \geq 2 R \mu_{0 j}(t) \text { or }\left|x\right| \leq R^{-1} \mu_{0 j}(t) .
	\end{array}\right.
	j=1,\dots,k-1.
	\\
	\zeta_{k}(x, t)
	& =\left\{\begin{array}{ll}
		1 & \text { for } \quad \left|x\right| \leq R \mu_{0 k}(t), \\
		0 & \text { for }\quad |x| \geq 2 R \mu_{0 k}(t) .
	\end{array}\right.
\end{align}
{Here $R$ is a large constant to be determined later. In fact, we fix $R$ first, then take $t_0$ very negative.}

We consider functions $\phi_{j}(y, t)$, $j=1, \cdots, k$ defined in $B_{8R} \times (-\infty,t_0)$ and a function $\Psi(x, t)$ defined in $\mathbb{R}^{n} \times\left(-\infty,t_0\right)$. We look for the $\varphi(x, t)$ in \eqref{W-3.1} of the form
\be\label{W-3.4}
\varphi(x,t)=\sum_{j=1}^{k} \varphi_{j} \eta_{j}(x,t)+\Psi(x,t)
\ee
where
\begin{align}\label{W-3.4.5}
\varphi_{j}(x, t)={\mu_{j}^{-\frac{n-2}{2}}} \phi_{j}\left(\frac{x}{\mu_{j}(t)}, t\right).
\end{align}

Let us substitute $\varphi$ given by \eqref{W-3.4} into equation \eqref{W-3.1}. We get
\[
\begin{aligned}
	S\left[u_{*}+\varphi\right]=& \sum_{j=1}^{k} \eta_{j}
	\mu_{j}^{-\frac{n+2}{2}}
	\Big(
	-\mu_{j}^2 \partial_{t} \phi_{j}(y_j,t)
	+\Delta_{y} \phi_{j}(y_j,t)
	+p U(y_j)^{p-1} \phi_{j}(y_j,t)
	\\
	&
	+
	\mu_j^{\frac{n-2}{2}}
	\zeta_{j} p U(y_j)^{p-1} \Psi
	+
 D_{j}[\vec{\mu}_{1}]
 \Big)
 \\
	&-\Psi_{t}+\Delta_{x} \Psi+V \Psi+B[\vec{\phi}]+\mathcal{N}[\vec{\phi}, \Psi , \vec{\mu}_1 ]
	+E^{o u t} .
\end{aligned}
\]
Here we denote for $\vec{\phi}=\left(\phi_{1}, \ldots, \phi_{k}\right)$, $\vec{\mu}=\left(\mu_{1}, \ldots, \mu_{k}\right)$ and
\begin{align}
	&B[\vec{\phi}] =\sum_{j=1}^{k}
	2 \nabla_{x} \eta_{j}\cdot \nabla_{x} \varphi_{j}+\left(-\partial_{t} \eta_{j}+\Delta_{x} \eta_{j}\right) \varphi_{j}+ p\left(u_{*}^{p-1}-U_{j}^{p-1}\right) \varphi_{j}\eta_{j}
	-\dot{\mu}_{j} \frac{\partial}{\partial \mu_{j}} \varphi_{j} \eta_{j} ,
	\label{def:Bphi}\\
	&\mathcal{N}
	[\vec{\phi}, \Psi , \vec{\mu}_1
	]
	=N_{u_{*}}\left(\sum_{j=1}^{k} \varphi_{j} \eta_{j}+\Psi\right), \quad V=pu_{*}^{p-1}-\sum_{j=1}^{k} \zeta_{j} pU_{j}^{p-1},
	\label{Def:NV}
	\\
	&E^{o u t}=S\left[u_{*}\right]-\sum_{j=1}^{k} \mu_{j}^{-\frac{n+2}{2}} D_{j}[\vec{\mu}_{1}]  \eta_{j}
	,
	\label{def:Eout}
\end{align}
where $D_{j}[\vec{\mu}_{1}]$ are defined in \eqref{W-2.23} and \eqref{W-2.24}. We will have that $S\left[u_{*}+\varphi\right]=0$ if the following system of $k+1$ equations are satisfied.
\begin{align}
	&
	\mu_{j}^{2} \partial_{t} \phi_{j}
	=
	\Delta_{y} \phi_{j}+
	p U(y)^{p-1} \phi_{j}
	+
	\HH_j[\Psi,\vec{\mu}_1](y,t)
	\mbox{ \ \ in \ }
	B_{8R} \times (-\infty,t_0),
	\quad
	j=1,\dots,k,
	\label{W-3.6} \\
	&\pp_t \Psi
	=
	\Delta_{x} \Psi
	+
	\GG[\vec{\phi},\Psi,\vec{\mu}_1]
	(x,t)
	\mbox{ \ \ in \ }
	\RR^n \times (-\infty,t_0)
	,
	\label{W-3.7}
\end{align}
where
\begin{align}
\HH_j[\Psi,\vec{\mu}_1](y,t)= \ &
\mu_{j}^{\frac{n-2}{2}}
\zeta_{j}(\mu_j y)
p U(y)^{p-1}  \Psi(\mu_j y,t)
+
D_{j}[\vec{\mu}_{1}](y,t),
\label{def:Hj}
\\
\GG[\vec{\phi},\Psi,\vec{\mu}_1] (x,t) = \ &
V \Psi+B[\vec{\phi}]+\mathcal{N}[\vec{\phi}, \Psi , \vec{\mu}_1 ]+E^{o u t} .\label{def:calG}
\end{align}
In the next sections we will solve this system in a well-designed topology with suitable choice of parameters $\vec{\mu}_1$.


\section{The linear equations}\label{sec:linear}

In order to solve the system \eqref{W-3.6}-\eqref{W-3.7}, we need to study their linear equations respectively. The linear estimates of this section are crucial to the fixed point argument.

\subsection{The linear inner problem}
First, we consider the linear theory of \eqref{W-3.6}.
\begin{align}\label{W-4.17}
    \mu_j(t)^2\partial_t\phi=\Delta_y\phi+pU(y)^{p-1}\phi+h(y,t),\quad B_{8R}\times (-\infty,t_0).
\end{align}
where $\mu_j(t)\approx (-t)^{-\alpha_j}$ and $R$ is a sufficiently large constant. We aim to solve \eqref{W-4.17} by finding a linear mapping $\phi=\phi[h]$ that keeps the spatial decay property of $h$, provided that certain solvability condition for $h$ is satisfied. Making change of variables
\[
\tau(t)=
\tau_0 +
\int_{t_0}^t\mu_j(s)^{-2}ds\approx -(-t)^{2\alpha_j+1},
\]
where $\tau_0 $ is a suitably chosen that $\tau_0 \approx -(-t_0)^{2\alpha_j +1}$,
transforms \eqref{W-4.17} into
\begin{align}
    \partial_\tau \phi=\Delta_y\phi+pU(y)^{p-1}\phi+h(y,\tau),\quad B_{8R}\times (-\infty,\tau_0).
\end{align}
In order to solve this equation, we need to know the space $h(y,\tau)$ belongs to. This amounts to examining the decay of $\mathcal{H}_j$ in \eqref{def:Hj}.
Inspired by the estimate of $|\mathcal{H}_j(y,t)|$ in
Lemma \ref{lem:inner-h},
we define the following norms
\begin{align}\label{def:h^in_nu}
	\|h\|_{\nu(\tau),2+a}^{in}:=\sup _{s<\tau_{0}} \sup_{y \in B_{8R}} \nu^{-1}(s) \langle y \rangle^{2+a}|h(y, s)|,
\end{align}
\begin{equation}\label{def:phi^*_nu}
\|\phi\|_{\nu(\tau),a}^{in,*}
: =
\sup _{s<\tau_{0}} \sup_{y \in B_{8R}}
R^{-(n+1-a)}
\nu^{-1}(s) \langle y \rangle^{n+1 }|\phi(y, s)|.
\end{equation}
where $0<a< 1$ and $\nu(\tau):(-\infty,\tau_0)\to \mathbb{R}_+$ is a positive $C^1$ function satisfying
\begin{align}\label{nu-condi}
    \lim_{\tau\to -\infty}\nu(\tau)= 0, \quad\text{and}\quad \partial_\tau\nu\approx \frac{\nu(\tau)}{-\tau}\quad \text{ for }\tau\le \tau_0.
\end{align}
\begin{lemma}\label{inner lin}
Consider
\begin{equation}\label{W-4.24}
	\begin{aligned}
	\partial_{\tau} \phi =
		\Delta \phi + pU^{p-1} \phi + h(y,\tau)
		\quad \text{ in }
		B_{8R} \times (-\infty,\tau_0) .
	\end{aligned}
\end{equation}
For all sufficiently large $R>0$,  if $\tau_0 = \tau_0(R)$ is very negative, $\|h\|_{\nu(\tau), 2+a}^{in}<+\infty$, and $h(y,\tau)$ satisfies
	\begin{align}\label{h-ortho-1}
	\int_{B_{8R}} h(y, \tau) Z_{j}(y) d y=0 \quad \text { for all } \quad \tau \in\left(-\infty,\tau_{0}\right)
	\end{align}
$j=1, \ldots, n+1$, where $Z_j(y)=\partial_{y_j}U(y)$ and $Z_{n+1}(y)=\frac{n-2}{2} U(y)+y \cdot \nabla U(y)$.
	Then there exists a linear mapping
	\begin{align}\label{def:Tin}
	    \phi=\mathcal{T}^{in}_{\nu(\tau)}[h]
	\end{align}
	which solves \eqref{W-4.24} and satisfies the estimate
	\begin{align}\label{ynablayphi}
	\|\langle y\rangle\nabla_y \phi\|_{\nu(\tau),a}^{in,*}
	+\|\phi\|_{\nu(\tau),a}^{in,*}\leq C^{in}_{\nu(\tau),a}\|h\|_{\nu(\tau),2+a}^{in},
	\end{align}
{where $C^{in}_{\nu(\tau),a}$ is a constant depending on $\nu(\tau)$ and $a$.}
\end{lemma}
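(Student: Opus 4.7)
The plan is to establish the solution operator $\mathcal{T}^{in}_{\nu(\tau)}$ in two stages: first, an \emph{a priori} weighted estimate of the form \eqref{ynablayphi} via a blow-up/contradiction argument, then a construction of the map itself by approximation on finite time intervals and passage to an ancient limit. The gradient bound on $\langle y\rangle\nabla_y\phi$ then follows from interior parabolic Schauder estimates on unit parabolic cylinders, once the pointwise weighted bound on $\phi$ is in hand.

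For the a priori estimate I would argue by contradiction: assume sequences $R_n\to\infty$, $\tau_{0,n}\to-\infty$, data $h_n$ satisfying the orthogonality \eqref{h-ortho-1} with $\|h_n\|^{in}_{\nu(\tau),2+a}\to 0$, and solutions $\phi_n$ of \eqref{W-4.24} with $\|\phi_n\|^{in,*}_{\nu(\tau),a}=1$. Let $(y_n,\tau_n)$ almost realize the weighted supremum and rescale $\tilde\phi_n(y,\tau) := \nu(\tau_n)^{-1}R_n^{-(n+1-a)}\phi_n(y,\tau+\tau_n)$. Parabolic regularity, the slowly-varying condition \eqref{nu-condi} (which gives $\nu(\tau+\tau_n)/\nu(\tau_n)\to 1$ on compact $\tau$-sets), and the weighted decay produce, along a subsequence, a locally uniform limit $\phi_\infty$ on $\mathbb{R}^n\times(-\infty,T_\infty]$ solving the homogeneous equation $\partial_\tau\phi_\infty=L_0\phi_\infty$ with $L_0:=\Delta+pU^{p-1}$, orthogonal to each $Z_j$, $j=1,\dots,n+1$, decaying like $\langle y\rangle^{-(n+1)}$, and nontrivial by the normalization. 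Two subcases ($|y_n|$ bounded vs.\ $|y_n|\to\infty$) are handled separately; in the latter, the $R^{-(n+1-a)}$ factor built into $\|\cdot\|^{in,*}_{\nu(\tau),a}$ provides the ``room'' for a $\langle y\rangle^{-(n+1-a)}$ barrier to rule out a nontrivial limit near the lateral boundary of the rescaled domain.

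The crux, and the genuinely new obstacle in the backward setting (flagged by the authors), is to prove that any such $\phi_\infty$ must vanish identically. Spectrally under $L_0$, the kernel directions $Z_1,\dots,Z_{n+1}$ are killed by orthogonality; the stable-subspace part is killed by the fact that stable-mode solutions grow backward in time, hence violate $|\phi_\infty|\lesssim\langle y\rangle^{-(n+1)}$. The delicate piece is the unique unstable eigenfunction $Z_0$ (negative eigenvalue $-|\lambda_0|$ of $-L_0$): its coefficient satisfies $c_0'=|\lambda_0|c_0$, so $c_0(\tau)=c\,e^{|\lambda_0|\tau}$ is \emph{bounded} as $\tau\to-\infty$ for every $c$, unlike the forward case where $c=0$ is forced by exponential forward growth. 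To close the argument I would test $\phi_\infty$ against a truncated $Z_0$ and exploit both the prescribed polynomial decay in $y$ and the fact that, thanks to \eqref{nu-condi}, the weight $\nu(\tau_n)$ is asymptotically flat on compact $\tau$-sets yet decays to zero at $-\infty$; this asymmetry, together with the explicit exponential profile of $c_0(\tau)Z_0(y)$, forces $c=0$. Combined with the kernel and stable analyses, this yields $\phi_\infty\equiv 0$ and contradicts the normalization.

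With the a priori estimate available, the construction is routine: solve \eqref{W-4.24} on $B_{8R}\times(T,\tau_0)$ with zero initial data at $\tau=T$ and zero Dirichlet data on $\partial B_{8R}$ by linear parabolic theory, enforcing \eqref{h-ortho-1} via a Lyapunov--Schmidt reduction against $\mathrm{span}(Z_1,\dots,Z_{n+1})$ (the finitely many scalar obstructions being absorbed by the orthogonality on $h$). The a priori bound lets us send $T\to-\infty$ to extract an ancient solution obeying \eqref{ynablayphi}; linearity of $\mathcal{T}^{in}_{\nu(\tau)}$ is inherited from the linear dependence of each step on $h$. The principal obstacle throughout is the backward uniqueness in the blow-up step: unlike the forward analysis in \cite{cortazar2019green}, one cannot simply invoke exponential forward growth of the unstable mode, and a more careful argument tailored to the norms $\nu(\tau)$ and $\langle y\rangle^{-(n+1)}$ is required.
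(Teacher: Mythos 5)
Your overall two-stage architecture (uniform estimates on finite time intervals, then an ancient limit by compactness) superficially matches the paper, and the gradient bound via interior parabolic estimates is fine, but the core of your argument has a genuine gap at the unstable mode, and it is not repairable in the way you suggest. Let $\lambda_0>0$ and $Z_0>0$ denote the principal eigenvalue and eigenfunction of $L_0=\Delta+pU^{p-1}$ (or its Dirichlet counterpart on $B_{8R}$, which exists for $R$ large with eigenvalue close to $\lambda_0$). The function $\phi(y,\tau)=e^{\lambda_0\tau}Z_0(y)$ is an exact nontrivial ancient solution of the homogeneous problem; it decays exponentially in $y$ (in particular like $\langle y\rangle^{-(n+1)}$), it is $L^2$-orthogonal to $Z_1,\dots,Z_{n+1}$ by self-adjointness of $L_0$, and it has finite $\|\cdot\|^{in,*}_{\nu(\tau),a}$ norm because $\nu(\tau)^{-1}e^{\lambda_0\tau}\to0$ as $\tau\to-\infty$ under \eqref{nu-condi}. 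Consequently: (i) the a priori estimate you want, valid for \emph{all} solutions with data satisfying \eqref{h-ortho-1}, is false (add this homogeneous solution to any solution); (ii) the Liouville statement your blow-up argument needs ("any such $\phi_\infty$ must vanish") is false --- taking $\phi_n$ to be normalized multiples of $e^{\lambda_0(\tau-\tau_{0,n})}Z_0$ produces exactly this function as a blow-up limit; and (iii) no test-function computation against a truncated $Z_0$ can "force $c=0$", since $c\,e^{\lambda_0\tau}Z_0$ satisfies every constraint available in the limit. The same defect infects your construction step: with zero data at time $T$ the unstable projection $c_0(\tau)=\int_T^{\tau}e^{\lambda_0(\tau-r)}\bigl(\int hZ_0\bigr)(r)\,dr$ is of size $e^{\lambda_0(\tau-T)}\nu(T)$, which is unbounded as $T\to-\infty$ because $h$ is \emph{not} assumed orthogonal to $Z_0$; so your approximants admit no uniform bound and the limit you invoke need not exist.

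The unstable direction has to be handled inside the construction, and this is where the paper (following \cite{cortazar2019green}) proceeds differently: the quantitative finite-interval estimates are proved for the truncated operator $\Delta+pU^{p-1}(1-\chi_M)$, which admits a positive supersolution and hence no unstable direction, while the $Z_0$-projection of the full problem is fixed by an explicit scalar ODE choice (prescribing it at the final time $\tau_0$ and integrating from the future, which yields a bound of order $\nu(\tau)$), not by a Liouville theorem for the full linearized flow. The genuinely new point in the ancient setting --- and the only part the paper proves in detail --- is that the subsequential limits obtained as the initial time $s\to-\infty$ coincide: the difference of two limits solves the homogeneous Dirichlet problem for the truncated operator, the energy is nonincreasing by Lemma \ref{nonnegative energy lemma}, and the weighted bound together with $\nu(\tau)\to0$ forces $\int_{B_{8R}}|\Phi_*|^2\to0$ as $\tau\to-\infty$, hence $\Phi_*\equiv0$. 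It is this well-definedness argument, rather than a classification of ancient solutions of $\partial_\tau\phi=L_0\phi$, that makes $\mathcal{T}^{in}_{\nu(\tau)}$ a linear map; your proposal would need to be restructured along these lines to close.
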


\begin{remark}
Since we consider radial scheme throughout this paper, $\int_{B_{8R}} h(y, \tau) Z_{j}(y) d y=0$,  $j=1,\dots,n$, are satisfied automatically.
\end{remark}
The proof inherits the spirit of \cite{cortazar2019green}. First, we consider the linear problem \eqref{W-4.24} in a finite time region $(s,\tau_0)$ and get a uniform estimate independent of the initial time $s$. Second, we make $s$ go to $-\infty$ and get an ancient solution by the compaction argument, like \cite{daskalopoulos2018type}. We need to use some Liouville type theorem to guarantee the uniqueness of the ancient solution derived from this operation, which deduces the existence of the desired linear mapping.

	Since the proof is very similar to the linear theory in \cite{cortazar2019green}, we only stress the difference due to taking subsequence as $s\rightarrow -\infty$. We have to prove no matter what convergent subsequence we choose, the limit function is the same.
	
	First we need the following preparation lemma.
	\begin{lemma}\label{nonnegative energy lemma}
		Given $L(u) = \Delta u + c(x) u$ defined in a bounded domain $\Omega$, if $L$ has a positive supersolution
		$w \in C^2(\Omega) \cap C(\bar{\Omega})$, that is $L(w) \le 0$ in $\Omega$ and $w > 0$ in $\bar{\Omega}$, then for all $\phi \in C^2(\Omega) \cap C(\bar{\Omega}) $ with $\phi = 0$ on $\pp \Omega$, the corresponding energy
		\[
		Q(\phi,\phi) := \int_{\Omega} \left( |\nabla \phi|^2 - c(x)\phi^2 \right) \dd y \ge 0 .
		\]
	\end{lemma}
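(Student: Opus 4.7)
The plan is to use the classical Picone-type substitution $\phi = w\psi$, i.e.\ set $\psi := \phi/w$. Since $w > 0$ on the compact set $\bar\Omega$, it is bounded below by a positive constant, so $\psi$ is well-defined, lies in $C^2(\Omega) \cap C(\bar\Omega)$, and inherits the boundary condition $\psi = 0$ on $\partial\Omega$. I would then rewrite $Q(\phi,\phi)$ entirely in terms of $w$ and $\psi$ and show that the resulting expression is a sum of manifestly nonnegative terms.

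First I would expand $\nn \phi = \psi\,\nn w + w\,\nn\psi$, which gives
\begin{align*}
|\nn \phi|^2 = w^2 |\nn\psi|^2 + \psi^2 |\nn w|^2 + 2 w \psi\, \nn\psi \cdot \nn w,
\end{align*}
and observe that the last two terms combine as $\nn w \cdot \nn(w\psi^2)$. Since $w\psi^2 = 0$ on $\partial\Omega$, integration by parts yields $\int_{\Omega} \nn w \cdot \nn(w\psi^2) \dd y = -\int_{\Omega} w \psi^2\, \Delta w \dd y$. Substituting back and grouping with the $-c\phi^2 = -c\, w^2 \psi^2$ contribution gives
\begin{align*}
Q(\phi,\phi) = \int_{\Omega} w^2 |\nn\psi|^2 \dd y - \int_{\Omega} \psi^2\, w\, L(w) \dd y.
\end{align*}
Both integrands are pointwise nonnegative: the first trivially, and the second because $w > 0$ and $L(w) \le 0$ in $\Omega$. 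This yields $Q(\phi,\phi) \ge 0$, as desired.

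The only technicality is justifying the integration by parts when $\phi$ is only $C^2(\Omega) \cap C(\bar\Omega)$; this is routine by exhausting $\Omega$ from the inside with subdomains $\Omega_\epsilon \Subset \Omega$ on which both $\phi$ and $w$ are genuinely $C^2$ up to the boundary, then letting $\epsilon \to 0$ and noting that the boundary contributions vanish because $\phi \to 0$ continuously on $\partial\Omega$ while $w$ stays bounded. There is no serious obstacle here: the lemma is essentially the classical fact that the existence of a positive supersolution forces the Dirichlet quadratic form of $-L$ to be nonnegative.
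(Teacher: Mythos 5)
Your proof is correct and follows essentially the same route as the paper: the substitution $\phi = w\psi$ and an integration by parts reduce $Q(\phi,\phi)$ to $\int_\Omega w^2|\nabla\psi|^2 - \int_\Omega \psi^2 w\,L(w) \ge 0$, exactly as in the paper's argument (which merely orders the integrations by parts slightly differently). The regularity technicality you flag is treated no more carefully in the paper, so there is nothing to add.
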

	\begin{proof}
		Since $w > 0$ in $\bar{\Omega}$,
		$\exists\, \psi \in C^2(\Omega) \cap C^0(\bar{\Omega}) $ such that $\phi = w \psi$. Then
		\begin{align}
		\begin{split}
		    Q(\phi,\phi)&= \int_{\Omega} \left( -
		    w \psi^2 \Delta w  -2w \psi \nabla w \cdot \nabla \psi
		    -
		    w^2\psi
		    \Delta \psi  - c(x)w^2 \psi^2
		    \right) \dd y  \\
		    &= \int_{\Omega} \left[			- \left( \Delta w + c(x)w \right) w \psi^2 -2w \psi \nabla w \cdot \nabla \psi
		    -
		    w^2\psi
		    \Delta \psi   \right] \dd y .
			\end{split}
		\end{align}
		Using the assumption $Lw\leq 0$ and $w>0$, we have
		\begin{align*}
		    Q(\phi,\phi)&\ge \int_{\Omega} \left(  -2w \psi \nabla w \cdot \nabla \psi -
		    w^2\psi
		    \Delta \psi
		    \right) \dd y\\
		    &= \int_{\Omega} \left[  -2w \psi \nabla w \cdot \nabla \psi + \nabla\psi
		    \cdot \nabla(w^2\psi) \right] \dd y   = \int_{\Omega} |\nabla\psi|^2 w^2  \dd y \geq 0.
		\end{align*}
	\end{proof}

We take the following typical lemma, whose counterpart is given in Lemma $7.3$ in \cite{cortazar2019green}, to illustrate the difference with the linear theory in \cite{cortazar2019green} due to taking subsequence. Define $\chi_M(y)= \chi(|y|-M)$.
\begin{lemma}\label{phi*-corollary-ancient}
Consider
	\begin{equation}\label{phi-chiM-ancient}
		\begin{cases}
			\phi_{\tau} = \Delta \phi + p U^{p-1} (1-\chi_M) \phi + h(y,\tau) \mbox{\ \ in \ }
		B_{8R}\times (-\infty,\tau_0),
			\\
			\phi = 0 \mbox{ \ \ on \ } \pp B_{8R}\times (-\infty,\tau_0),
		\end{cases}
	\end{equation}
where $\|h \|_{\nu,a} <+\infty$, $0\le a<n$.
	If $M$ is a large constant, there exists a very negative constant $\tilde{\tau}_0$. If $\tau_0\le \tilde{\tau}_0$,
	there exists a linear map $\phi_*[h]$ satisfying \eqref{phi-chiM-ancient}
	and the following estimate:
	\begin{equation}\label{phi*-ancient}
		|\phi_*[h]| \lesssim_M
		\nu(\tau) \Theta_{R a}^0(|y|)
		\|h \|_{\nu,a}
		,
	\end{equation}
	where
	\[
	\Theta_{Ra}^0(r) =
	\begin{cases}
		(1+r)^{2-a} & \mbox{\ \ if \ } 2 <a < n \\
		\ln R & \mbox{\ \ if \ } a=2 \\
		R^{2-a} & \mbox{\ \ if \ } 0 \le a<2 .
	\end{cases}
	\]
\end{lemma}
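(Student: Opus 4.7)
The plan is to follow the scheme of Section~7 of \cite{cortazar2019green}, adapted to the ancient setting by a compactness-plus-uniqueness argument. What is new relative to the forward problem treated there is handling the limit $s\to-\infty$ of initial-value problems so that the resulting operator $\phi_*[\cdot]$ is well-defined and linear. I proceed in three steps.

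\textbf{Step 1 (coercivity of $L_M$ and stationary barrier).} The role of $\chi_M$ is to delete the potential $pU^{p-1}$ from the region $|y|<M$ where it is large; for $|y|>M$ one has $pU^{p-1}\lesssim|y|^{-4}$, so for $M$ sufficiently large a positive function $w$ with $L_Mw:=\Delta w+pU^{p-1}(1-\chi_M)w\le 0$ on $\overline{B_{8R}}$ is easy to construct (take $w\equiv 1$ for $|y|\le M$ and a small perturbation of the form $1-\epsilon(|y|^{-2}-(8R)^{-2})$ for $|y|>M$, using $\int_{|y|>M}|y|^{-4}\,dy\ll 1$). Lemma~\ref{nonnegative energy lemma} then gives nonnegativity of the Dirichlet quadratic form associated to $-L_M$ on $B_{8R}$, and compactness yields a strictly positive first Dirichlet eigenvalue $\lambda_1=\lambda_1(M,R)>0$. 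Next I construct a radial $\bar\phi(y)>0$ on $B_{8R}$ vanishing on $\partial B_{8R}$ with
\[
L_M\bar\phi \le -\langle y\rangle^{-(2+a)},\qquad \bar\phi(y)\approx\Theta^0_{Ra}(|y|),
\]
by solving $-\Delta\bar\phi_0=\langle y\rangle^{-(2+a)}$ with Dirichlet data and absorbing the lower-order perturbation $pU^{p-1}(1-\chi_M)\bar\phi_0$ using Step~1's coercivity; the three cases in $\Theta^0_{Ra}$ correspond to the standard integrable/critical/non-integrable dichotomy for the source $\langle y\rangle^{-(2+a)}$ on $B_{8R}$.

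\textbf{Step 2 (uniform-in-$s$ a priori bound).} For each $s<\tau_0$, let $\phi_s$ be the classical solution of \eqref{phi-chiM-ancient} on $B_{8R}\times(s,\tau_0)$ with $\phi_s|_{\tau=s}=0$ and $\phi_s|_{\partial B_{8R}}=0$, supplied by standard linear parabolic theory. I use
\[
\overline\Phi(y,\tau):=C\,\|h\|_{\nu,a}\,\nu(\tau)\,\bar\phi(y)
\]
as a barrier: since $\partial_\tau\nu\approx\nu/(-\tau)>0$ by \eqref{nu-condi},
\[
(\partial_\tau-L_M)\overline\Phi \;\ge\; C\,\|h\|_{\nu,a}\,\nu(\tau)\,\langle y\rangle^{-(2+a)} \;\ge\; |h(y,\tau)|
\]
for $C$ chosen large enough, and $\overline\Phi\ge 0$ on the parabolic boundary. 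The comparison principle then yields $|\phi_s|\le\overline\Phi$ uniformly in $s$, and interior plus boundary parabolic regularity gives equicontinuity of $\{\phi_s\}$ on compact subsets of $\overline{B_{8R}}\times(-\infty,\tau_0]$.

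\textbf{Step 3 (ancient limit and uniqueness — the main obstacle).} By Arzela--Ascoli and a diagonal extraction, $\phi_{s_j}\to\phi_\infty$ locally uniformly; the limit satisfies \eqref{phi-chiM-ancient} on $B_{8R}\times(-\infty,\tau_0)$ together with \eqref{phi*-ancient}. The genuinely new and hardest point is to show that $\phi_\infty$ is independent of the extracted subsequence, so that $\phi_*[h]:=\phi_\infty$ is a well-defined linear operator in $h$. Given two such limits, their difference $w$ solves the \emph{homogeneous} problem $\partial_\tau w=L_Mw$ with Dirichlet boundary data and satisfies the pointwise bound $|w(y,\tau)|\le 2C\|h\|_{\nu,a}\nu(\tau)\bar\phi(y)$, hence $\|w(\cdot,\tau)\|_{L^2(B_{8R})}\le C'_h\,\nu(\tau)$. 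Standard forward-in-time $L^2$-contraction by the first eigenvalue from Step~1 gives, for any $\tau_1<\tau_0$,
\[
\|w(\cdot,\tau_0)\|_{L^2}\;\le\;e^{-\lambda_1(\tau_0-\tau_1)}\|w(\cdot,\tau_1)\|_{L^2}\;\le\;C'_h\,e^{-\lambda_1(\tau_0-\tau_1)}\,\nu(\tau_1).
\]
Since $\nu(\tau_1)\to 0$ only polynomially in $|\tau_1|$ by \eqref{nu-condi}, while $e^{-\lambda_1(\tau_0-\tau_1)}\to 0$ exponentially as $\tau_1\to-\infty$, the right-hand side vanishes, forcing $w(\cdot,\tau_0)\equiv 0$. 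Applying the same at every $\tau<\tau_0$ gives $w\equiv 0$, which proves uniqueness and hence linearity of $h\mapsto\phi_*[h]$; combining with the bound from Step~2 yields \eqref{phi*-ancient}.
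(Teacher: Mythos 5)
Your overall architecture is the same as the paper's: solve the problem on $(s,\tau_0)$ with zero data at time $s$, obtain a bound uniform in $s$, pass to the limit $s\to-\infty$ by parabolic estimates, Arzel\`a--Ascoli and a diagonal argument, and prove the limit is independent of the subsequence by an $L^2$ argument that uses Lemma \ref{nonnegative energy lemma} (nonnegativity of the Dirichlet form of $L_M$, coming from a positive supersolution) together with $\nu(\tau)\to 0$ as $\tau\to-\infty$. Your uniqueness step is in fact slightly stronger than needed: once you know, as in the paper, that $\|\Phi_*(\cdot,\tau)\|_{L^2}$ is nonincreasing and tends to $0$ as $\tau\to-\infty$, you are done; the positive spectral gap $\lambda_1>0$ (which does follow from the equality case in Lemma \ref{nonnegative energy lemma}, since $Q(\phi,\phi)=0$ forces $\phi=cw$ with $w>0$ on $\partial B_{8R}$, hence $c=0$) is not required. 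The main structural difference is that the paper simply imports the uniform-in-$s$ estimate and the positive kernel $g_2$ of $L_M$ from Lemma 7.3 of \cite{cortazar2019green}, while you try to make this part self-contained with explicit barriers; that is a legitimate alternative, but it is exactly where your write-up has a concrete error.

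The supersolution you propose in Step 1 does not work: for $n\ge 7$ the function $|y|^{-2}$ is superharmonic ($\Delta |y|^{-2}=2(4-n)|y|^{-4}<0$), so $w=1-\epsilon\left(|y|^{-2}-(8R)^{-2}\right)$ satisfies $\Delta w=2\epsilon(n-4)|y|^{-4}>0$, and since the potential $pU^{p-1}(1-\chi_M)w\ge 0$ you get $L_Mw>0$, i.e.\ a strict \emph{sub}solution (your gluing with $w\equiv 1$ on $|y|\le M$ is also discontinuous as written). The correct mechanism is to \emph{add} the superharmonic piece, e.g.\ $w=1+b|y|^{-2}$ for $|y|\ge M$ suitably capped near the origin, with $b$ fixed so that $2b(n-4)|y|^{-4}$ dominates $pU^{p-1}(1-\chi_M)w\lesssim |y|^{-4}$ on $|y|\ge M$; the smallness of $\int_{|y|>M}|y|^{-4}\,dy$ plays no role, since the potential is pointwise of size $|y|^{-4}$ with a fixed constant. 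Similarly, your construction of $\bar\phi$ by "absorbing the perturbation using coercivity" is not the right mechanism pointwise: for $2<a<n$ one should use the profile $(1+|y|)^{2-a}$ directly, whose $-\Delta$ is of order $|y|^{-a}$ and therefore dominates both the source $\langle y\rangle^{-2-a}$ and the potential term $|y|^{-4}(1+|y|)^{2-a}$ on $|y|\ge M$ for $M$ large (and analogously in the cases $a=2$, $a<2$). With these corrections your Steps 2 and 3 go through and reproduce the paper's proof.
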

\begin{proof}
	First we consider
	\begin{equation*}
		\begin{cases}
			\phi_{\tau}^{s} = \Delta \phi^{s}  + p U^{p-1} (1-\chi_M) \phi^{s}  + h(y,\tau) \mbox{\ \ in \ } B_{8R}\times (-\infty,\tau_0)
			,
			\\
			\phi^{s}  = 0 \mbox{ \ \ on \ } \pp B_{8R}\times (-\infty,\tau_0),
			\\
			\phi^{s}(\cdot,s)  = 0 \mbox{ \ \ in \ }  B_{8R} .
		\end{cases}
	\end{equation*}
	By the same method in Lemma 7.3 in \cite{cortazar2019green}, we have
	\begin{equation}
		|\phi_*^{s}[h]| \lesssim_M
		\nu(\tau) \Theta_{R a}^0(|y|)
		\|h \|_{\nu,a}
		.
	\end{equation}
	Notice this estimate is independent of $s$.
	By parabolic estimate,
	Arzel\`a-Ascoli theorem and
	diagonalization argument, taking $s\rightarrow -\infty$, we find a weak solution
	$\phi_*[h]$ to \eqref{phi-chiM-ancient} with the following estimate
	\begin{equation}\label{upper bound}
		|\phi_*[h]| \lesssim_M
		\nu(\tau) \Theta_{R a}^0( |y| )
		\|h \|_{\nu,a}  .
	\end{equation}
	
	Next, we need to demonstrate this operation is really a mapping. That is, if the operation gives two functions $\phi_*^1[h]$, $\phi_*^2[h]$ due to the different choices of  subsequences, we need to prove $\phi_*^1[h] = \phi_*^2[h]$. In fact, set $\Phi_* = \phi_*^1[h] - \phi_*^2[h]$. By \eqref{phi-chiM-ancient}, \eqref{upper bound}, $\Phi_*$ satisfies
	\begin{equation}
		\begin{cases}
			\pp_{\tau} \Phi_* = \Delta \Phi_* + p U^{p-1} (1-\chi_M) \Phi_*  \mbox{\ \ in \ }
			B_{8R}\times (-\infty,\tau_0),
			\\
			\Phi_* = 0 \mbox{ \ \ on \ } \pp
			B_{8R}\times (-\infty,\tau_0),
			\\
			|\Phi_*| \lesssim_M
			\nu(\tau) \Theta_{R a}^0( |y| )
			\|h \|_{\nu,a} .
		\end{cases}
	\end{equation}
By parabolic regularity theory, $\Phi_*$ is smooth.
	Multiplying $\Phi_*$ for both sides and integrating by part, we have
	\begin{equation}
		\frac 12 \pp_\tau \int_{B_{8R}} |\Phi_* |^2 \dd x=\int_{B_{8R } }
		\left[
		-|\nabla \Phi_* |^2 + p U^{p-1} (1-\chi_M) \Phi_*^2
		\right]
		\dd x
		\le 0.
	\end{equation}
The inequality is due to Lemma \ref{nonnegative energy lemma} since $L_M(\phi) = \Delta \phi + p U^{p-1}(y) (1 - \chi_M)\phi $ has a positive kernel $g_2(|y|)$ given in Lemma 7.3 of  \cite{cortazar2019green}.
	
By the upper bound of $|\Phi_* |$,
	\begin{equation*}
		\int_{B_{8R }} |\Phi_* |^2 \dd x
		\lesssim
		\|h \|_{\nu,a}^2
		\nu^2(\tau) R^{n}
		\begin{cases}
			1 & \mbox{\ \ if \ } 2 <a < n ,
			\\
			 \ln^2(R )
			& \mbox{\ \ if \ } a=2 ,
			\\
		 R^{4-2a}
			 & \mbox{\ \ if \ } 0 \le a<2 .
		\end{cases}
	\end{equation*}
Thus
	\begin{equation}
		\int_{B_{8R }} |\Phi_* |^2 \dd x \rightarrow 0 \mbox{ \ \  as \  } \tau \rightarrow -\infty ,
	\end{equation}
we have $\int_{B_{8R }} |\Phi_* |^2 \dd x \equiv 0$, which implies $\Phi_* \equiv 0$.
	
	By the same argument, we can prove that  $\phi_*[h]$ is a linear mapping. That is, for all functions $f$, $g$ satisfying $\|f\|_{\nu,a}$, $\|g\|_{\nu,a}<\infty$, we have $\phi_*[f+g] = \phi_*[f] + \phi_*[g]$.
\end{proof}

Since we aim to find ancient solutions. The initial value given in the linear theory of \cite{cortazar2019green} will disappear as $s\rightarrow -\infty$.

\subsection{The linear outer problem}
We consider the solution of
\begin{align}\label{W-4.1}
	\psi_t=\Delta_x\psi+g(x,t),\quad \text{in }\RR^n \times(-\infty,t_0).
\end{align}
It is well-known that the above equation has a solution which is given by Duhamel's formula
\begin{align}\label{duhamel}
	\psi(x,t)=\mathcal{T}^{out}[g](x,t):=\frac{1}{(4 \pi)^{\frac{n}{2}}} \int_{-\infty}^{t} \frac{d s}{(t-s)^{\frac{n}{2}}} \int_{\mathbb{R}^{n}} e^{-\frac{|x-y|^{2}}{4(t-s)}} g(y, s) d y
\end{align}
whenever the integral is well-defined.

In order to design a topology to solve the outer problem \eqref{W-3.7}, we define three types of weights.
\begin{equation}\label{def:w1j}
\begin{aligned}
    w_{11}(x,t)
    &
    =
    \frac{|t|^{-1-\sigma}}{1+|x|^{2+\alpha}}
  \1_{\{ |x|\le \bar \mu_{01} \}}  +
  |t|^{-1-\sigma}
  \bar{\mu}_{01}^{n-2-\alpha} |x|^{-1-n} \1_{\{ \bar{\mu}_{01} \le |x| \le |t|^{\frac 12} \}}
  \\
  &  \approx
    |t|^{\gamma_1}
    \1_{\{|x|\le 1\}}
    +
    |t|^{\gamma_1}|x|^{-2-\alpha}
    \1_{\{1\le|x|\le \bar{\mu}_{01} \}}
    +
    |t|^{\gamma_{1}}
    \bar{\mu}_{01}^{n-2-\alpha} |x|^{-1-n} \1_{\{ \bar{\mu}_{01} \le |x| \le |t|^{\frac 12} \}}
    ,
    \\
	w_{1j}(x,t) &=
	\frac{|t|^{-\sigma}}{\mu_{0j}^{\frac{n+2}{2}}} \frac{\lambda_{0j}^{\frac{n-2}{2}}}{1+(\frac{|x |}{\mu_{0j}})^{2+\alpha}}
	\1_{\{|x|\le \bar{\mu}_{0j}\}}
	\\
&
\approx
\mu_{0j}^{-2}|t|^{\gamma_j}
\1_{\{ |x|\le \mu_{0j} \}}
+
\mu_{0j}^{\alpha}|t|^{\gamma_j}
|x|^{-2-\alpha}
\1_{ \{\mu_{0j} \le |x|\le \bar{\mu}_{0j} \} } ,
\end{aligned}
\end{equation}
where $\gamma_1=-1-\sigma$ and $\gamma_j=\frac{n-2}{2}\alpha_{j-1}-\sigma$ for $j = 2,\dots,k$.

\begin{equation}
	\begin{aligned}
		w_{21}(x,t) = \ &
		|t|^{-2\sigma}
		\mu_{02}^{\frac{n}{2} -2 }
		\mu_{01}^{-1}
		|x|^{2-n}
		\1_{\{\bar{\mu}_{02} \le |x| \le 1\}} ,
		\\
	w_{2j}(x,t) = \ &
		|t|^{-2\sigma}
		\mu_{0,j+1}^{\frac{n}{2} -2 }
		\mu_{0j}^{-1}
		|x|^{2-n}
		\1_{\{\bar{\mu}_{0,j+1} \le |x| \le \bar{\mu}_{0j}
		\}} ,
		\mbox{ \ \ for \ } j = 2,\dots, k-1.
	\end{aligned}
\end{equation}
and
\begin{equation}
	w_{3}(x,t) = R |t|^{-1-\sigma} |x|^{2-n} \1_{\{|x|\ge \bar{\mu}_{01} \} },
\end{equation}
where $\delta>0$ is a small constant.

\begin{remark}\label{rmk:weight}
 These ad hoc weights are used to control the behavior of $\GG$ in \eqref{def:calG}. There are four terms in $\GG$, namely $B[\vec{\phi}]$ (the influence of inner problem), $V\Psi$ (linear term on $\Psi$), $E^{out}$ (error comes from ansatz $\bar U$ and mainly depends on $\vec{\mu}_1$), $\mathcal{N}$ (higher order nonlinear term). Roughly speaking, $w_{1j}$ will be used to control $B[\vec{\phi}]$ in the support of $\chi_j$. Specially, $w_{11}$ is also designed to control the influence of $w_{11}^{*}$ in $\{|x|\ge \bar{\mu}_{01}\}$. The regions between the support of $\chi_j$ of $B[\vec{\phi}]$ is controlled by $w_{2j}$. Also notice the support of $B[\vec{\phi}]$ is contained in $\{|x|\leq 4 R\mu_{01}\}$.
 $w_{3}$ is designed for controlling $E^{out}$ in $\{|x|\ge \bar{\mu}_{01}\}$.
 See Remark \ref{rmk:weight-2} how to control the other three terms.
\end{remark}

\begin{lemma}\label{lem:w1j-w1j*}
	 For $ j = 1, \dots,k$, we have the following estimate:
\begin{align}\label{w1j*}
    \mathcal{T}^{out}\left[w_{1j}\right]\lesssim w_{1j}^*:
    =\begin{cases}
    |t|^{\gamma_j} & \text{if }|x|\leq \mu_{0j},
    \\
    |t|^{\gamma_j}\mu_{0j}^{\alpha}|x|^{-\alpha}
    &\text{if } \mu_{0j}\leq |x|\leq\bar\mu_{0j} ,
    \\
    |t|^{\gamma_j}\mu_{0j}^\alpha \bar\mu_{0j}^{n-2-\alpha}|x|^{2-n}
    & \text{if }\bar\mu_{0j}\leq |x|\leq |t|^{\frac12},
    \\
    |x|^{2\gamma_j^*+2-n}& \text{if }|x|\geq |t|^{\frac12}.
    \end{cases}
\end{align}
where $\gamma_j^*=(1-\frac{n}{2})\alpha_j-\frac{\alpha}{2}(\alpha_j-\alpha_{j-1})-\sigma$ for $j = 2,\dots, k$ and $\gamma_1^*=\gamma_1+(n-2-\alpha)\delta$.
\end{lemma}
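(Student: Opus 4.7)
The plan is to bound the Duhamel integral
\[
\mathcal{T}^{out}[w_{1j}](x,t) \ = \ \int_{-\infty}^{t} \frac{1}{(4\pi(t-s))^{n/2}}\int_{\RR^n} e^{-|x-y|^2/(4(t-s))}\, w_{1j}(y,s)\, \dd y\, \dd s
\]
directly, by decomposing the $y$- and $s$-integrations into the natural regimes dictated by the piecewise structure of $w_{1j}(y,s)$ and by the diffusion scale $\sqrt{t-s}$. Using the approximate piecewise form recorded in \eqref{def:w1j}, each relevant piece of $w_{1j}$ has the shape $A(s)\,|y|^{-b}\1_{\{r_1\le|y|\le r_2\}}$ with $A(s)\approx |s|^{\gamma_j}$ (times explicit constants) and with $(r_1,r_2,b)$ taking a few explicit values; for $j\ge 2$ one has the inner piece with $(r_1,r_2,b)=(0,\mu_{0j},0)$ and the annular piece with $(\mu_{0j},\bar{\mu}_{0j},2+\alpha)$, while $w_{11}$ also carries an outer piece on $\{\bar{\mu}_{01}\le|y|\le|s|^{1/2}\}$ that must be treated separately.

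For each piece I would first estimate the inner spatial convolution
\[
J_{r_1,r_2,b}(x,t,s) \ = \ (t-s)^{-n/2}\int_{r_1\le|y|\le r_2} e^{-|x-y|^2/(4(t-s))}|y|^{-b}\dd y
\]
by standard Gaussian-kernel bounds broken into three regimes: when $\sqrt{t-s}\gtrsim r_2$ one has $J\lesssim (t-s)^{-n/2} r_2^{n-b}$ times a Gaussian in $|x|/\sqrt{t-s}$; when $r_1\lesssim\sqrt{t-s}\lesssim r_2$ and $|x|\approx\sqrt{t-s}$ one gets $J\lesssim |x|^{-b}$; when $\sqrt{t-s}\ll\min(r_1,|x|)$ the Gaussian localizes at $y\approx x$ and yields $J\lesssim |x|^{-b}\1_{\{r_1\le|x|\le r_2\}}$. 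Substituting these into $\int_{-\infty}^{t} A(s)\,J\,\dd s$ and splitting the time axis at the scales $\mu_{0j}^2$, $\bar{\mu}_{0j}^2$ and $|x|^2$, I then pick out, for each of the four spatial regions of $x$ in the statement, the dominant contribution: for $|x|\le \mu_{0j}$ it comes from $t-s\approx \mu_{0j}^2\ll|t|$ in the full-mass regime and gives $|t|^{\gamma_j}$; for $\mu_{0j}\le|x|\le\bar{\mu}_{0j}$ the scale $t-s\approx|x|^2$ dominates and produces $|t|^{\gamma_j}\mu_{0j}^{\alpha}|x|^{-\alpha}$; for $\bar{\mu}_{0j}\le|x|\le|t|^{1/2}$ the annulus is seen as a point source of total mass $\sim\mu_{0j}^{\alpha}\bar{\mu}_{0j}^{n-2-\alpha}$ and the Newtonian-tail regime yields $|t|^{\gamma_j}\mu_{0j}^{\alpha}\bar{\mu}_{0j}^{n-2-\alpha}|x|^{2-n}$; finally, for $|x|\ge|t|^{1/2}$ the $s$-integral is driven by $t-s\sim|x|^2\gg|t|$, so $|s|^{\gamma_j}$ at the effective lower cutoff combines with the Newtonian tail to produce $|x|^{2\gamma_j^*+2-n}$, with $\gamma_j^*$ fixed precisely so that the last two regimes match at $|x|=|t|^{1/2}$. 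For $j=1$ the extra outer piece of $w_{11}$ is handled in the same way and contributes the correction $(n-2-\alpha)\delta$ in $\gamma_1^*$, consistent with $\bar{\mu}_{01}=(-t)^{\delta}$.

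The main obstacle is bookkeeping: the joint decomposition in $s$ and $|x|$ generates many subcases and one must verify that in each of the four spatial regions listed in $w_{1j}^{*}$ the sum of all contributions is really controlled by the claimed bound. The most delicate regime is the far tail $|x|\ge|t|^{1/2}$, where convergence of the $s$-integral as $s\to-\infty$ hinges on the Gaussian factor $e^{-|x|^2/(4(t-s))}$ defeating the polynomial growth $|s|^{\gamma_j}$; the power $2\gamma_j^*+2-n$ is extracted by a saddle-point-type evaluation near $t-s\sim|x|^2$, and its consistency with the neighbouring region $\bar{\mu}_{0j}\le|x|\le|t|^{1/2}$ reduces to a direct algebraic check using $\mu_{0j}\approx|t|^{-\alpha_j}$ and $\bar{\mu}_{0j}\approx|t|^{-(\alpha_j+\alpha_{j-1})/2}$.
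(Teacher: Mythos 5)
Your plan is essentially the paper's own argument: the paper carries out exactly this Gaussian-kernel/diffusion-scale splitting, but packages it once and for all in Lemmas \ref{lem:int-G-est}--\ref{lem:sing} and Corollary \ref{Coro-B4}, which it then applies piecewise to $w_{1j}$ (inner piece with $a=0$, annular piece with $a=2+\alpha$, and for $j=1$ the additional outer piece with $a=n+1$), so your direct computation reproduces the same estimates and the same matching at $|x|=|t|^{1/2}$ that fixes $\gamma_j^*$. One caveat for the far-field bookkeeping: as $s\to-\infty$ the Gaussian factor tends to $1$, so convergence of the $s$-integral is a power-counting condition of the type \eqref{b-require} (kernel decay $(t-s)^{-n/2}$ against the source's mass), not the Gaussian defeating $|s|^{\gamma_j}$ — with the parameters here this condition holds, and your stated regional bounds are correct.
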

 Here $\gamma_j^*$ satisfies  $
|t|^{\gamma_j}\mu_{0j}^\alpha \bar\mu_{0j}^{n-2-\alpha}
\approx
|t|^{\gamma_j^*}
$ for simplicity. Approximately, $w_{1j}^*$ is like a radially non-increasing function about $|x|$ for every fixed $t$ up to a constant multiplicity, that is
\begin{align}\label{a1}
    w_{1j}^*
    \approx
    \min\{|t|^{\gamma_j}, |t|^{\gamma_j -\alpha \alpha_{j}}
    |x|^{-\alpha},|t|^{\gamma_j^*}|x|^{2-n},|x|^{2\gamma_j^*+2-n}\} .
\end{align}

Similarly, we have the following fact.
\begin{lemma}\label{lem:w2-w2*}
We have the following estimates:
	\begin{align}\label{w21*}
	    \mathcal{T}^{out}[w_{21}]\lesssim w_{21}^*
	    :=\begin{cases}
	    |t|^{-2\sigma}
	    &\text{if }
	    |x|
	    \leq \bar\mu_{02},
	    \\
	    |t|^{-2\sigma}\bar \mu_{02}^{n-4}|x|^{4-n}&\text{if }\bar \mu_{02}\leq |x|\leq 1,\\
	   |t|^{-2\sigma}\bar \mu_{02}^{n-4}|x|^{2-n}&\text{if }1\leq |x|\leq |t|^{\frac12},
	    \\
	    (|x|^2)^{-2\sigma-(\frac{n}{2}-2)\alpha_2}|x|^{2-n}&\text{if }|x|\geq |t|^{\frac12}.
	    \end{cases}
	\end{align}
and for $ j = 2, \dots, k-1$,
	\begin{align}\label{def:w2j*}
		\mathcal{T}^{out}[w_{2j}]\lesssim
		w_{2j}^*:=\begin{dcases}
			|t|^{-2\sigma} \mu_{0j}^{1-\frac n2}
			&
			\mbox{ \ \ if \ } |x| \le \bar{\mu}_{0,j+1},
			\\
			|t|^{-2\sigma}	\mu_{0,j+1}^{\frac{n}{2} -2} \mu_{0j}^{-1}
			|x|^{4-n}
			&
			\mbox{ \ \ if \ }
			\bar{\mu}_{0,j+1} \le
			|x| \le \bar \mu_{0j},
			\\
			|t|^{-2\sigma}	\mu_{0,j+1}^{\frac{n}{2} -2} \mu_{0,j-1}
			|x|^{2-n}
			&
			\mbox{ \ \ if \ }  \bar\mu_{0j} \le |x| \le |t|^{\frac 12},
			\\
			(|x|^2)^{-2\sigma -(\frac n2 -2)\alpha_{j+1} -\alpha_{j-1}}
			|x|^{2-n}
			&
			\mbox{ \ \ if \ }   |x| \ge |t|^{\frac 12} .
		\end{dcases}
	\end{align}
\end{lemma}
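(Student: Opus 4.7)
The plan is to estimate $\mathcal{T}^{out}[w_{2j}](x,t)$ directly from the Duhamel formula \eqref{duhamel}. After changing variables $\tau = t-s \ge 0$, the essential observation is that the spatial profile of $w_{2j}(\cdot,s)$ is the harmonic (away from the origin) function $|y|^{2-n}$ restricted to the annulus $A_j(s) = \{\bar\mu_{0,j+1}(s) \le |y| \le \bar\mu_{0j}(s)\}$. I would use the standard bound
\begin{align*}
\int_{\mathbb{R}^n} \frac{e^{-|x-y|^2/(4\tau)}}{(4\pi\tau)^{n/2}} |y|^{2-n} \mathbf{1}_{A_j(s)}(y) \, dy
\ \lesssim \
\min\bigl\{ \max(|x|,\bar\mu_{0,j+1}(s))^{2-n},\ \tau^{1-n/2} \bigr\},
\end{align*}
which follows from interpreting the left-hand side as $e^{\tau\Delta}$ applied to a truncated Newton potential and separating the regimes $\tau \lesssim |x|^2$ and $\tau \gtrsim |x|^2$. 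This reduces the lemma to integrating a polynomial weight in $\tau$.

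Next I would partition the $\tau$-integral at the scales $\tau \sim |x|^2$, $\tau \sim \bar\mu_{0j}^2$ and $\tau \sim |t|$. In the range $\tau \lesssim |t|/2$ one has $|s|\approx|t|$, so the coefficient $|s|^{-2\sigma}\mu_{0,j+1}(s)^{(n-4)/2}\mu_{0j}(s)^{-1}$ and the annulus $A_j(s)$ are essentially frozen at their time-$t$ values. One then reads off three cases: for $|x|\le \bar\mu_{0,j+1}$ the $\tau^{1-n/2}$ branch integrated on $\tau\in[0,\bar\mu_{0j}^2]$ gives a constant of size $\mu_{0j}^{2-n/2}$, which combined with the prefactor yields $|t|^{-2\sigma}\mu_{0j}^{1-n/2}$; for $\bar\mu_{0,j+1}\le|x|\le\bar\mu_{0j}$ the spatial bound saturates at $|x|^{2-n}$ and integration in $\tau$ contributes an extra factor of $|x|^2$, yielding the $|x|^{4-n}$ rate; for $\bar\mu_{0j}\le|x|\le|t|^{1/2}$ the crossover at $\tau\sim|x|^2$ multiplies $|x|^{2-n}$ by the factor $\mu_{0,j-1}$ coming from the range $\bar\mu_{0j}^2 \lesssim \tau \lesssim |x|^2$ over which $|s|\approx|t|$ still holds.

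For $|x|\ge|t|^{1/2}$ the history matters: I would split again at $\tau\sim|x|^2$. For $\tau\lesssim|x|^2$ the Gaussian factor $e^{-|x|^2/(4\tau)}$ is exponentially small and negligible. For $\tau\gtrsim|x|^2\ge|t|$ one now has $|s|\approx\tau$, and integrating $\tau^{-n/2}$ against the truncated profile together with the polynomial growth of the prefactors in $|s|$ produces exactly the announced $(|x|^2)^{-2\sigma-(n/2-2)\alpha_{j+1}-\alpha_{j-1}}|x|^{2-n}$. The main obstacle will be the bookkeeping of the exponents $\alpha_j$ across the crossover scales: one must verify the algebraic identities relating $\bar\mu_{0j}^2$, $\mu_{0,j-1}$, and the effective diffusion radii so that the pieces glue into a single four-region profile. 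The estimate for $w_{21}^*$ is obtained by the same scheme with the convention $\bar\mu_{01}=(-t)^\delta$ and obvious index adjustments.
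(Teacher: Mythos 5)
There is a genuine gap, and it sits in your key displayed inequality: by replacing the annular source $|y|^{2-n}\1_{A_j(s)}$ with $\min\{|y|^{2-n},\bar\mu_{0,j+1}(s)^{2-n}\}$ you keep the inner truncation but discard the outer one at $|y|=\bar\mu_{0j}(s)$, and that loss is fatal exactly in the third and fourth regions of $w_{2j}^*$. Indeed, with only $\min\{\max(|x|,\bar\mu_{0,j+1})^{2-n},\tau^{1-n/2}\}$, the portion $\tau\lesssim |t|$ of the Duhamel integral gives, for every $\bar\mu_{0,j+1}\le |x|\le |t|^{1/2}$,
\begin{equation*}
|t|^{-2\sigma}\mu_{0,j+1}^{\frac n2-2}\mu_{0j}^{-1}\Big(\int_0^{|x|^2}|x|^{2-n}\,d\tau+\int_{|x|^2}^{|t|}\tau^{1-\frac n2}\,d\tau\Big)\approx |t|^{-2\sigma}\mu_{0,j+1}^{\frac n2-2}\mu_{0j}^{-1}|x|^{4-n},
\end{equation*}
which exceeds the claimed bound $|t|^{-2\sigma}\mu_{0,j+1}^{\frac n2-2}\mu_{0,j-1}|x|^{2-n}$ by the factor $(|x|/\bar\mu_{0j})^2\gg 1$ once $|x|\gg\bar\mu_{0j}$; the same loss (a factor $|x|^{2-4d_2}$) appears for $|x|\ge |t|^{1/2}$, and for $w_{21}$ in $\{1\le|x|\le|t|^{1/2}\}$. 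Your verbal claim that the factor $\mu_{0,j-1}$ "comes from the range $\bar\mu_{0j}^2\lesssim\tau\lesssim|x|^2$" is therefore not a consequence of the inequality you wrote down: over that range your bound only produces $|x|^{2-n}$ per unit time, hence $|x|^{4-n}$ after integration. The correct mechanism is that once $|x|\ge 2\bar\mu_{0j}(s)$ the point $x$ lies outside the support, so $|x-y|\ge |x|/2$ there and one should use
\begin{equation*}
\int_{\RR^n}\frac{e^{-\frac{|x-y|^2}{4\tau}}}{(4\pi\tau)^{n/2}}\,|y|^{2-n}\1_{A_j(s)}(y)\,dy
\ \lesssim\ \tau^{-\frac n2}e^{-\frac{|x|^2}{16\tau}}\int_{A_j(s)}|y|^{2-n}\,dy
\ \approx\ \tau^{-\frac n2}e^{-\frac{|x|^2}{16\tau}}\,\bar\mu_{0j}(s)^2 ,
\end{equation*}
i.e. the finite mass $\approx\bar\mu_{0j}^2$ of the annular source plus Gaussian decay; integrating this in $\tau$ is what yields the gain $\bar\mu_{0j}^2=\mu_{0j}\mu_{0,j-1}$ in the third region and the exponent $-2\sigma-(\frac n2-2)\alpha_{j+1}-\alpha_{j-1}$ in the fourth. (A minor further point: your constant "$\mu_{0j}^{2-n/2}$" in the first region should be $\bar\mu_{0,j+1}^{4-n}$, although the final first-branch bound you state is correct, and the tail $\tau\gtrsim|t|$ also needs the mass bound above to be absorbed.)

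For comparison, the paper does not redo the Duhamel analysis at all here: it observes that $w_{2j}$ is exactly of the form $|t|^b|x|^{-a}\1_{\{c_1|t|^{d_1}\le|x|\le c_2|t|^{d_2}\}}$ with $a=n-2$, $b=-2\sigma-(\frac n2-2)\alpha_{j+1}+\alpha_j$, $d_1=-\frac12(\alpha_{j+1}+\alpha_j)$, $d_2=-\frac12(\alpha_j+\alpha_{j-1})$, and quotes the general estimate \eqref{Tout_a>2} of Corollary \ref{Coro-B4}; the far-field branches of that corollary are precisely estimate \eqref{u-d2} of Lemma \ref{lem:annulus}, whose proof contains the $|x-y|\ge|x|/2$ plus total-mass argument you are missing. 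If you incorporate the refined bound above (or simply invoke Lemma \ref{lem:annulus}/Corollary \ref{Coro-B4}), your splitting of the $\tau$-integral at $|x|^2$, $\bar\mu_{0j}^2$ and $|t|$ does go through and reproduces all four branches.
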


\begin{lemma}\label{lem:w3-w3*}
For $\delta\le \frac 12$, we have the following estimate:
\begin{equation}\label{eq:w3j*}
\TT^{out}[w_3] \lesssim
w_{3}^{*} := R
\begin{cases}
	|t|^{-1-\sigma}  \bar{\mu}_{01}^{4-n}
	&
	\mbox{ \ \ if \ }  |x| \le \bar{\mu}_{01} ,
	\\
	|t|^{-1-\sigma} |x|^{4-n}
	&
	\mbox{ \ \ if \ } \bar{\mu}_{01}  \le |x| \le |t|^{\frac 12},
	\\
	|t|^{-\sigma} |x|^{2-n}
	&
	\mbox{ \ \ if \ }  |x| \ge |t|^{\frac 12}.
\end{cases}
\end{equation}
\end{lemma}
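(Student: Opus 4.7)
The plan is to insert $w_3$ directly into Duhamel's formula \eqref{duhamel} and perform the space--time convolution by first controlling the spatial integral for each fixed time and then the time integral that follows. After the substitution $\tau=t-s$, one has
\[
\TT^{out}[w_3](x,t)\ =\ R\int_0^\infty (|t|+\tau)^{-1-\sigma}\, I(x,\tau,(|t|+\tau)^\delta)\,\dd\tau,
\]
where
\[
I(x,\tau,a)\ :=\ \int_{|y|\ge a}(4\pi\tau)^{-n/2}\,e^{-|x-y|^2/(4\tau)}\,|y|^{2-n}\,\dd y
\]
and $a=\bar\mu_{01}(s)=(-s)^\delta=(|t|+\tau)^\delta$.

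The first step will be to establish the spatial bound
\[
I(x,\tau,a)\ \lesssim\ \min\bigl(|x|^{2-n},\ \tau^{(2-n)/2},\ a^{2-n}\bigr).
\]
The two Newton-kernel bounds $|x|^{2-n}$ and $\tau^{(2-n)/2}$ follow from representing $|y|^{2-n}$ as a $t'$--integral of Gaussians and applying the heat semigroup, which reduces $I$ (without the cutoff) to $\tilde c_n\int_\tau^\infty s^{-n/2}e^{-|x|^2/(4s)}\dd s$ and directly gives these two asymptotics depending on whether $|x|^2\lessgtr\tau$. The third bound $a^{2-n}$ is the trivial estimate $|y|^{2-n}\le a^{2-n}$ on $\{|y|\ge a\}$ combined with $\int K\,\dd y\le 1$.

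Next I would split $|x|$ into the three regions of $w_3^*$ and, inside each, split the $\tau$--integral at the thresholds $\tau\approx|x|^2$, $\tau\approx|t|^{2\delta}$, and $\tau\approx|t|$, using whichever of the three competing bounds for $I$ is sharpest. When $|x|\ge|t|^{1/2}$, splitting at $|x|^2$ and using $|x|^{2-n}$ on $[0,|x|^2]$ and $\tau^{(2-n)/2}$ on $[|x|^2,\infty)$ produces $|x|^{2-n}|t|^{-\sigma}$, the tail giving $|x|^{2-n-2\sigma}$ which is absorbed via $|x|^{-2\sigma}\le|t|^{-\sigma}$. When $\bar\mu_{01}\le|x|\le|t|^{1/2}$, the relation $|t|+\tau\approx|t|$ on $[0,|t|]$ yields $|x|^{4-n}|t|^{-1-\sigma}$ directly from both sub-intervals. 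When $|x|\le\bar\mu_{01}$, the bound $|x|^{2-n}$ is too large and I must invoke the $a^{2-n}$ bound instead; the min is $(|t|+\tau)^{\delta(2-n)}$ for $\tau\le|t|^{2\delta}$ and $\tau^{(2-n)/2}$ for $|t|^{2\delta}\le\tau\le|t|$, and both sub-intervals yield the claimed $\bar\mu_{01}^{4-n}|t|^{-1-\sigma}$.

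The hardest step will be controlling the far tail $\int_{|t|}^\infty\tau^{-1-\sigma+(2-n)/2}\,\dd\tau\approx|t|^{-(n-2)/2-\sigma}$ that appears in the last two regimes. Dominating it by $|x|^{4-n}|t|^{-1-\sigma}$ in Case B reduces to $|x|\le|t|^{1/2}$, which is automatic; but dominating it by $\bar\mu_{01}^{4-n}|t|^{-1-\sigma}=|t|^{\delta(4-n)-1-\sigma}$ in Case C reduces precisely to the exponent inequality $-(n-2)/2\le -1+\delta(4-n)$, i.e.\ $(n-4)(\tfrac12-\delta)\ge 0$, which is exactly $\delta\le\tfrac12$. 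This is the single place where the hypothesis $\delta\le\tfrac12$ enters sharply and is what pins down the restriction in the statement.
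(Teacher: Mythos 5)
Your proposal is correct, and it reaches the estimate by a genuinely different route than the paper. The paper proves this lemma in two lines by splitting $w_3$ into the piece supported in $\{\bar\mu_{01}\le |x|\le |t|^{1/2}\}$ and the piece in $\{|x|\ge |t|^{1/2}\}$, and then quoting the general-purpose kernel estimates of Corollary \ref{Coro-B4} and Lemma \ref{lem:outer-du}, whose proofs rest on case-by-case splittings of the Duhamel integral (with rearrangement and explicit incomplete-Gamma computations) for arbitrary spatial powers $|x|^{-a}$ and time-dependent annuli $\{c_1|t|^{d_1}\le|x|\le c_2|t|^{d_2}\}$. You instead exploit that the spatial power in $w_3$ is exactly the Newtonian exponent $2-n$: the subordination identity $|y|^{2-n}=c_n\int_0^\infty s^{-n/2}e^{-|y|^2/(4s)}\,\dd s$ plus the semigroup property collapse the spatial convolution to $\int_\tau^\infty v^{-n/2}e^{-|x|^2/(4v)}\dd v$, yielding the clean bound $I\lesssim\min\bigl(|x|^{2-n},\tau^{(2-n)/2},a^{2-n}\bigr)$, with the moving cutoff $\bar\mu_{01}(s)=(|t|+\tau)^\delta$ handled by the crude third bound; the remaining one-dimensional $\tau$-integrals are elementary. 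What the paper's route buys is reusability — the same Corollary \ref{Coro-B4} is what controls $w_{1j}$ and $w_{2j}$, where the powers are not $2-n$ and your subordination trick would not apply as directly — while your route buys a short, self-contained argument for this specific weight. Both arguments locate the hypothesis $\delta\le\frac12$ in the same place: the far tail contributes $|t|^{1-\sigma-\frac n2}$ (your $|t|^{-(n-2)/2-\sigma}$) in the region $|x|\le\bar\mu_{01}$, and dominating it by $|t|^{-1-\sigma}\bar\mu_{01}^{4-n}$ is precisely the inequality $(n-4)(\tfrac12-\delta)\ge0$.
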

\begin{remark}
Just like \eqref{a1},
$w_{2j}^{*}$, $ j = 1,\dots, k-1$, $w_3^*$ are approximate to some non-increasing functions about $|x|$ for every fixed $t$.
\end{remark}

The proofs of Lemma \ref{lem:w1j-w1j*}, \ref{lem:w2-w2*} and \ref{lem:w3-w3*} are deferred to subsection \ref{out-linear-pf} in the appendix.

For a function $h=h(x,t)$, we define the weighted $L^{\infty} $ norm $\|h\|^{out}_{\alpha,\sigma}$, $\|h\|^{out,*}_{\alpha,\sigma}$ as the following form respectively.
\begin{align}\label{def:out}
\|h\|^{out}_{\alpha,\sigma}
:=
\inf
\left\{ K \ \Big| \
	|h(x,t)|\leq K\left(\sum_{j=1}^kw_{1j}
	+
	\sum_{j=1}^{k-1}
	w_{2j}+w_3\right)(x,t),\quad \Rn\times (-\infty,t_0)
	\right\} .
\end{align}
\begin{align}\label{def:out*}
\|h\|^{out,*}_{\alpha,\sigma} :=
\inf
\left\{ K \ \Big| \
	|h(x,t)|\leq K\left(\sum_{j=1}^kw^*_{1j}
	+
	\sum_{j=1}^{k-1}
	w^*_{2j}
	+
	w^*_3\right)(x,t),\quad \Rn\times (-\infty,t_0)
	\right\} .
\end{align}
Using Lemma \ref{lem:w1j-w1j*}, \ref{lem:w2-w2*}  \ref{lem:w3-w3*} and Lemma \ref{lem:outer-1}-\ref{lem:outer-5}, we get the following proposition:
\begin{prop}\label{prop:G-bd}
	Suppose that $\sigma,\epsilon>0$ is chosen small enough, $\| \vec{\mu}_1\|_{\sigma}\le 1$ and $t_0$ is negative enough. Then there exists a constant $C^{out}>0$, independent of $R$ and $t_0$, such that $\partial_t \psi=\Delta_x\psi+\GG[\vec{\phi},\Psi,\vec{\mu}_1]$ has a solution $\mathcal{T}^{out}[\GG[\vec{\phi},\Psi,\vec{\mu}_1]]$ in $\Rn\times (-\infty,t_0)$ satisfying
	\begin{align*}
	\|\mathcal{T}^{out}[\GG[\vec{\phi},\Psi,\vec{\mu}_1]]\|^{out,*}_{\alpha,\sigma}\leq C^{out}R^{\alpha-a}
	\left(1+\|\vec{\phi}\|^{in,*}_{a,\sigma}+\|\Psi\|^{out,*}_{\alpha,\sigma}+(\|\vec{\phi}\|^{in,*}_{a,\sigma})^p+(\|\Psi\|^{out,*}_{\alpha,\sigma})^p\right)
	\end{align*}
	where $\GG$ is defined in \eqref{def:calG} and $\mathcal{T}^{out}[g]$ is given by \eqref{duhamel}.
\end{prop}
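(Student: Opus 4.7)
The plan is to decompose $\GG[\vec{\phi},\Psi,\vec{\mu}_1] = V\Psi + B[\vec{\phi}] + \mathcal{N}[\vec{\phi},\Psi,\vec{\mu}_1] + E^{out}$ along \eqref{def:calG} and to show that each of the four pieces is pointwise dominated by a multiple of the master weight $W := \sum_{j=1}^{k} w_{1j} + \sum_{j=1}^{k-1} w_{2j} + w_{3}$, with the announced dependence on $\|\vec{\phi}\|^{in,*}_{a,\sigma}$ and $\|\Psi\|^{out,*}_{\alpha,\sigma}$ and with a constant independent of $R$ and $t_0$. Once this pointwise bound on $\GG$ is in place, linearity of \eqref{duhamel} and nonnegativity of the heat kernel turn Lemmas \ref{lem:w1j-w1j*}, \ref{lem:w2-w2*} and \ref{lem:w3-w3*} into a corresponding bound on $\mathcal{T}^{out}[\GG]$ by $W^{*} := \sum w_{1j}^{*} + \sum w_{2j}^{*} + w_{3}^{*}$, which by the definition \eqref{def:out*} is exactly the required estimate.

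I would organize the four pointwise bounds in increasing order of delicacy. The easiest is $E^{out}$ from \eqref{def:Eout}: by construction in Section \ref{sec:ansatz}, the correction $\varphi_0$ has cancelled the leading terms $-\partial_t U_j + p U_j^{p-1} U_{j-1}(0)$, so $E^{out}$ depends only on $\vec{\mu}_1$ and is of higher order in $|t|^{-\sigma}$. A direct region-by-region inspection, following Remark \ref{rmk:weight} (namely $w_{1j}$ governs the support of $\chi_j$, $w_{2j}$ the transition annulus $\{\bar\mu_{0,j+1} \le |x| \le \bar\mu_{0j}\}$, and $w_3$ the far field $|x| \ge \bar\mu_{01}$), bounds $E^{out}$ by a multiple of $W$, producing the stand-alone $1$ in the proposition under the hypotheses $\|\vec{\mu}_1\|_\sigma \le 1$, $\sigma,\epsilon$ small, and $t_0$ very negative.

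Next I would handle $B[\vec{\phi}]$ from \eqref{def:Bphi}. Its main contributions come from the cut-off derivatives $\nabla_x \eta_j$, $\partial_t \eta_j$ supported in annuli $|x| \sim R\mu_{0j}$, together with $p(u_*^{p-1} - U_j^{p-1})\varphi_j \eta_j$. Pulling back to the inner variable $y = x/\mu_j$ and invoking \eqref{def:phi^*_nu} together with the companion bound \eqref{ynablayphi} on $\langle y\rangle \nabla_y \phi$, each contribution is pointwise bounded by $\|\vec{\phi}\|^{in,*}_{a,\sigma}$ times a function that fits into the weight architecture; the exchange between the inner decay $\langle y\rangle^{-(n+1)}$ and the outer decay $|x|^{-(2+\alpha)}$ evaluated at $|y| = R$ produces exactly the factor $R^{\alpha-a}$ announced in the proposition (so that, choosing $a > \alpha$, this factor is small for $R$ large and beats the $R$-dependence coming from the support of the cut-offs). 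The linear term $V\Psi$ with $V = p u_*^{p-1} - \sum_j \zeta_j p U_j^{p-1}$ is treated analogously: $V$ is supported only in the transition annuli where some $\zeta_j$ vanishes and in the far field, and in each such region it decays fast enough that the pointwise bound on $\Psi$ provided by $\|\Psi\|^{out,*}_{\alpha,\sigma}$ places $V\Psi$ inside $W$ with coefficient $\|\Psi\|^{out,*}_{\alpha,\sigma}$.

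Finally, $\mathcal{N}$ is reduced by the elementary inequality $\bigl| |u_*+\varphi|^{p-1}(u_*+\varphi) - u_*^p - p u_*^{p-1}\varphi \bigr| \lesssim u_*^{p-2} \varphi^2 + |\varphi|^p$ (with the usual modification when $p<2$), applied to $\varphi = \sum_j \eta_j \varphi_j + \Psi$ and using the positivity of $u_*$ from Lemma \ref{lem:phi0}. On the inner support of each $\eta_j$ the inner part dominates, on the complement the outer part does, and a region-by-region accounting yields a bound of the form $C\bigl((\|\vec{\phi}\|^{in,*}_{a,\sigma})^p + (\|\Psi\|^{out,*}_{\alpha,\sigma})^p\bigr) \cdot W$, with quadratic cross terms absorbed by the same scheme. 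The main obstacle, and where most of the real labor lies, is the bookkeeping: one must verify in every region delineated by the scales $\mu_{0j}$, $\bar\mu_{0j}$, $\bar\mu_{0,j+1}$ and $|t|^{1/2}$ that the claimed weight bound holds, and that the constants coming from the $t$-decay of $\vec{\mu}_1$, the $R$-dependence of the cut-offs and the smallness of $\sigma,\epsilon$ collapse into the single universal $C^{out}$ of the statement. Once this verification is carried out, Lemmas \ref{lem:w1j-w1j*}, \ref{lem:w2-w2*}, \ref{lem:w3-w3*} and \eqref{duhamel} assemble the proposition mechanically.
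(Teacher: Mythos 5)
Your overall route coincides with the paper's: split $\GG$ into $B[\vec{\phi}]+E^{out}+V\Psi+\mathcal{N}$, bound each piece against the weights $w_{1j},w_{2j},w_3$, and then invoke Lemmas \ref{lem:w1j-w1j*}, \ref{lem:w2-w2*}, \ref{lem:w3-w3*} to pass from $\|\cdot\|^{out}_{\alpha,\sigma}$ to $\|\TT^{out}[\cdot]\|^{out,*}_{\alpha,\sigma}$; this is exactly what Lemmas \ref{lem:outer-1}, \ref{lem:outer-2} and \ref{lem:outer-5} do for $B$, $E^{out}$ and $\mathcal{N}$.

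However, there is a genuine gap in your treatment of $V\Psi$. You claim that in every region $V$ decays fast enough that $|V\Psi|$ sits pointwise inside the master weight $W$ with the needed smallness, but this fails for the piece $pu_*^{p-1}\bigl(1-\sum_{j}\zeta_j\bigr)\Psi$ on the annuli $\{\bar\mu_{0i}\le |x|\le 2R^{-1}\mu_{0,i-1}\}$, $i=2,\dots,k$: there $u_*\approx U_{i-1}\approx \mu_{i-1}^{-\frac{n-2}{2}}$ is essentially constant, and the best available bound on $\Psi$ is $|\Psi|\lesssim |t|^{\gamma_{i-1}}\|\Psi\|^{out,*}_{\alpha,\sigma}$, so $|V\Psi|\approx \mu_{i-1}^{-2}|t|^{\gamma_{i-1}}\|\Psi\|^{out,*}_{\alpha,\sigma}\approx w_{1,i-1}\,\|\Psi\|^{out,*}_{\alpha,\sigma}$ with an $O(1)$ constant and no gain in $R$ or $|t_0|$ (this is precisely \eqref{Vpsi:1-zeta} and Remark \ref{rmk:weight-2}). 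A purely pointwise scheme therefore only yields a coefficient $O(1)$ in front of $\|\Psi\|^{out,*}_{\alpha,\sigma}$, which is strictly weaker than the stated bound $C^{out}R^{\alpha-a}\|\Psi\|^{out,*}_{\alpha,\sigma}$ and would not close the Schauder fixed point argument (one needs the output to land back in $\B_{out}=\{\|\Psi\|^{out,*}_{\alpha,\sigma}\le R^{-2\rho}\}$). The paper's fix, in Lemma \ref{lem:outer-4}, is to apply $\TT^{out}$ directly to this compactly supported term and exploit its narrow support $\{|x|\le 2R^{-1}\mu_{0,i-1}\}$ via Lemma \ref{lem:annulus}, which produces the estimate \eqref{no-space-decay} and recovers the small factor $R^{-1}w_{1,i-1}^*$; your argument needs this (or an equivalent smoothing step) to be inserted before the conclusion can be assembled.
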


\begin{remark}\label{rmk:weight-2}
 There are some subtlety to bound $V\Psi$. Some term in $V\Psi$ can not be much smaller than $w_{1j}$ in the sense of $L^{\infty}$(see \eqref{Vpsi:1-zeta}). Thanks to its narrow support, we could still get the smallness when applying $\mathcal{T}^{out}$ to it. The estimate of $\mathcal{N}$ and $E^{out}$ are straightforward.
\end{remark}
\section{Orthogonal equations}\label{sec:Ortho}

In this section, we deal with the orthogonal equations
\begin{equation}\label{H equation}
\int_{B_{8R}}\HH_j[\Psi,\vec{\mu}_1](y,t)Z_{n+1}(y)dy=0,
\mbox{ \ \ for \ } j= 1, \dots, k.
\end{equation}
\begin{lemma}
\eqref{H equation} is equivalent to
    \begin{equation}\label{mu 1 eq}
\begin{cases}
\dot\mu_{11}
= M_1[\Psi,\vec{\mu}_1] (t) ,
	\\
\dot\mu_{1j}
+
\frac{n-4}{2}
\frac{\alpha_j}{t}
\mu_{1j}
-
\frac{n-2}{2}
\frac{\alpha_j}{t}
\lambda_{0j}
\mu_{1,j-1}
= 	M_j[\Psi,\vec{\mu}_1](t),
\mbox{ \ \ for \ } j=2, \dots, k,
\end{cases}
\end{equation}
where $M_j$ are given in \eqref{def:M1} and \eqref{def:Mj}.
\end{lemma}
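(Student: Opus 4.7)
The plan is to substitute the decomposition \eqref{def:Hj} of $\HH_j$ together with the explicit form of $D_j[\vec{\mu}_1]$ given in \eqref{W-2.23}--\eqref{W-2.24} into \eqref{H equation}, isolate the part linear in $(\dot{\vec{\mu}}_1,\vec{\mu}_1)$, and collect everything else into the forcing $M_j$. Two earlier identities do all the real work: the definition of $c_*$ in \eqref{W-2.16}, which gives $pU(0)\int_{\mathbb{R}^n} U^{p-1}Z_{n+1}\,dy=-c_*\int_{\mathbb{R}^n}Z_{n+1}^2\,dy$, and the first-approximation relation $\mu_{0j}\dot\mu_{0j}=c_*\lambda_{0j}^{(n-2)/2}$ from \eqref{W-2.15}.

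For $j\ge 2$, testing $D_j$ against $Z_{n+1}$ on $B_{8R}$ produces
\[
(\dot\mu_{0j}\mu_{1j}+\mu_{0j}\dot\mu_{1j})\!\int_{B_{8R}}\! Z_{n+1}^2\,dy+\frac{n-2}{2}pU(0)\lambda_{0j}^{\frac{n-2}{2}}\Big(\tfrac{\mu_{1j}}{\mu_{0j}}-\tfrac{\mu_{1,j-1}}{\mu_{0,j-1}}\Big)\!\int_{B_{8R}}\! U^{p-1}Z_{n+1}\,dy
\]
plus the integral of the quadratic remainder $\Theta_j\,Z_{n+1}$. Replacing the integrals on $B_{8R}$ by their $\mathbb{R}^n$ counterparts (the tails decay like $R^{-(n-2)}$ thanks to the fast decay $Z_{n+1}=O(|y|^{-(n-2)})$ and $U^{p-1}Z_{n+1}=O(|y|^{-(n+2)})$, and are moved into $M_j$), then using \eqref{W-2.16} followed by \eqref{W-2.15}, the linear-in-$\vec\mu_1$ part collapses to
\[
\mu_{0j}\Big[\dot\mu_{1j}+\Big(1-\tfrac{n-2}{2}\Big)\tfrac{\dot\mu_{0j}}{\mu_{0j}}\mu_{1j}+\tfrac{n-2}{2}\tfrac{\dot\mu_{0j}}{\mu_{0j}}\lambda_{0j}\mu_{1,j-1}\Big]\!\int_{\mathbb{R}^n}\! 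Z_{n+1}^2\,dy.
\]
The explicit form $\mu_{0j}(t)=\beta_j(-t)^{-\alpha_j}$ from \eqref{W-2.17} gives $\dot\mu_{0j}/\mu_{0j}=-\alpha_j/t$, and combined with the arithmetic $1-\tfrac{n-2}{2}=-\tfrac{n-4}{2}$ this is exactly the LHS of the second equation in \eqref{mu 1 eq}. Dividing by $\mu_{0j}\!\int Z_{n+1}^2$ and defining $M_j[\Psi,\vec{\mu}_1](t)$ to be the negative of what is left over—namely the $\Psi$-integral coming from the first term of $\HH_j$, the $\Theta_j$-integral, and the $R$-tail corrections—yields the claim.

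The case $j=1$ is cleaner: \eqref{W-2.24} gives $\int_{B_{8R}}D_1 Z_{n+1}\,dy=(1+\mu_{11})\dot\mu_{11}\int_{B_{8R}}Z_{n+1}^2\,dy$, and isolating the $\Psi$-integral on the right-hand side and dividing by $(1+\mu_{11})\int Z_{n+1}^2$ produces $\dot\mu_{11}=M_1[\Psi,\vec{\mu}_1](t)$.

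The computation itself is purely algebraic; the only point that deserves some care is the accounting of what lands in $M_j$ versus the linear principal part. In particular, one has to verify that the tail errors from restricting the model integrals to $B_{8R}$, the quadratic piece $\Theta_j$, and the $\Psi$-dependent integral are truly perturbative in the weighted norms introduced in Section \ref{sec:linear}; this is what makes \eqref{mu 1 eq} amenable to a fixed-point argument later. No further ingredient is needed to establish the stated equivalence.
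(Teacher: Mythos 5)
Your computation follows the paper's own route exactly: test $\HH_j$ against $Z_{n+1}$, replace the $B_{8R}$ integrals by their $\RR^n$ counterparts, use \eqref{W-2.16} to convert $pU(0)\int U^{p-1}Z_{n+1}$ into $-c_*\int Z_{n+1}^2$, then \eqref{W-2.15} and $\dot\mu_{0j}/\mu_{0j}=-\alpha_j/t$ to collapse the linear part to $\dot\mu_{1j}+\frac{n-4}{2}\frac{\alpha_j}{t}\mu_{1j}-\frac{n-2}{2}\frac{\alpha_j}{t}\lambda_{0j}\mu_{1,j-1}$, and the $j=1$ case via \eqref{W-2.24} with the division by $(1+\mu_{11})$. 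That core algebra is correct.

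Two bookkeeping points need fixing. First, and more substantively, $\HH_j$ as defined in \eqref{def:Hj} contains only $D_j[\vec{\mu}_1]$, not $\Theta_j[\vec{\mu}_1]$: the quadratic piece $\Theta_j$ from \eqref{W-2.23} is absorbed into the outer error $E^{out}$ (it appears in $\bar E_2$), so there is no ``$\Theta_j$-integral'' to put into $M_j$. As written, your $M_j$ differs from \eqref{def:Mj} by exactly that term, so your derivation proves the equivalence for a different forcing than the one the lemma specifies; deleting the $\Theta_j$ contribution restores the paper's statement verbatim. Second, the truncation error from restricting to $B_{8R}$ is $O(R^{-2})$, not $O(R^{-(n-2)})$: the dominant tail comes from $\int U^{p-1}Z_{n+1}$, whose integrand is $O(|y|^{-n-2})$, giving $O(R^{-2})$, while the $\int Z_{n+1}^2$ tail is only $O(R^{4-n})$. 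This does not affect the equivalence (the paper records the correction as $O(R^{-2})$ in \eqref{def:Mj}), but the stated rate is too optimistic and would matter if carried into the later estimate of Lemma \ref{lem:orth-eq-est}.
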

\begin{proof}
For $j=1$, using \eqref{def:Hj} and \eqref{W-2.24}, \eqref{H equation} is equivalent to
\begin{equation}
	\dot\mu_{11}
	= M_1[\Psi, \vec{\mu}_1] (t) ,
\end{equation}
where
\begin{equation}\label{def:M1}
M_1[\Psi,  \vec{\mu}_1] (t) =
	-
	\frac{\mu_1^{\frac{n-2}{2}}}{1+\mu_{11}} \frac{\int_{B_{8R}}\zeta_1(\mu_1 y) p U(y)^{p-1}Z_{n+1}(y)\Psi(\mu_1y,t)dy}{\int_{B_{8R}}Z_{n+1}^2(y)dy} .
\end{equation}
	For $j=2,\dots,k$, by \eqref{def:Hj}
and \eqref{W-2.23},
\eqref{H equation} is equivalent to
\begin{align}
&
\dot{\mu}_{0j}\mu_{1j}(t)+\mu_{0j}\dot\mu_{1j}(t)
+\frac{n-2}{2}\frac{U(0)\int_{B_{8R}}pU^{p-1}(y)Z_{n+1}(y)dy}{\int_{B_{8R}}Z_{n+1}^2(y)dy}\lambda_{0j}^{\frac{n-2}{2}}\left(\frac{\mu_{1j}}{\mu_{0j}}-\frac{\mu_{1,j-1}}{\mu_{0,j-1}}\right)\notag
\\
&= \
 - \frac{\mu_j^{\frac{n-2}{2}}
 \int_{B_{8R}}
 \zeta_j(\mu_j y) p U(y)^{p-1} Z_{n+1}(y)\Psi(\mu_j y,t) \dd y}{\int_{B_{8R}}Z_{n+1}^2(y) \dd y} .\label{dmu0j}
\end{align}
Since $|Z_{n+1}(y)| \lesssim \langle y \rangle^{2-n}$,
\[
\begin{aligned}
	\int_{B_{8R}} p U^{p-1}(y) Z_{n+1}(y) d y &=\int_{\mathbb{R}^{n}} p U^{p-1}(y) Z_{n+1}(y) d y+O\left(R^{-2} \right) ,
	\\
	\int_{B_{8R}} Z_{n+1}^{2}(y) d y &=\int_{\mathbb{R}^{n}} Z_{n+1}^{2}(y) d y+O\left( R^{4-n}\right) .
\end{aligned}
\]
It follows that
\[
-\frac{U(0)  \int_{B_{8R}}
p U^{p-1}(y) Z_{n+1}(y) d y}{\int_{B_{8R}} Z_{n+1}^{2}(y) d y}= c_*+O\left(R^{-2}\right) ,
\]
where $c_*$ is the positive constant defined in \eqref{W-2.16}.
Notice the fact that
\[\frac{\dot\mu_{0j}}{\mu_{0j}}=\frac{\alpha_j}{-t},\quad \frac{c_*\lambda_{0j}^{\frac{n-2}{2}}}{\mu_{0j}^2}=\frac{\dot\mu_{0j}}{\mu_{0j}}
\text{ \ \ for \ } j=2,\dots,k.
\]
We can simplify \eqref{dmu0j} to
\begin{equation}
		\dot\mu_{1j}
		+
		\frac{n-4}{2}
		\frac{\alpha_j}{t}
		\mu_{1j}
		-
		\frac{n-2}{2}
		\frac{\alpha_j}{t}
		\lambda_{0j}
		\mu_{1,j-1}
		= 	M_j[\Psi,\vec{\mu}_1](t) ,
\end{equation}
where
\begin{equation}\label{def:Mj}
M_j[\Psi,\vec{\mu}_1](t)  =
-
\frac{\mu_j^{\frac{n-2}{2}}}{\mu_{0j}} \frac{\int_{B_{8R}}
\zeta_j(\mu_j y)
p U(y)^{p-1} Z_{n+1}(y)\Psi(\mu_j y,t) \dd y}{ \int_{B_{8R}}Z_{n+1}^2(y) \dd y}
+
\dot{\mu}_{0j}
\left(\frac{\mu_{1j}}{\mu_{0j}}-\frac{\mu_{1,j-1}}{\mu_{0,j-1}}\right)O(R^{-2})
	.
\end{equation}
\end{proof}

In order to solve \eqref{mu 1 eq} by the fixed point theorem,  we  reformulate \eqref{mu 1 eq} as the following  mapping.
Let us define $\vec{\mathcal{S} }[\Psi,\vec{\mu}_1 ] = (\mathcal{S}_1[\Psi,\vec{\mu}_1 ], \dots,\mathcal{S}_k[\Psi,\vec{\mu}_1 ])$ where
 and
\begin{equation}\label{def:Sj}
\begin{aligned}
\mathcal{S}_1
[\Psi,\vec{\mu}_1 ](t)
& =
\int_{-\infty}^{t}
M_1[\Psi, \vec{\mu}_1] (s)
\dd s,
\\
\mathcal{S}_j
[\Psi,\vec{\mu}_1 ](t)
& =
(-t)^{-\frac{n-4}{2}\alpha_j}\int_{ t_0 }^t(-s)^{\frac{n-4}{2}\alpha_j}
\left(
\frac{n-2}{2}\frac{\alpha_j}{s}\lambda_{0j}(s)
\mathcal{S}_{j-1}
[\Psi,\vec{\mu}_1 ](s)
+
M_j[\Psi,\vec{\mu}_1 ](s)
\right)
\dd s ,
\end{aligned}
\end{equation}
for $ j = 2,\dots, k$.

For a constant $b$ and a function $g(t)$, we define
\begin{align}\label{W-5.14}
	\|g\|_b^{\#}:=\sup\limits_{t\le t_0}|(-t)^b g(t)| .
\end{align}
We introduce the norm about $\vec{\mu}_1$:
\begin{equation}\label{mu1-topo}
	\|\vec{\mu}_1  \|_{\sigma}
	:=
	\sum\limits_{i=1}^{k}
	\left(
	\|\dot{\mu}_{1i} \|_{1+\alpha_i +\sigma}^{\#}
	+
	\| \mu_{1i} \|_{\alpha_i +\sigma}^{\#}
	\right) ,
\end{equation}
where $\sigma>0$.

\begin{lemma}\label{lem:orth-eq-est}
    Suppose $\Psi$ and $\vec{\mu}_1$ satisfy $\|\Psi\|_{\alpha,\sigma}^{out,*}<\infty$, $\|\vec{\mu}_1\|_\sigma\le 1$,  $0<\sigma<1$ respectively, when $t_0$ is very negative, there exists  $C^\mathcal{S}$ such that
  \begin{equation}
    \|\vec{\mathcal{S}}[\Psi,\vec{\mu}_1]\|_\sigma\leq C^{\mathcal{S}}(\|\Psi\|_{\alpha,\sigma}^{out,*}+O(R^{-2})) .
   \end{equation}
\end{lemma}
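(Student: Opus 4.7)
The goal is to control $\vec{\mathcal S}[\Psi,\vec\mu_1]$ component by component. The strategy is straightforward: (i) obtain a pointwise-in-time bound on the forcing $M_j(t)$ from its definitions~\eqref{def:M1}, \eqref{def:Mj}; (ii) feed this into the representation~\eqref{def:Sj}, treating $j=1$ by direct integration and $j\ge 2$ by a simple induction that exploits the integrating factor $(-t)^{(n-4)\alpha_j/2}$ already built into the formula for $\mathcal S_j$; and (iii) recover $\dot{\mathcal S}_j$ from the ODE~\eqref{mu 1 eq}. Throughout, write $K:=\|\Psi\|_{\alpha,\sigma}^{out,*}$.

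\textbf{Step 1: bounding $M_j$.} On the support of $\zeta_j(\mu_j\,\cdot\,)$ we have $|y|\in[R^{-1},R]$, so $|x|=|\mu_j y|\approx \mu_{0j}$; at such $x$ the weight $w_{1j}^{*}$ is the dominant one in the sum defining $\|\cdot\|_{\alpha,\sigma}^{out,*}$, giving
\begin{equation*}
|\Psi(\mu_j y,t)|\;\lesssim\;K\,(-t)^{\gamma_j},\qquad \gamma_1=-1-\sigma,\quad \gamma_j=\tfrac{n-2}{2}\alpha_{j-1}-\sigma\ \ (j\ge 2).
\end{equation*}
Together with $|\int_{B_{8R}}pU^{p-1}Z_{n+1}|\lesssim 1$, $\int_{B_{8R}}Z_{n+1}^{2}\gtrsim 1$, $\mu_j\approx\mu_{0j}\approx(-t)^{-\alpha_j}$, $|\dot\mu_{0j}|\lesssim(-t)^{-1-\alpha_j}$, and $|\mu_{1j}/\mu_{0j}|\lesssim\|\vec\mu_1\|_\sigma(-t)^{-\sigma}\le (-t)^{-\sigma}$, this yields
\begin{equation*}
|M_j(t)|\;\lesssim\;\bigl(K+R^{-2}\bigr)(-t)^{-1-\alpha_j-\sigma},\qquad j=1,\dots,k.
\end{equation*}
The exponent check for $j\ge 2$ rests on the algebraic identity $\tfrac{n-2}{2}\alpha_{j-1}-\tfrac{n-6}{2}\alpha_j=-1$, which is immediate from $\alpha_j+\tfrac12=\tfrac12\bigl(\tfrac{n-2}{n-6}\bigr)^{j-1}$; it forces the factors $\mu_{0j}^{(n-4)/2}(-t)^{\gamma_j}$ to collapse to $(-t)^{-1-\alpha_j-\sigma}$.

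\textbf{Step 2: estimating $\mathcal S_j$ by induction.} For $j=1$, since $\alpha_1=0$ we have $|\mathcal S_1(t)|\le\int_{-\infty}^{t}|M_1(s)|\,ds\lesssim K(-t)^{-\sigma}$ and $|\dot{\mathcal S}_1|=|M_1|\lesssim K(-t)^{-1-\sigma}$, so $\|\mathcal S_1\|_{\sigma}^{\#}+\|\dot{\mathcal S}_1\|_{1+\sigma}^{\#}\lesssim K$. For $j\ge 2$, assume inductively that $|\mathcal S_{j-1}(s)|\lesssim(K+R^{-2})(-s)^{-\alpha_{j-1}-\sigma}$. Since $\lambda_{0j}(s)\approx(-s)^{-\alpha_j+\alpha_{j-1}}$,
\begin{equation*}
\Bigl|\tfrac{n-2}{2}\tfrac{\alpha_j}{s}\lambda_{0j}(s)\mathcal S_{j-1}(s)\Bigr|+|M_j(s)|\;\lesssim\;(K+R^{-2})(-s)^{-1-\alpha_j-\sigma}.
\end{equation*}
Substituting into~\eqref{def:Sj} gives an integrand $(-s)^{(n-6)\alpha_j/2-1-\sigma}$, whose antiderivative has positive exponent $(n-6)\alpha_j/2-\sigma>0$ for $\sigma$ small; hence
\begin{equation*}
|\mathcal S_j(t)|\;\lesssim\;(-t)^{-(n-4)\alpha_j/2}(K+R^{-2})(-t)^{(n-6)\alpha_j/2-\sigma}\;=\;(K+R^{-2})(-t)^{-\alpha_j-\sigma},
\end{equation*}
and $|\dot{\mathcal S}_j|\lesssim(K+R^{-2})(-t)^{-1-\alpha_j-\sigma}$ via the linear ODE~\eqref{mu 1 eq}. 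Summing over $j$ yields $\|\vec{\mathcal S}[\Psi,\vec\mu_1]\|_\sigma\lesssim K+R^{-2}$, as claimed.

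\textbf{Expected obstacle.} The routine arithmetic of Step 2 is straightforward once Step 1 is in place; the delicate part is justifying the pointwise bound $|\Psi(\mu_j y,t)|\lesssim K(-t)^{\gamma_j}$ on the support of $\zeta_j(\mu_j\cdot)$. One must verify at $|x|\approx\mu_{0j}$ that $w_{1j}^{*}(x,t)$ indeed majorizes each of $w_{1i}^{*}$ ($i\ne j$), $w_{2i}^{*}$, and $w_3^{*}$; this amounts to a finite list of monomial comparisons in $(-t)^{-1}$ using the explicit formulas for $\mu_{0j}$, $\bar\mu_{0j}$, $\gamma_j$, $\gamma_j^{*}$, but is the one place where the carefully chosen shape of the outer topology from Section~\ref{sec:linear} is used in an essential way.
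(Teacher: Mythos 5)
Your proposal is correct and follows essentially the same route as the paper: bound $\Psi$ on the support of $\zeta_j$ by $K(-t)^{\gamma_j}$ using the weight-comparison Lemmas \ref{lem:w1j*-cmp}--\ref{lem:w2j*-cmp}, deduce $|M_j|\lesssim (K+R^{-2})(-t)^{-1-\alpha_j-\sigma}$ via $\mu_{0j}\dot\mu_{0j}=c_*\lambda_{0j}^{\frac{n-2}{2}}$ (equivalently your identity $\tfrac{n-2}{2}\alpha_{j-1}-\tfrac{n-6}{2}\alpha_j=-1$), and then run the same induction on $j$ through the integral formula \eqref{def:Sj}, with the integrability condition $\sigma<\min_j\tfrac{n-6}{2}\alpha_j=1$ matching the paper's. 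The only cosmetic difference is that the paper bounds the sum of the four weights active near $|x|\approx\mu_{0j}$ rather than asserting dominance of $w_{1j}^*$ alone, which is the same monomial-comparison content you flag in your "expected obstacle".
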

\begin{proof} Note that the support of $\zeta_1 $ is contained in $\{ R^{-1}\mu_{01} \le |x| \le 2R \mu_{01}\}$. By Lemma \ref{lem:w1j*-cmp} and \ref{lem:w2j*-cmp}, we have $|\Psi|\lesssim (w_{11}^*+ w_{12}^* +w_{21}^*)\|\Psi\|_{\alpha,\sigma}^{out,*}\lesssim |t|^{-1-\sigma}\|\Psi\|_{\alpha,\sigma}^{out,*}$. Then using \eqref{def:M1}, we have
\begin{align}
    |M_1[\Psi,\vec{\mu}_1]|\lesssim |t|^{-1-\sigma}\|\Psi\|_{\alpha,\sigma}^{out,*}.
\end{align}
By \eqref{def:Sj}, we have
\begin{align}
    \|\dot{\mathcal{S}}_1 [\Psi,\vec{\mu}_1] \|_{1+\sigma}^{\#}
    +\|\mathcal{S}_1[\Psi,\vec{\mu}_1] \|_{\sigma}^{\#}\lesssim \|\Psi\|_{\alpha,\sigma}^{out,*}.
\end{align}
 Similarly, for $j=2,\dots, k$, the support of $\zeta_j$ is contained in $\{ R^{-1}\mu_{0j} \le |x| \le 2R \mu_{0j}\}$. By Lemma \ref{lem:w1j*-cmp} and \ref{lem:w2j*-cmp}, we have
 \begin{equation*}
|\Psi|\lesssim (w_{1j}^*+w_{1,j+1}^*+ w_{2j}^*+w_{2,j-1}^*)\|\Psi\|_{\alpha,\sigma}^{out,*}\lesssim |t|^{\gamma_j}\|\Psi\|_{\alpha,\sigma}^{out,*},
 \end{equation*}
  where $w_{1,j+1}^*$, $w_{2j}^*$ are vacuum if $j=k$.

 Then using \eqref{def:Mj}, we have
\begin{equation}\label{Mj est}
    |M_j [\Psi,\vec{\mu}_1] |\lesssim  |t|^{-\sigma}
    \mu_{0j}^{-1} \lambda_{0j}^{\frac{n-2}{2}}
    \|\Psi\|_{\alpha,\sigma}^{out,*}
    +
    |t|^{-1-\alpha_j-\sigma}
    O(R^{-2})
    \lesssim |t|^{-\alpha_j-1-
    \sigma}(\|\Psi\|_{\alpha,\sigma}^{out,*}+O(R^{-2})),
\end{equation}
where we have used that $\mu_{0j}\dot{\mu}_{0j} = c_{*} \lambda_{0j}^{\frac{n-2}{2}}$.

We will prove
\begin{equation}
\|\dot{\mathcal{S}}_j [\Psi,\vec{\mu}_1]\|_{1+\alpha_j+\sigma}^{\#}
+\|\mathcal{S}_j [\Psi,\vec{\mu}_1] \|_{\alpha_j+\sigma}^{\#}
\lesssim \|\Psi\|_{\alpha,\sigma}^{out,*}+O(R^{-2}),
\end{equation}
by induction. The case $j=1$ has been proved.

Suppose we have proved $\|\dot{\mathcal{S}}_{j-1} [\Psi,\vec{\mu}_1]\|_{1+\alpha_{j-1}+\sigma}^{\#}
+\|\mathcal{S}_{j-1} [\Psi,\vec{\mu}_1] \|_{\alpha_{j-1}+\sigma
}^{\#}
\lesssim \|\Psi\|_{\alpha,\sigma}^{out,*}+O(R^{-2})$ by induction. Consequently  $\left|s^{-1}\lambda_{0j}(s)\mathcal{S}_{j-1}(s)\right|\lesssim (-s)^{-\alpha_j-1-\sigma}\left(\|\Psi\|_{\alpha,\sigma}^{out,*}+O(R^{-2}) \right)$.  Now using \eqref{def:Sj} and \eqref{Mj est},
\begin{equation}
\begin{aligned}
   | \mathcal{S}_j[\Psi,\vec{\mu}_1]|\lesssim\ & (-t)^{-\frac{n-4}{2}\alpha_j}\int_{t_0}^t(-s)^{\frac{n-4}{2}\alpha_j}(-s)^{-\alpha_j-1-\sigma}ds(\|\Psi\|_{\alpha,\sigma}^{out,*}+O(R^{-2}))
   \\
   \lesssim\ &(-t)^{-\alpha_j-\sigma}(\|\Psi\|_{\alpha,\sigma}^{out,*}+O(R^{-2}) ),
\end{aligned}
\end{equation}
where we have used $\sigma<1=\min_{2\leq j\leq k}\{\frac{n-6}{2}\alpha_j\}$. Similarly, we can get
\[|\dot{ \mathcal{S}}_j[\Psi,\vec{\mu}_1]|\lesssim (-t)^{-1-\alpha_j-\sigma}(\|\Psi\|_{\alpha,\sigma}^{out,*}+O(R^{-2}) ).\] This completes the induction.
\end{proof}

\section{The Schauder fixed point argument}

In this section, we will solve the system \eqref{W-3.6}-\eqref{W-3.7} by fixed point argument. We need to set up appropriate topology and operators. Recall \eqref{def:phi^*_nu}, \eqref{def:h^in_nu} and \eqref{def:Tin} in the previous section.
When $\nu(\tau)=(-t)^{\gamma_j}\mu_{0j}^{\frac{n-2}{2}}$, we write
\begin{align}\label{inner-symbol}
    \|\langle y\rangle \nabla \phi\|_{\nu(\tau),a}^{in,*}+\|\phi\|_{\nu(\tau),a}^{in,*}=\|\phi\|_{j,a,\sigma}^{in,*},\quad \|h\|_{\nu(\tau),a}^{in}=\|h\|_{j,a,\sigma}^{in},\quad \mathcal{T}_{\nu(\tau)}^{in}=\mathcal{T}_{j}^{in}
\end{align}
for short,
where $0<a<1$, $\gamma_1 = -1-\sigma$ and
$\gamma_{j} = \frac{n-2}{2}\alpha_{j-1}-\sigma$, $j= 2, \dots, k$.

Now we state precisely the topology we are going to use. Suppose $\sigma$ is small enough and $0<\alpha<a<1$. Define
\begin{align}\label{vecphiin}
 \|\vec{\phi}\|_{a,\sigma}^{in,*}:=\sum_{j=1}^k\|\phi_j\|_{j,a,\sigma}^{in,*} .
\end{align}
We will cope with $\vec{\phi}$, $\Psi$, $\vec{\mu}_{1}$ in the topology \eqref{vecphiin},  \eqref{def:out*} and  \eqref{mu1-topo} respectively.

The following lemma justifies why we choose $\nu(\tau)=(-t)^{\gamma_j}\mu_{0j}^{\frac{n-2}{2}}$.
\begin{lemma}\label{lem:inner-h}
For any $R>0$ large, there exists $t_0$ negative enough such that for $t<t_0$ one has
\begin{align}
    |\mathcal{H}_j(x,t)|&\leq C^{\mathcal{H}}
    \mu_{0j}^{\frac{n-2}{2}} (-t)^{\gamma_j} \langle y_j\rangle^{-4}(\|\vec{\mu}_1\|_\sigma+\|\Psi\|_{\alpha,a}^{out,*}),\quad j=1,\cdots,k.
\end{align}
\end{lemma}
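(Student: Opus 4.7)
The natural plan is to decompose the right-hand side of \eqref{def:Hj} as
\[
\HH_j(y,t) = \mu_j^{\frac{n-2}{2}}\zeta_j(\mu_j y)\, p U(y)^{p-1}\Psi(\mu_j y,t) + D_j[\vec{\mu}_1](y,t),
\]
and bound each piece separately by a constant multiple of $\mu_{0j}^{\frac{n-2}{2}}(-t)^{\gamma_j}\langle y\rangle^{-4}$ times the corresponding norm. Throughout I would use $\mu_j \approx \mu_{0j}$ (from \eqref{W-2.22} and $\|\vec{\mu}_1\|_\sigma\le 1$), together with the standard pointwise bounds $U(y)^{p-1}\lesssim \langle y\rangle^{-4}$ and $|Z_{n+1}(y)|\lesssim \langle y\rangle^{2-n}$.

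For the $\Psi$-piece I would observe that $y\mapsto\zeta_j(\mu_j y)$ is supported in the annulus $R^{-1}\mu_{0j}/\mu_j\le|y|\le 2R\mu_{0j}/\mu_j$, so that $|x|=|\mu_j y|$ lies in $[R^{-1}\mu_{0j},\,2R\mu_{0j}]$. On this region one reads off from \eqref{w1j*}--\eqref{def:w2j*} that every outer weight except $w_{1j}^*$ is smaller and that $w_{1j}^*\lesssim (-t)^{\gamma_j}$ (up to an $R^\alpha$ factor, which is absorbed into $C^\HH$). Hence
\[
|\Psi(\mu_j y,t)|\lesssim (-t)^{\gamma_j}\|\Psi\|_{\alpha,\sigma}^{out,*},
\]
which multiplied by $\mu_j^{\frac{n-2}{2}}\approx \mu_{0j}^{\frac{n-2}{2}}$ and $U(y)^{p-1}\lesssim\langle y\rangle^{-4}$ gives the required bound.

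For $D_j$ with $j\ge 2$, formula \eqref{W-2.23} and the norm \eqref{mu1-topo} give
\[
|\mu_{1j}|\lesssim (-t)^{-\alpha_j-\sigma}\|\vec{\mu}_1\|_\sigma,\qquad |\dot{\mu}_{1j}|\lesssim (-t)^{-\alpha_j-1-\sigma}\|\vec{\mu}_1\|_\sigma,
\]
together with $\dot{\mu}_{0j}/\mu_{0j}=\alpha_j/(-t)$. The $Z_{n+1}$-contribution is then controlled by $\mu_{0j}(-t)^{-1-\alpha_j-\sigma}\langle y\rangle^{2-n}\|\vec{\mu}_1\|_\sigma$ and the $U^{p-1}$-contribution by $\lambda_{0j}^{\frac{n-2}{2}}(-t)^{-\sigma}\langle y\rangle^{-4}\|\vec{\mu}_1\|_\sigma$. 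For $j=1$ the analogous estimate from \eqref{W-2.24} produces $(-t)^{-1-\sigma}\langle y\rangle^{2-n}\|\vec{\mu}_1\|_\sigma$, which matches $\mu_{01}^{\frac{n-2}{2}}(-t)^{\gamma_1}$ since $\alpha_1=0$, $\gamma_1=-1-\sigma$ and $\mu_{01}\equiv\beta_1$.

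The decisive (and only nontrivial) step is to verify that both temporal prefactors above collapse to $\mu_{0j}^{\frac{n-2}{2}}(-t)^{\gamma_j}$ up to the constants $\beta_\ell$. Since $\mu_{0j}=\beta_j(-t)^{-\alpha_j}$ and $\gamma_j=\frac{n-2}{2}\alpha_{j-1}-\sigma$, both reductions amount to the algebraic identity
\[
\tfrac{n-6}{2}\alpha_j-\tfrac{n-2}{2}\alpha_{j-1}=1,
\]
which is exactly the recursion encoded in \eqref{W-2.17}; indeed, this is precisely why the $\mu_{0j}$ were tuned as in Section \ref{sec:ansatz}, so the matching is a design feature rather than a coincidence. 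Finally, $\langle y\rangle^{2-n}\le\langle y\rangle^{-4}$ for $n\ge 7$, so the $Z_{n+1}$-contributions also respect the claimed spatial decay. Combining the two estimates gives the lemma with $C^\HH$ depending only on $n$, $k$, $\alpha$ and the $\beta_\ell$. The main obstacle is simply this bookkeeping of exponents; no new analytic machinery beyond Section \ref{sec:ansatz} is required.
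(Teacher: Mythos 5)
Your proposal is correct and follows essentially the same route as the paper: the same split of $\HH_j$ into the $\Psi$-term and $D_j$, the same restriction to the support of $\zeta_j$ combined with Lemmas \ref{lem:w1j*-cmp}--\ref{lem:w2j*-cmp} to get $|\Psi|\lesssim (-t)^{\gamma_j}\|\Psi\|_{\alpha,\sigma}^{out,*}$ there, and the same exponent bookkeeping (equivalently the relation $\mu_{0j}\dot\mu_{0j}=c_*\lambda_{0j}^{\frac{n-2}{2}}$, i.e. $\frac{n-6}{2}\alpha_j-\frac{n-2}{2}\alpha_{j-1}=1$) reducing both pieces of $D_j$ to $\mu_{0j}^{\frac{n-2}{2}}(-t)^{\gamma_j}\langle y_j\rangle^{-4}$. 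One small correction: no $R^{\alpha}$ factor should be absorbed into $C^{\mathcal H}$ (which must remain $R$-independent for the later fixed-point step); $w_{1j}^*$ is already capped by $|t|^{\gamma_j}$ on the whole support of $\zeta_j$, and the $R$-powers arising from the neighboring weights $w_{1,j+1}^*$, $w_{2j}^*$, $w_{2,j-1}^*$ at $|x|\approx R^{-1}\mu_{0j}$ are killed by taking $t_0$ negative enough depending on $R$, exactly as the lemma's statement permits.
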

\begin{proof}

By \eqref{mu1-topo}, we have
\begin{align}
	\begin{split}
	    |D_1[\vec{\mu}_1]|\lesssim& |\dot \mu_{11}Z_{n+1}(y_1)|\lesssim (-t)^{\gamma_1}\langle y_1\rangle^{2-n}\|\vec{\mu}_1\|_{\sigma},\\
		|D_j[\vec{\mu}_1]|\lesssim& \lambda_j(t)^{\frac{n-2}{2}}(-t)^{-\sigma}(|Z_{n+1}(y_j)|+\langle y_j\rangle^{-4})\|\vec{\mu}_1\|_{\sigma}
		\lesssim
		\mu_{0j}^{\frac{n-2}{2}} (-t)^{\gamma_j}  \langle y_j\rangle^{-4}\|\vec{\mu}_1\|_{\sigma},
		\label{Djmu-1}
	\end{split}
\end{align}
for $j=2,\dots,k$. By the same estimate in Lemma \ref{lem:orth-eq-est}, we have
\begin{align}\label{xijUp-1}
	|\zeta_jU(y_j)^{p-1}\mu_j^{\frac{n-2}{2}}\Psi|\lesssim \mu_{0j}^{\frac{n-2}{2}} (-t)^{\gamma_j} \langle y_j\rangle^{-4}\|\Psi\|_{\alpha,a}^{out,*},
	\quad j=1,\dots,k .
\end{align}
\end{proof}

We reformulate the inner-outer gluing system and the orthogonal equation into the mapping $\vec{T}$:
\begin{equation}\label{final system}
	(\vec{\phi},\Psi, \vec{\mu}_1)
	= \vec{T}[\vec{\phi},\Psi, \vec{\mu}_1],
\end{equation}
where $\vec{T}= (\vec{T}^1, T^2,\vec{T}^3)$, $\vec{T}^1 = (\vec{T}^1_1,\dots,\vec{T}^1_k)$, $\vec{T}^3 = (\vec{T}^3_1,\dots,\vec{T}^3_k)$, with the following expressions,
\begin{align}
	\vec{T}^1_j[\Psi, \vec{\mu}_1 ]
	= \ &
\TT^{in}_{j} [\HH_j[\Psi,\vec{\mu}_1]
- c_{j}\left[\Psi, \vec{\mu}_{1}\right] Z_{n+1}
] , \quad
j=1, \ldots, k,\label{T1j}\\
T^2[\vec{\phi},\Psi, \vec{\mu}_1]
= \ &
\TT^{out}[\GG[\vec{\phi},\Psi, \vec{\mu}_1]]
 ,	
\label{T2out}\\
\vec{T}^3_j[\Psi, \vec{\mu}_1]
= \ & \mathcal{S}_j[\Psi, \vec{\mu}_1 ],
\quad
j=1, \ldots, k.\label{T3j}
\end{align}
where $c_{j}\left[\Psi, \vec{\mu}_{1}\right](t) = \| Z_{n+1} \|_{L^2(B_{8R})}^{-2} \int_{B_{8R}} \HH_{j}\left[\Psi, \vec{\mu}_{1}\right] (y,t) Z_{n+1}(y) \dd y $.
Here $\TT^{in}_j$ in \eqref{T1j} is obtained from \eqref{def:Tin}. It is well-defined because $\mathcal{H}_j-c_j Z_{n+1}$ satisfies \eqref{h-ortho-1}.
Denote
\begin{equation}\label{final topology}
\B_{out} = \{ \Psi \ | \ \|\Psi\|^{out,*}_{\alpha,\sigma} \le R^{-2\rho}\}
,
\quad
\B_{in} = \{ \vec{\phi} \ | \
\|\vec{\phi}\|_{a,\sigma}^{in,*} \le 1
\},
\quad
\B_{mu} = \{
\vec{\mu}_1
\ | \
\|\vec{\mu}_1 \|_{\sigma} \le R^{-\rho}
\} ,
\end{equation}
where $0<2\rho\le \frac{a-\alpha}{2}$ is a small constant.
\begin{proof}[Proof of Theorem \ref{intro:thm:main}:] $\bullet$ Existence part. Fix $0<\alpha<a<1$. We choose $\sigma,\epsilon>0$ small enough such that Proposition \ref{prop:G-bd} holds.
Let $\B=\B_{in}\times \B_{out}\times \B_{mu}$, then we claim that $\vec{T}$ maps $\B$ to $\B$ provided taking $R$ large enough and $t_0$ negative enough.

First, for any fixed $\Psi \in \B_{out}$, $\vec{\mu}_1 \in \B_{mu}$. Applying \eqref{ynablayphi} with $h=\mathcal{H}_j$ and $\nu(\tau)=(-t)^{\gamma_j}\mu_{0j}^{\frac{n-2}{2}}$, Lemma \ref{lem:inner-h} implies that
\begin{align}
    \|\vec{T}^1[\Psi,\vec{\mu}_1]
    \|_{a,\sigma}^{in,*}\leq
    \sum\limits_{j=1}^{k}
    C_{j,a}^{in}C^{\mathcal{H}}(R^{-\rho}+R^{-2\rho})\leq 1
\end{align}
provided taking $R$ large enough.

Second, for any fixed $\vec{\phi} \in \B_{in},$ $\vec{\mu}_1 \in \B_{mu}$, by Proposition \ref{prop:G-bd}, there exists $t_0=t_0(R)$ negative enough such that
\begin{equation*}
\| T^2[ \vec{\phi},\Psi, \vec{\mu}_1] \|^{out,*}_{\alpha,\sigma}
\le C^{out}R^{\alpha-a}(3+2e^{-\rho})\leq R^{-2\rho}.
\end{equation*}

Third, for any fixed $\vec{\phi}\in \B_{in}$, $\Psi\in \B_{out}$. It follows from Lemma \ref{lem:orth-eq-est} that
\begin{align}
    \|\vec{T}^3
    [\Psi, \vec{\mu}_{1}]
    \|_{\sigma}\leq C^{\mathcal{S}}(R^{-2\rho}+O(R^{-2}))\leq R^{-\rho}.
\end{align}
Therefore $\vec{T}:\B\to \B$.

Next we need to show $\vec{T}$ is a compact mapping. Thus for any sequence  $(\vec{\phi}^m,\Psi^m,\vec{\mu}_{1}^{m})\in \B$, where $\vec{\phi}^m = (\phi^m_{1},\dots,\phi^m_{k})$, $\vec{\mu}_{1}^{m}=(\mu^{m}_{1},\dots, \mu^{m}_{k})$, we have to show $\vec{T}[\vec{\phi}^m,\Psi^m,\vec{\mu}_{1}^m]$ has a convergent subsequence. Let us consider first the sequence $\tilde \phi_{j}^{m}=\TT^{in}_{j} [\HH_j[\Psi^m,\vec{\mu}_1^m]
- c_{j}\left[\Psi^m, \vec{\mu}_{1}^m\right] Z_{n+1}
]$. It satisfies
\[\partial_\tau \tilde \phi_{j}^{m}=\Delta_y\tilde \phi_{j}^{m}+h_{j}^{m}(y,\tau), \quad h_{j}^{m}=\HH_j[\Psi^m,\vec{\mu}_1^m]
- c_{j}\left[\Psi^m, \vec{\mu}_{1}^m\right] Z_{n+1}.
\]
Using Lemma \ref{lem:inner-h} and interior estimate of parabolic equations, we get that in any compact set $K\subset B_{8R}\times(-\infty, t_0)$, we have $\tilde\phi_{j}^m\in C^{1+\gamma,\frac{1+\gamma}{2}}$ in $K$ for each fixed $\gamma\in(0,1)$. Thus $\tilde\phi_{j}^m$ and $\nabla_y \tilde\phi_{j}^m$ are equi-continuous in $K$. By Arzel\`a-Ascoli theorem, going to a subsequence if necessary, $\tilde \phi_{j}^m$ will converge uniformly in compact sets of $B_{8R}\times(-\infty,t_0)$. Since $\tilde{\phi}_{j}^m\in \B_{in}$, then the limit will also belong to $\B_{in}$.

Second, consider $\tilde \Psi^m=\mathcal{T}^{out}[\GG[\vec{\phi}^m,\Psi^m,\vec{\mu}_{1}^m]]$. Since $\GG[\vec{\phi}^m,\Psi^m,\vec{\mu}_{1}^m]$ are uniformly bounded, $\tilde{\Psi}^m$ have a uniform $C^{1+\gamma,\frac{1+\gamma}{2}}$ bound in compact sets of $\Rn\times(-\infty,t_0)$.
By Arzel\`a-Ascoli theorem, $\tilde{\Psi}^m$ (up to a subsequence) converges uniformly to a function $\tilde{\Psi}\in \B_{out}$.

Third, consider $\mathcal{S}[\Psi^m,\vec{\mu}_{1}^m]$. Note that \eqref{def:M1} and \eqref{def:Mj} imply $M_1[\Psi^m,\vec{\mu}_{1}^m]$ and $M_j[\Psi^m,\vec{\mu}_1^m]$ are $ C^1(-\infty,t_0)$. Thus $\mathcal{S}[\Psi^m,\vec{\mu}_{1}^m]\in C^2(-\infty,t_0)$. Consequently $\mathcal{S}[\Psi^m,\vec{\mu}_{1}^m]$ has a convergent subsequence in $\B_{mu}$.

By Schauder's fixed point theorem, $\vec{T}:\B\to \B$ has a fixed point $(\vec{\phi},\Psi,\vec{\mu}_1)$. Then \eqref{T3j} implies $c_j[\Psi,\vec{\mu}_1]=0$ and consequently  $(\vec{\phi},\Psi,\vec{\mu}_1)$ makes \eqref{W-3.6} and \eqref{W-3.7} hold.

We have constructed a bubble tower solution for \eqref{intro:eq:main}. Recall that $u=\bar U+\varphi_0+\sum_{j=1}^k\varphi_j \eta_j+\Psi$. One can see \eqref{def:barU}, \eqref{W-2.6} and \eqref{W-3.4} for their respective definitions. We shall prove that $\bar U$ dominates in the sum in the sense of $L^{\infty}$ and $H^{1}$ when $t_0$ is negative enough.

$\bullet$ Convergence in  $L^{\infty}(\Rn)$ and positiveness.
It is easy to see the first approximation and inner solutions are smaller than $\bar U$. Namely, by Lemma \ref{lem:phi0}, we know $|\varphi_0| \lesssim |t|^{-\epsilon} \bar{U}$. For $j=1,\dots, k$, by \eqref{phi-est}  and \eqref{final topology},
\begin{equation*}
\begin{aligned}
\left|
\mu_{j}^{-\frac{n-2}{2}}
\phi_{j}(\frac{x}{\mu_j},t) \eta_{j}
\right| & \lesssim
|t|^{\gamma_{j}} R^{n+1-a}
\langle y_{j} \rangle^{-n-1} \1_{\{|x|\le 4R \mu_{0j} \} }
\\
& \approx
\mu_{j}^{\frac{n-2}{2}} |t|^{\gamma_j}
 R^{n+1-a}
\langle y_{j} \rangle^{- 3} \1_{\{|x|\le 4R \mu_{0j} \} } U_{j}(x,t)
 \lesssim
|t|^{-\sigma}  U_{j}(x,t) .
\end{aligned}
\end{equation*}
The solution $\Psi$ in the outer problem is more involved to estimate. First, by \eqref{final topology}, we have
\begin{equation*}
	|\Psi| \lesssim
	\sum\limits_{j=1}^k w_{1j}^* + \sum\limits_{j=1}^{k-1} w_{2j}^* + w_3^*.
\end{equation*}
We will estimate it in several regions. We will use Lemma \ref{lem:w1j*-cmp} and \ref{lem:w2j*-cmp} repeatedly in the following argument.

In $\{|x|\le \bar{\mu}_{0k} \}$, we have
\begin{equation*}
|\Psi|  \lesssim
w_{1k}^{*} + w_{2,k-1}^*
 \lesssim
 |t|^{\gamma_{k}}
 \approx
 |t|^{\gamma_{k}}
 \mu_{k}^{\frac{n-2}{2}}
 \langle y_k \rangle^{n-2} U_{k}(x,t) \lesssim
 |t|^{-\sigma}
 U_{k}(x,t) .
\end{equation*}
In $\{\bar{\mu}_{0,i+1}\le |x| \le \bar{\mu}_{0i} \}$, $i=2,\dots k-1$, we have
\begin{equation*}
\begin{aligned}
	|\Psi| & \lesssim
w_{1i}^{*} + w_{1,i+1}^{*} + w_{2i}^{*} +w_{2,i-1}^{*}
\\
& \approx
 (w_{1i}^{*} + w_{1,i+1}^{*} + w_{2i}^{*} +w_{2,i-1}^{*})
\mu_{i}^{\frac{n-2}{2}}
\langle y_i \rangle^{n-2} U_{i}(x,t)
\lesssim
|t|^{-\sigma} U_{i}(x,t).
\end{aligned}
\end{equation*}
This is because
\begin{align*}
w_{1i}^{*}
\mu_{i}^{\frac{n-2}{2}}
\langle y_i \rangle^{n-2}
&
\lesssim
|t|^{\gamma_i}
\mu_{i}^{\frac{n-2}{2}}
(\frac{\bar{\mu}_{0i}}{\mu_{i}})^{n-2}
\lesssim |t|^{-\sigma} ,
\\
 w_{1,i+1}^{*}
\mu_{i}^{\frac{n-2}{2}}
\langle y_i \rangle^{n-2}
&
\lesssim
|t|^{\gamma_{i+1}} \bar{\mu}_{i+1}^{n-2} |x|^{2-n}
\mu_{i}^{\frac{n-2}{2}}
\langle y_i \rangle^{n-2}
\lesssim
|t|^{\gamma_{i+1}} \bar{\mu}_{i+1}^{n-2}
\mu_{i}^{\frac{n-2}{2}}
\bar{\mu}_{0,i+1}^{2-n}
\lesssim
|t|^{-\sigma},
\\
w_{2i}^{*}
\mu_{i}^{\frac{n-2}{2}}
\langle y_i \rangle^{n-2}
&
\approx
|t|^{-2\sigma}
\mu_{i+1}^{\frac n2 -2} \mu_{i}^{\frac n2 -2} |x|^{4-n} \langle y_i \rangle^{n-2}
\\
&
\lesssim
|t|^{-2\sigma}
\mu_{i+1}^{\frac n2 -2} \mu_{i}^{\frac n2 -2}
\left(
\bar{\mu}_{0,i+1}^{4-n}
+
\bar{\mu}_{0i}^2 \mu_{i}^{2-n}
\right)
\lesssim
|t|^{-2\sigma},
\\
w_{2,i-1}^{*}
\mu_{i}^{\frac{n-2}{2}}
\langle y_i \rangle^{n-2}
&
\lesssim
|t|^{-2\sigma} \mu_{i-1}^{1-\frac n2}
\mu_{i}^{\frac{n-2}{2}}
(\frac{\bar{\mu}_{0i}}{\mu_{i}})^{n-2}
\lesssim
|t|^{-2\sigma} .
\end{align*}
In $\{ \bar{\mu}_{02} \le |x| \le \bar{\mu}_{01} \}$,
similarly, we have
\begin{equation*}
		|\Psi| \lesssim
	(w_{11}^{*} + w_{12}^{*} + w_{21}^{*} )
		(1+|x|)^{n-2} U_{1}(x,t)
	 \lesssim
		|t|^{-\sigma} U_{1}(x,t),
\end{equation*}
since for $\delta\le (n-2-\alpha)^{-1}$,
\begin{equation*}
\begin{aligned}
w_{11}^{*}
(1+|x|)^{n-2}
& \lesssim
|t|^{\gamma_1}\langle x \rangle^{-\alpha} (1+|x|)^{n-2}\lesssim |t|^{\gamma_1}|t|^{\delta(n-2-\alpha)}=|t|^{-\sigma},
\\
 w_{12}^{*}
(1+|x|)^{n-2}
& \lesssim
|t|^{\gamma_2} \bar{\mu}_{2}^{n-2} |x|^{2-n} (1+|x|)^{n-2}
\lesssim
|t|^{-\sigma},
\\
 w_{21}^{*}
(1+|x|)^{n-2}
&
\lesssim
|t|^{-2\sigma}
\bar{\mu}_{2}^{n-4} |x|^{2-n}
\min\{1,|x|^2\}
(1+|x|)^{n-2}
\lesssim
|t|^{-2\sigma} .
\end{aligned}
\end{equation*}
In $\{   \bar{\mu}_{01} \le |x| \le |t|^{\frac 12}\}$,
\begin{equation*}
|\Psi|\lesssim w_{11}^{*} +w_{3}^{*} \lesssim |t|^{-\frac{\sigma}{2}} U_{1}(x,t).
\end{equation*}
In  $\{   |x| \ge |t|^{\frac 12}\}$,
\begin{equation*}
	|\Psi|\lesssim w_{3}^{*} \lesssim |t|^{-\frac{\sigma}{2}} U_{1}(x,t).
\end{equation*}
 Therefore $|\Psi|\lesssim |t|^{-\frac{\sigma}{2}}\bar U$.

Combining the above analysis, we have $u = \bar{U}(1+O(|t|^{-\epsilon})) >0$. By the parabolic regularity theory, we improve the regularity of $u$ to be smooth.

$\bullet$ Convergence in  $H^{1}(\Rn)$. The solution we construct is $u=\bar U+\varphi_0+\varphi$, where $\varphi_0$ is from \eqref{W-2.6} and $\varphi$ is from  \eqref{W-3.4}.
Set $\bar{\varphi} = \varphi_{0} + \varphi$. We have already proved $|\bar{\varphi}|\lesssim |t|^{-\epsilon} \bar{U}$. Formally, we can expect $|\nabla_x\bar\varphi|\lesssim |t|^{-\epsilon}|\nabla_x \bar U|$.  Note that $\bar{\varphi}$ satisfies
\begin{equation}
\pp_{t} \bar{\varphi} = \Delta_{x} \bar{\varphi} + f(x,t)
\end{equation}
where $f(x,t) = \left( \bar{U} + \bar{\varphi}\right)^p  -\sum_{j=1}^{k} U_j^p -\sum_{j=1}^{k}\partial_{t} U_{j}$. It follows that
\begin{equation*}
\begin{aligned}
|f(x,t) | \lesssim \ &
\begin{cases}
|t|^{-\epsilon}
\mu_{0k}^{-\frac{n+2}{2}}
\langle y_{0k} \rangle^{-n-2}
+
\mu_{0k}^{-\frac n2} |\dot{\mu}_{0k} | \langle y_{0k} \rangle^{2-n}
&
\mbox{ \ \ if \ }
 |x| \le \bar{\mu}_{0k},
	\\
|t|^{-\epsilon}
\mu_{0j}^{-\frac{n+2}{2}}
\langle y_{0j} \rangle^{-n-2}
+
\mu_{0j}^{-\frac n2} |\dot{\mu}_{0j} | \langle y_{0j} \rangle^{2-n}
&
\mbox{ \ \ if \ }
\bar{\mu}_{0,j+1} \le |x| \le \bar{\mu}_{0j},
\quad
j=2,\dots, k-1,
	\\
|t|^{-\epsilon} \langle x\rangle^{-n-2} + |t|^{-1-\sigma}  \langle x\rangle^{2-n}
&
\mbox{ \ \ if \ } |x| \ge \bar{\mu}_{02} ,
\end{cases}
\\
\approx \ &
\begin{cases}
|t|^{-\epsilon}
	\mu_{0k}^{-\frac{n+2}{2}}
	&
	\mbox{ \ \ if \ }
	|x| \le \mu_{0k},
	\\
|t|^{-\epsilon}
	\mu_{0j}^{-\frac{n+2}{2}}
	&
	\mbox{ \ \ if \ }
	\bar{\mu}_{0,j+1} \le |x| \le \mu_{0j},
	\quad
	j=2,\dots, k-1,
	\\
	|t|^{-\epsilon}
	\mu_{0j}^{\frac{n+2}{2}}
	|x|^{-n-2}
	&
	\mbox{ \ \ if \ }
	\mu_{0j} \le |x| \le \bar{\mu}_{0j},
	\quad
	j=2,\dots, k,
	\\
|t|^{-\epsilon}
	&
	\mbox{ \ \ if \ } \bar{\mu}_{02} \le |x| \le 1  ,
\\
|t|^{-\epsilon} |x|^{-n-2} + |t|^{-1-\sigma}  |x|^{2-n}
&
\mbox{ \ \ if \ } 1 \le |x| .
\end{cases}
\end{aligned}
\end{equation*}
since $|t|^{-\epsilon}
\mu_{0j}^{-\frac{n+2}{2}}
\langle y_{0j} \rangle^{-n-2}
\gtrsim
\mu_{0j}^{-\frac n2} |\dot{\mu}_{0j} | \langle y_{0j} \rangle^{2-n}$ in $\{ \bar{\mu}_{0,j+1} \le |x| \le \bar{\mu}_{0j}
\}$, $j=2,\dots, k$ when $\epsilon$ is small.

By the similar argument about uniqueness in Corollary \ref{phi*-corollary-ancient}, we know
\begin{equation*}
\bar{\varphi} = \TT^{out}[f].
\end{equation*}
Then
\begin{align}
	|\nabla_x\bar{\varphi}|
	\lesssim
\TT^{d}[|f|],
\end{align}
where
\begin{equation*}
\TT^d[g] : =
\int_{-\infty}^{t} \frac{d s}{(t-s)^{\frac{n}{2} +1}} \int_{\mathbb{R}^{n}} e^{-\frac{|x-y|^{2}}{4(t-s)}}
|x-y| g(y, s) \dd y \dd s.
\end{equation*}
Claim:
\begin{equation}\label{deri-est}
	|\nabla_x\bar{\varphi}|
\lesssim
|t|^{-\epsilon}
\sum\limits_{j=1}^{n}
\mu_{0j}^{-\frac n2} \langle y_{0j} \rangle^{1-n} .
\end{equation}
Notice $\mu_{0j}^{-\frac n2} \langle y_{0j} \rangle^{1-n} $ is approximate to $|\nabla_{x} U_{j}(x,t)|$.
 Once we complete the proof of \eqref{deri-est}, it is straightforward to have $\| u(\cdot,t) - \bar{U} \|_{H^1(\RR^n)} = O(|t|^{-\epsilon})$.

By Lemma \ref{lem:deri-est}, for $j=2,\dots, k-1$, we get
\begin{equation*}
\begin{aligned}
\TT^{d}[|t|^{-\epsilon}
\mu_{0j}^{-\frac{n+2}{2}}  \1_{\{ \bar{\mu}_{0,j+1} \le |x| \le \mu_{0j} \}}]
\lesssim \ &
\begin{cases}
|t|^{-\epsilon}
	\mu_{0j}^{-\frac{n}{2}}
	&
	\mbox{ \ \ if \ }
	|x|\le \mu_{0j},
	\\
|t|^{-\epsilon}
	\mu_{0j}^{\frac n2 -1} |x|^{1-n}
	&
	\mbox{ \ \ if \ }
	\mu_{0j} \le |x|\le |t|^{\frac 12} ,
	\\
	(|x|^2)^{-\epsilon -\alpha_j(\frac n2-1)}
	|x|^{1-n}
	&
	\mbox{ \ \ if \ }
|t|^{\frac 12} \le  |x| .
\end{cases}
\\
\lesssim \ &
|t|^{-\epsilon}
\mu_{0j}^{-\frac n2} \langle y_{0j} \rangle^{1-n} .
\end{aligned}
\end{equation*}
Similarly,
\begin{equation*}
\begin{aligned}
\TT^{d}[|t|^{-\epsilon}
\mu_{0k}^{-\frac{n+2}{2}} \1_{\{|x|\le \mu_{0k} \} }]
\lesssim \ &
|t|^{-\epsilon} \mu_{0k}^{-\frac n2} \langle y_{0k} \rangle^{1-n},
\\
\TT^{d}[|t|^{-\epsilon}  \1_{\{
	\bar{\mu}_{02} \le |x| \le 1 \}}]
\lesssim \ &
|t|^{-\epsilon}
\mu_{01}^{-\frac n2} \langle y_{01} \rangle^{1-n}.
\end{aligned}
\end{equation*}

For $j=2,\dots, k,$
\begin{equation*}
\begin{aligned}
\TT^{d}[|t|^{-\epsilon}
\mu_{0j}^{\frac{n+2}{2}}
|x|^{-n-2} \1_{\{\mu_{0j} \le |x| \le \bar{\mu}_{0j}\}}]
\lesssim \ &
\begin{cases}
|t|^{-\epsilon} \mu_{0j}^{-\frac n2}
	&
	\mbox{ \ \ if \ }
	|x| \le  \mu_{0j} ,
	\\		
|t|^{-\epsilon}
\mu_{0j}^{\frac n2 -1} |x|^{1-n}
	&
	\mbox{ \ \ if \ }
	\mu_{0j} \le
	|x| \le  |t|^{\frac 12},
	\\
(|x|^2)^{-\epsilon -\alpha_j(\frac n2-1)}
|x|^{1-n}
	&
	\mbox{ \ \ if \ }
	|x| \ge  |t|^{\frac 12},
\end{cases}
\\
\lesssim \ &
|t|^{-\epsilon}
\mu_{0j}^{-\frac n2} \langle y_{0j} \rangle^{1-n} .
\end{aligned}
\end{equation*}
Similarly,
\begin{equation*}
	\TT^{d}[ |t|^{-\epsilon} |x|^{-n-2} \1_{\{1 \le |x| \le |t|^{\frac 12} \}} ]
	\lesssim
	|t|^{-\epsilon}
\mu_{01}^{-\frac n2} \langle y_{01} \rangle^{1-n}.
\end{equation*}

The left part can be transformed into the estimate in Appendix.
\begin{equation*}
\begin{aligned}
	&
\TT^{d}[ \left[|t|^{-\epsilon} |x|^{-n-2} \1_{\{ |t|^{\frac 12} \le |x|\}} + |t|^{-1-\sigma}  |x|^{2-n} \1_{\{1 \le  |x|\}} \right] ]
\\
\lesssim \ &
\int_{-\infty}^{t} \frac{d s}{(t-s)^{\frac{n+1}{2} }} \int_{\mathbb{R}^{n}} e^{-\frac{|x-y|^{2}}{8(t-s)}}
\left[(-s)^{-\epsilon} |y|^{-n-3} \1_{\{ (-s)^{\frac 12} \le |y|\}} + (-s)^{-1-\sigma}  |y|^{1-n} \1_{\{1 \le |y|\}} \right]
|y|
 d y \dd s
 \\
 \lesssim \ &
 \left(
|t|^{-1-\epsilon}
+ |t|^{-\sigma}
\right)
\mu_{01}^{-\frac n2} \langle y_{01} \rangle^{1-n},
\end{aligned}
\end{equation*}
whose estimate process is similar to the convolution of Gaussian kernel in $\RR^{n+1}$.

This completes the proof of \eqref{deri-est}.
\end{proof}

\section*{Acknowledgement} The research of L. Sun and J. Wei is partially supported by NSERC of Canada.


\appendix

\section{Estimates for the data in the outer problem}
We will prove Proposition \ref{prop:G-bd} in this section.
Throughout this section, we assume $\| \Psi\|_{\alpha,\sigma}^{out,*} + \| \vec{\phi}\|_{a,\sigma}^{in,*}< \infty$, $\| \vec{\mu}_1\|_{\sigma}\le 1$.

The parameters are determined in the following order.
First, we choose $R$ as a large fixed positive constant. Second, we choose $\sigma>0$ small. Third, we choose  $\delta>0$ small. Fourth, we choose  $\epsilon>0$ small. Finally, we take $t_{0}$ very negative such that $\mu_{j} \approx \mu_{0j} $, for $j=1, \dots , k$, $\dot{\mu}_{j} \approx \dot{\mu}_{0j} $ for $j=2 , \dots , k$.

We introduce the notation $y_j=x/\mu_j$, $\bar y_{j}=x/\bar{\mu}_j$,
$y_{0j}=x/\mu_{0j}$, $\bar y_{0j}=x/\bar{\mu}_{0j}$ for $j=1 , \dots , k$. One readily sees that $|y_j| \approx |y_{0j}|$,
$|\bar{y}_j| \approx |\bar{y}_{0j}|$ for $j=1 , \dots , k$.
\begin{lemma}\label{lem:Uij}
	Consider the $U_j$ defined in \eqref{def:Uj}. For $j=1,\cdots,k-1$, one has
	\begin{align}\label{A-1}
	    U_j
	    < U_{j+1}\text{ in }\{|x|<\bar{\mu}_{j+1}\}
	    \text{ and }U_j > U_{j+1}\text{ in }\{|x|>\bar{\mu}_{j+1}\}.
	\end{align}
In $\{|x| \le \bar{\mu}_{0k} \}$,
	\begin{equation}
	U_{k} \gtrsim
	U_{k-1} > U_{k-2}
	> \dots
	> U_{1} .
	\end{equation}
In $\{|x|\ge \bar{\mu}_{02} \}$,
	\begin{equation}
	 U_{1} \gtrsim
	 U_{2} > U_3 >\dots
	 >
	 U_{k} .
	\end{equation}
In $\{\bar{\mu}_{0,j+1}\le |x| \le \bar{\mu}_{0j} \}$, $j=2, \dots, k-1$,
\begin{equation}\label{A-4}
U_{j} \gtrsim U_{j+1}
>
U_{j+2}
>
\dots
>
U_{k},
\quad
U_{j} \gtrsim U_{j-1}
>
U_{j-2}
>
\dots
> U_{1} .
\end{equation}
	Moreover
	\begin{align}\label{Uj-ratio-1}
			\frac{U_{j+1}}{U_j}
 & \approx
\lambda_{j+1}^{-\frac{n-2}{2}}
\langle y_{j+1} \rangle^{-(n-2)}
\1_{\{ |x|\le \mu_{0j}\}}
+
\lambda_{j+1}^{\frac{n-2}{2}}
\1_{\{ |x| > \mu_{0j}\}}
\mbox{ \ \ for \ }
j=1, \dots, k-1
,
	\\
\frac{U_{j-1}}{U_j}
& \approx
		\lambda_{j}^{\frac{n-2}{2}}
		\langle y_j \rangle ^{n-2}
		\1_{\{ |x|\le \mu_{0,j-1}\}}
		+
\lambda_{j}^{-\frac{n-2}{2}}
\1_{\{ |x| > \mu_{0,j-1}\}}
\mbox{ \ \ for \ }
j=2, \dots, k
.\label{Uj-ratio-2}
	\end{align}
\end{lemma}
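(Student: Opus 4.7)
\textbf{Proof plan for Lemma \ref{lem:Uij}.} My plan is to first rewrite the bubbles in a form in which the ratio $U_{j+1}/U_j$ has a transparent monotone structure, then read off the crossover point, and finally run a case analysis with respect to $|x|$ versus $\mu_{0j}$ to obtain the two asymptotic formulas.

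First, I would rewrite
\[
U_j(x,t) \;=\; \alpha_n\Bigl(\frac{\mu_j}{\mu_j^2+|x|^2}\Bigr)^{\frac{n-2}{2}},
\]
so that for consecutive indices
\[
\frac{U_{j+1}}{U_j}(x,t)
\;=\;
\Bigl(\frac{\mu_{j+1}(\mu_j^2+|x|^2)}{\mu_j(\mu_{j+1}^2+|x|^2)}\Bigr)^{\frac{n-2}{2}}.
\]
Since $\mu_{j+1}<\mu_j$, the inequality $U_{j+1}>U_j$ is equivalent (after cancelling the positive factor $\mu_j-\mu_{j+1}$) to $|x|^2<\mu_j\mu_{j+1}=\bar\mu_{j+1}^{\,2}$, which gives \eqref{A-1} immediately, together with the boundary equality at $|x|=\bar\mu_{j+1}$ (the reason the chain inequalities below are stated with $\gtrsim$ at the pivotal index).

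Next, for the asymptotics \eqref{Uj-ratio-1}–\eqref{Uj-ratio-2} I would split on $|x|$. Writing $\lambda_{j+1}=\mu_{j+1}/\mu_j$, the identity above becomes
\[
\frac{U_{j+1}}{U_j}
\;=\;
\lambda_{j+1}^{\frac{n-2}{2}}
\Bigl(\frac{\mu_j^2+|x|^2}{\mu_{j+1}^2+|x|^2}\Bigr)^{\frac{n-2}{2}}.
\]
For $|x|\le\mu_{0j}$ one has $\mu_j^2+|x|^2\approx\mu_j^2$ and $\mu_{j+1}^2+|x|^2\approx\mu_{j+1}^2\langle y_{j+1}\rangle^2$; this yields the first branch $\lambda_{j+1}^{-(n-2)/2}\langle y_{j+1}\rangle^{-(n-2)}$ of \eqref{Uj-ratio-1}. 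For $|x|>\mu_{0j}$ both quadratic terms are $\approx|x|^2$, giving the second branch $\lambda_{j+1}^{(n-2)/2}$. The computation for $U_{j-1}/U_j$ is symmetric and produces \eqref{Uj-ratio-2} after splitting on $|x|\lessgtr\mu_{0,j-1}$.

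Finally, the chain inequalities in the three regions follow by iterating \eqref{A-1} and the ratio estimates. In $\{|x|\le\bar\mu_{0k}\}$ one has $|x|<\bar\mu_{\ell+1}$ for every $\ell<k$ (because $\bar\mu_{0k}<\bar\mu_{0,\ell+1}$), so $U_\ell<U_{\ell+1}$ strictly for $\ell<k-1$, and $U_k\gtrsim U_{k-1}$ at the pivotal index; analogously in $\{|x|\ge\bar\mu_{02}\}$ one uses that $|x|>\bar\mu_{\ell+1}$ for $\ell\ge 1$ to get the reversed chain. In the intermediate annulus $\{\bar\mu_{0,j+1}\le|x|\le\bar\mu_{0j}\}$ one simultaneously has $|x|\ge\bar\mu_{0,j+1}\ge\bar\mu_{\ell+1}$ for $\ell\ge j$ and $|x|\le\bar\mu_{0j}\le\bar\mu_{\ell+1}$ for $\ell<j$, which produces the two one-sided chains in \eqref{A-4}. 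There is no real obstacle here: the only mild subtlety is that the ``pivot'' inequalities $U_j\gtrsim U_{j\pm 1}$ must be stated with $\gtrsim$ rather than $>$ because $\mu_j$ differs from $\mu_{0j}$ by a lower order term, so equality at $|x|=\bar\mu_{0j}$ for the idealized profile is replaced by comparability.
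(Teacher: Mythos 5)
Your proposal is correct and follows essentially the same route as the paper: it computes the ratio $U_{j+1}/U_j$ explicitly, identifies the crossover at $|x|=\bar\mu_{j+1}$ (the paper phrases this as the ratio being strictly decreasing and equal to $1$ there), and obtains \eqref{Uj-ratio-1}–\eqref{Uj-ratio-2} by the same case split at $|x|\approx\mu_{0j}$ (resp.\ $\mu_{0,j-1}$), with the chain inequalities and the $\gtrsim$ at the pivotal index handled as in the paper.
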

\begin{proof}
\eqref{A-1}-\eqref{A-4} follow from that $\frac{U_{j+1}}{U_j}$ is strictly decreasing about $|x|$ and $\frac{U_{j+1}}{U_j}(\bar{\mu}_{j+1})=1$, (see Figure \ref{fig:3-bubble}). Up to a multiplicity of the constant $\alpha_n$, $U_j=\mu_{j}^{\frac{2-n}{2}}(1+|y_j|^2)^{\frac{2-n}{2}}$ and
	\begin{align}\label{Uj+1}
		U_{j+1}=\frac{\mu_{j+1}^{\frac{n-2}{2}}}{(\mu_{j+1}^2+|x|^2)^{\frac{n-2}{2}}}=\frac{\mu_{j+1}^{\frac{n-2}{2}}\mu_j^{2-n}}{(\lambda_{j+1}^2+|y_j|^2)^{\frac{n-2}{2}}},
	\end{align}
	then
	\begin{equation}
	\frac{U_{j+1}}{U_j}
 =\lambda_{j+1}^{\frac{n-2}{2}}\frac{(1+|y_j|^2)^{\frac{n-2}{2}}}{(\lambda_{j+1}^2+|y_j|^2)^{\frac{n-2}{2}}}
 \approx
\lambda_{j+1}^{-\frac{n-2}{2}}
\langle y_{j+1} \rangle^{-(n-2)}
\1_{\{ |x|\le \mu_{0j}\}}
+
\lambda_{j+1}^{\frac{n-2}{2}}
\1_{\{ |x| > \mu_{0j}\}} ,
	\end{equation}
for $j=1, \dots, k-1$.
	This finishes the proof of \eqref{Uj-ratio-1}.
Similarly,
	\begin{align}
		\frac{U_{j-1}}{U_j}
	=
	\lambda_j^{-\frac{n-2}{2}}
	\frac{(\lambda_j^2 + |y_{j-1}|^2)^{\frac{n-2}{2}}}
	{(1+|y_{j-1}|^2)^{\frac{n-2}{2}}}
		\approx
		\lambda_{j}^{\frac{n-2}{2}}
		\langle y_j \rangle ^{n-2}
		\1_{\{ |x|\le \mu_{0,j-1}\}}
		+
\lambda_{j}^{-\frac{n-2}{2}}
\1_{\{ |x| > \mu_{0,j-1}\}}
	\end{align}
for $j=2, \dots, k$.
Then \eqref{Uj-ratio-2} holds.
\end{proof}
\begin{figure}
	\centering
		\includegraphics[width=0.5\linewidth]{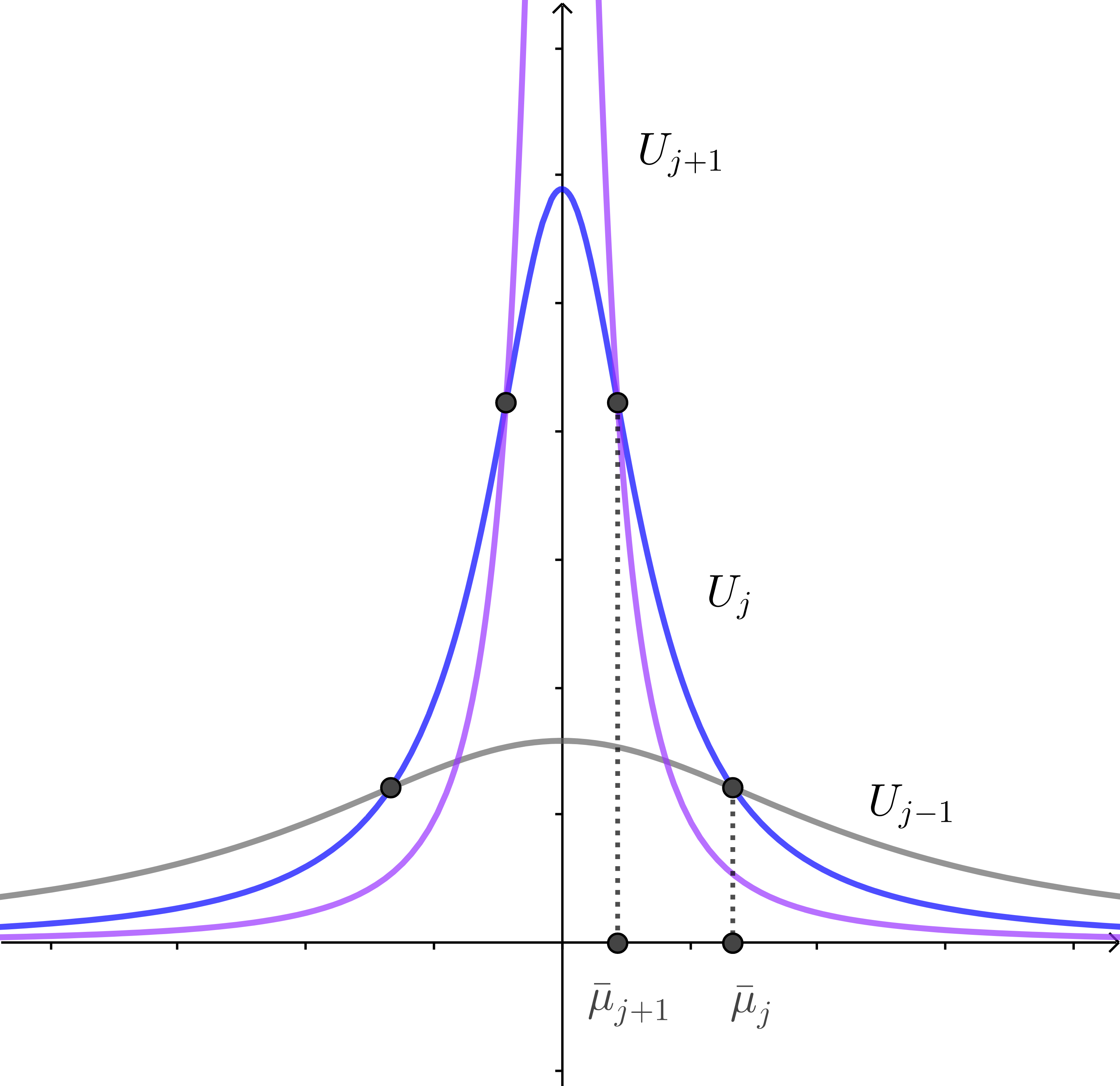}
	\caption{Relation for three bubbles.}
	\label{fig:3-bubble}
\end{figure}
\begin{lemma}\label{lem:phi0}
	Consider $\varphi_0$ defined in \eqref{W-2.6}. One has $|\varphi_0|\lesssim \sum\limits_{i=2}^k\lambda_i U_i\chi_i$.
\end{lemma}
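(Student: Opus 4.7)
The plan is to reduce the estimate to a single pointwise bound on the universal profile $\bar\phi$, then compare $\varphi_{0j}$ with $\lambda_j U_j$ directly on the support of the cutoff $\chi_j$.

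First I would unpack the definitions. By \eqref{W-2.6}, \eqref{W-2.7} and \eqref{W-2.20},
\[
\varphi_{0j}(x,t)=\mu_j(t)^{-\frac{n-2}{2}}\,\lambda_{0j}(t)^{\frac{n-2}{2}}\,\bar\phi\!\left(y_j\right),\qquad y_j:=\frac{x}{\mu_j(t)}.
\]
Since $\bar\phi$ is a fixed radial classical solution of the linear elliptic equation $\Delta\bar\phi+pU^{p-1}\bar\phi+\bar h=0$ in $\mathbb{R}^n$, it is bounded near the origin by standard interior regularity, and it satisfies $\bar\phi(y)=O(|y|^{-2})$ at infinity as stated in the paper. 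These two facts combine into the uniform envelope $|\bar\phi(y)|\lesssim\langle y\rangle^{-2}$ on $\mathbb{R}^n$, and hence
\[
|\varphi_{0j}(x,t)|\;\lesssim\;\mu_j^{-\frac{n-2}{2}}\,\lambda_{0j}^{\frac{n-2}{2}}\,\langle y_j\rangle^{-2}.
\]

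Second, using $U_j(x,t)\approx\mu_j^{-\frac{n-2}{2}}\langle y_j\rangle^{-(n-2)}$ from \eqref{intro:bubble}--\eqref{def:Uj} and the assumption $\mu_j\approx\mu_{0j}$ (so $\lambda_j\approx\lambda_{0j}$), the ratio collapses to
\[
\frac{|\varphi_{0j}|}{\lambda_j U_j}\;\lesssim\;\lambda_{0j}^{\frac{n-4}{2}}\,\langle y_j\rangle^{\,n-4}.
\]
The whole argument then turns on bounding $\langle y_j\rangle$ on $\mathrm{supp}\,\chi_j$. By \eqref{spt-chij}, on this set $|x|\le\bar\mu_{0j}=\sqrt{\mu_{0j}\mu_{0,j-1}}$, which gives $|y_j|\le\bar\mu_{0j}/\mu_j\approx\lambda_{0j}^{-1/2}$. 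Splitting into $|y_j|\le 1$ and $|y_j|\ge 1$, and using that $\lambda_{0j}\to 0$ with $n\ge 7$ (so $(n-4)/2\ge 3/2$), one gets $\lambda_{0j}^{(n-4)/2}\langle y_j\rangle^{n-4}\lesssim 1$ in both regimes. This yields $|\varphi_{0j}|\lesssim\lambda_j U_j$ on $\mathrm{supp}\,\chi_j$, and summing in $j$ via the triangle inequality on $\varphi_0=\sum_{j=2}^k\varphi_{0j}\chi_j$ completes the proof.

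The only nontrivial ingredient is the global bound $|\bar\phi(y)|\lesssim\langle y\rangle^{-2}$, which is not an obstacle: the interior part follows from classical Schauder estimates for the fixed smooth radial solution, while the exterior part is exactly the stated asymptotics $\bar\phi(y)=O(|y|^{-2})$ following \eqref{W-2.20}. Everything else is algebra on the parameters $\mu_j$, $\lambda_j$ and the geometric constraint $|y_j|\lesssim\lambda_{0j}^{-1/2}$ imposed by $\chi_j$.
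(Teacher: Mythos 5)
Your proposal is correct and follows essentially the same route as the paper: bound $|\varphi_{0j}|\lesssim\mu_j^{-\frac{n-2}{2}}\lambda_{0j}^{\frac{n-2}{2}}\langle y_j\rangle^{-2}$ via the envelope $|\bar\phi(y)|\lesssim\langle y\rangle^{-2}$, then use that $\mathrm{supp}\,\chi_j$ forces $|y_j|\lesssim\lambda_{0j}^{-1/2}$ to compare with $\lambda_j U_j$. The parameter algebra matches the paper's verification that $\mu_{j-1}^{-\frac{n-2}{2}}\langle y_j\rangle^{-2}\lesssim\lambda_j U_j$ on that set, so no further comment is needed.
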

\begin{proof}
By \eqref{W-2.6} and \eqref{W-2.20}, we have
	\begin{align}\label{phi0-f-est}
		|\varphi_0|\lesssim\sum_{i=2}^k\mu_{i-1}^{-\frac{n-2}{2}}\langle y_i\rangle^{-2}\chi_i.
	\end{align}
	It follows from \eqref{spt-chij} that the support of $\chi_i$ are disjoint. More precisely, the support of $\chi_i$ is contained in $ \{\lambda_{i+1}^{\frac12}\leq|y_i|\leq \lambda_i^{-\frac12}\}$. It is easy to verify that $\mu_{i-1}^{-\frac{n-2}{2}}\langle y_i\rangle^{-2}\lesssim \lambda_i U_i$ in this set.
\end{proof}

\begin{lemma}\label{lem:outer-1}For $0<\alpha<a<1$, there exists $R$ large enough and $t_0$ negative enough such that $B[\vec{\phi}]$ defined in \eqref{def:Bphi} satisfies
	\begin{align}
	\begin{split}\label{Bphi-main}
	\|B[\vec{\phi}\,]\|^{out}_{\alpha,\sigma}\lesssim
	R^{\alpha-a}
	\|\vec{\phi}\|^{in,*}_{a,\sigma}
\end{split}
	\end{align}
\end{lemma}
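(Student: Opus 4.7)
\textbf{Proof plan for the estimate on $B[\vec{\phi}]$.}

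The plan is to decompose $B[\vec{\phi}]=\sum_{j=1}^k(B_{1j}+B_{2j}+B_{3j}+B_{4j})$ into the four types of contributions:
\begin{align*}
B_{1j}&=2\nabla_x\eta_j\cdot\nabla_x\varphi_j,\quad B_{2j}=(-\partial_t\eta_j+\Delta_x\eta_j)\varphi_j,\\
B_{3j}&=p(u_*^{p-1}-U_j^{p-1})\varphi_j\eta_j,\quad B_{4j}=-\dot\mu_j\tfrac{\partial}{\partial\mu_j}\varphi_j\,\eta_j,
\end{align*}
and bound each one by a multiple of the ambient weights $w_{1i},w_{2i}$ from \eqref{def:w1j} in the appropriate spatial region, extracting an overall factor $R^{\alpha-a}$ from the interplay between the inner norm (which carries a prefactor $R^{n+1-a}$ on $\phi_j$) and the outer weights (which carry the spatial decay $|x|^{-2-\alpha}$).

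First I would handle $B_{1j}$ and $B_{2j}$. Both are supported in the annulus $\mathcal{A}_j:=\{2R\mu_{0j}\le|x|\le 4R\mu_{0j}\}$, where $|y_j|\approx R$, and where $|\nabla^m_x\eta_j|\lesssim (R\mu_{0j})^{-m}$ for $m=1,2$ while $|\partial_t\eta_j|\lesssim|\dot\mu_{0j}|/\mu_{0j}\approx|t|^{-1}$. Unpacking definition \eqref{def:phi^*_nu} with $\nu(\tau)=(-t)^{\gamma_j}\mu_{0j}^{(n-2)/2}$ gives the pointwise bounds $|\phi_j(y_j,t)|\lesssim\|\vec\phi\|^{in,*}_{a,\sigma}\nu(t)R^{-a}$ and $|\nabla_y\phi_j(y_j,t)|\lesssim\|\vec\phi\|^{in,*}_{a,\sigma}\nu(t)R^{-1-a}$ on $\mathcal{A}_j$. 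A direct calculation then yields
\[
|B_{1j}(x,t)|+|B_{2j}(x,t)|\lesssim \|\vec\phi\|^{in,*}_{a,\sigma}\,R^{-2-a}\mu_{0j}^{-2}|t|^{\gamma_j}\mathbf{1}_{\mathcal{A}_j}.
\]
Comparing with $w_{1j}|_{\mathcal{A}_j}\approx \mu_{0j}^{\alpha}|t|^{\gamma_j}(R\mu_{0j})^{-2-\alpha}=R^{-2-\alpha}\mu_{0j}^{-2}|t|^{\gamma_j}$ produces exactly the factor $R^{\alpha-a}$ as required.

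Next I would treat $B_{4j}$. Writing $\partial_{\mu_j}\varphi_j=-\mu_j^{-1}\mu_j^{-(n-2)/2}(\tfrac{n-2}{2}\phi_j+y_j\cdot\nabla_y\phi_j)$ and using $|\dot\mu_j|\lesssim|\dot\mu_{0j}|=\alpha_j\mu_{0j}/(-t)$, the inner norms supply $|y\nabla_y\phi_j|+|\phi_j|\lesssim \|\vec\phi\|^{in,*}_{a,\sigma}\nu(t)R^{n+1-a}\langle y\rangle^{-(n+1)}$ on $\operatorname{supp}\eta_j$. One checks that the resulting pointwise bound is absorbed by $R^{\alpha-a}w_{1j}^{\#}$ (where $w_{1j}^{\#}$ denotes $w_{1j}$ without its indicator cutoff, extended over $|x|\le 4R\mu_{0j}$, which is dominated by $w_{1j}$ up to radii $\bar\mu_{0j}$ and by the tail of $w_{1,j+1}$ or $w_{2j}$ outside).

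The main obstacle is $B_{3j}$. Here I would split $\operatorname{supp}\eta_j\cap\{|x|\le 4R\mu_{0j}\}$ into three sub-regions:
(i) the inner region $\{|x|\lesssim \bar\mu_{0,j+1}\}$ where Lemma \ref{lem:Uij} shows $\bar U\approx U_{j+1}+\cdots+U_k$ dominates and $|u_*^{p-1}-U_j^{p-1}|\lesssim U_{j+1}^{p-1}$;
(ii) the bulk region $\{\bar\mu_{0,j+1}\lesssim|x|\lesssim \bar\mu_{0j}\}$ where $\bar U\approx U_j$ so the difference $|u_*^{p-1}-U_j^{p-1}|\lesssim U_{j}^{p-2}(U_{j-1}+U_{j+1})+|\varphi_0|U_j^{p-2}$, controlled by $\lambda_j^{(n-2)/2}+\lambda_{j+1}^{(n-2)/2}$ times $U_j^{p-1}$ via the ratio formulas \eqref{Uj-ratio-1}--\eqref{Uj-ratio-2} together with Lemma \ref{lem:phi0};
(iii) the outer region $\{\bar\mu_{0j}\lesssim|x|\le 4R\mu_{0j}\}$ where $U_{j-1}$ becomes comparable to $U_j$ and the difference is no longer small. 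In each subregion I match the resulting bound against one of $w_{1j},w_{1,j+1},w_{2j},w_{2,j-1}$, again recovering the $R^{\alpha-a}$ gain from the mismatch between $R^{n+1-a}\langle y_j\rangle^{-(n+1)}$ (inner) and $|x|^{-2-\alpha}$ (outer) in the transition zones.

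The rest is bookkeeping: sum over $j=1,\dots,k$ and use that the supports $\operatorname{supp}\eta_j$ overlap only at the transition scales $\bar\mu_{0j}$, so the contributions add without loss. For $j=1$ the outer tail $\{|x|\ge \bar\mu_{01}\}$ does not appear since $\eta_1$ is supported in $\{|x|\le 4R\mu_{01}\}\subset\{|x|\le |t|^{\delta}\}$ for $t_0$ sufficiently negative, which is why $w_3$ does not need to be invoked on the right-hand side beyond what is already available in $w_{11}$. Fixing $\sigma,\delta>0$ small first and then taking $R$ large (and finally $t_0$ very negative) delivers the stated inequality.
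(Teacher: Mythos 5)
Your treatment of the cut-off terms ($B_{1j}$, $B_{2j}$, $B_{4j}$) is essentially the paper's argument and is fine, but your handling of the potential term $B_{3j}=p(u_*^{p-1}-U_j^{p-1})\varphi_j\eta_j$ contains a genuine gap. In your region (i), $\{|x|\lesssim\bar\mu_{0,j+1}\}$, the claim $|u_*^{p-1}-U_j^{p-1}|\lesssim U_{j+1}^{p-1}$ is false whenever $k\ge j+2$: at scales $|x|\approx\mu_{0m}$ with $m\ge j+2$ the locally dominant bubble is $U_m$, and $U_m^{p-1}\approx\mu_m^{-2}\gg\mu_{j+1}^{-2}\approx U_{j+1}^{p-1}$ there. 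Correspondingly, your matching list ``$w_{1j},w_{1,j+1},w_{2j},w_{2,j-1}$'' cannot close the estimate: one checks that $\mu_{0m}^{-2}|t|^{\gamma_j}$ is not dominated by any of these weights in $\{|x|\lesssim\bar\mu_{0,m}\}$, and the correct bookkeeping (as in the paper) splits the interior into the annuli $\{\bar\mu_{0,m+1}\le|x|\le\bar\mu_{0m}\}$, $m=j+1,\dots,k$, bounds the term by $U_m^{p-1}|\varphi_j|$ there, and matches it against $w_{1m}$, gaining smallness from the factor $|t|^{\gamma_j-\gamma_m}$ (with $\gamma_m>\gamma_j$). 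In your region (ii) the stated bound ``$(\lambda_j^{\frac{n-2}{2}}+\lambda_{j+1}^{\frac{n-2}{2}})U_j^{p-1}$'' is also wrong near the crossing radius: by \eqref{Uj-ratio-1}, $U_{j+1}/U_j\approx\lambda_{j+1}^{-\frac{n-2}{2}}\langle y_{j+1}\rangle^{2-n}$ on $\{\bar\mu_{0,j+1}\le|x|\le\mu_{0j}\}$, which is of order one at $|x|\approx\bar\mu_{0,j+1}$, not $O(\lambda_{j+1}^{\frac{n-2}{2}})$. This cross term has the profile $\mu_{j+1}^{\frac{n-2}{2}}\mu_j^{-2}|x|^{2-n}$ after multiplying by $|\varphi_j|\lesssim|t|^{\gamma_j}R^{n+1-a}\|\vec\phi\|^{in,*}_{a,\sigma}$, and it is precisely the piece that must be compared with $w_{2j}$; your pointwise claim would make $w_{2j}$ unnecessary, which indicates the mechanism has not been correctly identified.

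Two smaller points. Your region (iii), $\{\bar\mu_{0j}\lesssim|x|\le4R\mu_{0j}\}$, is empty once $t_0$ is taken negative enough after fixing $R$, since $\bar\mu_{0j}/\mu_{0j}=\lambda_{0j}^{-1/2}\to\infty$; so no argument is needed (nor available) there, and the statement that ``$U_{j-1}$ becomes comparable to $U_j$'' inside $\operatorname{supp}\eta_j$ does not occur. Finally, for $B_{3j}$ and $B_{4j}$ the factor $R^{\alpha-a}$ does not come from the mismatch between $\langle y_j\rangle^{-(n+1)}$ and $|x|^{-2-\alpha}$ (these terms live at $\langle y_j\rangle\approx1$ as well); it is obtained, as in the paper, by first extracting a time-decay factor (such as $\lambda_j^{\frac{n-2}{2}}$, $|\dot\mu_j\mu_j|$, or $|t|^{\gamma_j-\gamma_m}$) in front of $R^{n+1-a}$ and then choosing $t_0$ so negative that $|t_0|^{-\epsilon}<R^{\alpha-n-1}$. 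Your final sentence about the order of parameter choices is consistent with this, but the body of the plan attributes the gain to the wrong source.
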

\begin{proof}
By the definition \eqref{inner-symbol} and \eqref{def:phi^*_nu},
	\begin{equation}\label{phi-est}
		\langle y_j\rangle |\nabla_{y_j}\phi_j(y_j,t)|+|\phi_j(y_j,t)|\lesssim |t|^{\gamma_j}\mu_{0j}^{\frac{n-2}{2}}R^{n+1-a}\langle y_j\rangle^{-n-1}\|\phi_j\|^{in,*}_{j,a,\sigma}.
	\end{equation}
	
	$\bullet$ First, using \eqref{W-3.4.5}
	\begin{align}\label{Bphi-1}
		\begin{split}
			|\dot \mu_j\frac{\partial\varphi_j}{\partial\mu_j}\eta_j|= \ &\left| \dot \mu_j\mu_j^{-\frac{n}{2}}\left(\frac{n-2}{2} \phi_j(y_j,t)+y_j\cdot \nabla_{y_j} \phi_j(y_j,t)\right)\eta_{j}\right|\\
			\lesssim\ & | \dot \mu_j| \mu_j^{-1}(-t)^{\gamma_j}R^{n+1-a}\langle y_j\rangle^{-n-1}\eta_j\|\phi_j\|^{in,*}_{j,a,\sigma}
			\\
	\lesssim \ &
	|t_0|^{-\epsilon}
	R^{n+1-a}
	w_{1j}\|\phi_j\|^{in,*}_{j,a,\sigma}.
		\end{split}
	\end{align}
Here we choose $\epsilon$ small such that $|\dot{\mu}_j  \mu_j| \lesssim |t|^{-\epsilon}$ for $j=1, \dots, k$. We have used \eqref{def:w1j} in the last step.
	
	$\bullet$ Second, \eqref{phi-est} implies that
	\begin{align}\label{varphi-est}
	    |\varphi_j(x,t)|\lesssim |t|^{\gamma_j}R^{-a}\|\phi_j\|_{j,a,\sigma}^{in,*}
	    \mbox{ \ \ for \ }
	    2R\le |y_j| \le 4R
	    .
	\end{align}
	Using \eqref{def:eta}, we obtain
	\begin{align}\label{Bphi-2}
	    |\Delta\eta_j \varphi_j|\lesssim  (R\mu_j)^{-2}|t|^{\gamma_j}R^{-a}\|\phi_j\|^{in,*}_{j,a,\sigma}\mathbf{1}_{\{2R\leq |y_j|\leq 4R\}}\lesssim R^{\alpha-a}w_{1j}\|\phi_j\|^{in,*}_{j,a,\sigma}.
	\end{align}
	Similarly, we have	\begin{align}
		\begin{split}\label{Bphi-2-3}
		|\nabla_x\eta_j\cdot\nabla_x\varphi_j|\lesssim&\  R^{-1}\mu_j^{-1}|t|^{\gamma_j}\mu_j^{-1}R^{-1-a}\|\phi_j\|_{j,a,\sigma}^{in,*}\mathbf{1}_{\{2R\mu_j\leq |x|\leq 4R\mu_j\}}
			\lesssim R^{\alpha-a} w_{1j}\|\phi_j\|^{in,*}_{j,a,\sigma}
		\end{split}
	\end{align}
	and
	\begin{align}
	\begin{split}\label{Bphi-2-2}
	    |\partial_t\eta_j\varphi_j|
	    &
	    \lesssim R^{-1}\mu_j^{-2}
	    |\dot \mu_j | |t|^{\gamma_j}R^{-a}\|\phi_j\|^{in,*}_{j,a,\sigma}\mathbf{1}_{\{2R\leq |y_j|\leq 4R\}}
	    \lesssim R^{1+\alpha-a}
	    |\dot \mu_j | w_{1j}\|\phi_j\|^{in,*}_{j,a,\sigma}
	\\
	&
	\lesssim
	R^{1+\alpha-a}
	|t_0|^{-\epsilon}
	w_{1j}\|\phi_j\|^{in,*}_{j,a,\sigma} ,
	\end{split}
	\end{align}
	when $|\dot{\mu}_j|\lesssim |t|^{-\epsilon}$ for $j=1, \dots, k$.

	
	$\bullet$ Third, to estimate $\left|p(u_*^{p-1}-U_j^{p-1})\varphi_j\eta_j\right|$. We only give calculation details for $j=2, \dots , k-1$ since the case $j=1$ and $j=k$ can be dealt with similarly.
	Consider it in $\{\bar\mu_{0,j+1}\leq |x|\leq 4R\mu_{0j}\}$.
	By Lemma \ref{lem:Uij} and Lemma \ref{lem:phi0},
\begin{equation*}
U_{j}^{-1}\left( \sum\limits_{i\ne j} U_{i} + \varphi_{0} \right) \ge 2^{-1}U_{j}^{-1}\left( \sum\limits_{i\ne j} U_{i} \right) -\lambda_{j}^{\frac 12} > -\frac 12,
\end{equation*}
when $t_0$ is very negative. Consequently $u_*>\frac12U_j$. Thus by the mean value theorem and \eqref{Uj-ratio-1} and \eqref{Uj-ratio-2},
\begin{align}
		\begin{split}\label{u*-U}
		&	|u_*^{p-1}-U_j^{p-1}|
			\lesssim U_j^{p-1}\left(\frac{U_{j+1}}{U_j}+\frac{U_{j-1}}{U_j}
			+
	\lambda_j \right)
	\\
	\lesssim \ &
	\mu_j^{-2}
\langle y_j\rangle^{-4}
	\left(
	\lambda_{j+1}^{-\frac{n-2}{2}}
	\langle  y_{j+1} \rangle^{2-n}
	\mathbf{1}_{\{\bar\mu_{0,j+1}\leq|x|\leq \mu_{0j}\}}
	+
	\lambda_{j+1}^{\frac{n-2}{2}}
	+\lambda_j^{\frac{n-2}{2}}\langle y_j\rangle^{n-2}+\lambda_j\right).
		\end{split}
	\end{align}
	Therefore, using $|\varphi_j|\lesssim|t|^{\gamma_j}R^{n+1-a}
	\|\phi_j\|_{j,a,\sigma}^{in,*}$,
\begin{equation}\label{Bphi-3}
		\begin{aligned}
		&
		\left|p(u_*^{p-1}-U_j^{p-1})\varphi_j\eta_j\right|
		\1_{\{\bar\mu_{0,j+1}\leq |x|\leq 4R\mu_j\}}
		\\
		\lesssim & \
		R^{n+1-a} \mu_j^{-2}|t|^{\gamma_j}
		\lambda_{j+1}^{-\frac{n-2}{2}}
		\langle  y_{j+1}\rangle^{2-n}\mathbf{1}_{\{\bar\mu_{0,j+1}\leq|x|\leq \mu_{0j}
		\}}
		\|\phi_j\|^{in,*}_{j,a,\sigma}\\
			&\ +R^{n+1-a}(\lambda_{j+1}^{\frac{n-2}{2}}+\lambda_j^{\frac{n-2}{2}}
			R^{n-2}+\lambda_j)w_{1j}\|\phi_j\|^{in,*}_{j,a,\sigma}\\
			 \lesssim &\
			 R^{n+1-a}|t_0|^{-\epsilon}(w_{2j}+w_{1j})\|\phi_j\|^{in,*}_{j,a,\sigma}.
		\end{aligned}
\end{equation}
		Here we have used the following fact.		\begin{equation*}
\begin{aligned}
&
R^{n+1-a} \mu_j^{-2}|t|^{\gamma_j}
		\lambda_{j+1}^{-\frac{n-2}{2}}
		\langle  y_{j+1}\rangle^{2-n}
		\mathbf{1}_{\{\bar\mu_{0,j+1}\leq|x|\leq \mu_{0j}
		\}}
\\
\approx \ &
R^{n+1-a} \lambda_{j+1} \lambda_{j}^{\frac n2 -1} |t|^{\sigma}
|t|^{-2\sigma} \mu_{j+1}^{\frac n2 -2} \mu_{j}^{-1} |x|^{2-n}
\mathbf{1}_{\{\bar\mu_{0,j+1}\leq|x|\leq \mu_{0j}
		\}}
\\
\lesssim \ &
R^{n+1-a}
|t_0|^{-\epsilon } w_{2j} .
\end{aligned}
	\end{equation*}

In  $\{\bar\mu_{0,m+1}\leq|x|\leq \bar\mu_{0m}\}$, $m=j+1,\cdots,k$, one has $|u_*^{p-1}-U_j^{p-1}|\lesssim U_m^{p-1}
	\approx \mu_m^{-2}
	\langle y_m \rangle^{-4}
	$
	and $|\varphi_j|\lesssim(-t)^{\gamma_j}R^{n+1-a}\|\phi_j\|_{j,a,\sigma}^{in,*}$.
	Then it is easy to see
	\begin{align*}
		\left|p(u_*^{p-1}-U_j^{p-1})\varphi_j\eta_j\right|\lesssim
		R^{n+1-a}
		|t|^{\gamma_j-\gamma_m} w_{1m}\|\phi_j\|^{in,*}_{j,a,\sigma}\lesssim
		R^{n+1-a}|t_0|^{-\epsilon}w_{1m}\|\phi_j\|^{in,*}_{j,a,\sigma}.
	\end{align*}
Taking $t_0$ very negative such that $|t_0|^{-\epsilon}<R^{\alpha-n-1}$, we obtain \eqref{Bphi-main}.
\end{proof}

Recall $E^{out}$ defined in \eqref{def:Eout}. We reorganize the terms as the following.
\begin{align}
	E^{out}=\bar E_{11}+\bar E_2+\bar E_3+\bar E_4+\bar E_5
\end{align}
where $\bar E_{11}$ is defined in \eqref{W-2.10}, and
\begin{align}
	\bar E_2=&\mu_1^{-\frac{n+2}{2}}D_1[\vec{\mu}_1](1-\eta_1)+\sum_{j=2}^k\mu_j^{-\frac{n+2}{2}}D_j[\vec{\mu}_1]
	(\chi_j-\eta_j )+
	\sum_{j=2}^k
	\mu_j^{-\frac{n+2}{2}}\Theta_j[\vec{\mu}_1]\chi_j
	\\
	\bar E_3=&\sum_{j=2}^kp(\bar U^{p-1}-U_j^{p-1})\varphi_{0j}\chi_j\\
	\bar E_4=
	&\sum_{j=2}^{k}\left(2 \nabla_{x} \varphi_{0 j}
	\cdot \nabla_{x}\chi_{j}+
	\Delta_{x}\chi_{j} \varphi_{0 j}\right)-\sum_{j=2}^{k} \partial_{t}\left(\varphi_{0 j} \chi_{j} \right)\\
	\bar E_5=&N_{\bar U}[\varphi_0].
\end{align}

\begin{lemma}\label{lem:outer-2}
	 There exist $\sigma,\epsilon>0$ small and $t_0$ negative enough, such that
	\[E^{out}\lesssim
	R^{-1}
	\left(\sum_{j=1}^kw_{1j}+\sum_{j=1}^{k-1} w_{2j}+w_3\right)
	.
	\]
\end{lemma}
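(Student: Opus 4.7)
The plan is to split $E^{out}$ into its five pieces $\bar E_{11}, \bar E_2, \bar E_3, \bar E_4, \bar E_5$ and bound each in the dyadic shells $\Omega_j := \{\bar\mu_{0,j+1} \le |x| \le \bar\mu_{0j}\}$ for $j = 1, \ldots, k$ (with the convention $\bar\mu_{0,k+1} = 0$), together with the exterior region $\{|x| \ge \bar\mu_{01}\}$. In each $\Omega_j$ the bubble $U_j$ dominates by Lemma \ref{lem:Uij}, and the weights $w_{1j}$ (inner $\chi_j$-support), $w_{2j}$ and $w_{2,j-1}$ (transition shells between adjacent bubbles), and $w_3$ (far field) are designed to precisely match these sizes.

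For $\bar E_{11}$, in $\Omega_j$ I would Taylor-expand
\[
\bar U^p - \sum_i U_i^p = p U_j^{p-1} \sum_{i\ne j} U_i + O\Bigl( U_j^{p-2}\bigl(\sum_{i\ne j} U_i\bigr)^2\Bigr).
\]
The designed subtraction $p U_j^{p-1} U_{j-1}(0)\chi_j$ cancels the leading contribution of $U_{j-1}$, leaving $p U_j^{p-1}(U_{j-1} - U_{j-1}(0))$ (which vanishes quadratically near the origin, producing a factor $|x|^2/\mu_{j-1}^2$), plus the cross interactions with $U_i$, $i \ne j-1, j$, plus the quadratic remainder. Substituting \eqref{Uj-ratio-1}--\eqref{Uj-ratio-2} and the explicit form of $U_j$, each contribution is bounded by $w_{1j} + w_{2,j-1} + w_{2j}$ times a positive power of $\lambda_{0j}$ or $\lambda_{0,j+1}$. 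The $(1-\chi_j)\partial_t U_j$ pieces are supported on the narrow annuli $|x| \sim \bar\mu_{0j}$ and $|x| \sim \bar\mu_{0,j+1}$, and are absorbed by $w_{2,j-1}$ and $w_{2j}$ respectively.

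For $\bar E_2$, using the bounds on $D_j$ and $\Theta_j$ from \eqref{Djmu-1} (available since $\|\vec\mu_1\|_\sigma \le 1$) and the fact that $\chi_j - \eta_j$ is supported in $\{4R\mu_{0j} \le |x| \le \bar\mu_{0j}\}$, each term fits into $w_{1j}$ or $w_3$ with an extra power of $R^{-1}$ coming from $\langle y_j \rangle^{-(n-2)} \le R^{-(n-2)}$ on that annulus, while the $D_1(1-\eta_1)$ piece in the far region matches $w_3$ directly. For $\bar E_3$, combine Lemma \ref{lem:phi0} ($|\varphi_{0j}| \lesssim \lambda_{0j} U_j \chi_j$) with the $|\bar U^{p-1} - U_j^{p-1}|$ estimate of the type \eqref{u*-U}; the extra $\lambda_{0j}$ factor yields a negative power of $|t|$ that is absorbed into $w_{1j}$. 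For $\bar E_4$, the cutoffs $\nabla\chi_j$, $\Delta\chi_j$, $\partial_t\chi_j$ localise to the transition shells at radii $\bar\mu_{0j}$ and $\bar\mu_{0,j+1}$ and pick up the factors $\bar\mu_{0j}^{-1}$, $\bar\mu_{0j}^{-2}$, $\bar\mu_{0j}^{-1}\dot{\bar\mu}_{0j}$ respectively, which combine with $|\varphi_{0j}|$ to match $w_{2,j-1}$ and $w_{2j}$. For $\bar E_5 = N_{\bar U}[\varphi_0]$, since $|\varphi_0| \lesssim |t|^{-\epsilon}\bar U$ the quadratic bound $|N_{\bar U}[\varphi_0]| \lesssim \bar U^{p-2}\varphi_0^2 \lesssim \lambda_{0j}^2 U_j^p \chi_j$ is much smaller than $w_{1j}$.

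The principal obstacle is purely bookkeeping: five types of errors times $k+1$ regions times three weight families, each with a different dominant bubble and a different range of $|x|/\mu_{0j}$ to check. The gain $R^{-1}$ is harvested at the very end by choosing $t_0 = t_0(R)$ sufficiently negative: every spare factor of $|t|^{-\epsilon}$ appearing in the estimates (from $\|\vec\mu_1\|_\sigma \le 1$, from the ratios $\lambda_{0j}$, and from the cross interactions) can be converted into $R^{-1}$ via $|t_0|^{-\epsilon} \le R^{-1}$ once $\sigma, \delta, \epsilon$ are fixed small.
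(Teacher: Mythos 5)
Your overall scheme coincides with the paper's: the same five-way splitting $\bar E_{11},\bar E_2,\bar E_3,\bar E_4,\bar E_5$, shell-by-shell comparison with $w_{1j},w_{2j},w_3$ via Lemma \ref{lem:Uij}, and conversion of spare $|t_0|^{-\epsilon}$ factors into $R^{-1}$. However, your estimate of $\bar E_5$ as written fails. You bound $|N_{\bar U}[\varphi_0]|\lesssim \bar U^{p-2}\varphi_0^2\lesssim \lambda_{0j}^2U_j^p\chi_j$ using only the crude bound $|\varphi_{0j}|\lesssim\lambda_{0j}U_j$ of Lemma \ref{lem:phi0}, and claim this is much smaller than $w_{1j}$. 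That inequality is false near the core of the $j$-th bubble: at $|x|\approx\mu_{0j}$ one has $\lambda_{0j}^2U_j^p\approx\lambda_{0j}^2\mu_{0j}^{-\frac{n+2}{2}}$ while $w_{1j}\approx|t|^{-\sigma}\lambda_{0j}^{\frac{n-2}{2}}\mu_{0j}^{-\frac{n+2}{2}}$, so the ratio is $|t|^{\sigma}\lambda_{0j}^{-\frac{n-6}{2}}\to\infty$ for $n\ge7$ (and none of the other weights $w_{1i},w_{2i},w_3$ is larger there, so the sum does not help). The bound $\lambda_jU_j$ is sharp only near the edges of $\operatorname{supp}\chi_j$; to close the estimate you must use the sharp spatial decay $|\varphi_{0j}|\lesssim\mu_{0,j-1}^{-\frac{n-2}{2}}\langle y_j\rangle^{-2}\approx|t|^{\gamma_j+\sigma}\langle y_j\rangle^{-2}$, which comes from $\bar\phi(y)=O(|y|^{-2})$ in \eqref{W-2.20} (this is exactly \eqref{phi0-f-est}); with it, either the quadratic bound or the paper's simpler $|N_{\bar U}[\varphi_0]|\lesssim|\varphi_0|^p$ (valid since $p<2$) gives $\lesssim|t_0|^{-\epsilon}\sum_j w_{1j}$.

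Two further slips should be repaired. First, $\operatorname{supp}(\chi_j-\eta_j)$ is not contained in $\{4R\mu_{0j}\le|x|\le\bar\mu_{0j}\}$: since $\bar\mu_{0,j+1}\ll\mu_{0j}$, there is also the inner component $\{|x|\lesssim\bar\mu_{0,j+1}\}$ where $\eta_j=1$ but $\chi_j$ vanishes; your sketch never treats this region, and there the term $\mu_j^{-\frac{n+2}{2}}D_j[\vec\mu_1](\chi_j-\eta_j)$ must be absorbed by $w_{1,j+1}$ (as the paper does), not by $w_{1j}$ or $w_3$. Second, your displayed expansion of $\bar U^p-\sum_iU_i^p$ omits $-\sum_{i\ne j}U_i^p$ on the right-hand side; on $\Omega_j$ these dropped terms actually dominate the written remainder $U_j^{p-2}(\sum_{i\ne j}U_i)^2$ (because $U_i\lesssim U_j$ and $p<2$ imply $U_j^{p-2}U_i^2\le U_i^p$), so they are not negligible and must be estimated separately, e.g. $U_{j-1}^p\lesssim|t|^{-\epsilon}w_{1j}$ and $U_{j+1}^p\lesssim|t|^{-\epsilon}w_{2j}$ as in the paper's treatment of $J_2$. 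With these corrections your argument reduces to the paper's proof.
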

\begin{proof}
	$\bullet$ \emph{Estimate of $\bar E_2$.} Consider the first term in $\bar E_2$. The support of $1-\eta_1$ is $\{|y_{01}|\geq 2R\}$.
	Since we assume $\|\vec{\mu}\|_\sigma<1$, one has $|\dot\mu_{11}|\leq \|\vec{\mu}_1\|_{\sigma} |t|^{-1-\sigma} \le |t|^{-1-\sigma}$. Then using \eqref{W-2.24}
	\begin{equation*}
|\mu_{1}^{-\frac{n-2}{2}} D_{1}[\vec{\mu}_1] (1-\eta_1)| \lesssim
|t|^{-1-\sigma} |x|^{2-n} \1_{\{ |x|\ge 2R \}}
\lesssim R^{4+\alpha-n}w_{11} + R^{-1} w_{3}
\lesssim  R^{-1} \left(w_{11} +w_{3} \right)
.
\end{equation*}

For $j\geq 2$, the support of $\chi_j-\eta_j$ is contained in $\{ |x|\le \bar{\mu}_{0,j+1}\}\cup \{
2R \mu_{0j} \le |x|\le \bar{\mu}_{0j}
\}$. In the first set (it is vacuum if $j=k$),  one has $|\chi_j-\eta_j|\leq \chi(2 \bar y_{0,j+1})$  and $\langle y_{0j}\rangle\approx 1$. It follows from \eqref{Djmu-1} that
	\begin{equation*}
		\left|\mu_j^{-\frac{n+2}{2}}D_j[\vec{\mu}_1](\chi_j-\eta_j)\right|\lesssim \mu_j^{-2}
		|t|^{\gamma_{j}}
		\chi(2 \bar y_{0,j+1})
		\lesssim
|t|^{-\epsilon}	w_{1,j+1}.
	\end{equation*}
In the second set, since $|y_{0j}|\ge 2R$, then
	\begin{equation*}
		\left|\mu_j^{-\frac{n+2}{2}}D_j[\vec{\mu}_1](\chi_j-\eta_j)\right|
		\lesssim
		\mu_{j}^{-2} |t|^{\gamma_{j}}
	|y_j|^{-4}	
	\lesssim R^{-1 } w_{1j}.
	\end{equation*}
It is straightforward to have
	\begin{equation*}
|\mu_j^{-\frac{n+2}{2}}\Theta_j[\vec{\mu}_1]\chi_j|
\lesssim
|t|^{-\sigma} \mu_{j}^{-2} |t|^{\gamma_j} \langle y_j \rangle^{-4} \chi_j
\lesssim
|t|^{-\sigma} w_{1j} .
	\end{equation*}
	
	$\bullet$ \emph{Estimate of $\bar E_3$.}
	In the support of $\chi_j$, by Lemma \eqref{lem:Uij}, we have
	\[|\bar U^{p-1}-U_j^{p-1}|\lesssim U_j^{p-1}(\frac{U_{j+1}}{U_j}+\frac{U_{j-1}}{U_j}).
	\]
	It follows from \eqref{W-2.7} and \eqref{W-2.20} that $\varphi_{0j}\leq |t|^{\gamma_j+\sigma}\langle y_i\rangle^{-2}$.
		Using \eqref{u*-U}, similar to \eqref{Bphi-3}, we get
		\[|\bar E_3|\lesssim \sum\limits_{j=2}^{k}
	\lambda_j^{\frac{n-2}{2}}|t|^{\sigma}w_{1j}
	+
	\sum\limits_{j=1}^{k-1}
	\lambda_{j+1}
	\lambda_j^{\frac{n-2}{2}}|t|^{2\sigma}w_{2j}\lesssim  |t_0|^{-\epsilon}(\sum\limits_{j=2}^{k} w_{1j}+\sum\limits_{j=1}^{k-1}w_{2j}).
		\]
	
	$\bullet$ \emph{Estimate of $\bar E_4$.} Notice
	\begin{align*}
		\left|\nabla_{x} \chi_{j}\right| \lesssim  \bar{\mu}_{0j}^{-1} \mathbf{1}_{\left\{\frac12\bar{\mu}_{0 j}<\left|x\right|< \bar{\mu}_{0 j}\right\}}+ \bar{\mu}_{0,j+1}^{-1} \mathbf{1}_{\left\{\frac 12 \bar{\mu}_{0,j+1}<\left|x\right|< \bar{\mu}_{0,j+1}\right\}}.
	\end{align*}
	\begin{align*}
		\left|\Delta_{x} \chi_{j}\right| \lesssim  \bar{\mu}_{0j}^{-2} \mathbf{1}_{\left\{\frac12\bar{\mu}_{0 j}<\left|x\right|< \bar{\mu}_{0 j}\right\}}+ \bar{\mu}_{0,j+1}^{-2} \mathbf{1}_{\left\{\frac 12 \bar{\mu}_{0,j+1}<\left|x\right|< \bar{\mu}_{0,j+1}\right\}}.
	\end{align*}
	By \eqref{W-2.20}, one has $ |\varphi_{0j}|\lesssim
	|t|^{\gamma_j+\sigma}\langle y_j\rangle^{-2}$, $ |\nabla_x\varphi_{0j}|\lesssim \lambda_{0j}^{\frac{n-2}{2}}\mu_j^{-\frac{n}{2}}\langle y_j\rangle^{-3}
	\approx
	\mu_j^{-1}|t|^{\gamma_j+\sigma}\langle y_j\rangle^{-3}$,
	then
	\begin{align*}
	    |\nabla_x\varphi_{0j}\cdot \nabla_x \chi_j|\lesssim |t|^\sigma \frac{\mu_j}{\bar \mu_j}w_{1j}
	    +|t|^{\gamma_j-\gamma_{j+1}+\sigma}
	   \left(
	   \frac{\mu_{j+1}}{\mu_{j}}
	  \right)^{\frac{1-\alpha}{2}} w_{1,j+1}\lesssim |t_0|^{-\epsilon}(w_{1j}+w_{1,j+1}).
	\end{align*}
	\begin{align*}
	    |\varphi_{0j}  \Delta_x \chi_j|\lesssim |t|^\sigma \frac{\mu_j}{\bar \mu_j}w_{1j}
	    +
	    |t|^{
	  \frac{n-2}{2}(\alpha_{j-1} -\alpha_{j}) +
	  \frac{\alpha}{2}(\alpha_{j+1} - \alpha_{j}) + \sigma
	    }
	  w_{1,j+1}\lesssim |t_0|^{-\epsilon}(w_{1j}+w_{1,j+1}).
	\end{align*}
where we have used that $
	  \frac{n-2}{2}(\alpha_{j-1} -\alpha_{j}) +
	  \frac{\alpha}{2}(\alpha_{j+1} - \alpha_{j}) = (\frac{\alpha}{n-6}-1) \left( \frac{n-2}{n-6}\right)^{j-1} \le \frac{\alpha}{n-6}-1$.

 For $ \partial_t(\varphi_{0j}\chi_j)$, we have
\begin{equation*}
		\begin{aligned}
		&	|\partial_t(\varphi_{0j})\chi_j|+|\varphi_{0j}\partial_t
		\chi_j
		|
			\lesssim  |t|^{\gamma_j+\sigma-1}\langle y_j\rangle^{-2}\left(
		\chi_j +
		| \nabla_{x} \chi_j |
		\right)
	\lesssim  |t|^{\sigma-1}
	\mu_{j}^{2-\frac{\alpha}{2}} \mu_{j-1}^{\frac{\alpha}{2}}
	w_{1j}
\lesssim
|t_0|^{-\epsilon} 	w_{1j} .
		\end{aligned}
\end{equation*}

	$\bullet$ \emph{Estimate of $\bar E_5$.}
	It follows from \eqref{N-phi0}, \eqref{phi0-f-est} and $p\in(1,2)$ that
	\[
	\begin{aligned}
	|N_{\bar U}[\varphi_0]|
 \lesssim |\varphi_0|^p
	\lesssim \sum_{j=2}^k
	|t|^{(\gamma_j+\sigma)p}\langle y_j\rangle^{-2p}
	\chi_j^p
\approx
\sum_{j=2}^k
|t|^{2(\alpha_{j-1} - \alpha_j ) +\sigma}
\langle y_j \rangle^{2+\alpha -2p}
\mu_{j}^{-2}|t|^{\gamma_j} \langle y_j \rangle^{-2-\alpha}
\chi_j^p
.
	\end{aligned}
	\]
	
If $2p \ge 2+\alpha$, it is easy to see $|N_{\bar U}[\varphi_0]|
	\lesssim |t_0|^{-\epsilon} \sum_{j=2}^kw_{1j}.$

If $2p < 2+\alpha$,
	\[
	|N_{\bar U}[\varphi_0]|
	 \lesssim
\sum_{j=2}^k
|t|^{\frac{2-\alpha+2p}{2}(\alpha_{j-1} - \alpha_j ) +\sigma}
\mu_{j}^{-2} |t|^{\gamma_j} \langle y_j \rangle^{-2-\alpha}
\chi_j^p
	\lesssim |t_0|^{-\epsilon} \sum_{j=2}^kw_{1j}
.
	\]
	$\bullet$ \emph{Estimate of $\bar E_{11}$.}
Regrouping the terms in \eqref{W-2.10}, one obtain
\be
\begin{split}
	\bar{E}_{11} =&\sum_{j=2}^{k}p U_j^{p-1}\left[\sum_{l\neq j}U_l -U_{j-1}(0)\right] \chi_{j}+\sum_{j=2}^{k}\left[{\bar{U}}^p-\sum_{i=1}^k U_i^p-p U_j^{p-1}\sum_{l\neq j}U_l\right] \chi_{j}\\
	&+
	\left(-\sum_{j=2}^{k}(1-\chi_{j}) \partial_{t} U_{j}
	\right)
	+\left[{\bar{U}}^p-\sum_{j=1}^{k} U_j^p\right]\left(1-\sum_{i=2}^{k} \chi_{i}\right)\\
	:=&J_1+J_2+J_3+J_4.
\end{split}
\ee
Claim:
\[\bar E_{11}(x,t)\lesssim |t_0|^{-\epsilon}\left(\sum_{j=1}^k w_{1j}+\sum_{j=1}^{k-1}w_{2j}+w_3\right).
\]

\begin{enumerate}
\item
 \emph{Estimate of $J_1$.}
	\[J_1=\sum_{j=2}^{k}pU_j^{p-1}\left(\sum_{l\neq j,j-1}U_l\right) \chi_{j}+\sum_{j=2}^kpU_j^{p-1}(U_{j-1}-U_{j-1}(0))\chi_j.\]
	We will bound each term in the above equation. Fix $j\geq 2$. If $i\leq j-2$,
	\[|pU_j^{p-1}U_i\chi_j|\lesssim \mu_j^{-2}\langle y_j \rangle^{-4}\mu_{j-2}^{-\frac{n-2}{2}}\chi_j\lesssim
	|t|^{-\epsilon}w_{1j}.\]
	If $i\ge j+1$, by Lemma \ref{lem:Uij}
	\begin{align}
		\begin{split}
			&|p U_j^{p-1}U_i\chi_j|\lesssim U_{j}^p\frac{U_{j+1}}{U_j}\chi_j\\
			\lesssim&\     \mu_{j}^{-\frac{n+2}{2}}\langle y_j\rangle^{-n-2}
			\left(
			\lambda_{j+1}^{-\frac{n-2}{2}}
			\langle y_{j+1}\rangle^{2-n}\mathbf{1}_{\{\bar\mu_{0,j+1}\leq |x|\leq \mu_{0j} \}}+\lambda_{j+1}^{\frac{n-2}{2}} \right)\chi_j\\
			\lesssim&\  \mu_{j+1}^{\frac{n-2}{2}}\mu_{j}^{-2}
	|x|^{2-n}\mathbf{1}_{\{\bar\mu_{0,j+1}\leq|x|\leq \mu_{0j}
			\}}+(\frac{\lambda_{j+1}}{\lambda_j})^{\frac{n-2}{2}} |t|^{\sigma}w_{1j}\\
			\lesssim&\  |t|^{-\epsilon}(w_{2j}+w_{1j})
		\end{split}
	\end{align}
when we choose $\sigma$ small first and then chose $\epsilon$ small enough.
	Using $|U_{j-1}-U_{j-1}(0)|\chi_j\lesssim \mu_{j-1}^{-\frac{n-2}{2}}\lambda_j$, we have
	\begin{align}
		|p U_j^{p-1}(U_{j-1}-U_{j-1}(0))\chi_j|\lesssim \mu_{j}^{-2}\langle y_j\rangle^{-4}\mu_{j-1}^{-\frac{n-2}{2}}\lambda_j
		\chi_j
		\lesssim
		|t|^{-\epsilon}w_{1j}.
	\end{align}
	
\item
\emph{Estimate of $J_2$.}
By Lemma \eqref{lem:Uij}, we have
	\[
	\left|{\bar{U}}^p-\sum_{i=1}^k U_{i}^p-p U_j^{p-1}\sum_{l\neq j}U_l\right|
	\chi_j
	\lesssim
	\left(U_{j-1}^p+U_{j+1}^p\right)\chi_j.\]
Therein,
	\[
	\begin{aligned}
	U_{j-1}^p\chi_j
	& \approx
|t|^{\frac{n+2}{2}\alpha_{j-1}} \chi_j
 \lesssim
|t|^{\frac{n+2}{2}\alpha_{j-1}}
\mu_{j}^2 |t|^{-\gamma_j}
(\frac{\bar{\mu}_j}{\mu_j})^{2+\alpha}
w_{1j}
\chi_j
\\
&	\approx (\frac{\mu_j}{\mu_{j-1}})^{\frac{2-\alpha}{2}} |t|^{\sigma}
w_{1j}
\chi_j
\lesssim |t|^{-\epsilon}w_{1j},
\end{aligned}
	\]
and
\[U_{j+1}^{p}\chi_j
\approx
\mu_{j+1}^{\frac{n+2}{2}}|x|^{-2-n}\chi_j\lesssim |t|^{2\sigma}\mu_{j+1}^3\mu_j|x|^{-4}w_{2j} \chi_{j}\lesssim |t|^{-\epsilon}w_{2j},
\]
when we take $\sigma$ small first and then take $\epsilon$ small enough.
	
\item
 \emph{Estimate of $J_3$.}
For $j = 2, \dots,  k$, notice that
	\begin{align}\label{ptU}
		|\partial_t U_j|=|\dot\mu_j\mu_j^{-\frac{n}{2}}Z_{n+1}(y_j)|
	\lesssim \mu_j^{-2}
	\mu_{j-1}^{-\frac{n-2}{2}}
	\langle y_j\rangle^{2-n}.
	\end{align}
	The support of $1-\chi_j$ is contained in $\{|x|\leq \bar\mu_{0,j+1}\}\cup
	\{\frac{1}{2}\bar\mu_{0j}\leq|x| < \bar\mu_{0j}\}
	\cup\{\bar\mu_{0j}\leq |x|\}$. In the first set, it is easy to see $1-\chi_j= \chi(2|x|/\bar\mu_{0,j+1})$, then
	\begin{align*}
		| (1-\chi_j)\partial_t U_j|\lesssim
		(\frac{\mu_{j+1}}{\mu_j})^{\frac{2-\alpha}{2}}
		(\frac{\mu_j}{\mu_{j-1}})^{\frac{n-2}{2}}
		|t|^{\sigma}
		w_{1,j+1}
		\chi(2|x|/\bar{\mu}_{0,j+1})
		\lesssim
		|t|^{-\epsilon}w_{1,j+1} .
	\end{align*}
In the second set,
	\begin{equation*}
| \partial_t U_j|\1_{\{\frac{1}{2}\bar\mu_{0j}\leq|x| < \bar\mu_{0j}\}}
\lesssim
\left(
\frac{\mu_{j}}{\mu_{j-1}}
\right)^{\frac{n-4-
\alpha}{2}}
|t|^{\sigma}
w_{1j}
\1_{\{\frac{1}{2}\bar\mu_{0j}\leq|x| < \bar\mu_{0j}\}}
\lesssim |t_0|^{-\epsilon}w_{1j} .
	\end{equation*}
	In the third set, we  split it further to be $\{\bar\mu_{0 j}\leq |x|\}=\cup_{m=2}^j\{\bar\mu_{0m}\leq |x|\leq \bar \mu_{0,m-1}\}\cup\{\bar\mu_{01}\leq |x|\}$.

	Since $|y_j|$ is very large in the third set, \eqref{ptU} implies $|\partial_tU_j|\lesssim$ $\mu_j^{n-4}\mu_{j-1}^{-\frac{n-2}{2}}|x|^{2-n}$. Note that $\mu_j^{n-4}\mu_{j-1}^{-\frac{n-2}{2}}$ decreases about $j$ up to some constant multiplicity. Then in $\{\bar\mu_{0 m}\leq|x|\leq \bar\mu_{0,m-1}\}$, $ m = 2\dots, j$,
	$$|\partial_t U_j|\lesssim\mu_m^{n-4}\mu_{m-1}^{-\frac{n-2}{2}} |x|^{2-n}
	\lesssim |t|^{-\epsilon}w_{2,m-1}.$$
	In $\{\bar\mu_{01}\leq |x|\}$, we have
	\[|\partial_t U_j|\lesssim \mu_2^{n-4} |x|^{2-n}
	\lesssim |t|^{-\epsilon}w_{3}
	.\]
	
\item
 \emph{Estimate of $J_4$.} Recall the definition of $\chi_i$ in \eqref{spt-chij}, we have the support of $J_4$ is contained in the set $\cup_{m=3}^{k}\{\frac12\bar\mu_{0m}\leq |x|\leq \bar\mu_{0m}\} \cup \{\frac 12 \bar{\mu}_{02} \le |x| \}$.
	
In  $\{\frac12\bar\mu_{0m}\leq |x|\leq \bar\mu_{0m}\}$, for $m=3, \dots, k$, by Lemma \ref{lem:Uij}, one has $U_m\approx U_{m-1}\approx \mu_{m-1}^{-\frac{n-2}{2}} \gg U_i$ for $i\neq m, m-1$.  Therefore
	\[|J_4|
	\1_{\{\frac12\bar\mu_{0m}\leq |x|\leq \bar\mu_{0m}\}}
	\lesssim \mu_{m-1}^{-\frac{n+2}{2}}
	\1_{\{\frac12\bar\mu_{0m}\leq |x|\leq \bar\mu_{0m}\}}
	\lesssim
	(\frac{\mu_{m-1}}{\mu_m})^{\frac{\alpha-2}{2}}
	|t|^{\sigma}
	w_{1m}
	\lesssim |t_0|^{-\epsilon}w_{1m}.\]
In  $\{ \frac 12 \bar\mu_{02}\leq|x|\}$, by Lemma \ref{lem:Uij}, when $\sigma <(n-6)^{-1}$,
	\begin{equation*}
\begin{aligned}
&  |J_4|  \lesssim  U_1^{p-1}U_2
\\
 \approx \ & \mu_2^\frac{n-2}{2}
\left(
|x|^{2-n}
\1_{\{ \frac 12 \bar\mu_{02}\leq |x| \le  \bar\mu_{02} \}}
+
|x|^{2-n}
\1_{\{ \bar\mu_{02}\leq |x| \le 1 \}}
+
|x|^{-2-n} \1_{\{1 \le |x| \le \bar{\mu}_{01}\} } + |x|^{-2-n} \1_{ \{ \bar{\mu}_{01} \le |x|  \}}
\right)
\\
\lesssim \ &
|t_{0}|^{-\epsilon}
\left(w_{12} +
w_{21} + w_{11} +w_{3}
\right) .
\end{aligned}
	\end{equation*}

	\end{enumerate}
\end{proof}

\begin{lemma}\label{lem:w1j*-cmp}
There exist $\sigma>0$ small enough and $t_0$ negative enough, such that for $t<t_0$,
\begin{enumerate}
    \item In $ \{
|x|\le \bar{\mu}_{0i} \}
$, $ i = 1,\dots, k$,
we have $w_{1j}^* \lesssim w_{1i}^*$ for $j = 1, \dots ,  i-1$ (it is vacuum if $i=1$)
\item In $\{ \bar{\mu}_{0i} \le |x| \}$, $ i = 1,\dots, k$, we have $w_{1j}^* \lesssim w_{1i}^*$ for $j =  i+1, \dots , k$ (it is vacuum if $i=k$).
\item
In $\{|x|\le \bar{\mu}_{0j} \}$, $w_3^* \lesssim w_{1j}^*$ for $j = 1, \dots,  k$. In $\{ |x|\ge |t|^{\frac 12} \}$, $w_3^* \gtrsim w_{1j}^*$ for $j = 1, \dots,  k$ when $\delta \le (n-2-\alpha)^{-1}$.
\end{enumerate}
Consequently,
\begin{align}\label{eq:w1j*-cmp}
\sum_{j=1}^kw_{1j}^*+w_{3}^*\lesssim
\begin{cases}
w_{1k}^{*}
&
\text{if }
|x| \le \bar{\mu}_{0k},
\\
 w_{1i}^*+w_{1,i+1}^*&
 \text{if }\bar\mu_{0,i+1}\leq|x|\leq \bar\mu_{0i}, i=1,\cdots,k-1,\\
w_{11}^{*} + w_{3}^*&\text{if }\bar \mu_{01}\leq |x| \le |t|^{\frac 12} ,
\\
w_{3}^*&\text{if } |x| \ge |t|^{\frac 12} .
\end{cases}
\end{align}
\end{lemma}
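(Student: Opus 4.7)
The plan rests on two structural observations already visible in the definitions: (a) each $w_{1j}^*(\cdot,t)$ is, up to a universal constant, non-increasing in $|x|$ for every fixed $t$ (the display after \eqref{a1}); and (b) the exponents $\gamma_j$ and $\gamma_j^*$ are monotone in $j$ in compatible directions, namely $\gamma_j$ is increasing and $\gamma_j^*$ is decreasing once $\sigma,\delta$ are small. Combined with the freedom to fix $R$ first and then choose $t_0$ very negative, every pointwise inequality reduces to a single exponent inequality involving $\alpha_j=\frac12((\tfrac{n-2}{n-6})^{j-1}-1)$ and $\alpha_j-\alpha_{j-1}=\frac{2}{n-6}(\tfrac{n-2}{n-6})^{j-2}$.

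For claim (1), if $j<i$ and $|x|\le\bar\mu_{0i}$, then $|x|\le\bar\mu_{0i}\le\mu_{0,i-1}\le\mu_{0j}$, so $w_{1j}^*(x,t)=|t|^{\gamma_j}$, while by the monotonicity in $|x|$ one has $w_{1i}^*(x,t)\ge w_{1i}^*(\bar\mu_{0i},t)\approx|t|^{\gamma_i+\frac{\alpha}{2}(\alpha_{i-1}-\alpha_i)}$. The inequality therefore boils down to $\gamma_j-\gamma_i\le\frac{\alpha}{2}(\alpha_{i-1}-\alpha_i)$, which for $j,i\ge 2$ becomes $(n-2)(\alpha_{i-1}-\alpha_{j-1})\ge\alpha(\alpha_i-\alpha_{i-1})$; using the explicit ratio $(\alpha_i-\alpha_{i-1})/(\alpha_{i-1}-\alpha_{i-2})=\frac{n-2}{n-6}$ and $\alpha<1\le n-6$, this holds for $j=i-1$ and only gets easier for smaller $j$. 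The case $j=1$ is immediate since $\gamma_1=-1-\sigma$ is already very negative.

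For claim (2), when $j>i$ and $|x|\ge\bar\mu_{0i}$ we also have $|x|\ge\bar\mu_{0j}$, so both weights are evaluated on their outer pieces: on $\bar\mu_{0i}\le|x|\le|t|^{1/2}$ they are $|t|^{\gamma^*_\cdot}|x|^{2-n}$, while on $|x|\ge|t|^{1/2}$ they are $|x|^{2\gamma^*_\cdot+2-n}$. Using $\gamma_m^*=-\frac{n-2}{2}\alpha_m-\frac{\alpha}{2}(\alpha_m-\alpha_{m-1})-\sigma$ for $m\ge 2$ and the monotonicity and geometric growth of $\alpha_m$ and $\alpha_m-\alpha_{m-1}$, the inequality $\gamma_j^*\le\gamma_i^*$ is immediate; the sub-case $i=1$ only requires absorbing the correction $(n-2-\alpha)\delta$ in $\gamma_1^*$, which is harmless for $\delta$ small. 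Claim (3) is similar: on $|x|\le\bar\mu_{0j}$ the minimum of $w_{1j}^*$ is $|t|^{\gamma_j+\frac{\alpha}{2}(\alpha_{j-1}-\alpha_j)}$ and $w_3^*=R|t|^{-1-\sigma-(n-4)\delta}$, so $w_3^*\lesssim w_{1j}^*$ holds once $t_0$ is chosen negative enough for $R$ fixed; conversely on $|x|\ge|t|^{1/2}$ the inequality $w_{1j}^*\lesssim w_3^*$ reduces to $|x|^{2\gamma_j^*}\lesssim R|t|^{-\sigma}$, and since $\gamma_j^*\le-\sigma$ (guaranteed by $\delta\le(n-2-\alpha)^{-1}$) and $|x|^2\ge|t|$, the left side is $\le|t|^{\gamma_j^*}\le|t|^{-\sigma}$.

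Finally, the consolidated bound \eqref{eq:w1j*-cmp} follows by partitioning $\mathbb{R}^n$ into the bands $\{|x|\le\bar\mu_{0k}\}$, $\{\bar\mu_{0,i+1}\le|x|\le\bar\mu_{0i}\}$ for $i=1,\dots,k-1$, $\{\bar\mu_{01}\le|x|\le|t|^{1/2}\}$, and $\{|x|\ge|t|^{1/2}\}$, and invoking (1), (2), (3) in each band: in the $i$-th intermediate band only $w_{1i}^*$ and $w_{1,i+1}^*$ survive as dominant, while $w_3^*$ is dominated by $w_{1i}^*$ via (3). The main obstacle is not analytic but bookkeeping: one has to keep track, for each $(i,j)$-pair and each band, of which of the four sub-pieces in the definition of $w_{1j}^*$ is active and then verify the resulting exponent inequality on the nose; once that accounting is done carefully the estimates are forced by the structure of the $\alpha_j$'s.
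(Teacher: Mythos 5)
Your proposal is correct and follows essentially the same route as the paper: pointwise comparison of the piecewise weights using their near-monotonicity in $|x|$, reduction to exponent inequalities for $\gamma_j$, $\gamma_j^*$ (with $\alpha<n-6$ automatic and $\delta\le(n-2-\alpha)^{-1}$ for the $w_3^*$ comparison), and assembly of \eqref{eq:w1j*-cmp} band by band. The only cosmetic difference is that in part (3) you compare $w_3^*$ with each $w_{1j}^*$ directly via its boundary minimum, whereas the paper reduces to the $j=1$ case through parts (1) and (2); the two are interchangeable.
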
	
\begin{proof} (1) For $j = 1, \dots, i-1$, in $\{|x|\leq \bar\mu_{0i} \}$, we have $w^*_{1j}(x,t)= |t|^{\gamma_j}\leq |t|^{\gamma_{i-1}}$ and  $w^*_{1i}\geq |t|^{\gamma_i}\mu_i^\alpha\bar\mu_i^{-\alpha}$. It is easy to verify $|t|^{\gamma_{i-1}}\lesssim |t|^{\gamma_i}\mu_i^\alpha\bar\mu_i^{-\alpha}$ if $\alpha<n-6$.

(2) For $j =  i+1, \dots, k$, in $\{ \bar\mu_{0i}\leq |x|\leq |t|^{\frac12} \}$, $w_{1j}^*(x,t)= |t|^{\gamma_j}\mu_j^\alpha\bar\mu_j^{n-2-\alpha}|x|^{2-n}\approx
|t|^{\gamma_j^*}|x|^{2-n}\leq |t|^{\gamma_i^*}|x|^{2-n}\approx w_{1i}^*(x,t)$, because $\gamma_j^*$ is strictly decreasing on $j$, i.e.
\[\gamma_{1}^*
>
\gamma_{2}^*
> \cdots
> \gamma_k^*.\]
In $\{|x|\ge |t|^{\frac12} \}$, we have $w_{1j}^* \lesssim w_{1i}^*$ by the same reason.

(3) Due to $(1)$, we only need to check $w_3^* \lesssim w_{11}^*$ in $\{|x|\le \bar{\mu}_{01} \}$. It is straightforward to have $R|t|^{-1-\sigma +\delta(4-n)} \lesssim |t|^{-1-\sigma}\langle x \rangle^{-\alpha}$ in $\{|x|\le \bar{\mu}_{01} \}$. Due to $(2)$, in $\{|x|\ge |t|^{\frac 12} \}$, we only need to check $w_3^* \gtrsim w_{11}^*$, which is easy to get when $\delta \le (n-2-\alpha)^{-1}$.
\end{proof}

\begin{lemma}\label{lem:w2j*-cmp}
 There exists $t_0$ negative enough such that
\begin{enumerate}
    \item In $\{\bar\mu_{0,i+1}\leq |x| \}$, we have $w^*_{2j}\lesssim w^*_{2i}$ for $j=i+1, \dots , k-1$ (it is vacuum if $i=k, k-1$). In $\{|x|\leq \bar\mu_{0i} \}$, we have $w_{2,i-1}^* \gtrsim w_{2j}^*$ for $j= 1, \dots, i-2$ (it is vacuum if $i =1,2 $).
    \item
    In $\{|x|\geq \bar\mu_{01} \}$, $w_{2j}^*\lesssim w_3^*$ for $ j = 1, \dots,  k-1$.
\end{enumerate}
Consequently
\begin{align}\label{eq:w2j*-cmp}
    \sum_{j=1}^{k-1}w_{2j}^*\lesssim \begin{cases}
    w_{2,k-1}^{*}
    &\text{if }
    |x|\le \bar{\mu}_{0k},
    \\
    w_{2i}^*+w_{2,i-1}^*
    &\text{if }\bar\mu_{0,i+1} \leq |x|\leq \bar\mu_{0i},
    \mbox{ \ for \ }
    i=2, \dots, k-1,
    \\
    w_{21}^{*}
    &\text{if }\bar\mu_{02}\leq |x|\leq \bar\mu_{01},
    \\
    w_3^*&\text{if }|x|\geq \bar\mu_{01}.
\end{cases}
\end{align}
\end{lemma}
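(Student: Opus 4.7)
The plan is to follow the template of the case analysis already used for Lemma \ref{lem:w1j*-cmp}, exploiting the piecewise structure of each $w_{2j}^*$, which is (up to constants) radially non-increasing in $|x|$ on each of the four regions divided by $\bar\mu_{0,j+1}$, $\bar\mu_{0j}$, $|t|^{1/2}$, and attains its peak value $|t|^{-2\sigma}\mu_{0j}^{1-n/2}$ on $\{|x|\le \bar\mu_{0,j+1}\}$ (for $j=1$ this plateau is just $|t|^{-2\sigma}$, consistent with $\alpha_1=0$).

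For part (1), first inequality, I fix $i<j$ and work on each of the three sub-regions $\{\bar\mu_{0,i+1}\le |x|\le \bar\mu_{0i}\}$, $\{\bar\mu_{0i}\le |x|\le |t|^{1/2}\}$, $\{|x|\ge |t|^{1/2}\}$ in turn. Because $j\ge i+1$, one has $\bar\mu_{0,j+1}\le \bar\mu_{0j}\le \bar\mu_{0,i+1}$, so in each sub-region $w_{2j}^*$ is given by its third or fourth piece. Computing the ratio $w_{2j}^*/w_{2i}^*$ yields a monomial in $|t|$ and $|x|$; using $|x|\ge \bar\mu_{0,i+1}=(\mu_{0,i+1}\mu_{0i})^{1/2}$ to absorb the $|x|$-factor and then the scaling $\mu_{0\ell}\approx (-t)^{-\alpha_\ell}$, the problem reduces to checking that a specific combination of the $\alpha_\ell$'s produces a non-positive exponent of $(-t)$. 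The recursion $\alpha_{\ell+1}=\frac{n-2}{n-6}\alpha_\ell+\frac{2}{n-6}$ makes these sign checks purely algebraic. The second inequality is handled symmetrically: for $j\le i-2$ one has $|x|\le \bar\mu_{0i}\le \bar\mu_{0,j+1}$, so $w_{2j}^*$ is at its plateau value $|t|^{-2\sigma}\mu_{0j}^{1-n/2}$, and this is compared against whichever piece of $w_{2,i-1}^*$ applies in the given sub-region.

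Part (2) splits $\{|x|\ge \bar\mu_{01}\}$ into $\{\bar\mu_{01}\le |x|\le |t|^{1/2}\}$ and $\{|x|\ge |t|^{1/2}\}$. On the first, $w_3^*\approx R|t|^{-1-\sigma}|x|^{4-n}$ while $w_{2j}^*$ is in its third or fourth piece, and the ratio carries both an $R^{-1}$ factor and a negative power of $(-t)$, so $w_{2j}^*\lesssim w_3^*$ for all $t$ sufficiently negative. On the second, both weights have a $|x|^{2-n}$ factor and the comparison reduces to a direct exponent check. The consequence \eqref{eq:w2j*-cmp} then follows by localising to each annulus $\{\bar\mu_{0,i+1}\le |x|\le \bar\mu_{0i}\}$ (and to the innermost and outermost pieces) and using parts (1) and (2) to discard every $w_{2j}^*$ except the one or two nearest neighbours.

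The only real obstacle is the algebraic bookkeeping: one has to track the exponents of $(-t)$ produced by ratios of $\mu_{0\ell}$, $\bar\mu_{0\ell}$ and $\lambda_{0\ell}$ factors and verify via the recursion for $\alpha_\ell$ that each emerging exponent is strictly negative, with enough headroom that the small parameters $\sigma,\delta,\epsilon$ cannot flip the sign. This is exactly the type of arithmetic already carried out in Lemma \ref{lem:w1j*-cmp}, so no new analytic ingredient is needed; one simply chooses $t_0$ sufficiently negative at the end.
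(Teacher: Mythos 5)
Your proposal is correct and follows essentially the same route as the paper: both proofs compare the explicit power-law pieces of the $w_{2j}^*$ region by region (with the key case being the annulus $\{\bar\mu_{0,i+1}\le|x|\le\bar\mu_{0i}\}$, where the bound $|x|\ge\bar\mu_{0,i+1}$ reduces matters to the nearest neighbour $j=i+1$), and verify that the resulting exponent of $(-t)$ has the right sign via the explicit formula/recursion for the $\alpha_\ell$, with the comparison against $w_3^*$ done by splitting at $|x|=|t|^{1/2}$. The only cosmetic difference is that you spell out the sub-regions $\{\bar\mu_{0i}\le|x|\le|t|^{1/2}\}$ and $\{|x|\ge|t|^{1/2}\}$ which the paper dismisses as "easy to see", and you should just be slightly careful that $w_{21}^*$'s pieces change at $|x|=1$ rather than at $\bar\mu_{01}$, which does not affect the conclusion.
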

\begin{proof}
(1)
In $\{\bar\mu_{0,i+1} \leq |x|\leq \bar\mu_{0i} \}$, it follows from \eqref{def:w2j*} that $w_{2j}^*= |t|^{-2\sigma}\mu_{0,j+1}^{\frac{n}{2}-2}\mu_{0,j-1}|x|^{2-n}$ for $j=i+1, \dots, k-1$ and $w_{2i}^*=|t|^{-2\sigma}\mu_{0,i+1}^{\frac{n}{2}-2}\mu_{0i}^{-1}|x|^{4-n}$.
\begin{equation*}
\frac{w_{2i}^{*}}{w_{2j}^{*}} \gtrsim
(\frac{\mu_{0,i+1}}{\mu_{0,i+2}})^{\frac n2 -2} \frac{\mu_{0,i+1}}{\mu_{0i}} \approx
|t|^{(\frac n2 -2)(\alpha_{i+2}- \alpha_{i+1}) - (\alpha_{i+1} -\alpha_{i})} \gtrsim 1
\end{equation*}
where we have used that  $(\frac n2 -2)(\alpha_{i+2}- \alpha_{i+1}) - (\alpha_{i+1} -\alpha_{i})= \frac{(n-4)^2 +4}{(n-6)^2} (\frac{n-2}{n-6})^{i-1}$.
Thus $w_{2j}^*\lesssim w_{2i}^*$.  It is easy to see that $w_{2j}^*\lesssim w_{2i}^*$ also holds in $\{|x|>\bar\mu_{0i} \}$. This deduces the first part. The second parts holds obviously by \eqref{def:w2j*}.

    (2) In $\{\bar\mu_{01}\leq |x| \le |t|^{\frac 12}\}$, by the definition \eqref{def:w2j*} and \eqref{eq:w3j*}, it is easy to see that $w_{2j}^{*}\lesssim w_{3}^{*}$ for $j=1,\dots, k-1$ since  $w_{2j^*}$ have more time decay than $w_3^*$. In $\{|x| \ge |t|^{\frac 12} \}$, we have $w_{2j}^{*}\lesssim w_{3}^{*}$ by the similar reason.
\end{proof}
\begin{remark}
Lemma \ref{lem:w1j*-cmp} and \ref{lem:w2j*-cmp} help us consider much less terms in the topology of the outer problem in some special domains.
\end{remark}

\begin{lemma}\label{lem:outer-4} There exist $\sigma, \epsilon>0$ small and $t_0$ negative enough such that
\begin{equation}
\|\TT^{out}[V\Psi]\|_{\alpha,\sigma}^{out,*}
\lesssim
R^{-1}
\|\Psi\|_{\alpha,\sigma}^{out,*}.
\end{equation}
\end{lemma}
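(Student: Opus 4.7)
The plan is to decompose $V\Psi$ shell by shell according to the bubble hierarchy and the cut-offs $\zeta_j$, bound it pointwise by $R^{-1}$ times a sum of the basic weights $w_{1j}, w_{2j}, w_3$ wherever possible, and handle a residual transition piece by extracting the $R^{-1}$ smallness directly from the spatial integral in Duhamel's formula.

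First I partition $\RR^n$ into the core $\{|x|\le\bar\mu_{0k}\}$, the intermediate shells $\{\bar\mu_{0,j+1}\le|x|\le\bar\mu_{0j}\}$ for $j=1,\dots,k-1$, and the exterior $\{|x|\ge\bar\mu_{01}\}$. Lemma~\ref{lem:Uij} identifies the dominant bubble $U_j$ on each shell, and the support of $\zeta_j$ covers the concentration ball of $U_j$ there. In every shell I write
\[
V\Psi \;=\; p\bigl(u_*^{p-1}-U_j^{p-1}\bigr)\zeta_j\,\Psi \;+\; p\,u_*^{p-1}(1-\zeta_j)\,\Psi .
\]

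For the cancellation term, the pointwise estimate \eqref{u*-U} together with the weight comparisons of Lemmas~\ref{lem:w1j*-cmp}--\ref{lem:w2j*-cmp} bounding $|\Psi|$ in each shell, and the same arithmetic that yielded \eqref{Bphi-3}, give
\[
|p(u_*^{p-1}-U_j^{p-1})\zeta_j\Psi|
\;\lesssim\; |t_0|^{-\epsilon}\bigl(w_{1j}+w_{1,j+1}+w_{2j}+w_{2,j-1}\bigr)\|\Psi\|_{\alpha,\sigma}^{out,*}.
\]
For the $(1-\zeta_j)$ term I split further into a \emph{deep part}, where either $\langle y_j\rangle\gtrsim R$ or $\langle y_{j+1}\rangle\gtrsim R$ and hence the extra $\langle y\rangle^{-4}$ in $U_j^{p-1}$ beats the $\langle y_j\rangle^{-2-\alpha}$ built into $w_{1j}$ by a factor $R^{\alpha-2}$, and into \emph{transition annuli} $\{|x|\sim R\mu_{0j}\}$ and $\{|x|\sim R^{-1}\mu_{0j}\}$ where $0<\zeta_j<1$. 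The deep part is pointwise $\lesssim R^{-1}(\sum_j w_{1j}+\sum_j w_{2j})\|\Psi\|_{\alpha,\sigma}^{out,*}$, but the transition piece is only comparable to $(w_{1j}+w_{1,j+1})\|\Psi\|_{\alpha,\sigma}^{out,*}$ pointwise---this is exactly the obstruction flagged in Remark~\ref{rmk:weight-2}.

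To absorb the transition piece I compute $\TT^{out}$ of it directly: its source sits in a thin radial annulus of width $\approx\mu_{0j}$ inside the ball of radius $R\mu_{0j}$, so the heat-kernel convolution is cleanly comparable to the one that produced $w_{1j}^*$ in Lemma~\ref{lem:w1j-w1j*}, and the restriction to the thin annulus cuts the integrated spatial mass by a factor $\approx R^{-1}$ relative to the full ball. Combining this localized Duhamel estimate with the pointwise bounds for the deep and cancellation pieces (fed through Lemmas~\ref{lem:w1j-w1j*}--\ref{lem:w3-w3*}), and taking $t_0$ sufficiently negative to absorb $|t_0|^{-\epsilon}$, gives the required $\|\TT^{out}[V\Psi]\|_{\alpha,\sigma}^{out,*}\lesssim R^{-1}\|\Psi\|_{\alpha,\sigma}^{out,*}$.

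The main obstacle is the transition-annulus term: no pointwise $R^{-1}$ gain is available, and the smallness must be extracted from the spatial integration in Duhamel's formula via a side-by-side comparison with the $w_{1j}^*$ estimate rather than a black-box application of Lemma~\ref{lem:w1j-w1j*}.
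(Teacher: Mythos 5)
Your overall strategy (pointwise weight bounds where possible, plus a direct Duhamel estimate on the piece that has no pointwise smallness) is indeed the paper's strategy, but you have mislocated the problematic piece, and the step you rely on for the rest is false exactly where the real difficulty sits. Your ``deep part'' criterion ($\langle y_j\rangle\gtrsim R$ \emph{or} $\langle y_{j+1}\rangle\gtrsim R$) includes the region $\{\bar\mu_{0,j+1}\le |x|\le 2R^{-1}\mu_{0j}\}$, i.e.\ the part of the $j$-th shell \emph{below} the concentration zone of $U_j$, where $\langle y_j\rangle\approx 1$. There the mechanism you invoke (the $\langle y_j\rangle^{-4}$ decay of $U_j^{p-1}$ beating the $\langle y_j\rangle^{-2-\alpha}$ in $w_{1j}$) is inactive: $u_*^{p-1}\approx U_j^{p-1}\approx \mu_j^{-2}$ is flat, $|\Psi|$ can be of size $|t|^{\gamma_j}\|\Psi\|^{out,*}_{\alpha,\sigma}$ (from $w_{1j}^*$), so $|V\Psi|\approx \mu_{0j}^{-2}|t|^{\gamma_j}\|\Psi\|^{out,*}_{\alpha,\sigma}\approx w_{1j}\|\Psi\|^{out,*}_{\alpha,\sigma}$ with no $R$-gain at all. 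This is precisely the term the paper isolates in \eqref{Vpsi:1-zeta}. Conversely, the outer transition annulus $\{R\mu_{0j}\le|x|\le 2R\mu_{0j}\}$ that you flag as the obstruction is actually harmless pointwise: there $\langle y_j\rangle\approx R$, so $|V\Psi|\lesssim \mu_j^{-2}R^{-4-\alpha}|t|^{\gamma_j}\|\Psi\|^{out,*}_{\alpha,\sigma}\lesssim R^{-2}w_{1j}\|\Psi\|^{out,*}_{\alpha,\sigma}$. So your claimed pointwise bound $R^{-1}(\sum_j w_{1j}+\sum_j w_{2j})$ on the deep part fails, and the set you propose to treat by direct Duhamel is not the one that needs it.

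The correct repair is the paper's: the troublesome source is the flat term $\mu_{0j}^{-2}|t|^{\gamma_j}\1_{\{|x|\le 2R^{-1}\mu_{0j}\}}$ supported in a ball of radius $\sim R^{-1}\mu_{0j}$ (not a thin annulus of width $\mu_{0j}$), and one applies $\TT^{out}$ to it directly via the ball-localized heat estimate of Lemma \ref{lem:annulus}, obtaining \eqref{no-space-decay}: amplitude times (radius)$^2$ in the core, i.e.\ a gain $R^{-2}$, plus the matching tails, which is then dominated by $R^{-1}w_{1j}^*$. Your heuristic that ``the thin annulus cuts the integrated spatial mass by a factor $R^{-1}$'' is not a valid substitute: a reduction of total mass does not by itself control the weighted norm $\|\cdot\|^{out,*}_{\alpha,\sigma}$ of the Duhamel integral, and in any case the relevant geometry is a shrunken ball, with a quadratic (parabolic-scaling) gain, not a mass count. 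With the problematic region correctly identified and Lemma \ref{lem:annulus} used in place of your mass argument, the rest of your outline (the cancellation term via \eqref{u*-U} giving $|t_0|^{-\epsilon}$ smallness, and the genuinely decaying regions giving $R^{-1}$ or $R^{-2}$ pointwise) matches the paper's proof.
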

\begin{proof}
	Without loss of generality, we assume $\|\Psi\|_{\alpha,\sigma}^{out,*}=1$.
By \eqref{Def:NV}, we rewrite $V$ as
	\begin{align}\label{V-decomp}
		V=pu_*^{p-1}(1-\sum_{j=1}^k\zeta_j)+\sum_{j=1}^k\zeta_jp(u_*^{p-1}-U_j^{p-1}) .
	\end{align}
We shall handle terms respectively.
	
Consider the first term in \eqref{V-decomp}. Using \eqref{def:zeta}, the support of $1-\sum_{j=1}^k\zeta_j$ is  $\cup_{i=2}^{k}\{R\mu_{0i}\leq |x|\leq 2R^{-1}\mu_{0,i-1}\} \cup \{R\mu_{01}\leq |x|\}$.

$\bullet$ In $\{R\mu_{01}\leq |x|\}$, we have $p u_*^{p-1}\lesssim \mu_1^2|x|^{-4}\leq R^{-1}|x|^{-3}$ by Lemma \ref{lem:Uij} and \ref{lem:phi0}. Split the region into $\{R\mu_{01}\leq |x|\leq \bar\mu_{01}\}\cup \{\bar\mu_{01}\leq |x| \le |t|^{\frac 12}\} \cup \{|x|\ge |t|^{\frac 12}\}$.
In the first set, one has $|\Psi|\lesssim w_{11}^*+w_{12}^*+w_{21}^*$ by \eqref{eq:w1j*-cmp} and  \eqref{eq:w2j*-cmp}. Notice $w_{12}^* + w_{21}^* \lesssim w_{11}^*$ in $\{R\mu_{01}\le |x| \le \bar\mu_{01}\}$. Therefore
\begin{equation}\label{V-b1}
		\big|
		p u_{*}^{p-1} (1-\sum_{j=1}^{k} \zeta_{j})
	\Psi \big|
	\lesssim
R^{-1}  |x|^{-3}
w_{11}^*
\lesssim
R^{-1}
w_{11}
.
\end{equation}
In the second set, by \eqref{eq:w1j*-cmp} and \eqref{eq:w2j*-cmp}, one has $|\Psi|\lesssim w_{11}^{*} + w_3^* $. Then
\begin{equation}\label{V-b2}
		\big|
		pu_{*}^{p-1} (1-\sum_{j=1}^{k} \zeta_{j})
		\Psi
		\big|
		\lesssim \
		R^{-1}  |x|^{-3}
	(w_{11}^{*} + w_3^* )
		\lesssim \
	R^{-1}
	(w_{11} + w_3 )
	.
\end{equation}
In the third set,
similarly we have
\begin{equation}
		\big|
		pu_{*}^{p-1} (1-\sum_{j=1}^{k} \zeta_{j})
		\Psi
		\big|
		\lesssim
		R^{-1}  |x|^{-3}
	 w_3^*
		\lesssim \
		R^{-1}  w_3
		.
\end{equation}

$\bullet $ Consider the region $\{R\mu_{0i}\le |x| \le 2R^{-1}\mu_{0,i-1}\}$, $i = 2, \dots,  k$. We divide it further into two parts, $\{R\mu_{0i}\leq |x|\leq \bar\mu_{0i}\}\cup \{\bar\mu_{0i}\leq |x|\leq 2R^{-1}\mu_{0,i-1}\}$.
In $\{R\mu_{0i}\le |x| \le \bar{\mu}_{0i}\}$, we have $p u_*^{p-1}\lesssim \mu_i^{2}|x|^{-4}$ by Lemma \ref{lem:Uij} and \ref{lem:phi0}. Moreover, one has $|\Psi|\leq w_{1i}^*+w_{1,i+1}^*+w_{2,i-1}^*+w_{2i}^*$ by Lemma \ref{lem:w1j*-cmp} and \ref{lem:w2j*-cmp}. One readily has $w_{1,i+1}^* \lesssim w_{1i}^*  $ in $\{ R\mu_{0i}\le |x| \le \bar{\mu}_{0i} \}$.
Thus
\begin{equation}
	\begin{aligned}
		\big| p u_{*}^{p-1} (1-\sum_{j=1}^{k} \zeta_{j})
		\Psi
		\big|
\lesssim \ &
\mu_{i}^{2}
|x|^{-4 }
\left(
 w_{1i}^* + w_{1,i+1}^*
+
 w_{2,i-1}^*
 +
 w_{2i}^*
\right)
\\
\lesssim \ &
\mu_{i}^{2}
|x|^{-4 }
\left(
w_{1i}^*
+
w_{2,i-1}^*
+
w_{2i}^*
\right)
\\
\lesssim \ &
R^{-1}
\left(
w_{1i}
+
w_{2i}
\right)
,
	\end{aligned}
\end{equation}
where we have used the fact that in $\{R\mu_{0i}\leq |x|\leq \bar\mu_{0i} \}$,
\begin{align*}
&\mu_i^{2}|x|^{-4}w_{1i}^*\lesssim |t|^{\gamma_i}\mu_i^{2+\alpha}|x|^{-4-\alpha}
\lesssim
R^{-2}w_{1i},
\\
&\mu_i^{2}|x|^{-4}w_{2,i-1}^*
\lesssim
\mu_i^{2}|x|^{-4}|t|^{-2\sigma}\mu_{i-1}^{1-\frac{n}{2}}
\lesssim
R^{\alpha -2} |t|^{-\sigma} w_{1i}, \\
&\mu_i^{2}|x|^{-4}w_{2,i}^*
\lesssim
\mu_i^{2}|x|^{-4}|t|^{-2\sigma}\mu_{i+1}^{\frac{n}{2}-2}\mu_{i}^{-1}
|x|^{4-n}
\lesssim
R^{-2}w_{2i}.
\end{align*}

In the other part $\{\bar{\mu}_{0i} \le |x| \le 2R^{-1}\mu_{0,i-1}\}$, we have $p u_*^{p-1}\lesssim U_{i-1}^{p-1}\lesssim \mu_{i-1}^{-2} $ and $w_{2,i-2}^*\lesssim w_{1,i-1}^*$ (which is vacuum if $i=2$). Then
\begin{equation}\label{Vpsi:1-zeta}
\begin{aligned}
		\big| p u_*^{p-1} (1-\sum_{j=1}^{k} \zeta_{j})
		\Psi \big|
		\lesssim \ &
\mu_{i-1}^{-2}
\left(
 w_{1,i-1}^*
 +
 w_{1i}^*
+
 w_{2,i-2}^*
+
 w_{2,i-1}^*
\right)
\\
\lesssim \ &
\mu_{i-1}^{-2}
\left(
w_{1,i-1}^*
+
w_{1i}^*
+
w_{2,i-1}^*
\right)
\\
\lesssim \ &
\left(
\mu_{i-1}^{-2} |t|^{\gamma_{i-1}}
\1_{\{  |x| \le 2R^{-1}\mu_{i-1} \}}
+ R^{-2}w_{2,i-1}
\right)
,
\end{aligned}
\end{equation}
where we have used the fact that in $\{\bar\mu_{0i}\leq |x|\leq 2R^{-1}\mu_{0,i-1}\}$,
\begin{align*}
    &\mu_{i-1}^{-2}w_{2,i-1}^*\lesssim \mu_{i-1}^{-2}|t|^{-2\sigma}\mu_i^{\frac{n}{2}-2}\mu_{i-1}^{-1}|x|^{4-n}\lesssim R^{-2} |t|^{-2\sigma}\mu_i^{\frac{n}{2}-2}\mu_{i-1}^{-1}|x|^{2-n}\lesssim R^{-2}w_{2,i-1}
    ,
    \\   &\mu_{i-1}^{-2}w_{1i}^*\leq \mu_{i-1}^{-2}|t|^{\gamma_i}\mu_i^{\alpha}\bar\mu_i^{n-2-\alpha}|x|^{2-n}
    \lesssim |t|^{\gamma_i+2\sigma}\mu_i^{\frac{\alpha}{2}+1}\mu_{i-1}^{\frac{n-4-\alpha}{2}}w_{2,i-1}\lesssim |t|^{-\epsilon}w_{2,i-1} .
\end{align*}
By Lemma \ref{lem:annulus},
\begin{equation}\label{no-space-decay}
	\begin{aligned}
		&\TT^{out}
		[
		\mu_{i-1}^{-2} |t|^{\gamma_{i-1}
		}
		\1_{\{|x|\le 2R^{-1}\mu_{0,i-1}\}}
		]\\
		\lesssim \ &
\begin{cases}
 |t|^{\gamma_{i-1}} R^{-2}
&
\mbox{ \ \ if \ } |x|\le R^{-1} \mu_{0,i-1},
\\
\mu_{i-1}^{-2} |t|^{\gamma_{i-1}} (R^{-1} \mu_{i-1})^n
|x|^{2-n}
&
\mbox{ \ \ if \ }  R^{-1} \mu_{0,i-1} \le |x| \le |t|^{\frac 12},
\\
R^{-n}
(|x|^2)^{(2-n)\alpha_{i-1} +\gamma_{i-1}} |x|^{2-n}
&
\mbox{ \ \ if \ }   |x| \ge |t|^{\frac 12} .
\end{cases}\\
\lesssim \ &
R^{-1} w_{1,i-1}^*.
	\end{aligned}
\end{equation}

Next we consider the second term in \eqref{V-decomp}. Recall the support of $\zeta_j$ \eqref{spt:zeta} is  contained in $\{ R^{-1} \mu_{0j} \le |x| \le 2R \mu_{0j}\}$, which are mutually disjoint.

$\bullet$ For $j=1$, we have $|\zeta_1(u_*^{p-1}-U_1^{p-1})|\lesssim U_2^{p-1}\zeta_{1}
\lesssim \mu_2^{2}|x|^{-4}\1_{\{ R^{-1}\mu_{01} \le |x| \le 2 R\mu_{01}\}}$ since $\varphi_{0} =0$ in the support of $\zeta_{1}$.
Then
\begin{equation}\label{j=1case}
	\left|	 \zeta_{1}
	(
	u_*^{p-1}
	-
	U_j^{p-1}
	) \Psi  \right|
\lesssim
\mu_2^2 |x|^{-4}
\1_{\{ R^{-1}\mu_{01} \le |x| \le 2 R\mu_{01}
\}}
\left(
 w_{11}^* + w_{12}^*
+
 w_{21}^*
\right)
\lesssim
|t|^{-\epsilon}
w_{11} ,
\end{equation}
where we have used the fact that in $\{R^{-1}\mu_{01}\leq |x|\leq 2R\mu_{01}\}$,
\begin{align*}
    &\mu_2^2|x|^{-4}w_{11}^*\lesssim \mu_2^2|x|^{-4}|t|^{\gamma_1}(1+|x|)^{-\alpha}\leq R^{4}\mu_2^2 w_{11}\leq R^4|t|^{-2\epsilon} w_{11},\\
    &\mu_2^2|x|^{-4}w_{21}^*\lesssim \mu_2^2|x|^{-4} |t|^{-2\sigma}\mu_2^{\frac{n}{2}-2}|x|^{2-n}\lesssim R^{n+2}|t|^{-2\epsilon}w_{11},\\
    &\mu_2^2|x|^{-4}w_{12}^*\lesssim \mu_2^2|x|^{-4}|t|^{\gamma_2}\mu_2^\alpha\bar\mu_2^{n-2-\alpha}|x|^{2-n}\lesssim R^{n+2}|t|^{-2\epsilon}w_{11}.
\end{align*}

$\bullet$ For $ j =2,\dots, k$, we have  $|\zeta_j(u_*^{p-1}-U_j^{p-1})|\lesssim (U_{j-1}^{p-1}+U_{j+1}^{p-1} + \varphi_{0j}^{p-1})|\zeta_j|\lesssim \mu_{j-1}^{-2} |\zeta_j|$ where we have used the fact that $\varphi_{0}= \varphi_{0j} \chi_{j} $ in $\{ R^{-1} \mu_{0j} \le |x| \le 2R \mu_{0j}\}$ and $U_{j+1}$ is vacuum if $j=k$. Then
\begin{equation}
		\left|	\zeta_{j}
		(
		u_{*}^{p-1}
		-
		U_j^{p-1}
		) \Psi \right|
\lesssim
\mu_{j-1}^{-2}
\left(
 w_{1j}^*
 +
 w_{1,j+1}^*
+
 w_{2,j-1}^*
+
w_{2j}^*
\right)
|\zeta_j|
\lesssim
|t|^{-\epsilon}
\left( w_{1j}
+
 w_{2j}
\right)
,
\end{equation}
where we have used the fact that in $\{R^{-1}\mu_{0j}\leq |x|\leq 2R\mu_{0j}\}$, $w_{2,j-1}^* \lesssim w_{1j}^*$ and
\begin{align*}
    &\mu_{j-1}^{-2}w_{1j}^*\lesssim \mu_{j-1}^{-2}|t|^{\gamma_j}\lesssim \lambda_j^{2}\langle y_j\rangle^{2+\alpha}w_{1j}\lesssim  R^{2+\alpha}|t|^{-2 \epsilon}w_{1j},\\
    &\mu_{j-1}^{-2}w_{1,j+1}^*\lesssim \mu_{j-1}^{-2}|t|^{\gamma_{j+1}}\mu_{j+1}^\alpha\bar\mu_{j+1}^{n-2-\alpha}|x|^{2-n}\lesssim \mu_{j-1}^{-2}\mu_j^{1-\frac{\alpha}{2}}\mu_{j+1}^{1+\frac{\alpha}{2}}|t|^{\sigma}w_{2j}\lesssim |t|^{-\epsilon}w_{2j},\\
    &\mu_{j-1}^{-2}w_{2j}^*\lesssim \mu_{j-1}^{-2}|t|^{-2\sigma}\mu_{j+1}^{\frac{n}{2}-2}\mu_j^{-1}|x|^{4-n}\lesssim \mu_{j-1}^{-2}(R\mu_i)^2w_{2j}\lesssim R^2|t|^{-2\epsilon}w_{2j}.
\end{align*}
Combining the above calculations of the two terms in \eqref{V-decomp}, we get the conclusion.
\end{proof}

\begin{lemma}\label{lem:outer-5} There exist $\sigma,\epsilon>0$ small and $t_0$ negative enough such that
	\[\|\mathcal{N}[\vec{\phi}, \Psi , \vec{\mu}_{1}]\|^{out}_{\alpha,\sigma}\lesssim |t_0|^{-\epsilon}\left(\|\vec{\phi}\|^{in,*}_{a,\sigma}+\|\Psi\|^{out,*}_{\alpha,\sigma}\right)^p.\]
\end{lemma}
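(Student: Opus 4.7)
The plan is to exploit the fact that $1<p=\tfrac{n+2}{n-2}<2$ for $n\ge 7$, which gives the clean pointwise bound
\[
|N_{u_{*}}[\varphi]|\;=\;\bigl||u_{*}+\varphi|^{p-1}(u_{*}+\varphi)-u_{*}^{p}-p u_{*}^{p-1}\varphi\bigr|\;\lesssim\;|\varphi|^{p},
\]
uniformly in $u_{*}\ge 0$. Setting $\varphi=\sum_{j=1}^{k}\varphi_{j}\eta_{j}+\Psi$ and using $|a+b|^{p}\lesssim|a|^{p}+|b|^{p}$, it then suffices to bound $\sum_{j=1}^{k}|\varphi_{j}\eta_{j}|^{p}$ and $|\Psi|^{p}$ by $|t_{0}|^{-\epsilon}\bigl(\sum_{j}w_{1j}+\sum_{j}w_{2j}+w_{3}\bigr)$.

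For the inner pieces, \eqref{phi-est} gives $|\varphi_{j}\eta_{j}|\lesssim|t|^{\gamma_{j}}R^{n+1-a}\langle y_{j}\rangle^{-n-1}\|\phi_{j}\|_{j,a,\sigma}^{in,*}\mathbf{1}_{\{|y_{j}|\le 4R\}}$. Taking the $p$-th power and using $\langle y_{j}\rangle^{-p(n+1)}\le\langle y_{j}\rangle^{-2-\alpha}$ (since $p(n+1)>2+\alpha$) in the support $\{|y_{j}|\le 4R\}$, I would compare with the definition \eqref{def:w1j} of $w_{1j}$: the exponent in time is $p\gamma_{j}$ versus $\gamma_{j}-2\alpha_{j}$ in $w_{1j}$, and a direct computation using $(p-1)(n-2)=4$ shows $p\gamma_{j}-\gamma_{j}+2\alpha_{j}=(p-1)\gamma_{j}+2\alpha_{j}<0$ for all $j$ (negative by at least a fixed amount, since $\gamma_{1}=-1-\sigma$ and $\gamma_{j}=\tfrac{n-2}{2}\alpha_{j-1}-\sigma$ with $\alpha_{j-1}<\alpha_{j}$ for $j\ge 2$). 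This yields $|\varphi_{j}\eta_{j}|^{p}\lesssim R^{p(n+1-a)}|t_{0}|^{-\epsilon}w_{1j}(\|\vec{\phi}\|^{in,*}_{a,\sigma})^{p}$ after choosing $\sigma,\epsilon$ small.

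For the outer piece, I would use $|\Psi|\lesssim\|\Psi\|^{out,*}_{\alpha,\sigma}\bigl(\sum_{j=1}^{k}w_{1j}^{*}+\sum_{j=1}^{k-1}w_{2j}^{*}+w_{3}^{*}\bigr)$ and Lemmas \ref{lem:w1j*-cmp}--\ref{lem:w2j*-cmp} to reduce to at most two dominating $w^{*}$'s in each of the regions $\{|x|\le\bar\mu_{0k}\}$, $\{\bar\mu_{0,i+1}\le|x|\le\bar\mu_{0i}\}$ ($i=1,\dots,k-1$), $\{\bar\mu_{01}\le|x|\le|t|^{1/2}\}$, $\{|x|\ge|t|^{1/2}\}$. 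In each subregion I would compare $(w_{1j}^{*})^{p}$, $(w_{2j}^{*})^{p}$, $(w_{3}^{*})^{p}$ with the ambient non-starred $w$'s: for instance in $\{\mu_{0j}\le|x|\le\bar\mu_{0j}\}$ one has $(w_{1j}^{*})^{p}\approx|t|^{p\gamma_{j}}\mu_{0j}^{p\alpha}|x|^{-p\alpha}$ while $w_{1j}\approx\mu_{0j}^{\alpha}|t|^{\gamma_{j}}|x|^{-2-\alpha}$, and $(p-1)\gamma_{j}+(p\alpha-\alpha-2)\log|x|/\log|t|$ is strictly negative once $\alpha<a<1$, $\sigma,\delta,\epsilon$ are small, and $t_{0}$ is sufficiently negative. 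The outer region $\{|x|\ge|t|^{1/2}\}$ is handled by noting $(w_{3}^{*})^{p}\approx R^{p}|t|^{-p\sigma}|x|^{p(2-n)}$ has more spatial decay than $w_{3}$, with the leftover time factor absorbed into $|t_{0}|^{-\epsilon}$.

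The main obstacle is the region-by-region bookkeeping: verifying in each of the $O(k)$ subregions (and for each of the pairs of dominating $w^{*}$'s picked out by Lemmas \ref{lem:w1j*-cmp}--\ref{lem:w2j*-cmp}) that the exponents in $|t|$ and $|x|$ of $(w^{*})^{p}$ are dominated by those of the ambient non-starred weight by a strictly negative gap. This gap is uniform in $R$ thanks to the strict inequalities $\alpha<n-6$, $\alpha_{j-1}<\alpha_{j}$ and $(p-1)(n-2)=4$, so it can be absorbed into $|t_{0}|^{-\epsilon}$. Combining the inner and outer bounds gives
\[
\|\mathcal{N}[\vec{\phi},\Psi,\vec{\mu}_{1}]\|^{out}_{\alpha,\sigma}\;\lesssim\;|t_{0}|^{-\epsilon}\bigl(\|\vec{\phi}\|^{in,*}_{a,\sigma}+\|\Psi\|^{out,*}_{\alpha,\sigma}\bigr)^{p},
\]
after choosing $\sigma,\epsilon$ sufficiently small and $t_{0}$ sufficiently negative (the factor $R^{p(n+1-a)}$ from the inner part is absorbed into $|t_{0}|^{-\epsilon}$ since $R$ is fixed first).
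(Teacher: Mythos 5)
Your strategy is exactly the paper's: the elementary inequality $|N_{u_*}[\varphi]|\lesssim|\varphi|^p$ (valid since $1<p<2$) giving the decomposition $|\mathcal{N}|\lesssim\sum_j\mu_j^{-\frac{n+2}{2}}|\phi_j|^p\eta_j+|\Psi|^p$ as in \eqref{N-decomp}, the inner pieces controlled through \eqref{phi-est} against $w_{1j}$, and $|\Psi|^p$ handled region by region via Lemma \ref{lem:w1j*-cmp} and Lemma \ref{lem:w2j*-cmp}, comparing $p$-th powers of the starred weights with the unstarred ones. However, the one exponent computation you actually display for the inner part is wrong as written: on the support of $\eta_j$ one has $w_{1j}\approx\mu_{0j}^{-2}|t|^{\gamma_j}\langle y_j\rangle^{-2-\alpha}$ by \eqref{def:w1j}, and since $\mu_{0j}^{-2}\approx|t|^{+2\alpha_j}$ the time exponent of $w_{1j}$ is $\gamma_j+2\alpha_j$, not $\gamma_j-2\alpha_j$; the inequality you claim, $(p-1)\gamma_j+2\alpha_j<0$, is in fact false for $j\ge3$ (there $\gamma_j>0$). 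What is actually needed is $(p-1)\gamma_j-2\alpha_j<0$, which does hold with a uniform gap precisely by the identities you cite ($(p-1)\frac{n-2}{2}\alpha_{j-1}=2\alpha_{j-1}$ and $\alpha_{j-1}<\alpha_j$), so the conclusion survives, but the verification must be redone with the correct sign.

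The more substantial issue is that the outer estimate, which is where essentially all of the paper's proof lives, is deferred as "bookkeeping" with only an asserted uniform gap, and your single sample comparison is also garbled (the ratio $(w_{1j}^*)^p/w_{1j}$ carries an extra factor $\mu_{0j}^{(p-1)\alpha}$ that you drop, and the sign on the $\log|x|/\log|t|$ term is reversed). Moreover the bookkeeping is not a matter of comparing each $(w^*)^p$ with the weight of the same index: in $\{\bar\mu_{0,i+1}\le|x|\le\bar\mu_{0i}\}$ the term $(w_{1,i+1}^*)^p$ is too large for $w_{1i}$ and must be absorbed into $w_{2i}$; in $\{\bar\mu_{02}\le|x|\le1\}$ one needs $(w_{12}^*)^p\lesssim|t|^{-\epsilon}w_{21}$ (using $n-2\le(n-4)p$, i.e.\ $n\ge6$); in $\{\bar\mu_{01}\le|x|\le|t|^{1/2}\}$ the term $(w_{11}^*)^p$ must go into $w_3$ and forces a smallness condition on $\delta$ (the paper takes $\delta\le2(n-2)^{-2}$, and $\delta\le(n-2)^{-1}$ in $\{1\le|x|\le\bar\mu_{01}\}$); and for $|x|\ge|t|^{1/2}$ the factor you call a "leftover time factor" is actually the growing factor $|t|^{1+(1-p)\sigma}$, which is only controlled by trading the extra $|x|^{-4}$ decay against $|x|\ge|t|^{1/2}$, not by absorbing it into $|t_0|^{-\epsilon}$. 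So the proposal is the right route but, as it stands, the decisive exponent checks are either incorrect or missing; completing it amounts to reproducing the paper's case-by-case verification.
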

\begin{proof}
	By \eqref{Def:NV} and some elementary inequality
	\begin{align}\label{N-decomp}
		|\mathcal{N}[\vec{\phi},\Psi,\vec{\mu}_{1}]|\lesssim \sum_{j=1}^k \mu_j^{-\frac{n+2}{2}}|\phi_j|^p\eta_j+|\Psi|^p.
	\end{align}
For the first part on the RHS, recalling \eqref{phi-est}, we obtain
\begin{equation}\label{N1}
\begin{aligned}
\mu_j^{-\frac{n+2}{2}}|\phi_j|^p\eta_j
\lesssim \ &
|t|^{\gamma_jp}
\frac{R^{(n+1-a)p}}{\langle y_j\rangle ^{(n+1)p}}
\1_{\{|x|\le 4R\mu_{0j} \}}\left(
\|\phi_j\|^{in,*}_{j,a,\sigma}
\right)^p
\lesssim
R^{(n+1-a)p}|t|^{-2\epsilon}
w_{1j}
\left(
\|\phi_j\|^{in,*}_{j,a,\sigma}
\right)^p
.
\end{aligned}
\end{equation}
For the second part on the RHS of \eqref{N-decomp},

$\bullet$ In $\{|x|\geq \bar{\mu}_{01}  \}$, by \eqref{eq:w1j*-cmp}, we have $|\Psi|\lesssim (w_{11}^{*} + w_3^{*}) \|\Psi\|^{out,*}_{\alpha,\sigma}$ in $ \{ \bar{\mu}_{01} \le |x| \le |t|^{\frac 12}  \}$ and $|\Psi|\lesssim  w_3^{*} \|\Psi\|^{out,*}_{\alpha,\sigma}$ in $ \{ |x| > |t|^{\frac 12}\}$.
Notice
\begin{equation*}
\begin{aligned}
(w_3^*)^p = \ & R^{p-1}|t|^{1+(1-p)\sigma} |x|^{-4} w_3\lesssim |t|^{-1}w_3
\mbox{ \ \ if \ }
|x| > |t|^{\frac12} ,
\\
(w_3^*)^p
= \ &  R^{p-1} |t|^{-(1+\sigma)(p-1)}
|x|^{-\frac{2n-12}{n-2}} w_{3}
\lesssim |t|^{-\epsilon} w_{3}
\mbox{ \ \ if \ }\bar{\mu}_{01} \le |x| \le |t|^{\frac 12} ,
\\
	(w_{11}^*)^p
= \ & R^{-1} |t|^{-\frac{4}{n-2}(1+\sigma) +\delta(n-2-p\alpha)}  w_{3}
\lesssim |t|^{-\epsilon} w_{3}
\mbox{ \ \ if \ }
\bar{\mu}_{01} \le |x| \le |t|^{\frac 12},
\end{aligned}
\end{equation*}
where we take $\delta \le 2(n-2)^{-2}$ in the last formula.
Thus
\begin{align}\label{Psip-b2}
	|\Psi|^p
	\lesssim \
 |t|^{-\epsilon}
w_3
\left( \| \Psi\|^{out,*}_{\alpha,\sigma}
\right)^p .
\end{align}

$\bullet$ In $\{1\leq |x|\leq \bar\mu_{01}\}$, we have $|\Psi|\lesssim (w_{11}^*+w_{12}^*+w_{21}^*)\|\Psi\|^{out,*}_{\alpha,\sigma}\lesssim w_{11}^*\|\Psi\|^{out,*}_{\alpha,\sigma}$ since  $
\left(
w_{12}^*
+
w_{21}^*
\right)
\1_{\{1\le |x|\le \bar{\mu}_{01}\}} \lesssim w_{11}^*$. Therefore
\begin{equation}\label{Psip-b3}
		|\Psi|^p
\lesssim \
\left(
w_{11}^*\right) ^p
\left( \| \Psi\|^{out,*}_{\alpha,\sigma}
\right)^p
\lesssim
|t|^{-(1+\sigma)\frac{4}{n-2} +2\delta}
w_{11}
\left( \| \Psi\|^{out,*}_{\alpha,\sigma}
\right)^p
\lesssim
|t|^{-\epsilon}
w_{11}
\left( \| \Psi\|^{out,*}_{\alpha,\sigma}
\right)^p
 ,
\end{equation}
for $\delta\le (n-2)^{-1}$.

$\bullet $ In $\{\bar \mu_{02}\leq |x|\leq 1\}$, we have
\begin{equation*}
\begin{aligned}
    &(w_{11}^*)^p\lesssim |t|^{-(1+\sigma)p}\lesssim |t|^{-\epsilon}w_{11},
    \\
    &(w_{12}^*)^p\lesssim |t|^{\gamma_2p}\bar\mu_2^{n+2}|x|^{-n-2}\lesssim |t|^{\gamma_2p}\bar\mu_2^{n-2}|x|^{2-n}\lesssim |t|^{-\epsilon}w_{21},
    \\
    &(w_{21}^*)^p\lesssim |t|^{-2\sigma p}\mu_2^{(\frac{n}{2}-2)p}|x|^{(4-n)p}\lesssim |t|^{-2\sigma p}\mu_2^{\frac{n}{2}-1}|x|^{2-n}\lesssim |t|^{-\epsilon} w_{21},
\end{aligned}
\end{equation*}
where we have used $n-2\le (n-4)p$ when $n\ge 6$ in the last inequality. Then
\begin{align}\label{Psip-b4}
    |\Psi|^p\lesssim [(w_{11}^*)^p + (w_{12}^*)^p+(w_{21}^*)^p](\|\Psi\|^{out,*}_{\alpha,\sigma})^p\lesssim |t|^{-\epsilon}(w_{11}+w_{21})(\|\Psi\|^{out,*}_{\alpha,\sigma})^p.
\end{align}

$\bullet$ In $\{\bar\mu_{0,i+1}\leq |x|\leq \bar\mu_{0i}\}$, $i=2,\dots, k$, we have
\begin{align*}
	(w_{1i}^*)^p\lesssim \ & |t|^{\gamma_ip}\langle y_i\rangle^{-\alpha p}
	\lesssim
	\mu_{i}^{2}
	|t|^{\gamma_{i}(p-1)} \langle y_i \rangle^{2+\alpha -\alpha p} w_{1i}
\lesssim
|t|^{-\sigma(p-1)} \mu_{i}\mu_{i-1}^{-1} w_{1i}
	\lesssim |t|^{-\epsilon}w_{1i}
\end{align*}
provided $\epsilon<\frac{2}{n-6}$.
\begin{align*}
    (w_{1,i+1}^*)^p\lesssim &\  |t|^{p \gamma_{i+1}}\bar\mu_{i+1}^{n+2}|x|^{-n-2}\lesssim \mu_{i}^{-\frac{n+2}{2}}|t|^{-\sigma p}\bar \mu_{i+1}^{n-2}|x|^{2-n}\\
    \approx&\  \lambda_{i+1}\mu_{i+1}^{\frac{n}{2}-2}\mu^{-1}_i|t|^{-\sigma p}|x|^{2-n}\lesssim |t|^{-\epsilon}w_{2i}
\end{align*}
provided $\epsilon<-2\sigma+\sigma p+\frac{2}{n-6}$.
\begin{align}
    (w_{2,i-1}^*)^p\lesssim&\  |t|^{-2\sigma p}\mu_{i-1}^{-\frac{n+2}{2}}\lesssim |t|^{-2\sigma p}\mu_{i-1}^{-\frac{n+2}{2}}(\frac{\bar\mu_i}{\mu_i})^{2+\alpha}\langle y_i\rangle^{-2-\alpha}\\
    \lesssim&\ |t|^{-2\sigma p}(\frac{\mu_i}{\mu_{i-1}})^{1-\frac{\alpha}{2}}\mu_{i}^{-2}\mu_{i-1}^{1-\frac{n}{2}}\langle y_i\rangle^{-2-\alpha}\lesssim|t|^{-\epsilon}w_{1i}
\end{align}
provided $\epsilon<\frac{1}{n-6}$.
\begin{align}
    (w_{2i}^*)^p\lesssim &\  |t|^{-2\sigma p}\mu_{i+1}^{(\frac{n}{2}-2)p}\mu_i^{-p}|x|^{(4-n)p}\lesssim |t|^{-2\sigma p}\mu_{i+1}^{(\frac{n}{2}-2)p}\mu_i^{-p}|x|^{2-n}\bar\mu_{i+1}^{(4-n)p-(2-n)}\\
    \approx & \
    \lambda_{j+1} |t|^{-2\sigma p}\mu_{i+1}^{\frac{n}{2}-2}\mu_i^{-1}|x|^{2-n}\lesssim |t|^{-\epsilon}w_{2i}
\end{align}
provide $\epsilon<\frac{2}{n-6}$. Here we have used $(4-n)p-(2-n)\le 0$ when $n\ge 6$.

Therefore, for $ i =2,\dots, k$,
	\begin{align*}
		|\Psi|^p
		\lesssim &
		\left(
		 w_{1i}^*
		 +
		  w_{1,i+1}^*
		+
		 w_{2,i-1}^*
		 +
		 w_{2i}^*
		\right)^p
		\left( \| \Psi\|^{out,*}_{\alpha,\sigma}
		\right)^p
\lesssim
|t|^{-\epsilon} \left(w_{1i}
+ w_{2i}
\right)
\left( \| \Psi\|^{out,*}_{\alpha,\sigma}
\right)^p,
	\end{align*}
where $w_{1,i+1}^*,
w_{2i}^*
$ are vacuum if $i=k$ and $
w_{1,i+1}^*$ are vacuum if $i=k,k-1$.
\end{proof}

\begin{proof}[Proof of Proposition \ref{prop:G-bd}]
This is a combination of results in Lemma \ref{lem:outer-1}, \ref{lem:outer-2}, \ref{lem:outer-4}, \ref{lem:outer-5} and Lemma \ref{lem:w1j-w1j*}, \ref{lem:w2-w2*} \ref{lem:w3-w3*}.
\end{proof}


\section{Some estimates for the outer problem}
\subsection{Basic estimates}
Let $G(x,t)$ denote the standard heat kernel on $\Rn$, that is
\begin{align}\label{def:G}
	G(x,t)=\frac{1}{(4\pi t)^{n/2}}e^{-\frac{|x|^2}{4t}} .
\end{align}
Recall the $\TT^{out}[g](x,t)$ defined by \eqref{duhamel}.
\begin{lemma}\label{lem:int-G-est}
	Suppose $n >2$, $ a\ge 0$, $d_1\leq d_2\leq\frac12$ and $b$ satisfies
\begin{equation}\label{b-require}
\begin{cases}
\frac{n}{2}-b+d_2(a-n)>1
&
\mbox{ \ \ if  \ } a<n,
\\
\frac{n}{2}-b+d_1(a-n)>1
&
\mbox{ \ \ if  \ } a\ge n,
\end{cases}
\end{equation}	
$ 0\leq c_1,c_2\leq c_{**}$. Then there exists $C$ depending on $n,a,b,d_1,d_2,c_{**}$ such that for $t<-1$
	\[\TT^{out}\left[
\frac{	|t|^{b} }{|x|^{a} }
	\mathbf{1}_{\{c_1|t|^{d_1}\leq |x|\leq c_2|t|^{d_2}\}}\right](0,t)\leq C\begin{cases}|t|^{b}(c_1|t|^{d_1})^{2-a}&\text{if  }a\in(2,\infty),
	\\|t|^{b}\ln (c_2|t|^{d_2}/(c_1|t|^{d_1})) &\text{if }a=2,
	\\|t|^{b}(c_2|t|^{d_2})^{2-a}&\text{if }a\in[0,2).\end{cases}\]

\end{lemma}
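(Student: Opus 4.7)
The plan is to reduce the expression, using radial symmetry at $x=0$ and the substitution $\rho = r/(2\sqrt{t-s})$ in the inner integral, to the one-variable form
\[
\TT^{out}[g](0,t) \approx
\int_{-\infty}^{t} \frac{|s|^{b}}{(t-s)^{a/2}}
J(\rho_{1}(s),\rho_{2}(s))\,ds,
\qquad
J(\alpha,\beta) := \int_{\alpha}^{\beta} e^{-\rho^{2}} \rho^{\,n-1-a}\, d\rho,
\]
where $\rho_{j}(s) = c_{j}|s|^{d_{j}}/(2\sqrt{t-s})$ for $j=1,2$. The exponent $a/2$ in $(t-s)^{-a/2}$ comes from combining the Jacobian $(t-s)^{(n-a)/2}$ of the substitution with the heat-kernel normalization $(t-s)^{-n/2}$.

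First I would split the $s$-integration into three intervals according to the relative sizes of $\sqrt{t-s}$, $r_{1}(s)=c_{1}|s|^{d_{1}}$ and $r_{2}(s)=c_{2}|s|^{d_{2}}$: a ``near-diagonal'' region where $\rho_{1}(s)\gtrsim 1$, an ``intermediate'' region where $\rho_{1}(s)\lesssim 1\lesssim \rho_{2}(s)$, and a ``far'' region where $\rho_{2}(s)\lesssim 1$. In the near-diagonal region $J$ is exponentially small and integrates to a negligible term. In the far region the Gaussian is essentially $1$ on the annulus, so $J(\rho_{1},\rho_{2})\approx \int_{\rho_{1}}^{\rho_{2}} \rho^{n-1-a}\,d\rho$, which evaluates to a power of $\rho_{1}$ or $\rho_{2}$ depending on the sign of $n-a$ (with a logarithmic correction when $a=n$). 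In the intermediate region, the polynomial behaviour of $J$ is controlled by the same dichotomy in $a$ versus $n$, with the tail $\int_{\rho_{1}(s)}^{\infty}e^{-\rho^{2}}\rho^{n-1-a}\,d\rho$ providing the relevant bound.

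Next I would assemble the time integrals in each region. The constraint $d_{1}\leq d_{2}\leq \tfrac12$ guarantees that $\sqrt{t-s}\sim r_{j}(s)$ forces $|s|\sim |t|$, so $|s|^{b}$ can be pulled out as $|t|^{b}$ in the dominant contribution. For $a>2$ the integrand concentrates at the scale $t-s\sim (c_{1}|t|^{d_{1}})^{2}$, and evaluating $(t-s)^{-a/2}$ there produces the claimed factor $(c_{1}|t|^{d_{1}})^{2-a}$. For $a<2$ the same bookkeeping with $r_{2}$ in place of $r_{1}$ yields $(c_{2}|t|^{d_{2}})^{2-a}$. The critical case $a=2$ is scale-invariant in $t-s$, so the range of scales between $(c_{1}|t|^{d_{1}})^{2}$ and $(c_{2}|t|^{d_{2}})^{2}$ contributes uniformly, producing the $\ln(c_{2}|t|^{d_{2}}/(c_{1}|t|^{d_{1}}))$ factor.

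The main obstacle is verifying convergence of the $s$-integral at $-\infty$, which is precisely where \eqref{b-require} is used. Outside the near-diagonal region the integrand decays like $|s|^{b+d_{j}(n-a)}(t-s)^{-n/2}$, with $j=2$ when $a<n$ and $j=1$ when $a\geq n$; this tail is integrable on $(-\infty,t]$ exactly under the stated inequality. Once this integrability is secured, one checks that the resulting constant depends only on $n,a,b,d_{1},d_{2}$ and on the a priori bound $c_{**}$ on $c_{1},c_{2}$, as required. The estimates in the three sub-cases for $a$ then follow by combining the localised bounds above, together with the elementary identities for $\int_{\rho_{1}}^{\rho_{2}}\rho^{n-1-a}d\rho$ when $n-a\neq 0$ and when $n=a$.
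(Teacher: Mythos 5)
Your proposal follows essentially the same route as the paper's proof: after the radial reduction at $x=0$ the integrand becomes $|s|^{b}(t-s)^{-a/2}$ times the same incomplete-Gamma-type factor, and your three regions (near-diagonal, intermediate, far) correspond to the paper's split $J_1$, $J_2$ and $J_3\cup J_4$, with the $a$ versus $2$ dichotomy arising in the intermediate range and \eqref{b-require} securing integrability of the tail exactly as in the paper. The argument is correct; the only caveat is that when $d_j$ is close to $\tfrac12$ and $c_{**}\ge 1$ your region thresholds (and the claim that $\sqrt{t-s}\sim c_j|s|^{d_j}$ forces $|s|\sim|t|$) must be read with constants depending on $c_{**}$, which the paper handles by cutting at $t-c_j^{2}|t|^{2d_j}$ and at $2t$ --- a cosmetic difference, not a gap.
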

\begin{proof}
	Using \eqref{def:G}, we obtain
	\begin{align}
		&\int_{-\infty}^t\int_{\Rn}\frac{1}{(t-s)^{n/2}}e^{-\frac{|y|^2}{4(t-s)}}\frac{|s|^{b}}{|y|^a}\mathbf{1}_{\{c_1|s|^{d_1}\leq|y|\leq c_2|s|^{d_2}\}}dyds\notag\\
\approx &\int_{-\infty}^t\int_{c_1|s|^{d_1}}^{c_2|s|^{d_2}}\frac{|s|^{b}}{(t-s)^{n/2}}e^{-\frac{r^2}{4(t-s)}}r^{n-1-a}drds
		\approx \int_{-\infty}^t\int_{\frac{c_1^2|s|^{2d_1}}{4(t-s)}}^{\frac{c_2^2|s|^{2d_2}}{4(t-s)}}e^{-z}z^{\frac{n-a-2}{2}}\frac{|s|^{b}}{(t-s)^{a/2}}dzds\notag\\
		=&\int_{-\infty}^t\frac{|s|^{b}}{(t-s)^{a/2}}F\left(\frac{c_1^2|s|^{2d_1}}{4(t-s)},\frac{c_2^2|s|^{2d_2}}{4(t-s)}\right)ds\label{intsF},
	\end{align}
	where
	\[F(A,B):=\int_A^Be^{-z}z^{\frac{n-a-2}{2}}dz.
	\]
	We shall split \eqref{intsF} into four integrals $J_1,J_2,J_3,J_4$ according to the regions of $s$. First, in the region $s\in [t-c_1^2|t|^{2d_1},t]$, one has $|s-t|\leq c_1^2 |t|$ when $t<-1$. Therefore
	\begin{equation*}
\begin{aligned}
		J_1=&\int_{t-c_1^2|t|^{2d_1}}^t\frac{|s|^{b}}{(t-s)^{a/2}}F\left(\frac{c_1^2|s|^{2d_1}}{4(t-s)},\frac{c_2^2|s|^{2d_2}}{4(t-s)}\right)ds
		\\
		\lesssim &\ |t|^{b}\int_{t-c_1^2|t|^{2d_1}}^t\frac{1}{(t-s)^{a/2}}
		e^{-\frac{c_1^2|t|^{2d_1}}{4(t-s)} (1+c_{**}^2)^{2(d_1)^{-}} }ds
		\\
		 \approx&\  c_1^{2-a}|t|^{b+d_1(2-a)}\int_{\frac{(1+c_{**}^2)^{2(d_1)^{-}}}{4}}^{\infty}e^{-\tilde s}\tilde s^{\frac a2 -2}d\tilde s\lesssim c_1^{2-a}|t|^{b+d_1(2-a)},
\end{aligned}
	\end{equation*}
where $(d_1)^{-}=\min\{0,d_1\}$.

	Second, in the region $s\in [t-c_2^2|t|^{2d_2},t-c_1^2|t|^{2d_1}]$,
\begin{equation}\label{J2a}
	\begin{aligned}
		J_2=&\int_{t-c_2^2|t|^{2d_2}}^{t-c_1^2|t|^{2d_1}}\frac{|s|^{b}}{(t-s)^{a/2}}F\left(\frac{c_1^2|s|^{2d_1}}{4(t-s)},\frac{c_2^2|s|^{2d_2}}{4(t-s)}\right)ds
		\\
\lesssim
&
\int_{t-c_2^2|t|^{2d_2}}^{t-c_1^2|t|^{2d_1}}\frac{|s|^{b}}{(t-s)^{a/2}}
F\left(\frac{c_1^2|t|^{2d_1}(1+c_{**}^2)^{2(d_1)^{-}}}{4(t-s)},\frac{c_2^2|t|^{2d_2}(1+c_{**}^2)^{2(d_{2})^{+}}}{4(t-s)}\right)ds
\\
		\lesssim&\int_{t-c_2^2|t|^{2d_2}}^{t-c_1^2|t|^{2d_1}}\frac{|t|^{b}}{(t-s)^{a/2}}
\begin{cases}
1
&
\mbox{ \ \ if \ } a<n,
\\
\left|
\ln \left(
\frac{c_1^2|t|^{2d_1}(1+c_{**}^2)^{2(d_1)^{-}}}{4(t-s)}
\right)\right|
&
\mbox{ \ \ if \ } a=n,
\\
\left(
\frac{c_1^2 |t|^{2d_1}}{t-s}
\right)^{\frac{n-a}{2}}
&
\mbox{ \ \ if \ } a>n,
\end{cases}
		ds\\
		\lesssim & \begin{cases}c_1^{2-a}|t|^{b+d_1(2-a)}&\text{if  }2<a,\\
			|t|^{b}\log (c_2/c_1|t|^{2(d_2-d_1)}) &\text{if }a=2,\\c_2^{2-a}|t|^{b+d_2(2-a)}&\text{if }0\leq a<2,
		\end{cases}
	\end{aligned}
\end{equation}
where $(d_2)^{+} = \max\{0,d_2\}$.

	Third, when $s\in[2t-c_2^2|t|^{2d_2},t-c_2^2|t|^{2d_2}]$,
	\begin{equation}
	\begin{aligned}
			J_3=&\int_{2t-c_2^2|t|^{2d_2}}^{t-c_2^2|t|^{2d_2}}\frac{|s|^{b}}{(t-s)^{a/2}}F\left(\frac{c_1^2|s|^{2d_1}}{4(t-s)},\frac{c_2^2|s|^{2d_2}}{4(t-s)}\right)ds
			\\
\lesssim
&\int_{2t-c_2^2|t|^{2d_2}}^{t-c_2^2|t|^{2d_2}}\frac{|t|^{b}}{(t-s)^{a/2}}F\left(\frac{c_1^2|t|^{2d_1}(2+c_{**}^2)^{2(d_2)^{-}}}{4(t-s)},\frac{c_2^2|t|^{2d_2} (2+c_{**}^2)^{2(d_2)^{+}}}{4(t-s)}\right)ds
\\
			\lesssim& \int_{2t-c_2^2|t|^{2d_2}}^{t-c_2^2|t|^{2d_2}}\frac{|t|^{b}}{(t-s)^{a/2}}
\begin{cases}
			\left(\frac{c_2^2|t|^{2d_2}}{t-s}\right)^{\frac{n-a}{2}}
&
\mbox{ \ \ if \ } a<n,
\\
\ln \left(\frac{c_2^2|t|^{2d_2} (2+c_{**}^2)^{2(d_2)^{+}}}{c_1^2|t|^{2d_1}(2+c_{**}^2)^{2(d_2)^{-}}}\right)
&
\mbox{ \ \ if \ } a=n,
\\
\left(\frac{c_1^2|t|^{2d_1}}{t-s}\right)^{\frac{n-a}{2}}
&
\mbox{ \ \ if \ } a>n,
		\end{cases}
			ds
			\\
\lesssim &
\begin{cases} c_2^{2-a}|t|^{b+d_2(2-a)}
&
\mbox{ \ \ if \ } a<n,
\\
c_{2}^{2-n} |t|^{b+d_2(2-n) }
\ln \left(\frac{c_2^2|t|^{2d_2} (2+c_{**}^2)^{2(d_2)^{+}}}{c_1^2|t|^{2d_1}(2+c_{**}^2)^{2(d_2)^{-}}}\right)
&
\mbox{ \ \ if \ } a=n,
\\
|t|^b
c_{1}^{n-a} |t|^{d_{1}(n-a)}
c_{2}^{2-n} |t|^{d_{2}(2-n)}
&
\mbox{ \ \ if \ } a>n.
\end{cases}
	\end{aligned}
	\end{equation}
	Fourth, when $s\in(-\infty,2t-c_2^2|t|^{2d_2}]$, we have  $\frac{-s}{2} \le t-s \le -s$.
For $a<n$,
\begin{equation*}
	\begin{aligned}
		J_4
		=&\int_{-\infty}^{2t-c_2^2|t|^{2d_2}}\frac{|s|^{b}}{(t-s)^{a/2}}F\left(\frac{c_1^2|s|^{2d_1}}{4(t-s)},\frac{c_2^2|s|^{2d_2}}{4(t-s)}\right)ds\\
\lesssim
&\int_{-\infty}^{2t-c_2^2|t|^{2d_2}}
|s|^{b-\frac a2}
F\left(\frac{c_1^2|s|^{2d_1}}{4|s|},\frac{c_2^2|s|^{2d_2}}{2|s|}\right)ds
\\
		\lesssim& \int_{-\infty}^{2t-c_2^2|t|^{2d_2}}|s|^{b-\frac{a}{2}}
		c_2^{n-a}
		|s|^{(2d_2-1)\frac{n-a}{2}}
		ds
\\
		\lesssim &
		 c_2^{n-a}|t|^{b+d_2(n-a)-\frac{n}{2}+1}
		 \lesssim c_2^{2-a}|t|^{b+d_2(2-a)},
	\end{aligned}
\end{equation*}
	where \eqref{b-require} is needed to guarantee the integrability and the last step is using $d_2\leq 1/2$, $c_2\leq c_{**}$ and $n>2$. For $a\ge n$, similarly we have
\begin{equation*}
\begin{aligned}
J_4 \lesssim &
\begin{cases}
c_{1}^{n-a} |t|^{b+d_{1}(n-a) +1-\frac n2}
&
\mbox{ \ \ if \ } a>n,
\\
|t|^{b+1-\frac a2}\ln \left(
\frac{2c_2|t|^{d_2}}{c_1|t|^{d_1}}
\right)
&
\mbox{ \ \ if \ } a=n,
\end{cases}
\\
\lesssim &
c_1^{2-a}|t|^{b+d_1(2-a)} .
\end{aligned}
\end{equation*}
	
	 Collecting the estimates of $J_1$ to $J_4$, we get the conclusion.
\end{proof}

\begin{remark}\label{rmk:int-G-est}
	After close examination of the proof, only \eqref{J2a} needs the comparison between  $a$ and 2.
	In fact, if $a<2$, one can let $c_1=0$ to get
	\[\TT^{out}\left[\frac{|t|^{b}}{|x|^a}\mathbf{1}_{\{ |x|\leq c_2|t|^{d_2}\}}\right](0,t)\leq C c_2^{2-a}|t|^{b+d_2(2-a)}.\]

\end{remark}

\begin{lemma}\label{lem:annulus}
	Suppose $n >2$, $a\ge 0$, $d_1\leq d_2\leq\frac12$ and $b$ satisfies \eqref{b-require}, $0\leq c_1,c_2\leq c_{**}$.
	Denote
	\[
	u(x,t)=\mathcal{T}^{out}\left[\frac{|t|^{b}}{|x|^a}\mathbf{1}_{\{ c_1|t|^{d_1}\leq |x|\le c_2|t|^{d_2}\}}\right] (x,t).
	\]
	 Then there exists $C$ depending on $n,a,b,d_1,d_2,c_{**}$ such that for $t<-1$
	\begin{align}\label{u_infty_bd}
	u(x,t)\leq C \begin{cases}c_1^{2-a}|t|^{b+d_1(2-a)}&\text{if  }a\in(2,\infty),\\c_2^{2-a}|t|^{b+d_2(2-a)}&\text{if }a\in[0,2).\end{cases}
	\end{align}
	Moreover, when $|x|>2c_2|2t|^{d_2}$, $a<n$,
	\begin{align}\label{u-d2}
		u(x,t) \le C c_2^{n-a}\begin{cases}	|t|^{b+d_2(n-a)}|x|^{2-n}&\text{if }2c_2|2t|^{d_2}\leq |x|\le|t|^{\frac{1}{2}},\\|x|^{2b+2d_2(n-a)+2-n}&\text{if }|x|\geq |t|^{\frac12}.
		\end{cases}
	\end{align}
When $|x|\ge 2c_1 |2t|^{d_1}$, $a>n$,
\begin{equation}\label{u-d3}
	u(x,t)
	\le C
	c_1^{n-a}\begin{cases}
	|t|^{b+d_{1}(n-a)}
		|x|^{2-n}
		&
		\mbox{ \ \ if \ }
		2c_1 |2t|^{d_1}
		\le |x|\le |t|^{\frac 12},
		\\
		|x|^{2b+2d_1(n-a)} |x|^{2-n}
		&
		\mbox{ \ \ if \ } |x|\ge |t|^{\frac 12} .
	\end{cases}
\end{equation}
\end{lemma}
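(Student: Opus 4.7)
The plan is to adapt the proof of Lemma~\ref{lem:int-G-est} (which handles $u(0,t)$) to general $x\in\RR^n$, splitting into a near-field regime where $|x|$ is small relative to the support radius and a far-field regime where the Gaussian in the Duhamel formula decays.

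For the $L^\infty$ bound \eqref{u_infty_bd}, in the near-field regime $|x|\le 2c_2|2t|^{d_2}$ I would mimic the four-region decomposition of the time variable $s$ used to bound $u(0,t)$, but with the Gaussian $G(x-y,t-s)$ in place of $G(-y,t-s)$. Passing to the shifted spatial variable $y'=y-x$ and exploiting that $|x|$ is of the same order as the support radius on the relevant time windows, the angular integrals in \eqref{intsF} are modified only by multiplicative constants depending on $c_{**}$, and \eqref{u_infty_bd} follows. When $|x|>2c_2|2t|^{d_2}$, \eqref{u_infty_bd} follows a posteriori from the sharper estimates \eqref{u-d2} or \eqref{u-d3}, since each of their right-hand sides is dominated by the right-hand side of \eqref{u_infty_bd}.

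For \eqref{u-d2} (case $a<n$) in the far-field regime, I would split the $s$-integration at $s=2t$. On $s\in[2t,t]$ monotonicity of $|s|^{d_2}$ (considering $d_2\ge 0$ and $d_2<0$ separately) together with the hypothesis on $|x|$ forces $|y|\le c_2|2t|^{d_2}\le |x|/2$ on the support, whence
\[
e^{-\frac{|x-y|^2}{4(t-s)}}\le e^{-\frac{|x|^2}{16(t-s)}}.
\]
Combined with $\int_{\{c_1|s|^{d_1}\le|y|\le c_2|s|^{d_2}\}}|y|^{-a}\dd y\lesssim c_2^{n-a}|s|^{d_2(n-a)}$, the task reduces to evaluating
\[
\int_{2t}^{t}\frac{|s|^{b+d_2(n-a)}}{(t-s)^{n/2}}\,e^{-|x|^2/(16(t-s))}\dd s,
\]
which I would analyze by the substitution $\tau=t-s$, $z=|x|^2/(16\tau)$. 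For $|x|\le|t|^{1/2}$ the classical fundamental-solution calculation $\int_0^\infty\tau^{-n/2}e^{-|x|^2/(16\tau)}\dd\tau\approx|x|^{2-n}$ yields $|t|^{b+d_2(n-a)}|x|^{2-n}$; for $|x|\ge|t|^{1/2}$ the Gaussian forces $\tau\gtrsim|x|^2$, hence $|s|\approx|x|^2$, giving $|x|^{2b+2d_2(n-a)+2-n}$. The remainder $s<2t$ is handled with the trivial bound $e^{-|x-y|^2/(4(t-s))}\le 1$ and the integrability hypothesis \eqref{b-require}, in parallel with the $J_4$-term of Lemma~\ref{lem:int-G-est}.

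The proof of \eqref{u-d3} (case $a>n$) follows the same pattern with two modifications. First, the spatial mass of $f(\cdot,s)$ is concentrated at the inner boundary: $\int_{c_1|s|^{d_1}\le|y|}|y|^{-a}\dd y\lesssim c_1^{n-a}|s|^{d_1(n-a)}$. Second, since the full support of $f$ can extend beyond $|x|/2$, I would further split the $y$-integral at $|y|=|x|/2$: on $\{|y|<|x|/2\}$ the Gaussian yields the decay $e^{-|x|^2/(16(t-s))}$, while on $\{|y|\ge|x|/2\}$ one uses $G(x-y,t-s)\le(4\pi(t-s))^{-n/2}$ together with $\int_{|y|\ge|x|/2}|y|^{-a}\dd y\lesssim|x|^{n-a}$. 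The main obstacle, and the point requiring the most bookkeeping, is the control on $s<2t$ when $d_2>0$ (for \eqref{u-d2}) or, more generally, whenever the support annulus grows unboundedly as $-s\to\infty$: here the pointwise Gaussian bound centered at $x$ is unavailable, and one must rely entirely on the integrability encoded by \eqref{b-require} together with the trivial Gaussian bound to absorb both the growing support and the temporal factor $|s|^b$.
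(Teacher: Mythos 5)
Your proposal has a genuine gap in the far-field regime $|x|\ge |t|^{1/2}$, which is exactly where \eqref{u-d2} and \eqref{u-d3} carry their content. For $|x|\ge|t|^{1/2}$ the claimed bound is $c_2^{n-a}|x|^{2b+2d_2(n-a)+2-n}$, and since \eqref{b-require} forces $b+d_2(n-a)+1-\tfrac n2<0$, this is \emph{strictly smaller} than $|t|^{b+d_2(n-a)+1-\frac n2}$ when $|x|\gg|t|^{1/2}$. But your treatment of the early times $s<2t$ — ``the trivial bound $e^{-|x-y|^2/(4(t-s))}\le1$ and the integrability hypothesis \eqref{b-require}'' — yields only
\begin{equation*}
\int_{-\infty}^{2t}(t-s)^{-\frac n2}\,|s|^{b}\,c_2^{n-a}|s|^{d_2(n-a)}\,ds\;\approx\;c_2^{n-a}|t|^{\,b+d_2(n-a)+1-\frac n2},
\end{equation*}
which carries no $x$-decay at all and is therefore too weak in this regime; the same defect appears in your closing remark, where you say that for $d_2>0$ one ``must rely entirely'' on this mechanism. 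The paper's proof needs more: when the support stays inside $\{|y|\le|x|/2\}$ it keeps the centered Gaussian $e^{-|x|^2/(16(t-s))}$ also for $s<2t$, and when $d_2>0$ and the support eventually reaches $x$ it splits time at $|s|\approx(|x|/c_2)^{1/d_2}$ (the pieces $u_1,u_2,u_3$), exploiting that on the later pieces $|s|$ is bounded below by a power of $|x|$, so the negative exponent $b+d_2(n-a)+1-\frac n2$ converts time decay into the spatial factor $(|x|^{1/d_2})^{b+d_2(n-a)+1-\frac n2}\lesssim |x|^{2b+2d_2(n-a)+2-n}$ (here $d_2\le\frac12$ enters). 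Without one of these two mechanisms your argument proves a weaker estimate than \eqref{u-d2}, \eqref{u-d3}.

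The near-field part of \eqref{u_infty_bd} is also not actually proved by your sketch. After replacing $G(-y,t-s)$ by $G(x-y,t-s)$ the Gaussian and the weight $|y|^{-a}\1_{\{c_1|s|^{d_1}\le|y|\le c_2|s|^{d_2}\}}$ are radial about different centers, so the reduction to a single radial integral as in \eqref{intsF} is lost, and the claim that ``$|x|$ is of the same order as the support radius'' is unjustified: $|x|$ ranges over all of $[0,2c_2|2t|^{d_2}]$, including $|x|\lesssim c_1|t|^{d_1}$ where, for $a>2$, the local singularity of $|y|^{-a}$ near $y\approx x$ is what produces the bound $c_1^{2-a}|t|^{b+d_1(2-a)}$. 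Note also that the naive statement $u(x,t)\le u(0,t)$ is false for a purely annular density, so some preliminary step is needed; the paper first majorizes the density by the radially decreasing profile $|t|^{b}\min\{(c_1|t|^{d_1})^{-a},|y|^{-a}\}\1_{\{|y|\le c_2|t|^{d_2}\}}$ and only then invokes the Hardy--Littlewood rearrangement inequality to reduce to $x=0$ and apply Lemma \ref{lem:int-G-est} and Remark \ref{rmk:int-G-est}; alternatively one can split the $y$-integral at $|y|\sim|x|$ as in Lemma \ref{lem:sing}. Your proposal contains neither idea. (A further small slip: for $d_2<0$ and $s\in[2t,t]$ the support radius is bounded by $c_2|t|^{d_2}$, not $c_2|2t|^{d_2}$, though this only affects constants.)
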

\begin{proof}
	$\bullet$
	Since
	\[\frac{|t|^{b}}{|x|^a} \mathbf{1}_{\{ c_1|t|^{d_1}\leq |x|\le c_2|t|^{d_2}\}}\leq |t|^b\min\left\{\frac{1}{c_1^a|t|^{d_1a}},\frac{1}{|x|^a}\right\}\mathbf{1}_{\{|x|\leq c_2|t|^{d_2}\}}=f(x,t) ,
	\]
	then
	\begin{align*}
		u(x,t)\lesssim \int_{-\infty}^t\int_{\Rn}G(x-y,t-s)f(y,s)dyds .
	\end{align*}
	Since $G$ and $f$ are both decreasing functions for each time slice, using Hardy-Littlewood rearrangement inequality, then
	\[u(x,t)\leq u(0,t)=J_1+J_2,\]
	where
	\begin{align}
		J_1=&\int_{-\infty}^t\int_{\Rn}G(y,t-s)|s|^b\min\left\{\frac{1}{c_1^a|s|^{d_1a}},\frac{1}{|y|^a}\right\}\mathbf{1}_{\{|y|\leq c_1|s|^{d_1}\}}dyds,\\
		J_2=&\int_{-\infty}^t\int_{\Rn}G(y,t-s)|s|^b\min\left\{\frac{1}{c_1^a|s|^{d_1a}},\frac{1}{|y|^a}\right\}\mathbf{1}_{\{c_1|s|^{d_1}\leq|y|\leq c_2|s|^{d_2}\}}dyds.
	\end{align}
	Applying Lemma \ref{lem:int-G-est} and Remark \ref{rmk:int-G-est}, we obtain
	\begin{align*}
		J_1\lesssim c_1^{2-a}|t|^{b+d_1(2-a)},\quad
		J_2\lesssim \begin{cases}c_1^{2-a}|t|^{b+d_1(2-a)}&\text{if  }a\in(2,\infty),\\
c_2^{2-a}|t|^{b+d_2(2-a)}&\text{if }a\in(0,2).\end{cases}
	\end{align*}
	Therefore \eqref{u_infty_bd} is established.

	$\bullet $
	Next we will establish \eqref{u-d2} when $d_2\leq 0$. In this case, for $x>2c_2|t|^{d_2}$, one has
	\begin{align}\label{xy-d2<0}
	    \frac12|x|\leq |x-y|\leq 2|x| \text{ for } y \text{ with } |y|\leq |s|^{d_2} \text{ and }s\leq t.
	\end{align}
	Then
	\begin{align*}
		u(x,t)\lesssim& \int_{-\infty}^t\int_{\Rn}\frac{1}{(t-s)^{n/2}}e^{-\frac{|x|^2}{16(t-s)}}\frac{|s|^{b}}{|y|^a}\mathbf{1}_{\{c_1|s|^{d_1}\leq |y|\leq c_2|s|^{d_2}\}}dyds\\
		\approx&\int^t_{-\infty}\int_{c_1|s|^{d_1}}^{c_2|s|^{d_2}}\frac{1}{(t-s)^{n/2}}e^{-\frac{|x|^2}{16(t-s)}}|s|^{b}r^{n-1-a}drds\\
		\lesssim& c_2^{n-a}\int^t_{-\infty}\frac{1}{(t-s)^{n/2}}e^{-\frac{|x|^2}{16(t-s)}}|s|^{b+d_2(n-a)}ds\leq c_2^{n-a}\left(\max\{|t|,|x|^2\}\right)^{b+d_2(n-a)}|x|^{2-n}.
	\end{align*}
	The last step follows from the following two facts
	\begin{align*}
		&\int_{2t}^t\frac{1}{(t-s)^{n/2}}e^{-\frac{|x|^2}{16(t-s)}}|s|^{b+d_2(n-a)}ds
		\approx |t|^{b+d_2(n-a)}\int_{2t}^t\frac{1}{(t-s)^{n/2}}e^{-\frac{|x|^2}{16(t-s)}}ds\\
		\approx& |t|^{b+d_2(n-a)}|x|^{2-n}\int_{\frac{|x|^2}{16|t|}}^\infty e^{-z}z^{\frac{n}{2}-2}dz
		\lesssim\begin{cases}
			|t|^{b+d_2(n-a)}|x|^{2-n}&\text{if }|x|<|t|^{\frac12},\\
			|t|^{b+d_2(n-a)}\int_{\frac{|x|^2}{16|t|}}^\infty e^{-z}z^{\frac{n}{2}-2}dz&\text{if }|x|\geq |t|^{\frac12}.
		\end{cases}
	\end{align*}
	and
	\begin{align*}
		&\int_{-\infty}^{2t}\frac{1}{(t-s)^{n/2}}e^{-\frac{|x|^2}{16(t-s)}}|s|^{b+d_2(n-a)}ds\approx \int_{-\infty}^{2t}e^{-\frac{|x|^2}{16|s|}}|s|^{b+d_2(n-a)-\frac{n}{2}}ds\\
		\approx&|x|^{2b+2d_2(n-a)-n+2}\int_{0}^{\frac{|x|^2}{32|t|}}e^{-z}z^{-b-d_2(n-a)+\frac{n}{2}-2}dz
		\approx\begin{cases}
			|t|^{b+d_2(n-a)-\frac{n}{2}+1}&\text{if }|x|<|t|^{\frac{1}{2}},\\|x|^{2b+2d_2(n-a)-n+2}&\text{if }|x|\geq |t|^{\frac12},
		\end{cases}
	\end{align*}
	where $-b-d_2(n-a)+\frac{n}{2}-2>-1$ is needed to guarantee the integrability. Thus \eqref{u-d2} is established.
	
	$\bullet $ Next we will establish \eqref{u-d2} when $d_2>0$. We do not have \eqref{xy-d2<0} anymore. In this case, $|x|\geq 2c_2|2t|^{d_2}$ is equivalent to $-(\frac{|x|}{2c_2})^{1/d_2}\leq 2t$. We write
	\[(-\infty,t)=(-(\frac{|x|}{2c_2})^{1/d_2},t)\cup (-(\frac{2|x|}{c_2})^{1/d_2},-(\frac{|x|}{2c_2})^{1/d_2})\cup (-\infty,-(\frac{2|x|}{c_2})^{1/d_2})\]
	and thus
	\begin{align}
	    u\lesssim \int_{-\infty}^t\int_{\Rn}\frac{1}{(t-s)^{n/2}}e^{-\frac{|x-y|^2}{4(t-s)}}\frac{|s|^{b}}{|y|^a}\mathbf{1}_{\{|y|\leq c_2|s|^{d_2}\}}dyds=u_1+u_2+u_3
	\end{align}
	where $u_1,u_2,u_3$ are the integrations according to the three intervals respectively.
	We shall verify that $u_1,u_2,u_3$ all satisfy \eqref{u-d2}. For $u_1$, one has $c_2|s|^{d_2}\leq |x|/2$ for such $s$. Then
	\begin{align*}
	    u_1=&\int^t_{-(|x|/2c_2)^{1/d_2}}\int_{\Rn}\frac{1}{(t-s)^{n/2}}e^{-\frac{|x-y|^2}{4(t-s)}}\frac{|s|^{b}}{|y|^a}\mathbf{1}_{\{|y|\leq c_2|s|^{d_2}\}}dyds\\
	    \lesssim &\int^t_{-(|x|/2c_2)^{1/d_2}}\int_{\Rn}\frac{1}{(t-s)^{n/2}}e^{-\frac{|x|^2}{16(t-s)}}\frac{|s|^{b}}{|y|^a}\mathbf{1}_{\{|y|\leq c_2|s|^{d_2}\}}dyds\\
	    \lesssim &\left(\int_{2t}^t+\int^{2t}_{-(|x|/2c_2)^{1/d_2}}\right)\int_{\Rn}\frac{1}{(t-s)^{n/2}}e^{-\frac{|x|^2}{16(t-s)}}\frac{|s|^{b}}{|y|^a}\mathbf{1}_{\{|y|\leq c_2|s|^{d_2}\}}dyds\\
	    \lesssim&c_2^{n-a} \left(\max\{|t|,|x|^2\}\right)^{b+d_2(n-a)}|x|^{2-n}.
	\end{align*}
	The last step follows from some easy integration which has been done many times in this section. For $u_2$,
	\begin{align*}
	    u_2\lesssim& \int_{-(2|x|/c_2)^{1/d_2}}^{-(|x|/2c_2)^{1/d_2}}\int_{\Rn} \frac{1}{(t-s)^{n/2}}e^{-\frac{|x-y|^2}{4(t-s)}}\frac{|s|^{b}}{|y|^a}\mathbf{1}_{\{|y|\leq c_2|s|^{d_2}\}}dyds\\
	    \lesssim& (|x|/c_2)^{b/d_2}\int_{-(2|x|/c_2)^{1/d_2}}^{-(|x|/2c_2)^{1/d_2}}\int_{\Rn} \frac{1}{(t-s)^{n/2}}e^{-\frac{|x-y|^2}{4(t-s)}}\frac{1}{|y|^a}\mathbf{1}_{\{|y|\leq 2|x|\}}dyds\\
	    \lesssim& c_2^{-b/d_2}|x|^{b/d_2+n-a}\int_{-(2|x|/c_2)^{1/d_2}}^{-(|x|/2c_2)^{1/d_2}} \frac{1}{(t-s)^{n/2}}ds\lesssim c_2^{(-b+\frac{n}{2}-1)/d_2}(|x|^{1/d_2})^{b+d_2(n-a)+1-\frac{n}{2}}\\
	   \lesssim& c_2^{n-a}\begin{cases}	|t|^{b+d_2(n-a)}|x|^{2-n}&\text{if }2c_2|2t|^{d_2}\leq |x|<|t|^{\frac{1}{2}},\\|x|^{2b+2d_2(n-a)+2-n}&\text{if }|x|\geq |t|^{\frac12}.
		\end{cases}
	\end{align*}
	The last step follows from $0<d_2\leq \frac12$ and $b+d_2(n-a)+1-\frac{n}{2}<0$. Similarly, for $u_3$,
	\begin{align*}
	    u_3\lesssim& \int^{-(2|x|/c_2)^{1/d_2}}_{-\infty}\int_{\Rn} \frac{1}{(t-s)^{n/2}}e^{-\frac{|x-y|^2}{4(t-s)}}\frac{|s|^{b}}{|y|^a}\mathbf{1}_{\{|y|\leq c_2|s|^{d_2}\}}dyds\\
	    \lesssim&(|x|/c_2)^{(b+1-\frac{n}{2})/d_2}|x|^{n-a}+\int^{-(2|x|)^{1/d_2}}_{-\infty}\frac{1}{|s|^{\frac{n}{2}}}e^{-\frac{|y|^2}{16(t-s)}}\frac{|s|^a}{|y|^b}\mathbf{1}_{\{2|x|\leq |y|\leq c_2|s|^{d_2}\}}dyds\\
	    \lesssim& c_2^{(-b+\frac{n}{2}-1)/d_2}(|x|^{1/d_2})^{b+d_2(n-a)+1-\frac{n}{2}}\\
	    \lesssim& c_2^{n-a}\begin{cases}	|t|^{b+d_2(n-a)}|x|^{2-n}&\text{if }2c_2|2t|^{d_2}\leq |x|<|t|^{\frac{1}{2}},\\|x|^{2b+2d_2(n-a)+2-n}&\text{if }|x|\geq |t|^{\frac12}.
	    \end{cases}
	\end{align*}
	Collecting the results of $u_1,u_2,u_3$, we can get \eqref{u-d2}.
	
$\bullet$ By the similar calculation like \eqref{u-d2}, we will get \eqref{u-d3}.
\end{proof}

\begin{lemma}\label{lem:sing} Suppose $2<a<n$, $0\leq d_2\leq \frac12$, $\frac{n}{2} -b>1$ and $0<c_2\leq c_{**}$.  Then there exists $C$ depending on $n,a,b,d_2,c_{**}$ such that for $t<-1$,
	\begin{align}\label{u-d2=0}
		\mathcal{T}^{out}\left[\frac{|t|^{b}}{|x|^a}\mathbf{1}_{\{|x|\leq c_2|t|^{d_2}\}}\right]\le C |t|^{b}|x|^{2-a} \quad \text{for }|x|<c_2|t|^{d_2}.
	\end{align}
\end{lemma}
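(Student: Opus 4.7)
The desired bound $|t|^b|x|^{2-a}$ is the parabolic analog of the classical Riesz potential estimate $(|\cdot|^{2-n}\ast|\cdot|^{-a})(x)\approx |x|^{2-a}$ valid for $2<a<n$, so the plan is to reduce the spacetime Duhamel integral to that shape. Write
$$
u(x,t)=\int_{-\infty}^{t}\int_{\mathbb{R}^n}G(x-y,t-s)\,\frac{|s|^b}{|y|^a}\mathbf{1}_{\{|y|\le c_2|s|^{d_2}\}}\,dy\,ds,
$$
and decompose the $y$-region into $A_1=\{|y|\le |x|/2\}$, $A_2=\{|x|/2\le |y|\le 2|x|\}$, $A_3=\{|y|\ge 2|x|\}$, treating each separately. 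Throughout, I would use the key scalar identity $\int_{-\infty}^{t}G(z,t-s)|s|^b\,ds\lesssim |t|^b|z|^{2-n}$ (valid for $|z|\le |t|^{1/2}$), obtained by splitting the $s$-integral at $s=t-|z|^2$: the piece $s\in(t-|z|^2,t)$ gives the kernel mass $|z|^{2-n}$, while the tail $s<t-|z|^2$ is controlled by the integrability condition $n/2-b>1$. This is exactly the same decomposition used repeatedly in Lemma~\ref{lem:int-G-est} and Lemma~\ref{lem:annulus}.

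In region $A_2$ I would pull out $|y|^{-a}\lesssim |x|^{-a}$, drop the indicator, and translate $z=x-y$ so that the remaining integral becomes $\int_{|z|\le 3|x|}\int_{-\infty}^{t}G(z,t-s)|s|^b\,ds\,dz\lesssim |t|^b|x|^2$ (Newton potential of $\mathbf{1}_{|z|\le 3|x|}$), using $|x|\le c_2|t|^{d_2}\le c_2|t|^{1/2}$ so that the scalar identity above applies. This contributes $|t|^b|x|^{2-a}$. In region $A_1$ I would use $|x-y|\ge |x|/2$ to pull out the exponential factor $e^{-|x|^2/(16(t-s))}$, bound the time integral by $|t|^b|x|^{2-n}$, and estimate $\int_{|y|\le|x|/2}|y|^{-a}\,dy\lesssim |x|^{n-a}$ since $a<n$; the product is again $|t|^b|x|^{2-a}$. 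In region $A_3$ I would use $|x-y|\ge |y|/2$, estimate the time integral by $|t|^b|y|^{2-n}$ (for $|y|\le |t|^{1/2}$), and evaluate $\int_{2|x|\le|y|\le c_2|t|^{1/2}}|y|^{2-n-a}\,dy\lesssim |x|^{2-a}$, which is finite precisely because $a>2$ and $a<n$; the tail $|y|\ge |t|^{1/2}$ and the time-dependent band $c_2|t|^{d_2}\le |y|\le c_2|s|^{d_2}$ are controlled by the same $s$-splitting at $s=t-|y|^2$, with the $|s|^b$ factor absorbed thanks to $n/2-b>1$.

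The main obstacle is the correct handling of the time-dependent truncation $\mathbf{1}_{\{|y|\le c_2|s|^{d_2}\}}$ in $A_3$: because $d_2\ge 0$, far negative $s$ admit larger $|y|$, so one must verify that these contributions remain integrable. This is exactly the situation already addressed in the proof of Lemma~\ref{lem:annulus}, where the interval $s\in(-\infty,-(2|y|/c_2)^{1/d_2})$ contributes a convergent factor $|y|^{(b+1-n/2)/d_2}$; since $b+1-n/2<0$, this integrates against $|y|^{2-n-a}$ to something bounded by $|x|^{2-a}$ up to a factor $|t|^b$ arising from the upper endpoint $s\approx t$. Summing the three regions delivers the stated bound.
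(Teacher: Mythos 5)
Your decomposition into $A_1$, $A_2$, $A_3$ is exactly the paper's splitting into $u_1,u_2,u_3$, and your treatment of $A_1$ and $A_2$ is correct and matches the paper's (you absorb the terms $|t|^{b+1-\frac n2}|x|^{n-a}$ immediately via $|x|\lesssim|t|^{1/2}$, the paper does it at the end). The genuine gap is in $A_3$, at exactly the point you flag as the main obstacle. For $|y|\ge c_2|t|^{d_2}$ the indicator forces $|s|\ge(|y|/c_2)^{1/d_2}$, and since $d_2\le\frac12$ this gives $t-s\gtrsim(|y|/c_2)^{1/d_2}\ge(|y|/c_2)^{2}$, so the Gaussian factor $e^{-|y|^2/(16(t-s))}$ is of order one there: no splitting at $s=t-|y|^2$ produces a factor $|t|^b|y|^{2-n}$ for these $y$, and your phrase that the far-past piece "integrates against $|y|^{2-n-a}$" is inconsistent, since the $|y|^{2-n}$ comes from the near-$t$ part of the time integral, not the far past. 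What the far-past piece actually gives, per fixed $y$, is $\int_{-\infty}^{-(|y|/c_2)^{1/d_2}}|s|^{b-\frac n2}\,ds\approx(|y|/c_2)^{(b+1-\frac n2)/d_2}$, and integrating $(|y|/c_2)^{(b+1-\frac n2)/d_2}|y|^{-a}$ over $\{|y|\gtrsim c_2|t|^{d_2}\}$ converges at spatial infinity only if $(b+1-\frac n2)/d_2+n-a<0$, i.e.\ $\frac n2-b-d_2(n-a)>1$, which is condition \eqref{b-require} and is strictly stronger than the assumed $\frac n2-b>1$ when $d_2>0$ and $a<n$. So "absorbed thanks to $n/2-b>1$" cannot be made rigorous: for instance with $n=7$, $a=3$, $d_2=\frac12$, $b=2$ all stated hypotheses hold, yet bounding the heat kernel from below on $\{|y|\le c_2|s|^{1/2}\}$ for $s\le 2t$ shows the far-past contribution, and hence $\mathcal{T}^{out}$ itself, is $+\infty$.

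To be fair, the paper's own proof is terse at the same spot: the final bound on its $u_3$ after the change of variables implicitly uses \eqref{b-require}, which is what is assumed in Lemmas \ref{lem:int-G-est}, \ref{lem:annulus} and in Corollary \ref{Coro-B4} where the present lemma is applied. So your route is the paper's route, but as a proof of the statement your handling of the $A_3$ tail is both internally muddled and impossible to close from $\frac n2-b>1$ alone; you need to invoke (or the lemma needs to assume) \eqref{b-require} for the far-past, time-truncated region, while the regions $s\in(2t,t)$ and $|y|\le c_2|t|^{1/2}$ are indeed fine with only $2<a<n$ and $\frac n2-b>1$.
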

\begin{proof}
 We divide $u$ into three parts
	\begin{align*}
		u(x,t)= \int_{-\infty}^t\int_{\Rn}G(x-y,t-s)\frac{|s|^{b}}{|y|^a}\mathbf{1}_{\{|y|\leq c_2|s|^{d_2}\}}dyds=u_1+u_2+u_3,
	\end{align*}
	where $u_1$ is the term with $\mathbf{1}_{\{|y|\leq \frac{1}{2}|x|\}}$ inside the integrand, $u_2$ is the one with $\mathbf{1}_{\{\frac{1}{2}|x|\leq |y|\leq 2|x|\}}$ and $u_3$ is the one with $\{ 2|x|\leq |y|\leq c_2|s|^{d_2}\}$. Since most of the calculation are similar to the proof of the previous lemma. We omit some details here. For $u_1$, we proceed as
	\begin{align*}
		u_1\lesssim& \int_{-\infty}^t\int_{\Rn}\frac{1}{(t-s)^{n/2}}e^{-\frac{|x|^2}{16(t-s)}}\frac{|s|^{b}}{|y|^a}\mathbf{1}_{\{|y|\leq \frac12|x|\}}dyds\\
		\approx\, &|x|^{n-a}\int_{-\infty}^t\frac{1}{(t-s)^{n/2}}e^{-\frac{|x|^2}{16(t-s)}}|s|^{b}ds\\
		\lesssim&\, |t|^{b}|x|^{2-a}\int_{\frac{|x|^2}{16(-t)}}^\infty e^{-z}z^{\frac{n}{2}-2}dz+|x|^{2b+2-a}\int_0^{\frac{|x|^2}{32(-t)}}e^{-z}z^{-b+\frac{n}{2}-2}dz\\
		\lesssim& |t|^{b}|x|^{2-a}+|x|^{n-a}|t|^{1-\frac{n}{2}+b},
	\end{align*}
	where we have used the fact that $\frac{n}{2} -b>1$. For $u_2$, we have
	\begin{align*}
		u_2\lesssim&\int_{-\infty}^t\int_{\Rn} \frac{1}{(t-s)^{n/2}}e^{-\frac{|x-y|^2}{4(t-s)}}\frac{|s|^{b}}{|y|^a}\mathbf{1}_{\{\frac{1}{2}|x|\leq |y|\leq 2|x|\}}dyds\\
		\lesssim& |x|^{-a}\int_{-\infty}^t\int_{\Rn}\frac{1}{(t-s)^{n/2}}e^{-\frac{|x-y|^2}{4(t-s)}}|s|^{b}\mathbf{1}_{\{|x-y|\leq 3|x|\}}dydx\\
		\approx& |x|^{-a}\int_{-\infty}^t\int_0^{3|x|}\frac{1}{(t-s)^{n/2}}e^{-\frac{r^2}{4(t-s)}}|s|^{b}r^{n-1}dyds
		\approx |x|^{-a}\int_{-\infty}^t\int_0^{\frac{9|x|^2}{4(t-s)}}|s|^{b}e^{-z}z^{\frac{n}{2}-1} d z  d s\\
		\approx& |x|^{-a}\int_{-\infty}^t|s|^{b}\min\left\{1,\left(\frac{|x|^2}{t-s}\right)^{\frac{n}{2}}\right\}
		d s
		\approx |t|^{b}|x|^{2-a}+|t|^{1+b-\frac{n}{2}}|x|^{n-a}.
	\end{align*}
	For $u_3$, we have
	\begin{align*}
		u_3\lesssim&\int_{-\infty}^t\int_{\Rn} \frac{1}{(t-s)^{n/2}}e^{-\frac{|x-y|^2}{4(t-s)}}\frac{|s|^{b}}{|y|^a}\mathbf{1}_{\{2|x|\leq |y|\leq c_2|s|^{d_2}\}}dyds\\
		\lesssim& \int_{-\infty}^t\int_{\Rn} \frac{1}{(t-s)^{n/2}}e^{-\frac{|y|^2}{16(t-s)}}\frac{|s|^{b}}{|y|^a}\mathbf{1}_{\{2|x|\leq |y|\leq c_2 |s|^{d_2}\}}dyds\\
		\approx& \int_{-\infty}^t\int_{2|x|}^{c_2|s|^{d_2}}\frac{1}{(t-s)^{n/2}}e^{-\frac{r^2}{16(t-s)}}|s|^{b}r^{n-1-a}drds\\
		\approx&\int_{-\infty}^t\int^{\frac{c_2|s|^{d_2}}{16(t-s)}}_{\frac{|x|^2}{4(t-s)}}(t-s)^{-\frac{a}{2}}|s|^{b}e^{-z}z^{\frac{n-a}{2}-1}dzds\lesssim
		|t|^{b} |x|^{2-a} .
	\end{align*}
	Combining the estimate of $u_1,u_2,u_3$ and using the fact that $|x|^{n-a}|t|^{1-\frac{n}{2}+b}\leq |t|^{b}|x|^{2-a}$ because $|x|\leq c_2|t|^{d_2}$, we get \eqref{u-d2=0}.
\end{proof}

\begin{corollary}\label{Coro-B4}
Suppose that $n>2$, $d_1\leq d_2\leq \frac12$, $b$ satisfies \eqref{b-require}, $0\leq c_1,c_2\leq c_{**}$. Denote
\begin{equation*}
u(x,t) = \mathcal{T}^{out}\left[
|t|^{b} |x|^{-a}\mathbf{1}_{\{ c_1|t|^{d_1}\leq |x|\le c_2|t|^{d_2}\}}\right] .
\end{equation*}
Then there exists $C$ depending on $n,a,b,d_1,d_2,c_{**}$ such that for $t<-1$,\\
if $0\le a <2$,
\begin{equation}
\label{Tout-a<2}
u(x,t)
\le C
\begin{cases}
	c_2^{2-a}|t|^{b+d_2(2-a)}
	&\text{if }|x|\leq c_2|t|^{d_2},
	\\
	c_2^{n-a}|t|^{b+d_2(n-a)}|x|^{2-n}&\text{if }c_2|t|^{d_2}\leq |x|\leq |t|^{\frac12},
	\\
	c_2^{n-a} |x|^{2b+2d_2(n-a)+2-n}&\text{if }|x|\geq |t|^{\frac12}.
\end{cases}
\end{equation}
If $2<a<n$,
\begin{equation}\label{Tout_a>2}
    u(x,t)
    \le C
    \begin{cases}
    c_1^{2-a}|t|^{b+d_1(2-a)}&\text{if }|x|\leq c_1|t|^{d_1},
    \\
    |t|^{b}|x|^{2-a}&\text{if }c_1|t|^{d_1}\leq |x|\leq c_2|t|^{d_2},
    \\
    c_2^{n-a}|t|^{b+d_2(n-a)}|x|^{2-n}&\text{if }c_2|t|^{d_2}\leq |x|\leq |t|^{\frac12},
    \\
   c_2^{n-a} |x|^{2b+2d_2(n-a)+2-n}&\text{if }|x|\geq |t|^{\frac12}.
    \end{cases}
\end{equation}
If $a>n$,
\begin{equation}\label{Tout-a>n}
	u(x,t)
	\le C
	\begin{cases}
		c_1^{2-a}|t|^{b+d_1(2-a)}&\text{if }|x|\leq c_1|t|^{d_1},
		\\
		c_1^{n-a}|t|^{b+d_1(n-a)}|x|^{2-n}
		&\text{if }c_1|t|^{d_1}\leq |x|\leq |t|^{\frac12},
		\\
		c_1^{n-a} |x|^{2b+2d_1(n-a)+2-n}&\text{if }|x|\geq |t|^{\frac12}.
	\end{cases}
\end{equation}
\end{corollary}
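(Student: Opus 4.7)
The plan is to piece together the three preceding lemmas by splitting the range of $|x|$ into regions and applying in each region whichever of Lemma \ref{lem:annulus} or Lemma \ref{lem:sing} is sharpest. Throughout, I exploit the fact that the integrand
\[
f(y,s) := |s|^{b} |y|^{-a} \1_{\{ c_1|s|^{d_1} \le |y| \le c_2|s|^{d_2} \}}
\]
is pointwise dominated by $|s|^{b} |y|^{-a} \1_{\{|y|\le c_2|s|^{d_2}\}}$ (useful when the outer indicator is binding) and, when $a>2$, by $|s|^{b} |y|^{-a} \1_{\{|y|\ge c_1|s|^{d_1}\}}$ truncated (useful when the inner indicator is binding). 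Monotonicity of $\mathcal T^{out}$ then lets me borrow the bounds proved for those simpler kernels.

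\textbf{Case $0\le a<2$ (proof of \eqref{Tout-a<2}).} The interior estimate $|x|\le c_2|t|^{d_2}$ is exactly the bound \eqref{u_infty_bd} of Lemma \ref{lem:annulus} in the regime $a\in[0,2)$; since $a<n$, the two exterior estimates in $c_2|t|^{d_2}\le|x|\le|t|^{1/2}$ and $|x|\ge|t|^{1/2}$ are read off directly from \eqref{u-d2}. (One only needs to absorb the factor $|2t|^{d_2}\approx |t|^{d_2}$ into the constant $C$ to align with the thresholds stated in the corollary.)

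\textbf{Case $2<a<n$ (proof of \eqref{Tout_a>2}).} This is the most delicate case because the singular intermediate regime $c_1|t|^{d_1}\le|x|\le c_2|t|^{d_2}$ is not covered by Lemma \ref{lem:annulus}. For $|x|\le c_1|t|^{d_1}$ I use \eqref{u_infty_bd} in the regime $a\in(2,\infty)$; for $|x|\ge c_2|t|^{d_2}$ I use \eqref{u-d2}. The intermediate bound $u(x,t)\lesssim |t|^{b}|x|^{2-a}$ is the content of Lemma \ref{lem:sing}: indeed, since $f(y,s)\le |s|^{b}|y|^{-a}\1_{\{|y|\le c_2|s|^{d_2}\}}$ pointwise, monotonicity of $\mathcal T^{out}$ together with \eqref{u-d2=0} gives exactly the claimed bound in the full range $|x|\le c_2|t|^{d_2}$, and in particular on the annulus. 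A final check: at the overlap $|x|=c_1|t|^{d_1}$ both bounds agree (up to constants) with $c_1^{2-a}|t|^{b+d_1(2-a)}$, and at $|x|=c_2|t|^{d_2}$ the intermediate and exterior bounds match, so the piecewise function in \eqref{Tout_a>2} is self-consistent.

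\textbf{Case $a>n$ (proof of \eqref{Tout-a>n}).} For $|x|\le c_1|t|^{d_1}$ use \eqref{u_infty_bd} in the regime $a>2$, which yields $c_1^{2-a}|t|^{b+d_1(2-a)}$. For $|x|\ge 2c_1|2t|^{d_1}$ (hence a fortiori on $c_1|t|^{d_1}\le|x|\le|t|^{1/2}$ and $|x|\ge|t|^{1/2}$) the two exterior bounds come directly from \eqref{u-d3}. Note that when $a>n$ there is no separate intermediate region to treat, because the kernel singularity near the inner radius $c_1|t|^{d_1}$ dominates and the ``natural'' intermediate bound coming from \eqref{u-d3} already decays like $|x|^{2-n}$. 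The only step requiring care is verifying that the exponents in \eqref{b-require} (which in the $a\ge n$ branch impose $\tfrac{n}{2}-b+d_1(a-n)>1$) give the convergence of the $s$-integral tail used in Lemma \ref{lem:annulus}.

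The main obstacle I expect is bookkeeping rather than genuine analysis: one must line up the four (or three) subdomains in each case with the regimes where each earlier lemma delivers a sharp bound, and verify compatibility at the transition radii $|x|=c_1|t|^{d_1}$, $|x|=c_2|t|^{d_2}$, and $|x|=|t|^{1/2}$. The only substantive step specific to this corollary is the intermediate bound $|t|^{b}|x|^{2-a}$ in case $2<a<n$, which is supplied by Lemma \ref{lem:sing}.
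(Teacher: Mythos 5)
Your proposal is correct and takes essentially the same route as the paper's own (very terse) proof: the paper likewise derives \eqref{Tout-a<2} and \eqref{Tout-a>n} directly from Lemma \ref{lem:annulus} (i.e.\ \eqref{u_infty_bd}, \eqref{u-d2}, \eqref{u-d3}) and obtains \eqref{Tout_a>2} by combining \eqref{u-d2} with the intermediate bound \eqref{u-d2=0} from Lemma \ref{lem:sing}. The matching at the transition radii that you flag is exactly the bookkeeping the paper leaves implicit.
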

\begin{proof}
\eqref{Tout-a<2} and \eqref{Tout-a>n} follow Lemma \ref{lem:annulus} directly.
\eqref{Tout_a>2} follows from \eqref{u-d2} and \eqref{u-d2=0}.
\end{proof}

\begin{lemma}\label{lem:outer-du}
	Suppose that $\frac a2 -b >1$. Then
\begin{equation}
	\begin{aligned}
	&
	\TT^{out}
	\left[
	|t|^{b}
	|x|^{-a} \1_{\{|x|\ge  |t|^{\frac 12}\}}
	\right]
\\
 \lesssim \ &
|t|^{1+b-\frac a2} \1_{\{|x|\le |t|^{\frac 12}\} }
	+
	\1_{\{|x|\ge |t|^{\frac 12}\}}
		|x|^{-a}
		\begin{cases}
			|t|^{1+b},
			&
			\mbox{ \ \ if \ } b<-1,
			\\
			1+
			\ln\left(
			\frac{|x|^2}{|t|}
			\right)
			&
			\mbox{ \ \ if \ } b= - 1,
			\\
			|x|^{2+2b}
			&
			\mbox{ \ \ if \ } b> - 1 .
		\end{cases}
	\end{aligned}
\end{equation}
\end{lemma}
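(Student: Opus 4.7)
Writing out Duhamel's formula, let
\begin{equation*}
u(x,t) := \TT^{out}\bigl[|t|^{b} |x|^{-a} \1_{\{|x|\ge |t|^{1/2}\}}\bigr](x,t) = \int_{-\infty}^t \!\!\int_{\RR^n} G(x-y,\,t-s)\, |s|^{b}\, |y|^{-a}\, \1_{\{|y|\ge |s|^{1/2}\}}\, dy\, ds.
\end{equation*}
The plan is to bound $u$ separately in $\{|x|\le |t|^{1/2}\}$ and $\{|x|\ge |t|^{1/2}\}$, using two different strategies, and then to check that the two bounds agree on the interface.

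For the inner region $\{|x|\le |t|^{1/2}\}$, I would exploit the indicator directly: on its support one has $|y|^{-a}\le |s|^{-a/2}$, so the trivial bound $\int_{\RR^n} G(x-y,\,t-s)\, dy = 1$ gives
\begin{equation*}
u(x,t) \le \int_{-\infty}^{t} |s|^{b-a/2}\, ds \;\lesssim\; |t|^{b+1-a/2},
\end{equation*}
with convergence at $-\infty$ secured by the hypothesis $a/2 - b > 1$. This is exactly the first piece of the claimed bound.

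For the outer region $\{|x|\ge |t|^{1/2}\}$ the uniform bound above is too crude, so I would instead drop the indicator (getting an upper bound) and apply the Gaussian-smoothing estimate
\begin{equation*}
\int_{\RR^n} G(x-y,\tau) |y|^{-a}\, dy \;\lesssim\; \min\bigl(\tau^{-a/2},\; |x|^{-a}\bigr),
\end{equation*}
valid for $a<n$ (the relevant regime), which is proved by the rescaling $y=\sqrt{\tau}\,z$. This reduces the task to a one-dimensional integral that I would split at the crossover $t-s = |x|^{2}$:
\begin{equation*}
u(x,t) \;\lesssim\; |x|^{-a}\!\!\int_{t-|x|^{2}}^{t} |s|^{b}\, ds \;+\; \int_{-\infty}^{t-|x|^{2}} |s|^{b} (t-s)^{-a/2}\, ds \;=:\; |x|^{-a} I_1 + I_2.
\end{equation*}
Substituting $u=-s$, using $|t|+|x|^{2}\approx |x|^{2}$ (since $|x|^{2}\ge |t|$), and splitting on the sign of $b+1$, elementary computation gives
\begin{equation*}
I_1 \;\approx\; \begin{cases} |x|^{2(b+1)} & b > -1,\\ \ln(|x|^{2}/|t|) & b = -1,\\ |t|^{b+1} & b < -1,\end{cases}
\end{equation*}
while for $I_2$ one has $u-|t|\approx u$ on the integration range, and the assumption $a/2 - b > 1$ yields $I_2 \lesssim |x|^{2b+2-a}$. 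Assembling the three cases: for $b>-1$ both $|x|^{-a}I_1$ and $I_2$ are $\approx |x|^{2b+2-a}$; for $b=-1$ the dominant term is $|x|^{-a}\ln(|x|^{2}/|t|)$; and for $b<-1$ the inequality $|x|^{2(b+1)} \le |t|^{b+1}$ (from $b+1<0$ and $|x|^{2}\ge |t|$) shows $I_2 \lesssim |x|^{-a} |t|^{b+1}$, so $|x|^{-a} I_1 \approx |x|^{-a}|t|^{b+1}$ dominates.

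I do not foresee any serious obstacle. The only delicate point is the three-way case analysis $b\gtrless -1$ in $I_1$, which is the sole source of the logarithm at the critical exponent. A direct check at $|x|=|t|^{1/2}$ confirms that the outer bound matches $|t|^{b+1-a/2}$, so the two regions glue consistently.
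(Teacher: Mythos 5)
Your argument is correct where it applies, and it is genuinely more streamlined than the paper's. The paper's proof splits the time axis at $-\frac14|x|^2$ and $-4|x|^2$ and the space variable into the annuli $\{|s|^{1/2}\le|y|\le\frac12|x|\}$, $\{\frac12|x|\le|y|\le2|x|\}$, $\{|y|\ge2|x|\}$ (the terms $P_{11},P_{12},P_{13},P_2,P_3$, plus a separate intermediate zone $\frac12|t|^{1/2}\le|x|\le4|t|^{1/2}$), extracting the three-way case analysis in $b$ from the middle annulus $P_{12}$; in the inner region it uses the geometric inequality $|x-y|\ge\frac12|y|$. You instead get the inner bound for free from $\int_{\RR^n}G(x-y,t-s)\,dy=1$ combined with $|y|^{-a}\le|s|^{-a/2}$ on the support of the indicator (your bound is in fact uniform in $x$, which is more than the statement asks), and in the outer region you replace the paper's double decomposition by the single convolution estimate $\int G(x-y,\tau)|y|^{-a}\,dy\lesssim\min(\tau^{-a/2},|x|^{-a})$ plus one time split at $t-s=|x|^2$; your evaluation of $I_1$ and $I_2$, including the logarithm at $b=-1$ and the domination of $|t|^{1+b}$ over $|x|^{2+2b}$ when $b<-1$, reproduces exactly the stated right-hand side, and the hypothesis $\frac a2-b>1$ enters where it should (convergence at $s=-\infty$ and in $I_2$). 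The trade-off is the restriction you yourself flag: dropping the indicator forces $a<n$, since otherwise the spatial convolution diverges at $y=0$, whereas the lemma is stated, and proved in the paper, with no upper bound on $a$ (the paper keeps the lower cutoff $|y|\ge|s|^{1/2}$ in $P_{11}$ and $P_3$ and explicitly notes the case $a\ge n$). Because the only invocation in the paper is in Lemma \ref{lem:w3-w3*} with $a=n-2$, your restriction is harmless for the application, but to prove the lemma in its stated generality you would need to retain the indicator (equivalently, the cutoff $|y|\ge|s|^{1/2}$) in the outer-region estimate when $a\ge n$.
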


\begin{proof}
Denote $u(x,t) = \TT^{out}
\left[
|t|^{b}
|x|^{-a} \1_{[|x|\ge  |t|^{\frac 12}]}
\right] $.

$\bullet$	For $|x|\le \frac 12 |t|^{\frac 12}$, we have $|x-y|\geq \frac12|y|$ for $|y|\geq |s|^{\frac 12}\geq |t|^{\frac 12}$. Then
	\begin{equation}
		\begin{aligned}
			u(x,t)
			\lesssim \ &
			\int_{-\infty}^t \int_{\RR^n}
			\frac{1}{ (t-s)^{n/2}}
			e^{-\frac{|y|^2}{16(t-s)}}
			|s|^{b}
			|y|^{-a} \1_{\{|y|\ge  |s|^{\frac 12}\}} \dd \xi \dd s
			\\
			\lesssim \ &
			\left(
			\int_{-\infty}^{2t}
			+
			\int_{2t}^t
			\right)
			|s|^{b}
			(t-s)^{-a/2} e^{-\frac{(-s)}{32(t-s)}} \dd s
\lesssim \
|t|^{1+b-\frac a2} ,
		\end{aligned}
	\end{equation}
where  $\frac a2 -b >1$ is used to guarantee the integrability in the last step.

$\bullet$	Consider $|x|\ge 4|t|^{\frac 12}$. We make the following decomposition.
	\begin{align*}
			u(x,t)
			=  \ &	\left(\int_{-\frac14|x|^2}^t+\int_{-4|x|^2}^{-\frac14|x|^2}+\int^{-4|x|^2}_{-\infty}\right)\int_{\Rn}\frac{1}{(t-s)^{n/2}}e^{-\frac{|x-y|^2}{4(t-s)}}|s|^{b}|y|^{-a}\mathbf{1}_{\{|y|\geq |s|^{\frac12}\}}\dd y\dd s
			\\
			:= \ &
			P_1 + P_2 +P_3 .
	\end{align*}
For $P_1$, we divide it further to be
\begin{align*}
    P_1=\int_{-\frac14|x|^2}^t\int_{\Rn}\frac{1}{(t-s)^{n/2}}e^{-\frac{|x-y|^2}{4(t-s)}}|s|^{b}|y|^{-a}\mathbf{1}_{\{|y|\geq |s|^{\frac12}\}}dyds=P_{11}+P_{12}+P_{13}
\end{align*}
where $P_{11}$ is the term with $\mathbf{1}_{\{|s|^{\frac{1}{2}}\leq |y|\leq \frac{1}{2}|x|\}}$ in the integrand, $P_{12}$ is the one with $\mathbf{1}_{\{\frac12|x|\leq |y|\leq 2|x|\}}$ and $P_{13}$ is the one with $\mathbf{1}_{\{2|x|\leq |y|\}}$. For $P_{11}$, when $a<n$,
	\begin{equation}
		\begin{aligned}
			P_{11}
			\lesssim \ &
			\int_{
				-\frac14|x|^2
			}^t
			\int_{\RR^n}
			\frac{1}{(t-s)^{n/2}}
			e^{-\frac{|x|^2}{16(t-s)}}
			|s|^{b}
			|y|^{-a}
			\1_{\{ |s|^{\frac 12} \le |y|\le
				\frac{|x|}{2}\}}
			\dd y \dd s
\\
			\lesssim \ &
			\int_{
				-\frac14|x|^2
			}^t
			\frac{1}{(t-s)^{n/2}}
			e^{-\frac{|x|^2}{16(t-s)}}
			|s|^{b}
			|x|^{n-a} \dd s
\\
= \ &
|x|^{n-a}
\int_{
	-\frac14|x|^2
}^{t}
\frac{1}{(t-s)^{n/2}}
e^{-\frac{|x|^2}{16(t-s)}}
|s|^{b}
 \dd s
\lesssim \
 |x|^{2-a+2b}
 .
		\end{aligned}
	\end{equation}
When $a\ge n$, by similar calculation,
$
P_{11} \lesssim
|x|^{2-a+2b}
$ still holds.

	For $P_{12}$,
	\begin{equation}
		\begin{aligned}
			P_{12}
			\lesssim
			\ &
			|x|^{-a}
			\int_{
				-\frac14|x|^2
			}^t
			\int_{\RR^n}
			\frac{1}{(t-s)^{n/2}}
			e^{-\frac{|x-y|^2}{4(t-s)}}
			|s|^{b}
			\1_{\{
				\frac{|x|}{2} \le |y| \le 2|x|\}}
			\dd y \dd s
			\\
			\lesssim
\ &
|x|^{-a}
\left(
\int_{2t
}^t
+
\int_{
	-\frac14|x|^2
}^{2t}
\right)
\int_{0}^{3|x|}
\frac{1}{(t-s)^{n/2}}
e^{-\frac{r^2}{4(t-s)}}
|s|^{b}
r^{n-1}
\dd r \dd s
\\
\lesssim \ &
|x|^{-a}
|t|^{1+b}
+ |x|^{-a}
\begin{cases}
|t|^{1+b}
	&
	\mbox{ \ \ if \ } b< - 1,
	\\
	1+
	\ln\left(
	\frac{|x|^2}{-t}
	\right)
	&
	\mbox{ \ \ if \ } b=-1,
	\\
	(|x|^2)^{1+b}
	&
	\mbox{ \ \ if \ } b>- 1 .
\end{cases}
		\end{aligned}
	\end{equation}

	For $P_{13}$,
	\begin{equation}
		\begin{aligned}
			P_{13}
			\lesssim \ &
			\int_{
				-\frac14|x|^2
			}^t
			\int_{\RR^n}
			\frac{1}{(t-s)^{n/2}}
			e^{-\frac{|y|^2}{ 16(t-s)}}
			|s|^{b}
			|y|^{-a}
			\1_{\{
				2|x| \le  |y|\}}
			\dd y \dd s
			\\
			\approx \ &
			\int_{
				-\frac14|x|^2
			}^t
			\int_{2|x|}^\infty
			\frac{(-s)^{b}}{(t-s)^{n/2}}
			e^{-\frac{ r^2}{ 16(t-s)}}
			r^{n-1-a}
			\dd r \dd s
			\\
=
 \ &
 \left(
\int_{2t
}^{t}
+
\int_{
	-\frac14|x|^2
}^{2t}
\right)
\int_{2|x|}^\infty
\frac{(-s)^{b}}{(t-s)^{n/2}}
e^{-\frac{ r^2}{ 16(t-s)}}
r^{n-1-a}
\dd r \dd s
\\
\lesssim \ &
(-t)^{b}
|x|^{2-a} e^{-\frac{|x|^2}{16(-t)}}
+
|x|^{2+2b- a} .
		\end{aligned}
	\end{equation}

	\medskip
	
	For $P_2$, since $-\frac14|x|^2\leq 4t$ in this case,
	\begin{equation}
		\begin{aligned}
			P_2
			\lesssim  \ &
			\int_{-4|x|^2}^{
				-\frac14|x|^2
			}
			\int_{\RR^n}
			\frac{1}{ |s|^{n/2}}
			e^{-\frac{|x-y|^2}{4(-s)}}
			|s|^{b}
			|y|^{-a} \1_{\{|y|\ge \frac{ |x|}{2} \}} \dd y \dd s
			\\
			\approx
			\ &
			|x|^{-n+2b}
			\int_{-4|x|^2}^{
				-\frac14|x|^2
			}
			\int_{\RR^n}
			e^{-\frac{|x-y|^2}{4(-s)}}
			|y|^{-a}
			\left(
			\1_{\{\frac{ |x|}{2}  \le |y|\le 2|x|\}}
			+
			\1_{\{2|x|\le|y| \}}
			\right)
			\dd y \dd s
			\\
			\lesssim \ &
			|x|^{2+2b-a} .
		\end{aligned}
	\end{equation}
	
	\medskip
	
	For $P_3$, in this case, $|x|\le \frac 12 |s|^{\frac 12}$,
	\begin{equation}
		\begin{aligned}
			P_3
			\lesssim
			\ &
			\int_{-\infty}^{-4|x|^2}
			\int_{\RR^n}
			\frac{1}{(t-s )^{n/2}}
			e^{-\frac{|y|^2}{ 16(t-s)}}
			|s|^{b}
			|y|^{-a} \1_{\{|y|\ge  |s|^{\frac 12}\}} \dd y \dd s
			\\
	\lesssim
\ &
\int_{-\infty}^{-4|x|^2}
\int_{\RR^n}
(-s)^{b-\frac n2}
e^{-\frac{|y|^2}{ 16(-s)}}
|y|^{-a} \1_{\{|y|\ge  |s|^{\frac 12}\}} \dd y \dd s
\lesssim \
|x|^{2+2b-a} ,
		\end{aligned}
	\end{equation}
where $\frac a2 -b>1$ is required to guarantee the integrability. Combining the above estimates of $P_1$, $P_2$ and $P_3$, we get, when $|x|\geq 4|t|^{\frac 12}$
	\begin{equation}
		u(x,t) \lesssim
			|x|^{-a}
			\begin{cases}
				|t|^{1+b}
				&
				\mbox{ \ \ if \ } b<-1,
				\\
				1+
				\ln\left(
				\frac{|x|^2}{-t}
				\right)
				&
				\mbox{ \ \ if \ } b=-1,
				\\
				|x|^{2+2b}
				&
				\mbox{ \ \ if \ } b>-1 .
			\end{cases}
	\end{equation}
	
$\bullet$	Consider the case $ \frac 12 |t|^{\frac 12} \le |x|\le 4|t|^{\frac 12}$,
	\begin{equation}
		\begin{aligned}
			u(x,t) = \ &
			\int_{-\infty}^t \int_{\RR^n}
			\frac{1}{(4\pi(t-s))^{n/2}}
			e^{-\frac{|x-y|^2}{4(t-s)}}
			|s|^{b}
			|y|^{-a} \1_{[|y|\ge  |s|^{\frac 12}]} \dd y \dd s
			\\
			\lesssim \ &
			\left(
			\int_{64t}^t
			+
			\int_{-\infty}^{64t}
			\right)
			\int_{\RR^n}
			\frac{1}{(t-s)^{n/2}}
			e^{-\frac{|x-y|^2}{4(t-s)}}
			|s|^{b}
			|y|^{-a} \1_{\{|y|\ge  |s|^{\frac 12}\}} \dd y \dd s
	\\
		\lesssim \ &
	|t|^{1+b -\frac a2} .
		\end{aligned}
	\end{equation}
\end{proof}

In order to get the gradient estimate of $\bar{\varphi}$, we need the following lemma.
\begin{lemma}\label{lem:deri-est}
	For $d_{1}\le d_{2} \le \frac 12$,
	$\frac n2 -b-d_2 n>0$, $c_1, c_2 \approx 1$, we have
	\begin{equation}
\TT^{d}[|t|^{b} \1_{[c_1 |t|^{d_1} \le |x| \le c_2 |t|^{d_2}]}]
		\lesssim
		\begin{cases}
		|t|^{b+d_2}
			&
			\mbox{ \ \ if \ } |x| \le |t|^{d_2},
			\\
			|t|^{b+d_2 n} |x|^{1-n}
			&
			\mbox{ \ \ if \ }
			|t|^{d_2} \le
			|x| \le  |t|^{\frac 12},
			\\
			(|x|^2)^{b+d_2n} |x|^{1-n}
			&
			\mbox{ \ \ if \ }
			|x| \ge |t|^{\frac 12}.
		\end{cases}
	\end{equation}	
	
	For $d_{1}\le d_{2} \le \frac 12$, $a>n$, $\frac n2 -b -d_1(n-a)>0$, $c_1, c_2 \approx 1$, we have
	\begin{equation}
	\TT^{d}[\frac{|t|^{b}}{|x|^a} \1_{[c_1 |t|^{d_1} \le |x| \le c_2 |t|^{d_2}]}] \lesssim
		\begin{cases}
			|t|^{b+d_1(1-a)}
			&
			\mbox{ \ \ if \ }
			|x| \le  |t|^{d_1} ,
			\\		
			|t|^{b+d_1(n-a)} |x|^{1-n}
			&
			\mbox{ \ \ if \ }
			|t|^{d_1} \le
			|x| \le  |t|^{\frac 12},
			\\
			(|x|^2)^{b+d_1(n-a)} |x|^{1-n}
			&
			\mbox{ \ \ if \ }
		  |x| \ge |t|^{\frac 12} .
		\end{cases}
	\end{equation}
\end{lemma}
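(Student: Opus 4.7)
The plan is to adapt the techniques used for $\TT^{out}$ in Lemmas \ref{lem:int-G-est}--\ref{lem:outer-du} to the derivative-type operator $\TT^d$. The gradient of the heat kernel carries an extra factor $|x-y|/(t-s)$ compared with the heat kernel itself; on the Gaussian scale this behaves like $(t-s)^{-1/2}$, which produces one extra power of $|x|^{-1}$ (or $|t|^{-1/2}$) in the resulting bounds. This is precisely the difference between the present conclusions and the analogous $\TT^{out}$ estimates, where $|x|^{1-n}$ here replaces $|x|^{2-n}$ in Lemma \ref{lem:annulus}.

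For the first statement I would split into the three spatial regions $\{|x|\le |t|^{d_2}\}$, $\{|t|^{d_2}\le |x|\le |t|^{1/2}\}$, and $\{|x|\ge |t|^{1/2}\}$. In the inner region, since the integrand in the spatial variable is radial and non-increasing in $|y|$ once one applies the Gaussian shift, Hardy--Littlewood rearrangement reduces the estimate to evaluating at $x=0$; passing to polar coordinates in $y$, substituting $z=|y|^2/(4(t-s))$, and then splitting $s\in[2t,t]\cup(-\infty,2t]$ yields the bound $|t|^{b+d_2}$, with integrability of the outer piece guaranteed by the hypothesis $\tfrac{n}{2}-b-d_2 n>0$. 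In the parabolic and outer regions, one uses $|x-y|\approx|x|$ whenever $y$ lies in the support, so the Gaussian becomes $\exp(-c|x|^2/(t-s))$; the $y$-integral contributes $(c_2|t|^{d_2})^n$ and the $s$-integral, after the extra $(t-s)^{-1/2}$ from the derivative, is dominated by the balance $t-s\approx|x|^2$, producing the advertised $|x|^{1-n}$ factors.

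For the second statement, because $a>n$ the measure $|y|^{-a}\,dy$ on the annulus concentrates at the inner boundary $|y|\approx c_1|t|^{d_1}$, which is why the final bounds feature $d_1$ rather than $d_2$. The same three-region spatial split applies, and within each region I would further decompose the $y$-integral into $\{|y|\le|x|/2\}$, $\{|x|/2\le|y|\le 2|x|\}$, and $\{|y|\ge 2|x|\}$ following the pattern of Lemma \ref{lem:sing}. The derivative factor is absorbed either as an extra $(t-s)^{-1/2}$ on the Gaussian scale, or as an $|x|^{-1}$ when $y$ is well-separated from $x$ and $|x-y|\approx|x|$.

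The main obstacle is the careful bookkeeping of exponents so that the three regional estimates match continuously at the interfaces $|x|\approx|t|^{d_j}$ and $|x|\approx|t|^{1/2}$, and so that the final powers agree exactly with the statement. The assumptions $\tfrac{n}{2}-b-d_2 n>0$ and $\tfrac{n}{2}-b-d_1(n-a)>0$ should arise as the precise integrability thresholds for the tail integral $\int_{-\infty}^{2t}|s|^{\,\cdot}\,ds$ after both the spatial integration and the derivative factor have been accounted for; checking that this threshold matches the one inherited from Lemmas \ref{lem:annulus} and \ref{lem:sing}, up to the shift caused by the additional $(t-s)^{-1/2}$, is routine but the algebra must be done with care.
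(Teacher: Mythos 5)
Your proposal is correct and follows essentially the route the paper intends: the paper omits the proof of Lemma \ref{lem:deri-est}, remarking only that it follows by the same domain-splitting as Lemmas \ref{lem:int-G-est}--\ref{lem:outer-du} and behaves like a Gaussian convolution in $\RR^{n+1}$, which is exactly your device of absorbing the factor $|x-y|/(t-s)$ into a slightly weaker Gaussian as an effective $(t-s)^{-1/2}$ and then repeating the regional estimates (rearrangement at the origin, $|x-y|\approx|x|$ off the support, concentration of $|y|^{-a}\,dy$ at the inner radius when $a>n$). The remaining work you flag—exponent bookkeeping and matching at $|x|\approx|t|^{d_j}$, $|t|^{1/2}$—is precisely what the paper leaves implicit.
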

We omit the proof since it relies splitting integral domain like above.




\subsection{Proofs of three lemmas in the outer problem}\label{out-linear-pf}
\begin{proof}[Proof of Lemma \ref{lem:w1j-w1j*}]
For $j=2,\dots,k$, by Corollary \ref{Coro-B4},
	\begin{equation*}
		\TT^{out}[\mu_{0j}^{-2}(t)|t|^{\gamma_j}
		\1_{\{ |x|\le \mu_{0j} \}}]
		\lesssim
		\begin{cases}
			|t|^{\gamma_{j}}
			&
			\mbox{ \ \ if \ } |x|\le \mu_{0j},
			\\
			|t|^{\gamma_{j}}
			\mu_{0j}^{n-2} |x|^{2-n}
			&
			\mbox{ \ \ if \ } \mu_{0j} \le |x|\le |t|^{\frac 12},
			\\
			|x|^{2\gamma_{j} +(4-2n)\alpha_j +2-n}
			&
			\mbox{ \ \ if \ }  |x|\ge |t|^{\frac 12},
		\end{cases}
	\end{equation*}
	\begin{equation*}
		\TT^{out} [\mu_{0j}^{\alpha}(t)|t|^{\gamma_j}
		|x|^{-2-\alpha}
		\1_{ \{\mu_{0j} \le |x|\le \bar{\mu}_{0j} \} }]
		\lesssim
		\begin{cases}
			|t|^{\gamma_{j}}
			&
			\mbox{ \ \ if \ } |x|\le \mu_{0j},
			\\
			|t|^{\gamma_{j}} \mu_{0j}^{\alpha} |x|^{-\alpha}
			&
			\mbox{ \ \ if \ } \mu_{0j} \le |x|\le\bar{\mu}_{0j} ,
			\\
			|t|^{\gamma_{j}}
			\mu_{0j}^{\alpha}
			\bar{\mu}_{0j}^{n-2-\alpha} |x|^{2-n}
			&
			\mbox{ \ \ if \ }  \bar{\mu}_{0j} \le |x| \le |t|^{\frac 12} ,
			\\
			|x|^{2\gamma_j^*+2-n}
			&
			\mbox{ \ \ if \ }   |x| \ge |t|^{\frac 12} .
		\end{cases}
	\end{equation*}
	Thus $\TT^{out}[w_{1j}] \lesssim w_{1j}^{*}$.
	
For $j=1$, the first part of $w_{11}$ can be dealt with by the same method above. For the rest part,
by Corollary \ref{Coro-B4},
\begin{equation*}
	\begin{aligned}
		\TT^{out}[|t|^{\gamma_{1}}
		\bar{\mu}_{01}^{n-2-\alpha} |x|^{-1-n} \1_{\{ \bar{\mu}_{01} \le |x| \le |t|^{\frac 12} \}}]
		\lesssim \ &
		\begin{cases}
			|t|^{\gamma_{1} } \bar{\mu}_{01}^{-1-\alpha}
			&
			\mbox{ \ \ if \ } |x|\le \bar{\mu}_{01}
			,
			\\
			|t|^{\gamma_1} \bar{\mu}_{01}^{n-3-\alpha} |x|^{2-n}
			&
			\mbox{ \ \ if \ }  \bar{\mu}_{01} \le |x|\le |t|^{\frac 12}
			,
			\\
			|x|^{2(\gamma_1 +\delta (n-3-\alpha)) +2-n}
			&
			\mbox{ \ \ if \ } |x| \ge  |t|^{\frac 12} .
		\end{cases}
		\\
		\lesssim \ & w_{11}^{*} .
	\end{aligned}
\end{equation*}
\end{proof}
\begin{proof}[Proof of Lemma \ref{lem:w2-w2*}]
	This just follows from \eqref{Tout_a>2}. $b=-2\sigma-(\frac{n}{2}-2)\alpha_{j+1}+\alpha_j$, $d_2=-\frac{1}{2}(\alpha_j+\alpha_{j-1})$, $a=n-2$.
\end{proof}
\begin{proof}[Proof of Lemma \ref{lem:w3-w3*}]
By Corollary \ref{Coro-B4}, we have
\begin{equation*}
	\TT^{out}[|t|^{-1-\sigma} |x|^{2-n} \1_{\{ \bar{\mu}_{01} \le |x| \le |t|^{\frac 12}\}}]
	\lesssim
	\begin{cases}
		|t|^{-1-\sigma}
		\bar{\mu}_{01}^{4-n}
		&
		\mbox{ \ \ if \ } |x|\le \bar{\mu}_{01},
		\\
		|t|^{-1-\sigma} |x|^{4-n}
		&
		\mbox{ \ \ if \ } \bar{\mu}_{01} \le  |x|\le |t|^{\frac 12},
		\\
		|x|^{2-2\sigma-n}
		&
		\mbox{ \ \ if \ }
		|x| \ge |t|^{\frac 12 }.
	\end{cases}
\end{equation*}
By Lemma \ref{lem:outer-du}, we have
\begin{equation*}
	\TT^{out}[|t|^{-1-\sigma} |x|^{2-n} \1_{\{  |x| \ge |t|^{\frac 12}\}}]
	\lesssim
	\begin{cases}
		|t|^{1-\sigma-\frac n2}
		&
		\mbox{ \ \ if \ }  |x|\le |t|^{\frac 12},
		\\
		|t|^{-\sigma}|x|^{2-n}
		&
		\mbox{ \ \ if \ }
		|x| \ge |t|^{\frac 12 }.
	\end{cases}
\end{equation*}
Then \eqref{eq:w3j*} follows when $\delta\le \frac 12$.
\end{proof}




 \bibliographystyle{plainnat}
 \bibliography{Ancient-ref}

\begin{thebibliography}{35}
\providecommand{\natexlab}[1]{#1}
\providecommand{\url}[1]{\texttt{#1}}
\expandafter\ifx\csname urlstyle\endcsname\relax
  \providecommand{\doi}[1]{doi: #1}\else
  \providecommand{\doi}{doi: \begingroup \urlstyle{rm}\Url}\fi

\bibitem[Aubin(1976)]{aubin1976problemes}
Thierry Aubin.
\newblock Problemes isop{\'e}rim{\'e}triques et espaces de sobolev.
\newblock \emph{Journal of differential geometry}, 11\penalty0 (4):\penalty0
  573--598, 1976.

\bibitem[Collot et~al.(2017)Collot, Merle, and Rapha{\"e}l]{collot2017dynamics}
Charles Collot, Frank Merle, and Pierre Rapha{\"e}l.
\newblock Dynamics near the ground state for the energy critical nonlinear heat
  equation in large dimensions.
\newblock \emph{Communications in Mathematical Physics}, 352\penalty0
  (1):\penalty0 215--285, 2017.

\bibitem[Cortazar et~al.(2019)Cortazar, del Pino, and Musso]{cortazar2019green}
Carmen Cortazar, Manuel del Pino, and Monica Musso.
\newblock Green's function and infinite-time bubbling in the critical nonlinear
  heat equation.
\newblock \emph{Journal of the European Mathematical Society}, 22\penalty0
  (1):\penalty0 283--344, 2019.

\bibitem[Daskalopoulos et~al.(2018)Daskalopoulos, Del~Pino, and
  Sesum]{daskalopoulos2018type}
Panagiota Daskalopoulos, Manuel Del~Pino, and Natasa Sesum.
\newblock {Type II ancient compact solutions to the Yamabe flow}.
\newblock \emph{Journal f{\"u}r die reine und angewandte Mathematik},
  2018\penalty0 (738):\penalty0 1--71, 2018.

\bibitem[Del~Pino and Gkikas(2018)]{del2018ancient}
Manuel Del~Pino and Konstantinos~T Gkikas.
\newblock {Ancient multiple-layer solutions to the Allen--Cahn equation}.
\newblock \emph{Proceedings of the Royal Society of Edinburgh Section A:
  Mathematics}, 148\penalty0 (6):\penalty0 1165--1199, 2018.

\bibitem[del Pino et~al.(2019)del Pino, Musso, and Wei]{del2019type}
Manuel del Pino, Monica Musso, and Jun~Cheng Wei.
\newblock {Type II blow-up in the 5-dimensional energy critical heat equation}.
\newblock \emph{Acta Mathematica Sinica, English Series}, 35\penalty0
  (6):\penalty0 1027--1042, 2019.

\bibitem[Del~Pino et~al.(2020)Del~Pino, Musso, and Wei]{del2020infinite}
Manuel Del~Pino, Monica Musso, and Juncheng Wei.
\newblock Infinite time blow-up for the 3-dimensional energy critical heat
  equation.
\newblock \emph{Analysis \& PDE}, 13\penalty0 (1):\penalty0 215--274, 2020.

\bibitem[del Pino et~al.(2020)del Pino, Musso, Wei, and Zhou]{del2020type}
Manuel del Pino, Monica Musso, Juncheng Wei, and Yifu Zhou.
\newblock {Type II finite time blow-up for the energy critical heat equation in
  $\mathbb{R}^4$}.
\newblock \emph{Discrete \& Continuous Dynamical Systems-A}, 40\penalty0
  (6):\penalty0 3327, 2020.

\bibitem[Del~Pino et~al.(2021)Del~Pino, Musso, and Wei]{del2019existence}
Manuel Del~Pino, Monica Musso, and Juncheng Wei.
\newblock Existence and stability of infinite time bubble towers in the energy
  critical heat equation.
\newblock \emph{Analysis \& PDE,to appear}, 2021.

\bibitem[Fila and Yanagida(2011)]{fila2011homoclinic}
Marek Fila and Eiji Yanagida.
\newblock Homoclinic and heteroclinic orbits for a semilinear parabolic
  equation.
\newblock \emph{Tohoku Mathematical Journal}, 63\penalty0 (4):\penalty0
  561--579, 2011.

\bibitem[Filippas and Kohn(1992)]{filippas1992refined}
Stathis Filippas and Robert~V Kohn.
\newblock {Refined asymptotics for the blowup of $u_t-\Delta u=u^p$}.
\newblock \emph{Communications on pure and applied mathematics}, 45\penalty0
  (7):\penalty0 821--869, 1992.

\bibitem[Filippas et~al.(2000)Filippas, Herrero, and
  Vel{\'a}zquez]{filippas2000fast}
Stathis Filippas, Miguel~A Herrero, and Juan~JL Vel{\'a}zquez.
\newblock Fast blow-up mechanisms for sign-changing solutions of a semilinear
  parabolic equation with critical nonlinearity.
\newblock \emph{Proceedings of the Royal Society of London. Series A:
  Mathematical, Physical and Engineering Sciences}, 456\penalty0
  (2004):\penalty0 2957--2982, 2000.

\bibitem[Gidas et~al.(1979)Gidas, Ni, and Nirenberg]{gidas1979symmetry}
Basilis Gidas, Wei-Ming Ni, and Louis Nirenberg.
\newblock Symmetry and related properties via the maximum principle.
\newblock \emph{Communications in Mathematical Physics}, 68\penalty0
  (3):\penalty0 209--243, 1979.

\bibitem[Giga and Kohn(1985)]{giga1985asymptotically}
Yoshikazu Giga and Robert~V Kohn.
\newblock Asymptotically self-similar blow-up of semilinear heat equations.
\newblock \emph{Communications on pure and applied mathematics}, 38\penalty0
  (3):\penalty0 297--319, 1985.

\bibitem[Giga and Kohn(1987)]{giga1987characterizing}
Yoshikazu Giga and Robert~V Kohn.
\newblock Characterizing blowup using similarity variables.
\newblock \emph{Indiana University Mathematics Journal}, 36\penalty0
  (1):\penalty0 1--40, 1987.

\bibitem[Giga et~al.(2004)Giga, Matsui, and Sasayama]{giga2004blow}
Yoshikazu Giga, Shin'ya Matsui, and Satoshi Sasayama.
\newblock Blow up rate for semilinear heat equations with subcritical
  nonlinearity.
\newblock \emph{Indiana University mathematics journal}, pages 483--514, 2004.

\bibitem[Harada(2020{\natexlab{a}})]{harada2020higher}
Junichi Harada.
\newblock {A higher speed type II blowup for the five dimensional energy
  critical heat equation}.
\newblock \emph{Annales de l'Institut Henri Poincar{\'e} C, Analyse non
  lin{\'e}aire}, 37\penalty0 (2):\penalty0 309--341, 2020{\natexlab{a}}.

\bibitem[Harada(2020{\natexlab{b}})]{harada2020type}
Junichi Harada.
\newblock {A type II blowup for the six dimensional energy critical heat
  equation}.
\newblock \emph{Annals of PDE}, 6\penalty0 (2):\penalty0 1--63,
  2020{\natexlab{b}}.

\bibitem[Herrero and Vel{\'a}zquez(1993)]{herrero1993blow}
Miguel~A Herrero and Juan~JL Vel{\'a}zquez.
\newblock Blow-up behaviour of one-dimensional semilinear parabolic equations.
\newblock \emph{Annales de l'Institut Henri Poincare (C) Non Linear Analysis},
  10\penalty0 (2):\penalty0 131--189, 1993.

\bibitem[Herrero and Vel{\'a}zquez(1994)]{herrero1994explosion}
Miguel~A Herrero and Juan~JL Vel{\'a}zquez.
\newblock Explosion de solutions d'{\'e}quations paraboliques semilin{\'e}aires
  supercritiques.
\newblock \emph{Comptes Rendus de l'Acad{\'e}mie des Sciences. S{\'e}rie I.
  Math{\'e}matique}, 319\penalty0 (2):\penalty0 141--145, 1994.

\bibitem[Jendrej(2017)]{Jendrej2017}
Jacek Jendrej.
\newblock Construction of two-bubble solutions for the energy-critical {NLS}.
\newblock \emph{Anal. PDE}, 10\penalty0 (8):\penalty0 1923--1959, 2017.
\newblock ISSN 2157-5045.

\bibitem[Jendrej(2019)]{Jendrej2019}
Jacek Jendrej.
\newblock Construction of two-bubble solutions for energy-critical wave
  equations.
\newblock \emph{Amer. J. Math.}, 141\penalty0 (1):\penalty0 55--118, 2019.
\newblock ISSN 0002-9327.

\bibitem[Jendrej and Lawrie(2018)]{Jendrej2018}
Jacek Jendrej and Andrew Lawrie.
\newblock Two-bubble dynamics for threshold solutions to the wave maps
  equation.
\newblock \emph{Invent. Math.}, 213\penalty0 (3):\penalty0 1249--1325, 2018.
\newblock ISSN 0020-9910.

\bibitem[Merle and Zaag(1998)]{merle1998optimal}
Frank Merle and Hatem Zaag.
\newblock Optimal estimates for blowup rate and behavior for nonlinear heat
  equations.
\newblock \emph{Communications on pure and applied mathematics}, 51\penalty0
  (2):\penalty0 139--196, 1998.

\bibitem[Mizoguchi(2004)]{mizoguchi2004type}
Noriko Mizoguchi.
\newblock {Type-II blowup for a semilinear heat equation}.
\newblock \emph{Advances in Differential Equations}, 9\penalty0
  (11-12):\penalty0 1279--1316, 2004.

\bibitem[Pol{\'a}{\v{c}}ik and Yanagida(2005)]{polavcik2005liouville}
P~Pol{\'a}{\v{c}}ik and E~Yanagida.
\newblock A liouville property and quasiconvergence for a semilinear heat
  equation.
\newblock \emph{Journal of Differential Equations}, 208\penalty0 (1):\penalty0
  194--214, 2005.

\bibitem[Pol{\'a}{\v{c}}ik and Quittner(2021)]{polavcik2021entire}
Peter Pol{\'a}{\v{c}}ik and Pavol Quittner.
\newblock {Entire and ancient solutions of a supercritical semilinear heat
  equation}.
\newblock \emph{Discrete \& Continuous Dynamical Systems}, 41\penalty0
  (1):\penalty0 413--438, 2021.

\bibitem[Quittner(1999)]{quittner1999priori}
Pavol Quittner.
\newblock A priori bounds for global solutions of a semilinear parabolic
  problem.
\newblock \emph{Acta Math. Univ. Comenianae}, 68\penalty0 (2):\penalty0
  195--203, 1999.

\bibitem[Quittner and Souplet(2019)]{quittner2019superlinear}
Pavol Quittner and Philippe Souplet.
\newblock \emph{Superlinear parabolic problems}.
\newblock Springer, 2019.

\bibitem[Schweyer(2012)]{schweyer2012type}
R{\'e}mi Schweyer.
\newblock {Type II blow-up for the four dimensional energy critical semi linear
  heat equation}.
\newblock \emph{Journal of Functional Analysis}, 263\penalty0 (12):\penalty0
  3922--3983, 2012.

\bibitem[Struwe(1984)]{struwe1984global}
Michael Struwe.
\newblock A global compactness result for elliptic boundary value problems
  involving limiting nonlinearities.
\newblock \emph{Mathematische Zeitschrift}, 187\penalty0 (4):\penalty0
  511--517, 1984.

\bibitem[Talenti(1976)]{talenti1976best}
Giorgio Talenti.
\newblock Best constant in sobolev inequality.
\newblock \emph{Annali di Matematica pura ed Applicata}, 110\penalty0
  (1):\penalty0 353--372, 1976.

\bibitem[Vel{\'a}zquez(1992)]{velazquez1992higher}
JJL Vel{\'a}zquez.
\newblock Higher dimensional blow up for semilinear parabolic equations.
\newblock \emph{Communications in partial differential equations}, 17\penalty0
  (9-10):\penalty0 1567--1596, 1992.

\bibitem[Wang and Wei(2021)]{wang2021refined}
Kelei Wang and Juncheng Wei.
\newblock Refined blowup analysis and nonexistence of type ii blowups for an
  energy critical nonlinear heat equation.
\newblock \emph{arXiv preprint arXiv:2101.07186}, 2021.

\bibitem[Wang(1993)]{wang1993cauchy}
Xuefeng Wang.
\newblock On the cauchy problem for reaction-diffusion equations.
\newblock \emph{Transactions of the American Mathematical society},
  337\penalty0 (2):\penalty0 549--590, 1993.

\end{thebibliography}

\end{document}